\DeclareSymbolFontAlphabet{\mathcalorig} {symbols}
\newtheorem{theorem}{Theorem}[section]
\newtheorem{lemma}[theorem]{Lemma}
\newtheorem{proposition}[theorem]{Proposition}
\theoremstyle{definition}
\newtheorem{Note}[theorem]{Remark}
\newtheorem{Corollary}[theorem]{Corollary}
\newtheorem{conjecture}[theorem]{Conjecture}
\theoremstyle{remark}
\newtheorem{remark}[theorem]{Remark}
\newtheorem{notation}[theorem]{Notation}
\numberwithin{equation}{section}
\def\ve {\varepsilon}
\let\LaTeXStandardTableOfContents\tableofcontents
\renewcommand{\tableofcontents}{
\begingroup
\renewcommand{\bfseries}{\relax}
\LaTeXStandardTableOfContents
\endgroup
}
\begin{document}

\title{Integral points on Markoff type cubic surfaces}

\author{Amit Ghosh}
\address{Oklahoma State University\\ Stillwater, OK 74078, USA}
\curraddr{}
\email{ghosh@okstate.edu}
\thanks{}

\author{Peter Sarnak}
\address{Institute for Advanced Study and  Princeton University \\Princeton, NJ 08540, USA}
\curraddr{}
\email{sarnak@math.ias.edu}
\thanks{}
\date{}

\begin{abstract} For integers $k$, we consider the affine cubic surface $V_{k}$ given by $M({\bf x})=x_{1}^2 + x_{2}^2 +x_{3}^2 -x_{1}x_{2}x_{3}=k$. We show that for almost all $k$ the Hasse Principle holds, namely  that $V_{k}(\mathbb{Z})$ is non-empty if $V_{k}(\mathbb{Z}_p)$ is non-empty for all primes $p$, and that there are   infinitely many $k$'s for which it fails. The Markoff morphisms act on  $V_{k}(\mathbb{Z})$ with finitely many orbits and a numerical study points to some basic conjectures about these ``class numbers'' and Hasse failures. Some of the analysis may be extended to less special affine cubic surfaces.
\end{abstract}

\maketitle

\setcounter{secnumdepth}{5}
\setcounter{tocdepth}{3}
\setcounter{section}{0}
\setcounter{subsection}{4}
\setcounter{subsubsection}{5}
\setcounter{tocdepth}{1}
\tableofcontents

\section{Introduction}
\label{intro}
Little is known about the values at integers assumed by \textit{affine cubic forms} $F$ in three variables. Unless otherwise stated, by an \textit{affine form}   $f$ in $n$-variables we mean   $f\in \mathbb{Z}[x_1,\ldots ,x_n]$ whose leading homogeneous term $f_0$ is non-degenerate\footnote{that is it cannot be transformed to a polynomial of fewer than $n$ variables by a linear change of variables.} and such that $f -k$ is (absolutely) irreducible for all constants $k$. For $k\in \mathbb{Z}$ set
\begin{equation}\label{Intro2}
V_{k,F}=\{{\bf x}= (x_1,x_2,x_3):\ F({\bf x})=k\},
\end{equation}
and $\mathfrak{v}_{F}(k):=\vert V_{k,F}(\mathbb{Z})\vert$.
The basic question is for which $k$ is $V_{k,F}(\mathbb{Z})\neq \emptyset$, or more generally infinite or Zariski dense in $V_{k,F}$ ?

 A prime example is $F=S$, the sum of three cubes:
\begin{equation}\label{Intro1}
S(x_1,x_2,x_3)=x_{1}^{3}+x_{2}^{3} + x_{3}^{3}.
\end{equation}
 There are obvious local congruence obstructions, namely that $V_{k,S}(\mathbb{Z})=\emptyset$ if $k \equiv 4\ \text{or}\ 5\, ({\rm  mod}\, 9)$, but beyond that it is possible that the answers to all three questions is yes for all the other $k$'s, which we call the \textit{admissible} values (see  \cite{MorBook}, \cite{CV94}). It is known that strong approximation in its strongest form fails for $V_{k,S}(\mathbb{Z})$; the global obstruction coming from an application of cubic reciprocity (\cite{Cas85}, \cite{HB92}, \cite{CThW12}). Moreover, \cite{Le} and \cite{Be} show that $V_{1,S}(\mathbb{Z})$ is Zariski dense in $V_{1,S}$.

The case when the cubic polynomial $F(x_1,x_2,x_3)$ factors into linear factors can be studied algebraically using divisor theory, and is apparently quite different to our irreducible $F$. If $F$ is the split norm form  $N({\bf x})=x_{1}x_{2}x_{3}$, then every $V_{k,N}$ is non-empty, and  for $k$ non-zero, $\mathfrak{v}_{N}(k)$ is finite and is a divisor function.

For a $\mathbb{Q}$-anistropic torus given by $N({\bf x})={Nm}_{K\slash\mathbb{Q}}(\alpha_{1}x_{1}+\alpha_{2}x_{2}+\alpha_{3}x_{3})$, where $\alpha_{1},\,\alpha_{2},\,\alpha_{3}$ is a $\mathbb{Z}$-basis of an order in a cubic number field $K$, the Dirichlet Unit Theorem coupled with the action ${\bf w} \rightarrow u{\bf w}$ of the unit group on the homogeneous space and the theory of divisors, allows for the study of $V_{k,N}({\mathbb Z})$. It consists of a finite number  $\mathfrak{h}_{N}(k)$ of orbits (putting $\mathfrak{h}_{N}(k) =\mathfrak{v}_{N}(k)=0$ if $V_{k,N}=\emptyset$), is infinite if it is non-empty and is Zariski dense if $K$ is totally real. The dependence of $\mathfrak{h}_{N}(k)$ on $k$ is subtle, especially if the class number $H$ of the order is not one. Most $k$'s are not represented; in fact   \cite{Odo75} shows that 
\begin{equation}\label{Intro3}
\big{|}\{|k|\leq X: \mathfrak{v}_{N}(k)\neq 0\}\big{|} \sim \frac{1}{H}\big{|}\{|k|\leq X:  k\ \text{admissible}\}\big{|} \sim C X(\log X)^{-\frac{2}{3}}\, ,
\end{equation}
as $X \to \infty$ .  The question of the density of Hasse failures for norms of elements in a number field $K$ is studied in \cite{bn16}.

To measure the richness of representations by $f$, we say that $f$ is \textit{perfect} if $V_{k,f}(\mathbb{Z})$ is Zariski dense in $V_{k,f}$ for all but finitely many admissible $k$'s; we say it is \textit{almost perfect} if the same holds for almost all admissible $k$ (in the sense of natural density); and $f$ is \textit{full} if $\mathfrak{v}_{f}(k) \to \infty$ as $k \to \infty$ for almost all admissible $k$'s.  For an affine form, it
follows from \cite{LW} and \cite{SchBook} that the admissible $k$'s are given
in terms of a congruence condition as in the case of $S$.

Much more is known about cubic forms in the ``subcritical'' case of  forms in four or more variables or diagonal forms $f= x_1^{a_1} + \ldots + x_b^{a_b}$ with $\sum_{j=1}^{b}a_j^{-1} >1$ and $b\geq 3$ (see \cite{Hoo16}, \cite{VW}, \cite{bro15} for example)  and in the ``super-critical" case of two variables (\cite{CFZ}). The basic analytic feature in the subcritical case is that the average number of representations of $k$ is $k^{\delta}(\log k)^A$ for some $\delta >0$, while in the critical case, $\delta =0$.  If $f$ is a cubic polynomial, $n\geq 10$ and $f_0$ is nonsingular  then $f$ is perfect (\cite{B-HB})\footnote{They show that $\vert V_{k,f}(\mathbb{Z})\vert=\infty$ for $k$ admissible from an asymptotic count which is flexible enough to deduce that $V_{k,f}(\mathbb{Z})$ is Zariski dense in $V_{k,f}$.}. In a recent paper \cite{Hoo16}, it is shown that if $f=f_0$  and is nonsingular with $n\geq 5$, then $f$ is full, while conditional on the Riemann Hypothesis for certain Hasse-Weil $L$-functions, the same is true for $n\geq 4$. Moreover, it is conjectured there that  any such $f$ with $n\geq 4$ is perfect. For cubic $f$ in two variables (supercritical case) the celebrated theorems (\cite{Thue}, \cite{Siegel}) assert that $V_{k,f}(\mathbb{Z})$ is finite and moreover only for very few of the admissible $k$'s is $V_{k,f}(\mathbb{Z})$ non-empty (\cite{Sch}).

Returning to the critical dimension $n=3$  for affine cubic forms, there are well-known examples of $F$ which are not perfect, see (\cite{Mor53}, \cite{CG66})\footnote{The projective cubic surface for \cite{CG66}, namely $F(x_1,x_2,x_3)=10x_4^3$ with $F(x_1,x_2,x_3)= 5x_1^3 + 12x_2^3 + 9x_3^3$, fails the Hasse principle over $\mathbb{Q}$; from which it follows that $V_{k,F}(\mathbb{Z})$ fails the Hasse principle over $\mathbb{Z}$ for $k=10w^3$. There are many other such projective cubic surfaces over $\mathbb{Q}$ (see Sec.\,4 of \cite{Br}).}  and also our example of $M$ below; however it is possible that $F$ is always full (see the discussion at the end of the Introduction).

This paper is concerned with $F=M$ where
\begin{equation}\label{Intro4}
M({\bf x})= x_{1}^2 + x_{2}^2 +x_{3}^2 -x_{1}x_{2}x_{3}.
\end{equation}
The affine cubic surface $V_{0,M}(\mathbb{Z})$ was studied by Markoff (\cite{Markoff1}, \cite{Markoff2}); the points $(x_1,x_2,x_3)\in V_{0,M}(\mathbb{Z})$ with $x_j \in \mathbb{N}$ being essentially the ``Markoff triples'' . The reason that one can study $V_{0,M}(\mathbb{Z})$, or more generally $V_{k,M}(\mathbb{Z})$ is that there is a descent group action albeit non-linear. The Vieta involutions $\mathcal{V}_j$ with $\mathcal{V}_{1}(x_1,x_2,x_3)=(x_2x_3-x_1,x_2,x_3)$ and similarly for $\mathcal{V}_2$, $\mathcal{V}_3$, preserve $M$, as do permutations of the $x_j$'s and switching the signs of two of the $x_j$'s. We denote by $\Gamma$ the group of polynomial affine transformations generated as above. Then, $\Gamma$ preserves $V_{k,M}(\mathbb{Z})$ and except for the case of the Cayley cubic with $k=4$ (see Sec.\,\ref{subsec4b}), $V_{k,M}(\mathbb{Z})$ decomposes into a finite number  $\mathfrak{h}_{M}(k)$ of $\Gamma$-orbits. For example, if $k=0$, then $\mathfrak{h}_{M}(0)=2$ corresponds to the orbits of $(0,0,0)$ and $(3,3,3)$ (\cite{Markoff2}). If $V_{k,M}(\mathbb{Z})\neq \emptyset$ (so that $\mathfrak{h}_{M}(k) >0$) and $k\geq 5$ or $k< 0$ with $k$ not a square, which will be our cases of interest, then each $\Gamma$-orbit is infinite and even Zariski dense in $V_{k,M}$ (see \cite{CZ}, \cite{CL} and   Sec. \ref{param}). In particular, for $k\geq 5 $ and $k$ not a square, or $k\leq 0$
\begin{equation}\label{zariski}
V_{k,M}(\mathbb{Z}) \neq \emptyset \quad \text{iff} \quad V_{k,M}(\mathbb{Z}) \text{ is Zariski dense in}\ V_{k,M}.
\end{equation}
Moreover, $V_{k,M}(\mathbb{Z})$ contains polynomial  parametric solutions $\left(x_1(m),x_2(m),x_3(m)\right)$ if and only if $k=4+\nu^2$, in which case it contains a line (see Sec. \ref{param} for a direct proof). In \cite{BGS16} and \cite{BGS17}, it is shown that these affine cubic surfaces  with $V_{k,M}(\mathbb{Z})\neq \emptyset$ satisfy a form of strong approximation\footnote{In its strongest form this fails as is shown using quadratic reciprocity in Section \ref{sec7}, see \eqref{7a}.}, after taking into account the possible finite orbits of $\Gamma$ in $V_{k,M}(\bar{\mathbb{Q}})$.  Our goal in this paper is to study the set of $k$'s for which $\mathfrak{h}_{M}(k)>0$.

The first issue is to determine the congruence obstructions for $k$. This is elementary and in Section \ref{sec5} we show that $V_{k,M}(\mathbb{Z}\slash p^{n}\mathbb{Z})\neq \emptyset$ unless $k \equiv 3\ ({\rm mod}\, 4)$ or $k \equiv \pm 3\ ({\rm mod}\, 9)$.  Recall that $k$  is admissible means $k$ does not satisfy any of these congruences. The number of $0 < k \leq K$ (or $0 < -k \leq K$) which are admissible is $\frac{7}{12}K + O(1)$. Any admissible $k$ for which $\mathfrak{h}(k)=0$ is called a Hasse failure (since in this case $V_{k,M}(\mathbb{Z})$ is empty but there is no congruence obstruction).

In order to study $\mathfrak{h}_{M}(k)$ both theoretically and numerically, we give an explicit reduction (descent) for the action of $\Gamma$ on $V_{k,M}(\mathbb{Z})$. For this purpose, it is convenient to remove an explicit set of special admissible $k$'s, namely those for which there is a point in $V_{k,M}(\mathbb{Z})$ with $|x_j|=0,\,1$ or $2$. These $k$'s take the form (i) $k=u^2 + v^2$ or (ii) $4(k-1)=u^2 + 3v^2$ or (iii) $k=4+ u^2$. The number of these special $k$'s (which we refer to as {\it exceptional}) with $0\leq k\leq K$ is asymptotic to $C'\frac{K}{\sqrt{\log K}}$. The remaining admissible $k$'s are called {\it generic} (all negative admissible $k$'s are generic). For them, we have the following elegant reduced forms

\begin{theorem}\label{Thm1}\ 
\begin{enumerate}[wide,labelindent=0pt,label=(\roman*).]
\item Let $k\geq 5$ be  generic and consider the compact set 
\[
\mathfrak{F}_{k}^{+} = \{{\bf u}\in \mathbb{R}^3 : 3\leq u_1\leq u_2\leq u_3\, ,\  u_1^2 +u_2^2+u_3^2 +u_1u_2u_3 = k\}\, .
\]
The points in  $\mathfrak{F}_{k}^{+}(\mathbb{Z})= \mathfrak{F}_{k}^{+} \cap \mathbb{Z}^{3}$ are $\Gamma$-inequivalent, and any ${\bf x}\in V_{k,M}(\mathbb{Z})$ is $\Gamma$-equivalent to a unique point ${\mathbf u}'=(-u_1,u_2,u_3)$ with ${\mathbf u}=(u_1,u_2,u_3) \in \mathfrak{F}_{k}^{+}(\mathbb{Z})$ .
\item Let $k<0$ be admissible and consider the compact set 
\[
\mathfrak{F}_{k}^{-} = \{{\bf u}\in \mathbb{R}^3 : 3\leq u_1\leq u_2\leq u_3\leq \frac{1}{2}u_1u_2\,  ,\  u_1^2 +u_2^2+u_3^2 - u_1u_2u_3 = k\}\, .
\]
The points in  $\mathfrak{F}_{k}^{-}(\mathbb{Z})= \mathfrak{F}_{k}^{-} \cap \mathbb{Z}^{3}$ are $\Gamma$-inequivalent, and any ${\bf x}\in V_{k,M}(\mathbb{Z})$ is $\Gamma$-equivalent to a unique point  ${\mathbf u}=(u_1,u_2,u_3) \in \mathfrak{F}_{k}^{-}(\mathbb{Z})$ .
\end{enumerate}
\end{theorem}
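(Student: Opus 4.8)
The plan is to exploit the non-linear descent coming from the Vieta involutions, following the classical Markoff reduction but keeping careful track of signs. Fix $k$ and a point ${\bf x}\in V_{k,M}(\mathbb{Z})$. First I would reduce to the "all non-negative" situation: using the sign-change maps (which flip two coordinates simultaneously) one can arrange all $x_j\ge 0$ when $k\ge 5$ is generic — note that the sign of $x_1x_2x_3$ is then forced by the equation, and since the point is not exceptional no coordinate is $0,1,2$, so in fact $x_j\ge 3$ for all $j$; the subtlety for $k<0$ is that one instead needs $x_1x_2x_3>0$ with an even number of negative coordinates absorbed differently, which is why the fundamental domain in (ii) is stated directly in terms of ${\bf u}$ (i.e.\ after the substitution $u_1=-x_1$ is unwound). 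I will treat the two cases in parallel but separately.

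Next comes the core descent. Order the coordinates so that $x_1\le x_2\le x_3$ (permutations are in $\Gamma$). Apply $\mathcal{V}_3$, replacing $x_3$ by $x_3'=x_1x_2-x_3$ (for $k<0$) or, on the sign-twisted point, the analogue with $+$ sign. The key inequality is the standard Markoff one: viewing $M$ as a quadratic in $x_3$ with the other two fixed, the two roots $x_3, x_3'$ satisfy $x_3+x_3'=x_1x_2$ (the $M$-case) and one shows $x_3' < x_3$ precisely when $x_3$ is "too large" relative to $x_1,x_2$ — concretely when $x_3 > \tfrac12 x_1 x_2$ in the $k<0$ normalization, and when $x_3>x_1x_2-x_3$, i.e.\ again $x_3>\tfrac12 x_1x_2$, in the other. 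So as long as the point is not already in the claimed domain, $\mathcal{V}_3$ strictly decreases $x_3$ (hence strictly decreases $x_1+x_2+x_3$, a non-negative integer), and we iterate. Since the quantity being decreased is a non-negative integer, the process terminates at a point lying in $\mathfrak{F}_k^{+}(\mathbb{Z})$ (resp.\ $\mathfrak{F}_k^{-}(\mathbb{Z})$); one must check en route that the inequalities $3\le u_1$ and $u_1\le u_2\le u_3$ are preserved or can be restored by a permutation at each step, and that one never leaves the generic/admissible range (no coordinate drops to $0,1,2$), which is where genericity of $k$ is used. This establishes existence of a representative.

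For uniqueness — that distinct points of $\mathfrak{F}_k^{\pm}(\mathbb{Z})$ are $\Gamma$-inequivalent — I would argue that on the fundamental domain every non-identity generator of $\Gamma$ strictly increases $x_1+x_2+x_3$ (or fixes the point), so that $\mathfrak{F}_k^{\pm}(\mathbb{Z})$ meets each orbit in exactly one point; more precisely, any $\gamma\in\Gamma$ can be written as a word in the $\mathcal{V}_j$'s and signed permutations, and a minimality/"no backtracking" argument (the tree structure of the Markoff action, cf.\ the references \cite{CZ}, \cite{CL} to Zariski density) shows that a word taking one reduced point to another must be trivial. The main obstacle I anticipate is precisely this sign bookkeeping: the Vieta moves interact with the sign-flip symmetries, and one must verify that the specific normalizations chosen ($u_1\ge 3$ and $u_1u_2u_3$ taken with the "wrong" sign $+$ in case (i), the upper bound $u_3\le\tfrac12 u_1u_2$ in case (ii)) are exactly the ones that make the descent monotone and the terminal configuration unique — in particular that the boundary cases (equalities in the defining inequalities, or $u_1=3$) do not produce spurious extra equivalences, which again is controlled by excluding the exceptional $k$.
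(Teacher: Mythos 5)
Your existence half is essentially the paper's descent (Lemmas \ref{lem2.1} and \ref{lem2.2}): iterate $\mathcal{V}_3$ on the ordered point until the largest coordinate can no longer be decreased, using genericity to rule out coordinates in $\{0,\pm1,\pm2\}$. That part is fine, apart from a sign slip in case (i): a double sign change preserves the parity of the number of negative coordinates, so for $k\geq 5$ one cannot ``arrange all $x_j\geq 0$''; the terminal points necessarily have exactly one negative coordinate, which is why the representative is $(-u_1,u_2,u_3)$.

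The genuine gap is in the uniqueness half. Knowing that every generator of $\Gamma$ weakly increases a height at the reduced points only controls the local behaviour there; it does not exclude two distinct reduced points being joined by a path in the orbit graph that climbs over a ridge and comes back down. Ruling that out requires showing that every \emph{non}-reduced node has \emph{exactly one} height-decreasing Vieta neighbour (so no interior local maximum of the height can occur along a path, and hence each component has a unique minimum). Your appeal to ``the tree structure of the Markoff action'' assumes precisely this; it is not supplied by the cited Zariski-density references, and it is false without genericity (for $k=4$ the orbit structure degenerates, and for exceptional $k$ distinct points of $\mathfrak{F}_k^{\pm}(\mathbb{Z})$ can be equivalent), so the proof must show where genericity enters. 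The paper does this with the Bhargava-cube discriminant $\Delta$ of \eqref{1b} — invariant under permutations and double sign changes, unlike your $x_1+x_2+x_3$, which is not even well defined on the nodes and in fact decreases under $\mathcal{V}_3$ applied to a reduced point $(-u_1,u_2,u_3)$ — together with the identities \eqref{gauge}: the bracket $2(k-5)+(x_i^2-4)(x_j^2-4)$ is strictly positive exactly for generic $k$, so the sign of $\Delta_i({\bf x})-\Delta({\bf x})$ is that of $x_jx_l(x_jx_l-2x_i)$, giving one descending edge at each positive node and none at a negative node. Without an invariant of this kind and the corresponding one-descending-edge lemma, the inequivalence of the points of $\mathfrak{F}_k^{\pm}(\mathbb{Z})$ is not established.
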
 

 The Theorem is illustrated for $k>5$ in  Figs. \ref{fig:fset1} and \ref{fig:fset2} with $k=3685$ where $\mathfrak{h}_{M}(3685)=6$, and for $k<0$ in  Figs. \ref{fig:fset1a} and \ref{fig:fset2a} with $k= -3691$, where  $\mathfrak{h}_{M}(-3691)=9$.  The lattice points $V_{k,M}(\mathbb{Z})$ are highlighted and the  fundamental sets  indicated in a polygonal region. 

\begin{figure}[!ht]
\centering
  \includegraphics[width=.65\linewidth]{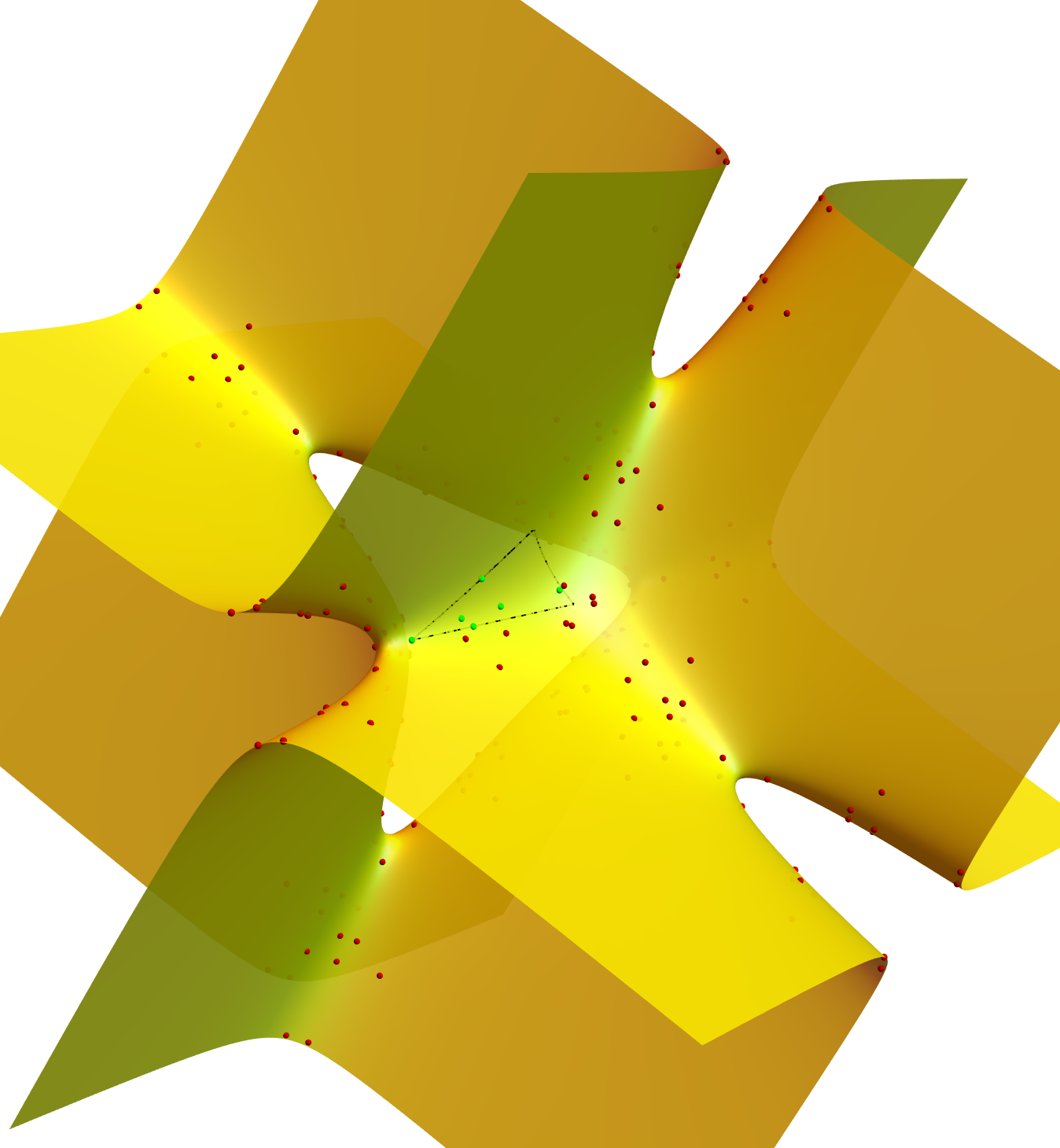}
  \caption{Lattice points and fundamental set (triangular) for $k=3685$.}
  \label{fig:fset1}
\end{figure}

\begin{figure}[!ht]
\centering
  \includegraphics[width=.65\linewidth]{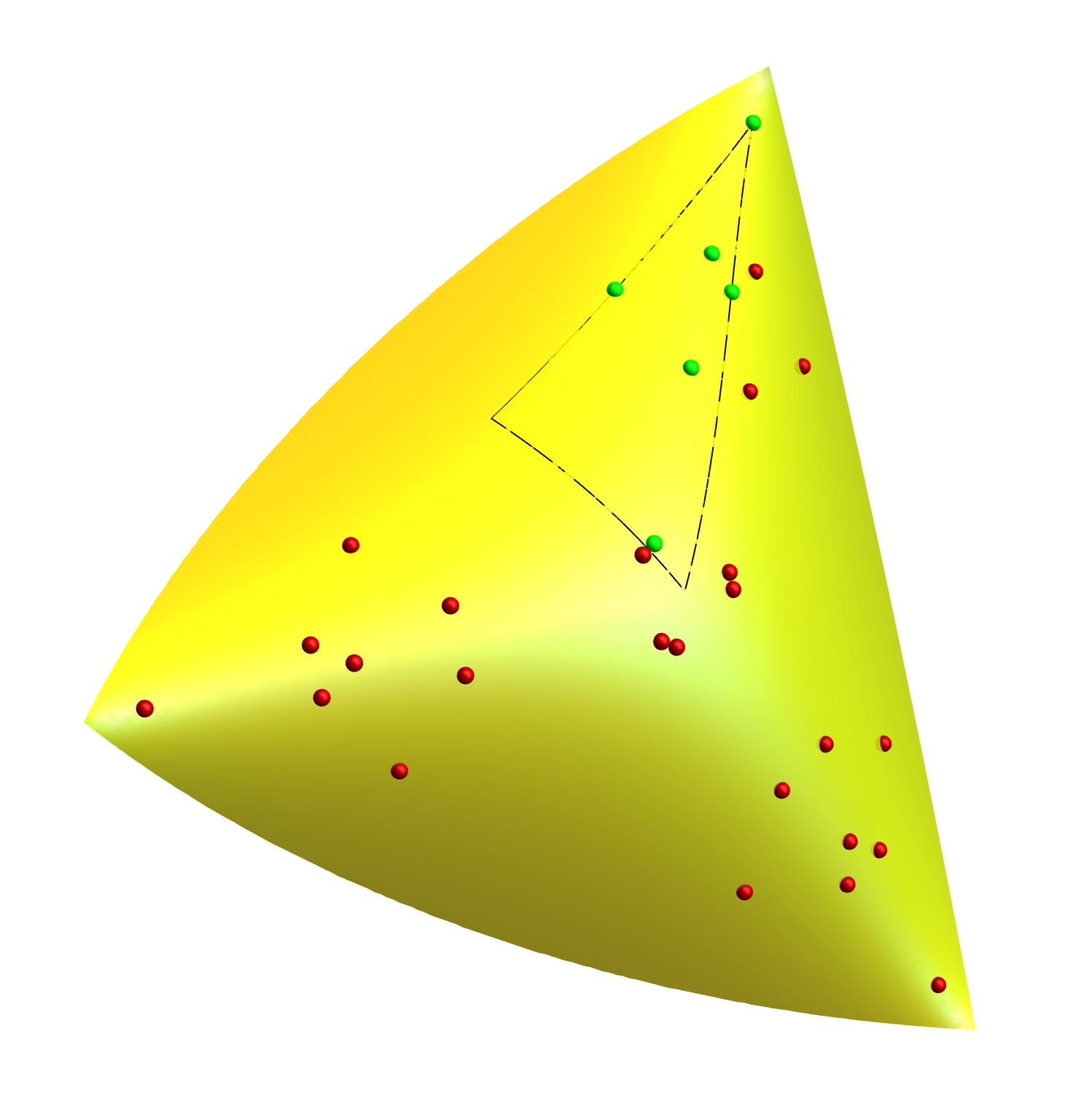}
  \caption{Closeup of fundamental set (triangular) for $k=3685$.}
  \label{fig:fset2}
\end{figure}

\begin{figure}[!ht]
\centering
  \includegraphics[width=.70\linewidth]{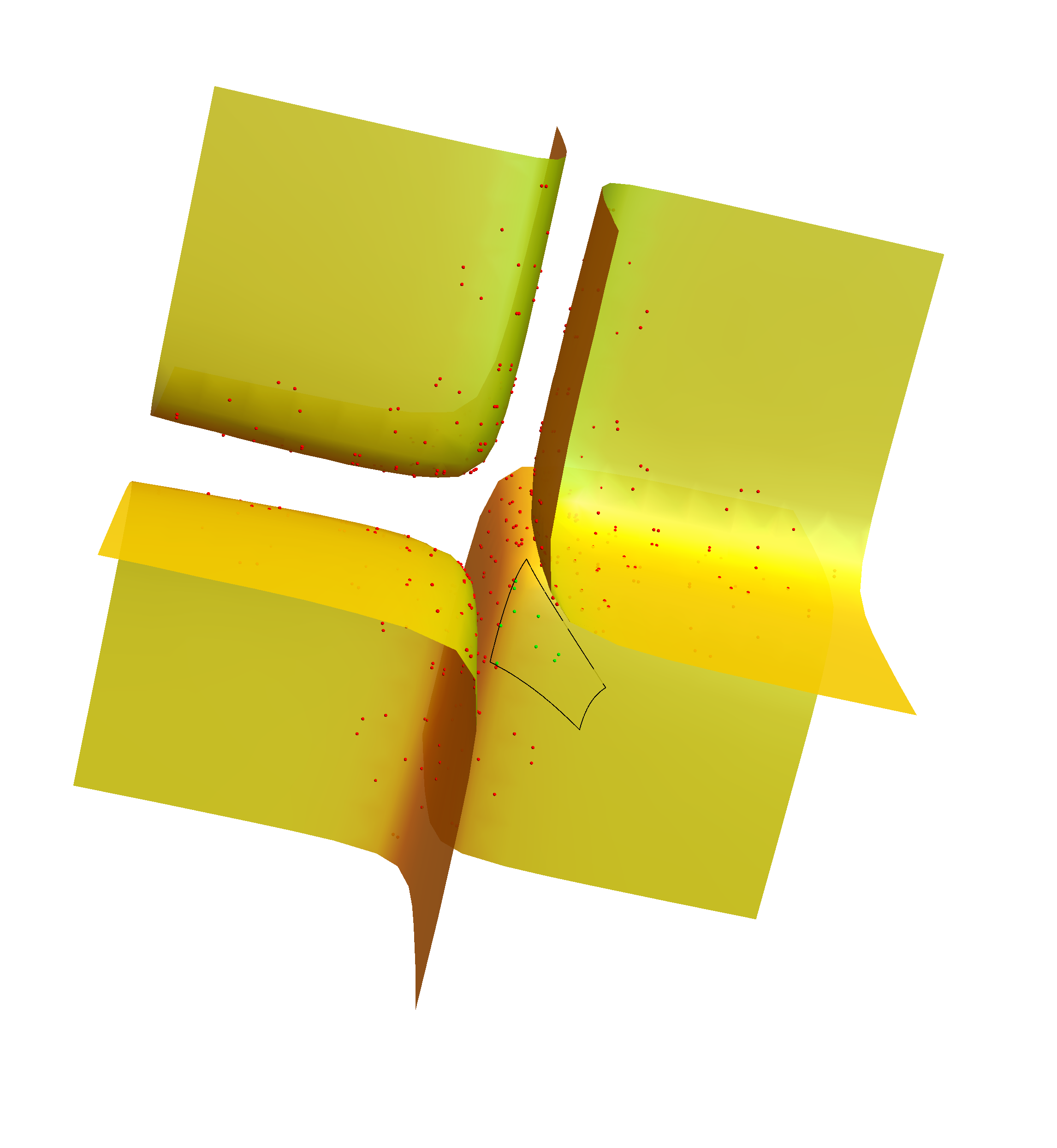}
  \caption{Lattice points and fundamental set  for $k=-3691$.}
  \label{fig:fset1a}
\end{figure}

\begin{figure}[!ht]
\centering
  \includegraphics[width=.70\linewidth]{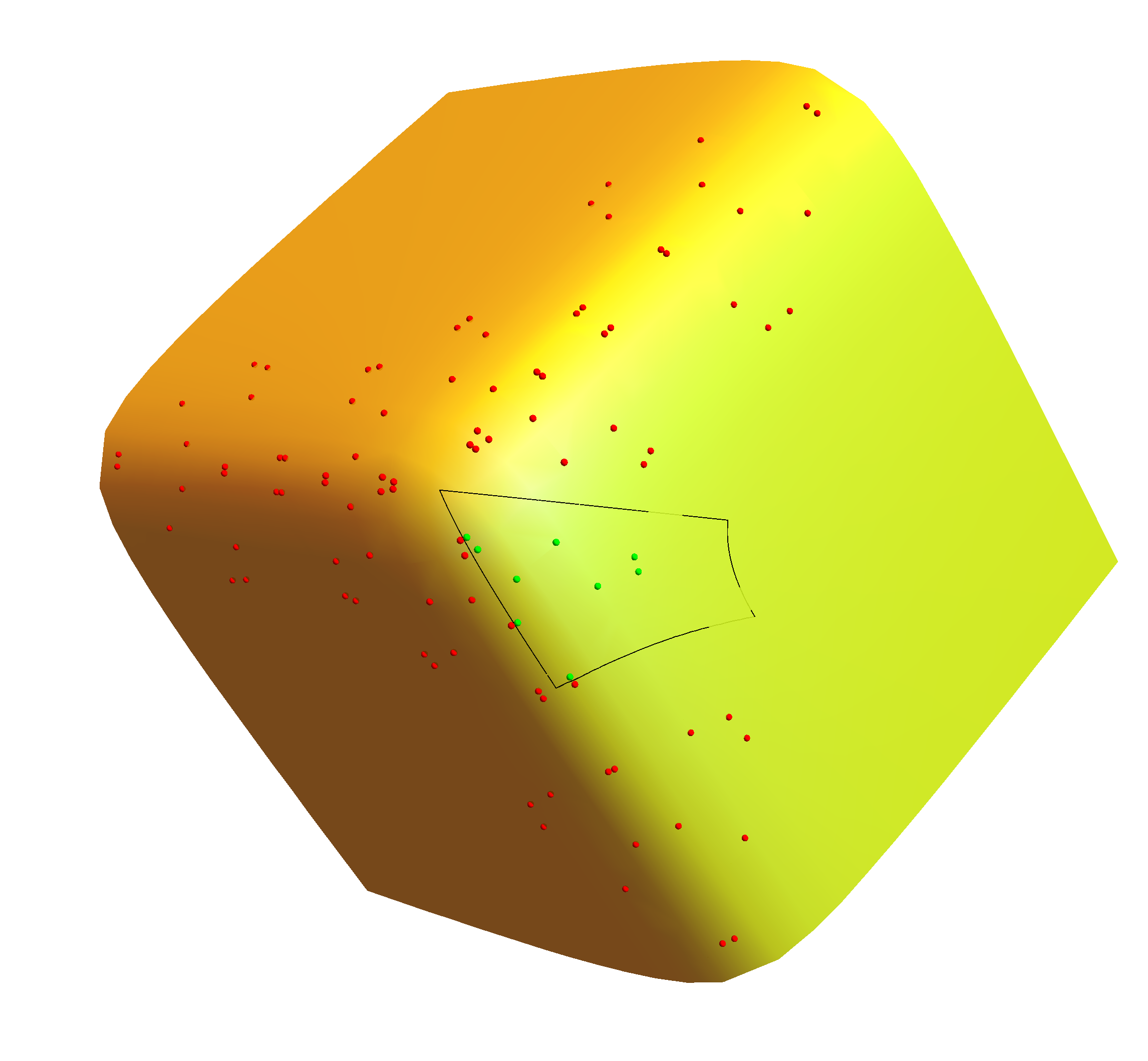}
  \caption{Closeup of fundamental set  for $k=-3691$.}
  \label{fig:fset2a}
\end{figure}
Some simple consequences of Theorem \ref{Thm1} are (see the discussion in Sec.\,\ref{sec2} and also Secs.\,\ref{sec6} and \ref{sec7})\, :
\begin{enumerate}[wide,labelindent=0pt,label=(\alph*).]
\item $V_{46}(\mathbb{Z})=\emptyset$, that is $\mathfrak{h}_{M}(46)=0$, this being the first positive Hasse failure\, .
\item $\mathfrak{h}_{M}(-2)=1$ with all solutions equivalent to the point $(3,3,4)$; while $k=-4$ is the first negative Hasse failure\, .
\item $\mathfrak{h}_{M}(k)\ll_{\ve} |k|^{\frac{1}{3}+ \ve}$ as $k \to \pm \infty$. This follows from the fact that when considering the values taken by the corresponding indefinite quadratic form in the $y$ and $z$ variable, for each fixed $x$, the units are bounded in number due to the restrictions imposed by the fundamental sets\, .
\item Let $\mathfrak{h}^{\pm}_{M}(k)= |\mathfrak{F}^{\pm}_{k}(\mathbb{Z})|$ where $\pm = \text{sgn}({k})$, this being defined for any $k$.  Then Theorem \ref{Thm1} implies that for generic $k$, $\mathfrak{h}_{M}^{\pm}(k)= \mathfrak{h}_{M}(k)$ while otherwise $\mathfrak{h}_{M}(k) \leq \mathfrak{h}_{M}^{\pm}(k)$. We have 
\begin{equation}\label{avg}
\sum_{\substack{k\neq 4\\ |k|\leq K}}\mathfrak{h}_{M}^{\pm}(k)\sim C^{\pm}K(\log K)^2 , 
\end{equation}
 where $C^{\pm}>0$ and  $K \to \infty$ (see Lemmas \ref{average} and \ref{average2}).
\end{enumerate}
So, as expected in this case of critical dimension 3, the numbers $\mathfrak{h}_{M}(k)$ are small on average. On the other hand  the fact that this average grows, albeit very slowly, is a key feature as it suggests that $\mathfrak{h}_{M}(k)$ might be non-zero for many $k$'s. In Section \ref{sec9}, we report on some numerical experiments using Theorem 1.1 to find the Hasse failures among the generic $k$'s when $0<  k < 6\times 10^8$. These suggest that
\begin{equation}\label{Intro5}
\sum_{\substack{0<k\leq K \\ k\,\text{admissible} \\ \mathfrak{h}_{M}(k)=0}} 1 \sim C_0K^\theta ,
\end{equation}
with $C_0 >0$ and $\theta \approx 0.8875..$\,. We also provide results concerning other statistics for the $\mathfrak{h}_{M}(k)$'s for $k$ in this range (see Sec.\,\ref{sec9} for the numerics concerning the numbers $\mathfrak{h}_{M}(k)$ and some conjectures that these support).

Our main result concerns the values assumed by $M$ and the Hasse failures in \eqref{Intro5}; we prove that $M$ is almost perfect but not perfect.

\begin{theorem}\label{Thm2}\ 
\begin{enumerate}[wide,labelindent=0pt,label=(\roman*).]
\item There are infinitely many Hasse failures. More precisely, the number of  $0<k \leq K$ and $-K \leq k <0$ for which the Hasse Principle fails is at least   $\sqrt{K}(\log K)^{-\frac{1}{2}}$ for $K$ large.
\vspace{3pt}
\item $M$ is almost perfect, that is
\[
\#\big\{|k|\leq K :\,k \ \text{admissible},\ \mathfrak{h}_{M}(k)=0\big\} = o(K)\, ,
\]
as $K \to \infty$\, and for almost all admissible $k$, $V_{k}(\mathbb{Z})$ is Zariski dense in $V_{k}$\ . 
\end{enumerate}
\end{theorem}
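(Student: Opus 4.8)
The plan is to prove the two parts by quite different methods. For part (i) (infinitude of Hasse failures), I would look for a sub-family of admissible $k$'s of size about $\sqrt{K}(\log K)^{-1/4}$ on which one can \emph{force} $V_{k,M}(\mathbb{Z})=\emptyset$ by a genuinely global argument, since by hypothesis all local conditions are satisfied. A natural candidate is to take $k$ in a thin set (e.g.\ $k$ of a special shape making the indefinite binary quadratic form $Q_{x}(y,z)=y^2+z^2-xyz - (k - x^2)$, obtained by fixing $x=x_1$, land in a class group where the relevant values $k-x^2$ are non-represented), and to use the reduced form of Theorem~\ref{Thm1}: a solution in $\mathfrak{F}^{\pm}_k(\mathbb{Z})$ has $u_1$ bounded (roughly $u_1 \ll k^{1/3}$ for $k>0$, and $3\le u_1$ with $u_3\le \tfrac12 u_1u_2$ for $k<0$), so ruling out solutions amounts to ruling out, for each of the finitely many admissible small values of $u_1$, that $k-u_1^2$ (resp.\ the analogous quantity) is a value of a specific binary quadratic form in the remaining two variables subject to the Markoff inequalities. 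Choosing $k$ so that $k - u_1^2$ is, for every eligible $u_1$, in a residue class modulo a fixed modulus that is not represented by the corresponding form, gives the Hasse failure; a counting of such $k$ via a sieve/Selberg--Delange estimate (the $(\log K)^{-1/4}$ is exactly the shape one expects from the density of integers avoiding several ``split prime'' conditions) yields the stated lower bound. The main obstacle here is arranging a \emph{single} congruence-type condition on $k$ that simultaneously kills all the finitely many $u_1$-fibers while still being compatible with admissibility and still leaving $\gg \sqrt{K}(\log K)^{-1/4}$ values of $k$; one must also be careful that the reduced-form region is truly the full obstruction, i.e.\ invoke Theorem~\ref{Thm1} to reduce to the bounded-$u_1$ situation.

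For part (ii) (almost perfection), the strategy is an upper-bound sieve / second-moment argument. By item (d) of the consequences of Theorem~\ref{Thm1} we have the first-moment asymptotic
\[
\sum_{\substack{k\neq 4,\ |k|\le K}} \mathfrak{h}^{\pm}_M(k) \sim C^{\pm} K(\log K)^2,
\]
and $\mathfrak{h}_M(k)\le \mathfrak{h}^{\pm}_M(k)$ always, with equality for generic $k$. Since the exceptional $k$'s number $O(K/\sqrt{\log K})=o(K)$, it suffices to show that $\mathfrak{h}^{\pm}_M(k)>0$ for almost all admissible $k$; equivalently that $\#\{|k|\le K:\ \mathfrak{h}^{\pm}_M(k)=0\}=o(K)$. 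I would estimate the second moment
\[
\sum_{\substack{|k|\le K,\ k \text{ admissible}}} \mathfrak{h}^{\pm}_M(k)^2,
\]
and combine with the first moment via Cauchy--Schwarz restricted to the support: if $N(K)=\#\{k:\ \mathfrak{h}^{\pm}_M(k)>0\}$ then $\big(\sum \mathfrak{h}^{\pm}_M(k)\big)^2 \le N(K)\cdot \sum \mathfrak{h}^{\pm}_M(k)^2$, so that
\[
N(K)\ \ge\ \frac{\big(C^{\pm}K(\log K)^2\big)^2}{\sum_{|k|\le K}\mathfrak{h}^{\pm}_M(k)^2}.
\]
Thus almost perfection follows if one proves $\sum_{|k|\le K}\mathfrak{h}^{\pm}_M(k)^2 \ll K(\log K)^{4+o(1)}$, i.e.\ the second moment is of the same order of magnitude (up to the sieve losses) as the square of the first moment divided by $K$ — this is the statement that $\mathfrak{h}^{\pm}_M$ does not concentrate. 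The hard part, and where the real work lies, is this second-moment bound: $\mathfrak{h}^{\pm}_M(k)^2$ counts pairs of reduced triples $({\bf u},{\bf u}')$ with $M({\bf u})=M({\bf u}')=k$, and summing over $k$ converts this into counting $\mathbb{Z}$-points on the fiber product, i.e.\ solutions of $M({\bf u})=M({\bf u}')$ with both in the reduced regions and $|M({\bf u})|\le K$. One fixes $u_1,u_1'$ (bounded) and is reduced to counting simultaneous representations of the same integer $k - u_1^2 \approx k - (u_1')^2$ by two binary quadratic forms in $(u_2,u_3)$ and $(u_2',u_3')$ respectively, with the Markoff inequalities cutting out a suitable region; this is a shifted-divisor / Estermann-type problem, and getting a bound with the correct power of $\log$ (rather than merely $O(K^{1+\ve})$, which is easy but too weak here only if one wanted a power saving — note for $o(K)$ even $K(\log K)^{A}$ for any fixed $A$ suffices) is the technical core. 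In fact for the qualitative $o(K)$ conclusion one only needs $\sum \mathfrak{h}^{\pm}_M(k)^2 = o\big(K(\log K)^4\big)\cdot(\log K)^4 = o(K(\log K)^{8})$, which is far softer; the genuinely delicate estimate is only needed if one wants the precise exponent $\theta$ in \eqref{Intro5}, which is stated as a conjecture, not a theorem. So I would aim first for a clean bound $\sum_{|k|\le K}\mathfrak{h}^{\pm}_M(k)^2 \ll K(\log K)^{C}$ for some constant $C$, via fixing the bounded variables and applying standard upper bounds for the number of simultaneous representations by pairs of binary quadratic forms (a divisor-type bound), and this suffices for part (ii).

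Summarizing the order of operations: (1) reduce everything to $\mathfrak{h}^{\pm}_M$ using Theorem~\ref{Thm1} and the $o(K/\sqrt{\log K})$ count of exceptional $k$; (2) recall the first-moment asymptotic \eqref{avg}; (3) for part (ii), bound the second moment $\sum_{|k|\le K}\mathfrak{h}^{\pm}_M(k)^2$ by fixing the bounded Markoff variables and counting simultaneous binary-quadratic-form representations, then apply Cauchy--Schwarz to conclude $N(K)\sim K$ up to $o(K)$; (4) for part (i), construct the thin family of $k$ by imposing, for each eligible small value of the smallest Markoff coordinate, a non-representation congruence on the corresponding shifted value, verify admissibility and local solvability persist, and count the resulting $k$ by a Selberg--Delange or small sieve argument to get the $\sqrt{K}(\log K)^{-1/4}$ lower bound. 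The principal obstacles are, for (i), the simultaneous-congruence construction and the verification that no local obstruction is thereby introduced, and for (ii), the second-moment estimate — though for the stated qualitative conclusion the latter can be done with soft tools.
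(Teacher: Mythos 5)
Your proposal for part (i) misses the mechanism the paper actually uses, and the substitute you sketch cannot work as stated. A condition on $k$ of the form ``$k-u_1^2$ lies in a non-represented residue class modulo a fixed modulus'' is, if it really is a congruence condition, a local obstruction — and admissible $k$ have none by definition; if instead non-representation requires class-group information beyond the genus, you must control it uniformly over all eligible values of the smallest coordinate, of which there are $\asymp k^{1/3}$, an unbounded number of fibers, so no single fixed-modulus condition on $k$ can kill them all. The paper's argument (Prop.~\ref{HF1}) does not fiber at all: rewriting the equation as $w^2-4(k-4)=(x_1^2-4)(x_2^2-4)$ with $w=2x_3-x_1x_2$, one takes $k=4\pm 2\nu^2$ with every prime factor of $\nu$ in $\{\pm 1\}\pmod 8$ (plus a mod $9$ condition to keep $k$ admissible); any integral solution has an odd coordinate $x_j$, so $x_j^2-4\equiv 5\pmod 8$ must have a prime factor $q\equiv\pm 3\pmod 8$ with $q\nmid\nu$, and reducing the factorization mod $q$ makes $\pm 2$ a square mod $q$ — a contradiction via quadratic reciprocity. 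The $\sqrt{K}(\log K)^{-1/4}$ count is then a Landau/Selberg--Delange count of such $\nu\le\sqrt{K/2}$; you guessed the right shape of the count but not the construction that produces the failures.

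For part (ii) the Cauchy--Schwarz step is quantitatively incapable of yielding an almost-all statement. The inequality $N(K)\ge S_1^2/S_2$ returns $N(K)=(1-o(1))\tfrac{7}{12}K$ only if $S_2\le (1+o(1))S_1^2/N(K)$, i.e.\ only if $\mathfrak{h}^{\pm}_M$ is essentially constant on its support. But $\mathfrak{h}^{\pm}_M(k)$ is a sum over $\asymp k^{1/3}$ fibres of divisor-like representation numbers, so its second moment exceeds $S_1^2/K$ by powers of $\log K$ (compare $\sum_{n\le x}\tau(n)^2\asymp x(\log x)^3$ against $\bigl(\sum_{n\le x}\tau(n)\bigr)^2/x\asymp x(\log x)^2$), and your bound degenerates to $N(K)\gg K(\log K)^{-c}$ — at best a positive-proportion-type statement, nowhere near $o(K)$ exceptions. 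The paper (Sec.~\ref{sec8}) instead restricts to the tentacled region where the small coordinate lies in $[\sqrt{A},A]$ with $A\ll K^{\varepsilon}$ and bounds the \emph{centered} second moment $\sum_{k\le K}\bigl(b_{\mathcal{A}}(k)-C(\log A)\delta^{(m)}(k)\bigr)^2\ll K(\log A)^2\Phi(A)^{-1}$, which by Chebyshev forces $b_{\mathcal{A}}(k)>0$ for almost all $k$ with $\delta^{(m)}(k)$ bounded below; a separate sieve argument shows $\delta^{(m)}(k)\ge\varepsilon$ outside a set of admissible $k$ of density $O(\varepsilon)$. Crucially, the variance bound needs an \emph{asymptotic} with the correct main term for the off-diagonal correlations $\sum_{k}r_{a_1}(k)r_{a_2}(k)$ (obtained by Kloosterman's circle method, with careful control of the singular series as $\gcd(D_{a_1},D_{a_2})$ varies) so that the main terms of $\Sigma_1,\Sigma_2,\Sigma_3$ cancel; a soft divisor-type upper bound cannot exhibit this cancellation. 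Your closing claim that ``soft tools'' suffice for the qualitative conclusion is therefore the gap.
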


\begin{remark}\ 
\begin{enumerate}[wide,labelindent=0pt,label=(\alph*).]
\item The proof of $(i)$ is based on quadratic reciprocity and a global factorization that arises for special $k$'s connected to the singular Cayley cubic  $V_{4,M}$. If $k=4 +\beta\nu^2$, with  $\beta$ carefully chosen and $\nu$'s having its prime factors in certain arithmetic progressions, we show that $V_{k,M}(\mathbb{Z})=\emptyset$  even though $k$ is generic. Explicit examples are given in Sec.\,\ref{sec7}. Some of these obstructions to integer solutions are similar to ones found by  Mordell \cite{Mor53} for similar cubic equations, and also to the ``Integer Brauer-Manin obstructions'' in \cite{CThW12}. Following our posting of an earlier version of this paper, \cite{LM} and \cite{CTWX} computed explicitly the Brauer groups of these affine Markoff surfaces, as well as the corresponding integral Brauer-Manin obstructions. They find that the Hasse failures in (i) and (ii) of Prop. \ref{HF1} are accounted for by their obstructions. However, the analysis leading to Hasse failures in part (iii) of the Proposition uses both reciprocity and Markoff descent, and they are not accounted for by the integral Brauer-Manin obstruction alone. In any case, all of these algebraic obstructions are far fewer (they are of order of magnitude $\sqrt{K}$) than the Hasse failures that we found numerically, indicating that any simple description of the latter is perhaps not possible.

\item In the recent paper \cite{GMS}, the Hasse failure (i) is exploited to give failures of profinite local to global principles for commutator equations in $SL_2(\mathcal{O})$ for $\mathcal{O}$ a ring of $S$-integers.

\item The proof of $(ii)$, when combined with Theorem \ref{Thm1} yields further information about the $\mathfrak{h}_{M}(k)$'s for generic $k$'s. If $t\geq 0$ is fixed, then
\[
\#\big\{0\leq |k|\leq K :\, \mathfrak{h}_{M}(k)=t,\  k\ \text{generic}\big\} = o(K),
\]
as $K \to \infty$\,. So for  generic $k$, $\mathfrak{h}_{M}(k)\to \infty$ for almost all $k$. 

\par
\item Our approach to proving that $M$ is full is to look for points in $V_{k,M}(\mathbb{Z})$ with $\vert k\vert\leq K$ in a region $\mathcal{R}$ where $x_1$ is small (roughly of size a power of $\log{K}$) and $x_2,\ x_3$ vary in a sector (so they are of the same size). $\mathcal{R}$ is contained in the fundamental domains $\mathfrak{F}_k^{\pm}$ and retains the \textit{tentacles} (cusps) of the latter, this being critical to ensuring that the average for $\vert k\vert\leq K$, of the number of points in $V_{k,M}(\mathbb{Z})\cap\mathcal{R}$ grows with $k$. For a given $x_1$, $M$ is a (indefinite) binary quadratic form in $x_2,\ x_3$ and this allows one to use the methods developed in \cite{BF11} and \cite{BG06} to show that $M$ assumes a positive proportion of the $k$'s. Our proof that $M$ is full is much more delicate. As with the proofs that cubic forms in many variables (starting with the case of a sum of four cubes \cite{Dav}) represents almost all admissible numbers, we compare the number of points in $V_{k,M}(\mathbb{Z})\cap\mathcal{R}$, to an arithmetic function $\delta^{(m)}(k)$ (see Sec.\,\ref{sec8}; here $m$ is a secondary parameter) which is a product of local densities of solutions. While this heuristic for the count can be way off for certain $k$'s (e.g. for the Hasse failures), we show that its variance from the actual count when averaged over $k$, is small enough to conclude that for almost all $k$'s, $\delta^{(m)}(k)$ is a good approximation. The fullness then follows after showing that $\delta^{(m)}(k)$ is large for most $k$'s\,. That $M$ is almost perfect then follows from \eqref{zariski} and that $M$ is full. The proof of the vanishing of the variance boils down to examining the ``diagonal'' and ``off-diagonal'' terms in \eqref{mo15}. For the first, we make use of the divisor analysis for varying quadratic forms \cite{BG06}, while for the second a modern treatment of Kloosterman's method for ternary quadratic forms \cite{Ni} allows for uniform control of the contributions of the varying forms.
\end{enumerate}
\end{remark}

To end the introduction, we return to a discussion of the general  affine cubic form $F$ in three variables. The study of the level sets $V_{k,M}(\mathbb{Z})$, for example \eqref{zariski} using the Markoff group is very special. It applies to $F$'s of the form $F=F_0 + G$, where 
\begin{equation}\label{cform}
F_0 = cx_1x_2x_3 \quad  {\rm and}\quad G=\sum_{i,j} a_{ij}x_ix_j  + \sum_{i} a_i x_i+ a,
\end{equation}
with $a_{jj}=\pm1$ for $j=1,\; 2,\; 3$ and $c,\; a,\; a_{ij},\; a_{i} \in \mathbb{Z}$, as well as $F$'s obtained from these via integral affine linear substitutions (see Appendix \ref{invariants}). Among these special affine forms are ones  for which $V_k$ carry explicit integral points and even parametric curves, for every $k$. This coupled with the action of the corresponding Markoff group leads to $V_k(\mathbb{Z})$ being Zariski dense for every $k$. Thus, the form is both perfect and `universal' in the sense that it represents every $k$. Explicit examples are 
\begin{equation}\label{You1}
U_1(x_1,x_2,x_3)= x_1 + x_1^2 + x_2^2 + x_3^2 -x_1x_2x_3,
\end{equation}
and
\begin{equation}\label{You2}
U_2(x_1,x_2,x_3)= x_2(x_3 -x_1) + x_1^2 + x_2^2 + x_3^2 -x_1x_2x_3.
\end{equation}
 See Section \ref{param} for an analysis of these forms. The only perfect $F$'s that we are aware of are of the form \eqref{cform}.

On the other hand, our treatment of the fullness of $M$ applies more generally. We leave the precise details and proofs of the following comments to a forthcoming paper. If $F_0$ is reducible in $\mathbb{Q}[x_1,x_2,x_3]$, then $F$ is full. In this case $F_0$ has a linear factor, which is the condition that $F$ has $\mathcal{h}$-invariant (\cite{D-L})  equal to 1 (see Appendix \ref{invariants} for a discussion of these arithmetic invariants of $F$). The linear factor yields a rational plane in $F_{0}(x_1,x_2,x_3)=0$ which can be used as the small variable and  to generate a family of planes and of  binary quadratic forms and a tentacled region. If $F_0$ is irreducible in $\mathbb{Q}[x_1,x_2,x_3]$ then our moving plane  method fails. Nevertheless one can still create tentacled regions $\mathcal{R}$ in $\mathbb{R}^3$ using neighborhoods at infinity of the  curve $F_0({\bf x})=0$ in $\mathbb{P}^2(\mathbb{R})$. As before, on average over $k$ with $\vert k\vert \leq K$, the number of points $r_{\mathcal{R}}(k)$ in $V_{k,F}(\mathbb{Z})\cap\mathbb{R}$ grows slowly with $k$. The study of the variance of $r_{\mathcal{R}}(k)$ from its expected number (i.e. a product of local densities) reduces to counting points on the hypersurface $F({\bf x})-F({\bf y})=0$ with $({\bf x},{\bf y})\in\mathcal{R}\times\mathcal{R}$. While this is well beyond the available tools from the circle method, a natural hypothesis in this context along the lines of ([Hoo16]) would lead to $F$ being full. In particular, this applies to $F=S$ in \eqref{Intro1}. The much stronger suggestion that $S$ is perfect (\cite{HB92})\footnote{Very recently, $V_{k,S}(\mathbb{Z})$ for $k=33$ and $42$ were shown to be nonempty, completing the list of such for $1\leq k\leq 100$ (see Booker \cite{Booker19} and Booker-Sutherland \cite{wiki19}).}, which was mentioned at the start is a fascinating one, as is the question of the existence of any  perfect homogeneous $F$. All $k$'s are admissible for the homogeneous form $x_1^3 + x_2^3 + 2x_3^3$ and it is a candidate for being both perfect and universal. It is interesting to note that this form is universal when considered over the $S$-integer ring $\mathbb{Z}[\frac{1}{6}]$, and has infinitely many solutions for each $k$.

We point out  that  the analogous problem for quadratic polynomials in  two variables is very different in that $f$ is never absolutely  irreducible, and indeed the typical such $f$ is never full.

Finally, we note that the $V_{k,F}$'s for $F=M$ are the relative character varieties for the representations of $\pi_{1}(\Sigma_{1,1})$ into $SL_2$ (here $\Sigma_{g,n}$ is a surface of genus $g$ and $n$ punctures) and the group $\Gamma$ is essentially the mapping class group action on the $V_{k,M}$'s (see Goldman \cite{Gol03}). As such, many of the questions that we address in  this simplest case make sense with $\Sigma_{1,1}$ replaced by $\Sigma_{g,n}$ (see Whang \cite{Whang}). In particular it is shown there that the key feature that the integral points for these varieties consist of finitely many $\Gamma$-orbits, persists. However both for $\Sigma_{1,1}$ and in this more general setting, this finiteness fails when the integers are replaced by $S$-integers in a general number ring. This makes for a quite different picture and analysis to which we will return in a future work.

\vskip 0.2in
\noindent{\small {\bf Acknowledgements.}}\\ 
We thank  V. Blomer, E. Bombieri, J. Bourgain, T. Browning, C. McMullen, P. Whang and U. Zannier for insightful discussions.\\
AG thanks the Institute for Advanced Study and Princeton University for making possible visits during part of the years 2015-2017  when much of this work took place. He also acknowledges support from the IAS, the Simons Foundation and his home department. He dedicates this article to his family Priscilla, Armand and Saskia.\\
PS was supported by NSF grant DMS 1302952.\\
 The softwares Eureqa$^\text{\textregistered}$ and  Mathematica$^{\copyright}$ were used on a PC running Linux to generate some of the data. Additional computations were done at the OSU-HPCC at Oklahoma State University, which is supported in part through the NSF grant OCI-1126330.
\vskip 0.2in
\begin{notation}
For the remainder of the paper we suppress the reference to the Markoff equation. So for example $V_k$ would mean $V_{k,M}$. We also use $(\frac{*}{p})_L$ to denote the Legendre symbol $(\frac{*}{p})$ to avoid any confusion with fractions.
\end{notation} 
\section{The descent argument revisited}
\label{sec2}
The descent argument was first considered by Markoff in \cite{Markoff2}, and later extended by Hurwitz \cite{Hur07} and Mordell \cite{Mor53} (see also \cite{Baragar} for a study of fundamental solutions associated with a special case of these several variable hypersurfaces) . In particular, Hurwitz used a ``height'' function given by $h(x_1,x_2,x_3)= |x_1|+|x_2|+|x_3|$, which was then utilized subsequently in the literature. The descent argument led to a finite number of points plus those with minimal height. Our initial analysis is a revisit of this descent argument but without the use of the height function (we later use a new function for a finer analysis).
 
For $k \in \mathbb{Z}$, consider the set of integral points on the Markoff surface
\begin{equation}\label{1a}
V_{k}:\quad\quad x_{1}^2 + x_{2}^2 + x_{3}^2 - x_{1}x_{2}x_{3} =k\, .
\end{equation}

After invariance by permutations and also changing two signs but leaving out Vieta involutions (which we call narrow equivalence), we see that (i) if  $k< 0$, we may consider only solutions $0\leq x_1 \leq x_2 \leq x_3$, and (ii) if $k\geq 0$, there are two types of solutions namely those with all variables non-negative and so $0\leq x_1\leq x_2\leq x_3$; and those in the compact  set $\mathfrak{S}^{+}(k)$ with exactly one negative variable and two positive.

For $k<0$ we note that $x = 0$, $1$ or $2$ are not possible (since they give $k= x_{2}^2 + x_{3}^2$, $4(k-1) = (2x_{2}- x_{3})^2 + 3x_{3}^2$ and $(x_2 - x_3)^2 = k-4$ respectively) so that we assume $3\leq x_1\leq x_2\leq x_3$ in this case. 

When $k \geq 0$, $x=0$ and $x=1$ give at most finitely many triples $(x_{1},x_{2},x_{3})$. and we denote this set by $\mathfrak{T}(k)$.  Thus in this case, $(x_{1},x_{2},x_{3})$ is a solution implies it is equivalent (narrowly) to one in $\mathfrak{S}^{+}(k) \cup \mathfrak{T}(k)$ or it satisfies $2\leq x_1 \leq x_2 \leq x_3$.

We now consider the Vieta involution acting on $(x_{1},x_{2},x_{3}) \rightarrow (x_{1},x_{2},x_{1}x_{2}- x_3)$. If $x_{1}x_{2}- x_3<0$, so that  $ k\geq 0$, then $(x_{1},x_{2},x_{3})$ is equivalent to a solution in $\mathfrak{S}^{+}(k)$. Next suppose $x_{1}x_{2}- x_3 \geq x_3$, so that $2x_3 \leq x_{1}x_{2}$. Solving for $x_3$ in \eqref{1a} gives $2x_3 = x_{1}x_{2} \pm \sigma$ where $\sigma = \sqrt{x_{1}^2 x_{2}^2 -4(x_{1}^2 + x_{2}^2 -k)}$, so that necessarily $\sigma = x_{1}x_{2} - 2x_3 \leq (x_1-2)x_2$. Squaring and simplifying gives $(x_1 -2)x_{2}^2 \leq (x_{1}^2 -k)$. 

If $x_1 \geq 3$ and $k>0$, we conclude that $x_{2}^2 < x_{1}^2$, a contradiction. If $x_1 =2$, we conclude that $k\leq 4$. Thus we derive a contradiction for all $k>4$, so that in this case we have $0\leq x_{1}x_{2}- x_3 < x_3$. But more is true, namely $0 \leq x_{1}x_{2} - x_3 < x_2$  shown as follows: if $x_2\leq x_{1}x_{2}- x_3 < x_3$, then $x_{1}x_{2} < 2x_3 \leq 2(x_1 -1)x_2$, so that necessarily $2x_3 = x_{1}x_{2} + \sigma$. Then $\sigma \leq (x_1 -2)x_2$ and the argument above gives a contradiction.   Hence we have

\begin{lemma}
\label{lem2.1}
If $k>4$ and if $(x_{1},x_{2},x_{3})$ is a lattice  point on  $V_k$ in \eqref{1a}, it is equivalent to one in the compact set $\mathfrak{S}^{+}(k) \cup \mathfrak{T}(k)$ where
\[
\mathfrak{S}^{+}(k) = \left\{(-x_{1},x_{2},x_{3}) : \ 3\leq x_1 \leq x_2 \leq x_3\,;\ x_1^2 + x_2^2 +x_3^2 + x_1x_2x_3 = k\right\}\cap \mathbb{Z}^3\, ,
\]
 or if not then it is equivalent to $(x_{1},x_{1}x_{2}-x_{3},x_2)$, with $3 \leq x_{1}x_{2}- x_3 < x_2 \leq x_3$ and $x_1\geq 3$.
\end{lemma}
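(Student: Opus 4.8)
The plan is to package the chain of reductions already carried out in the paragraphs preceding the statement into a descent argument, organized as a single "reduce to smallest third coordinate" procedure. First I would recall that by narrow equivalence (permutations and double sign changes) any lattice point on $V_k$ is equivalent either to a point with $0\le x_1\le x_2\le x_3$, or — when $k\ge 0$ — to a point with exactly one negative coordinate, which after sign/permutation normalization lies in $\mathfrak{S}^+(k)$, or to a point in the finite set $\mathfrak{T}(k)$ coming from $x_1\in\{0,1\}$. So it suffices to treat a point with $2\le x_1\le x_2\le x_3$ (the cases $x_1\in\{0,1\}$ being absorbed into $\mathfrak{T}(k)$) and to show that, as long as we have not yet landed in $\mathfrak{S}^+(k)\cup\mathfrak{T}(k)$ or in the exceptional configuration $3\le x_1x_2-x_3<x_2\le x_3$ with $x_1\ge 3$, the Vieta move $(x_1,x_2,x_3)\mapsto(x_1,x_2,x_1x_2-x_3)$ strictly decreases the largest coordinate; since the coordinates are non-negative integers bounded below, the descent must terminate.

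The key computational input is exactly the one displayed in the text: writing $2x_3=x_1x_2\pm\sigma$ with $\sigma=\sqrt{x_1^2x_2^2-4(x_1^2+x_2^2-k)}$, the assumption $x_1x_2-x_3\ge x_3$ forces $\sigma=x_1x_2-2x_3\le (x_1-2)x_2$, and squaring yields $(x_1-2)x_2^2\le x_1^2-k$. For $k>4$ this is impossible when $x_1\ge 3$ (it would give $x_2^2<x_1^2$) and forces $k\le 4$ when $x_1=2$; hence for $k>4$ we always have $x_1x_2-x_3<x_3$, i.e. the Vieta move does decrease $x_3$ unless it already produced a negative value (which, as noted, puts the point into $\mathfrak{S}^+(k)$ after renormalization, and forces $k\ge 0$). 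I would then record the sharper inequality: if in fact $x_2\le x_1x_2-x_3<x_3$, rerun the same squaring argument with the bound $\sigma\le(x_1-2)x_2$ again available (now because $2x_3=x_1x_2+\sigma$ is forced, as $x_1x_2<2x_3\le 2(x_1-1)x_2$ rules out the minus sign), obtaining the same contradiction for $k>4$, $x_1\ge3$. Therefore after the move we have either a point with strictly smaller maximal coordinate and still $0\le x_1\le x_2\le x_1x_2-x_3<x_2$, or we have reached $\mathfrak{S}^+(k)$, $\mathfrak{T}(k)$, or the boundary case $3\le x_1x_2-x_3<x_2\le x_3$ with $x_1\ge3$.

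Finally I would run the descent to completion: starting from any lattice point, normalize to the ordered form, apply the Vieta move in the largest slot whenever the above shows it decreases the maximum; each step strictly decreases $x_1+x_2+x_3$ (or at least $\max$), so after finitely many steps we terminate in one of the three advertised places — $\mathfrak{S}^+(k)$, the finite set $\mathfrak{T}(k)$, or a point $(x_1,x_1x_2-x_3,x_2)$ with $3\le x_1x_2-x_3<x_2\le x_3$ and $x_1\ge3$ — which is precisely the assertion of the lemma. Compactness of $\mathfrak{S}^+(k)$ is immediate since on it $M=k$ with all terms non-negative, bounding the coordinates. The only point requiring care — the "main obstacle" — is the bookkeeping that guarantees the process does not cycle and that every termination state has genuinely been listed: one must check that when the Vieta move fails to decrease the maximum, the inequalities land exactly in the stated ranges (in particular the strict lower bound $x_1x_2-x_3\ge 3$, which uses that the values $0,1,2$ for a coordinate were already excluded for the relevant $k$), so that no fourth escape route is overlooked.
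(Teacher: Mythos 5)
Your proposal is correct and takes essentially the same route as the paper, which proves the lemma by exactly the reduction you describe: narrow-equivalence normalization, the dichotomy at the Vieta move $(x_1,x_2,x_3)\mapsto(x_1,x_2,x_1x_2-x_3)$, the computation $2x_3=x_1x_2\pm\sigma$ with $\sigma\le(x_1-2)x_2$ giving $(x_1-2)x_2^2\le x_1^2-k$ (impossible for $k>4$, $x_1\ge 3$; forcing $k\le4$ for $x_1=2$), and the rerun of the same bound to sharpen $x_1x_2-x_3<x_3$ to $x_1x_2-x_3<x_2$. Two cosmetic points: the chain ``$0\le x_1\le x_2\le x_1x_2-x_3<x_2$'' is a typo for $0\le x_1x_2-x_3<x_2\le x_3$, and — exactly as in the paper — the case of a coordinate equal to $2$ is not covered by $\mathfrak{S}^{+}(k)\cup\mathfrak{T}(k)$ and is deferred to the later removal of exceptional $k$'s.
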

The special cases $1\leq k\leq 3$ are settled as follows: (i) there are no solutions when $k=3$ since there are none modulo 4; (ii) for $k=2$, we can use the descent argument  above and conclude that we need only look for solutions to $x^2 + y^2 + z^2 +xyz=2$ with all variables non-negative or we solve the Markoff equation with $x_1 \in \{ 0, \pm 1, \pm 2\}$, giving us the point $(0,1,1)$ and its infinite orbit under $\Gamma$; and for $k=1$, the same analysis results in the point $(0,0,1)$, for which there is only a finite orbit under $\Gamma$.  The cases $k=0$ and $4$ we consider in the next sections (they correspond to the original Markoff surface in Sec. \ref{subsec3a} and the singular Cayley surface in Sec. \ref{subsec4b}).

For $k <0$, the estimate $(x_1 -2)x_{2}^2 \leq (x_{1}^2 -k)$ given above is still valid when we assume $x_{1}x_{2}- x_3 \geq x_3$, with $3\leq x_1\leq x_2\leq x_3$. Then, if $x_1\geq 4$, it follows that $2x_{2}^2 \leq x_{1}^2 + |k|$, which then implies $x_2\leq \sqrt{|k|}$, so that $x_{3} \leq \frac{x_{1}x_{2}}{2} \leq \frac{|k|}{2}$. If $x_1 =3$, then clearly $x_2 \leq \sqrt{9+|k|}$, and so $x_3 \leq \frac{3}{2}\sqrt{9+|k|}$. The same argument shows that for large values of $|k|$, $x_1 \ll |k|^{\frac{1}{3}}$, $x_2 \ll \sqrt{\frac{|k|}{x_1}}$ and $x_3 \ll \sqrt{|k|x_1}$. Next, supposing $x_2 \leq x_{1}x_{2}- x_3 < x_3$, we see that the point $(x_1,x_2,x_3)$ is $\Gamma$-equivalent to $(y_1,y_2,y_3)= (x_1,x_2,x_1x_2-x_3)$, where now $y_1y_2 -y_3 \geq y_3$, the same inequality considered above.   Thus we have

\begin{lemma}
\label{lem2.2}
For $k<0$, if $(x_{1},x_{2},x_{3})$ is a lattice point on $V_k$ in  \eqref{1a}, it is then equivalent to one in the compact set $\mathfrak{S}^{-}(k)\subset \mathfrak{U}(k)$, where
\[
\mathfrak{S}^{-}(k):=\left\{(x_{1},x_{2},x_{3}): 3 \leq x_1  \leq x_2 \leq x_3 \leq \frac{1}{2}x_1x_2\right\}\cap V_{k}(\mathbb{Z})\, ,
\]
and
\[
\mathfrak{U}(k):=\left\{(x_{1},x_{2},x_{3}): 3 \leq x_1  \leq x_2 \leq \sqrt{|k|+9}; \  3\leq x_3 \leq \frac{3}{2}(|k|+9)\right\}\, ,
\] or if not  it is equivalent to $(x_{1},x_{1}x_{2}-x_{3}, x_2)$ with $3 \leq x_{1}x_{2}- x_{3} < x_2 \leq x_3$ and $x_1 \geq 3$.
\end{lemma}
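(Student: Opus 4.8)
The plan is to mirror the descent argument already carried out for $k > 4$ in Lemma \ref{lem2.1}, but now tracking the extra inequality $x_3 \leq \frac{1}{2}x_1x_2$ that characterizes the fundamental set $\mathfrak{S}^{-}(k)$, and to package the intermediate size bounds into the auxiliary box $\mathfrak{U}(k)$. First I would reduce by narrow equivalence (permutations and sign changes of two coordinates): since $k < 0$, there cannot be a solution with exactly one or three negative entries in a way that survives, and the analysis in the text shows $x = 0, 1, 2$ are impossible, so every lattice point on $V_k$ is narrowly equivalent to one with $3 \leq x_1 \leq x_2 \leq x_3$. This is the starting configuration.

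Next I would apply the Vieta involution $(x_1, x_2, x_3) \mapsto (x_1, x_2, x_1 x_2 - x_3)$ and split on the size of $y_3 := x_1 x_2 - x_3$. If $y_3 < x_3$ we have strictly decreased the largest coordinate (after reordering), so iterating this step terminates; the only obstruction to continuing is $y_3 \geq x_3$, i.e. $2x_3 \leq x_1 x_2$, which is exactly the defining inequality of $\mathfrak{S}^{-}(k)$. So the point either lands in $\mathfrak{S}^{-}(k)$ or one shows it is $\Gamma$-equivalent to $(x_1, x_1 x_2 - x_3, x_2)$ with $3 \leq x_1 x_2 - x_3 < x_2 \leq x_3$ — here I would reuse verbatim the computation from the $k > 4$ case: solving the quadratic for $x_3$ gives $2x_3 = x_1 x_2 \pm \sigma$ with $\sigma = \sqrt{x_1^2 x_2^2 - 4(x_1^2 + x_2^2 - k)}$, and the assumption $x_2 \leq x_1 x_2 - x_3 < x_3$ forces $\sigma \leq (x_1 - 2)x_2$, whence squaring yields $(x_1 - 2)x_2^2 \leq x_1^2 - k = x_1^2 + |k|$, a contradiction once one checks the ranges — so in fact $0 \leq x_1 x_2 - x_3 < x_2$, giving the stated alternative.

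Then, assuming we are in the case $2x_3 \leq x_1 x_2$ (so the point sits in $\mathfrak{S}^{-}(k)$), I would extract the size bounds defining $\mathfrak{U}(k)$. The inequality $(x_1 - 2)x_2^2 \leq x_1^2 + |k|$ gives, for $x_1 \geq 4$, that $2x_2^2 \leq x_1^2 + |k| \leq x_2^2 + |k|$ hence $x_2^2 \leq |k|$, i.e. $x_2 \leq \sqrt{|k|}$; for $x_1 = 3$ it gives $x_2^2 \leq 9 + |k|$, so in all cases $x_2 \leq \sqrt{|k| + 9}$, and since $x_1 \leq x_2$ the same bound holds for $x_1$. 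Finally $x_3 \leq \frac{1}{2}x_1 x_2 \leq \frac{1}{2} x_2^2 \leq \frac{1}{2}(|k| + 9)$ — though to match the stated $\frac{3}{2}(|k|+9)$ I would simply use the cruder bound $x_3 \leq \frac{1}{2}x_1 x_2 \leq \frac{3}{2} x_2 \le \frac32\sqrt{|k|+9} \le \frac{3}{2}(|k|+9)$ (any valid containment suffices, as $\mathfrak{U}(k)$ is only an ambient box). This establishes $\mathfrak{S}^{-}(k) \subset \mathfrak{U}(k)$ and that both sets are compact (closed and bounded).

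The one point requiring genuine care — and the main obstacle — is the \emph{termination} of the descent: I must argue that repeatedly applying the Vieta move $(x_1,x_2,x_3)\mapsto(x_1,x_2,x_1x_2-x_3)$ together with reordering cannot cycle. The clean way is to observe that whenever $y_3 = x_1 x_2 - x_3 < x_3$, the multiset of absolute values strictly decreases in the lexicographic (or sum) order, so the process halts, and it halts precisely at a configuration with $2x_3 \ge x_1 x_2$, which after the final reordering is either in $\mathfrak{S}^{-}(k)$ or, if that reordering was nontrivial, is the flagged exceptional point $(x_1, x_1 x_2 - x_3, x_2)$. One must also be slightly careful that the descent never forces $x_1 < 3$: this is handled by the observations that $x = 0, 1, 2$ are impossible for $k < 0$, which remain valid throughout the descent since the set $\{0,\pm1,\pm2\}$ is preserved (in absolute value) by the moves in the relevant sense, so the coordinate that could drop out never legitimately does. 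Assembling these pieces gives Lemma \ref{lem2.2}.
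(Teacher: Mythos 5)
Your overall architecture is right, but there is a genuine error at the key case split. You claim that the intermediate case $x_2 \leq x_1x_2 - x_3 < x_3$ can be eliminated by "reusing verbatim" the computation from $k>4$: the inequality $\sigma \leq (x_1-2)x_2$ squares to $(x_1-2)x_2^2 \leq x_1^2 - k$, and for $k>0$ this forces $x_2^2 < x_1^2$, a contradiction. For $k<0$, however, $x_1^2 - k = x_1^2 + |k|$, so the same inequality merely \emph{bounds} $x_2$ (indeed it is exactly the bound you later use to build $\mathfrak{U}(k)$) and yields no contradiction whatsoever. The case is not vacuous: $(3,4,7)$ lies on $V_{-10}$, is ordered with $3\leq x_1\leq x_2\leq x_3$, and has $x_1x_2 - x_3 = 5$ with $x_2 = 4 \leq 5 < 7 = x_3$. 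So your dichotomy "either in $\mathfrak{S}^{-}(k)$ or $x_1x_2-x_3 < x_2$" is obtained by an invalid step, and as written the proof would either assert a false impossibility or stall here.

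The paper's treatment of this case is different and is the one real point of divergence between $k>4$ and $k<0$: when $x_2 \leq x_1x_2 - x_3 < x_3$, apply the Vieta move once to get $(y_1,y_2,y_3) = (x_1,x_2,x_1x_2-x_3)$, already in increasing order. Then $y_1y_2 - y_3 = x_3 \geq x_1x_2 - x_3 = y_3$, i.e.\ $y_3 \leq \tfrac{1}{2}y_1y_2$, so the image lands in $\mathfrak{S}^{-}(k)$ and the original point is equivalent to a point of the first disjunct (in the example, $(3,4,7)\mapsto(3,4,5)\in\mathfrak{S}^{-}(-10)$). With that repair the rest of your argument goes through. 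Two smaller remarks: your containment $\mathfrak{S}^{-}(k)\subset\mathfrak{U}(k)$ is fine via the chain $x_3 \leq \tfrac{1}{2}x_1x_2 \leq \tfrac{1}{2}x_2^2 \leq \tfrac{1}{2}(|k|+9)$, but the alternative chain you offer, $\tfrac{1}{2}x_1x_2 \leq \tfrac{3}{2}x_2$, is only valid for $x_1=3$ and should be dropped; and the termination discussion, while correct in spirit, is not needed for this lemma, which is a single-step statement — the iteration is assembled afterwards.
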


The lemmas above form the basis of the descent argument with repeated application of the Vieta involution so that ultimately any integral solution is equivalent to one in a corresponding finite set.

\section{Bhargava cubes and Markoff}
\label{sec3}
To construct the fundamental sets in the next section, we utilize a function $\Delta({\bf x})$ given in \eqref{1b}, that  proves useful in tracking the images of points under the action of the group $\Gamma$. While we could define $\Delta$ without comment, we give here our original construction using Bhargava cubes.

Let $x_1$, $x_2$ and $x_3$ be arbitrary integers and consider the Bhargava cube (\cite{Bha04}) as shown in Fig. \ref{img1}.

\begin{figure}[!ht]
\centering
  \includegraphics[width=.5\linewidth]{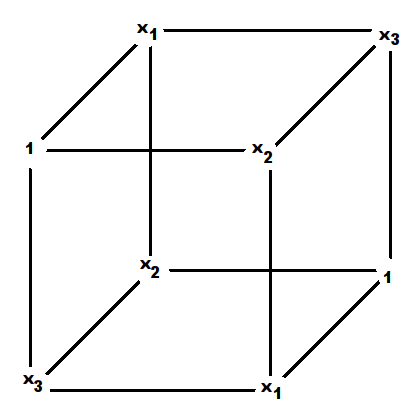}
\caption{Bhargava cube associated with $M$.}
  \label{img1}
\end{figure}

The Bhargava slicings give rise to the three matrix pairs:
\[
M_{1} = \left(\begin{array}{cc}
1 & x_{2} \\ x_{3} & x_{1} \end{array}\right) \ , \quad N_{1} = \left(\begin{array}{cc}
x_{1} & x_{3} \\ x_{2} & 1 \end{array}\right) \ ;
\]
\[
M_{2} = \left(\begin{array}{cc}
1 & x_{3} \\ x_{1} & x_{2}\end{array}\right) \ , \quad N_{2} = \left(\begin{array}{cc}
x_{2} & x_{1} \\ x_{3} & 1 \end{array}\right) \ ;
\]
\[
M_{3} = \left(\begin{array}{cc}
1 & x_{1} \\ x_{2} & x_{3} \end{array}\right) \ , \quad N_{3} = \left(\begin{array}{cc}
x_{3} & x_{1} \\ x_{2} & 1 \end{array}\right) . 
\]
These in turn give the following three quadratic forms $Q_{i}(u,v)$, where
\begin{align*}
Q_{1} &= (x_2x_3 - x_1)u^2 + (1 + x_{1}^{2} - x_{2}^{2} - x_{3}^{2})uv + (x_{2}x_{3} -x_1)v^2\, ,\\
Q_{2} &= (x_{1}x_{3} - x_2)u^2 + (1 + x_{2}^{2} - x_{1}^{2} - x_{3}^{2})uv + (x_{1}x_{3} - x_2)v^2\, ,\\
Q_{3} &= (x_{1}x_{2} -x_3)u^2 + (1 + x_{3}^{2} - x_{1}^{2} -x_{2}^{2})uv + (x_1x_2 -x_3)v^2\, .
\end{align*}
All three quadratic forms have the same discriminant $\Delta = \Delta(x_1,x_2,x_3)$ which also factorizes to give  
\begin{align}\label{1b}
\begin{split}
\Delta &= (1 + x_{2}^{2} - x_{1}^{2} -x_{3}^{2})^2 -4(x_1 x_3 - x_2)^2\, ,\\
&=(1+x_1 + x_2 + x_3)(1+x_2 -x_1 - x_3)(1+x_3 -x_1 - x_2)(1+x_1 -x_2 -x_3)\, .
\end{split}
\end{align}
Note that 
\begin{enumerate}[wide,labelindent=0pt,label=(\alph*).]
\item $\Delta \equiv 0$ or $1\, ({\rm mod}\, 4)$ depending on if $x_{1}^{2} + x_{2}^{2} + x_{3}^{2}$ is odd or even respectively.
\item $\Delta$ is invariant under permutations.
\item $\Delta$ is invariant if one variable is fixed and the sign is changed on the other two variables.
\item If $2\leq x_1\leq x_2 \leq x_3$, then $\Delta <0$ if and only if $x_2 \leq x_3 \leq x_1 + x_2 -2$.
\end{enumerate}

\subsection{The case $k=0$}
\label{subsec3a}\ 

Recall (Markoff \cite{Markoff2}) that the solution set has two orbits with fundamental roots $(0,0,0)$ and $(3,3,3)$. We have $\Delta(0,0,0)= 1$ and $\Delta(3,3,3)= -80$. We show here that 
\begin{equation}\label{eq3.1}
\Delta(x_1,x_2,x_3)<0 \ \ \text{if and only if}\  \ (x_1,x_2,x_3)=(3,3,3).
\end{equation}
 Thus, the two orbits each have a minimal value for $\Delta$, taken at the associated fundamental roots. In other words, there are two components of $V_{0}(\mathbb{Z})$ and in each component $\Delta$ has a minimum value, taken at a unique point, which can then be used as a generator for that component. This phenomenon repeats itself when $k\geq 5$ below.

We prove \eqref{eq3.1} as follows: since $x_{1}^{2} + x_{2}^{2} + x_{3}^{2} = x_{1}x_{2}x_{3}$, it follows that $x_1$, $x_2$ and $x_3$ are all positive or exactly two are negative (we avoid the trivial solution here). By the properties of $\Delta$ itemized above, we may assume that $1\leq x_1 \leq x_2 \leq x_3$. The Markoff equation is equivalent to the equation $(x_{1}^{2} -4)(x_{2}^{2} -4)-16 = (2x_3 -x_{1}x_{2})^2$, from which it follows that $3\leq x_1 \leq x_2\leq x_3$, which we assume. Suppose $\Delta(x_1,x_2,x_3)<0$, so that $x_2 \leq x_3 \leq x_1 + x_2 -2 <2x_2$. Solving for $x_3$ in the Markoff equation gives us $2x_3 =x_{1}x_{2} \pm \sigma$, where $\sigma = \sqrt{(x_{1}x_{2})^2 -4(x_{1}^{2} +x_{2}^{2})}\geq 1$. 
 
 If $x_1\geq 4$ we must discard the positive sign since $x_3 <2x_2$. So  in this case, $  x_{1}x_{2} - \sigma = 2x_3 \geq 2x_2$, from which, by expanding and simplifying, one gets $4x_{2}^{2} \leq x_1 x_{2}^{2} \leq x_{1}^{2} + 2x_{2}^{2} \leq 3x_{2}^{2}$, a contradiction.

For  $x_{1}=3$, we have $x_2\leq x_{3} \leq x_1 +x_2 -2 =x_2 +1$, so that $x_3 = x_2$ or $x_3 =x_2 +1$. If $x_3 = x_2$, we have $9+2x_{2}^{2} -3x_{2}^{2}=0$ so that $x_1=x_2=x_3=3$. Finally if $x_3 =x_2 +1$, we must have $9 + x_{2}^{2} + (x_2+1)^2 = 3(x_{2}^{2} + x_2)$, which is impossible.

\section{Fundamental sets and Theorem \ref{Thm1}}
\label{sec4} 
The descent arguments of Markoff, Hurwitz and Mordell show that there is a finite set of lattice points from which all lattice points of the Markoff surface \eqref{1a} can be obtained as images under $\Gamma$. This section provides a proof of Theorem \ref{Thm1} by showing the inequivalence of the points in the finite set.
\subsection{The case $k\geq 5$}
\label{subsec4a}\ 

Recall from Sec.\,\ref{sec2} that if $k \geq 5$, any solution ${\bf x}=(x_1,x_2,x_3)$ to the Markoff equation \eqref{1a} is equivalent to one in a compact \textit{reduced set} (by Lemma \ref{lem2.1} and descent). We order the coordinates first such that $0\leq |x_1|\leq|x_2|\leq|x_3|$. 

In the next section, we  show that the Markoff equation has no solutions for those $k$'s (positive or negative) satisfying any of the following congruences: $k \equiv 3 \, ({\rm mod}\, 4)$ and $k \equiv \pm 3 \, ({\rm mod}\, 9)$, these then accounting for $\frac{5}{12}K +O(1)$ members in the interval $5\leq k \leq K$, and we call them {\it non-admissible}; the non-admissible $k$'s have local obstructions. The remaining $k$'s we call {\it admissible}, and there are $\mathcalorig{A}(K) = \frac{7}{12}K + O(1)$ of them.

We say that $k$ is  {\it exceptional}\footnote{The removal of the points $\bf{x}$ with one of its coordinates in $\{-2,-1,0,1,2\}$ corresponds to avoiding the region at infinity on which $\Gamma$ acts ergodically (when $k>20$) in \cite{Gol03}, and to the notion of ``small'' in \cite{Auroux} Sec.\,5.} if there is a solution to \eqref{1a} with $|x_j|=  0, 1$ or $2$; these $k$'s satisfy at least one of the equations (i) $u^2 + v^2 = k$ ,  (ii) $u^2 + 3v^2 = 4(k-1)$, or (iii) $u^2 = k-4$. Consequently, for $k$'s in an interval of length $K$, they account for at most $O\left(K(\log{K})^{-\frac{1}{2}}\right)$ members, and we will ignore them in what follows.  The remaining $\frac{7}{12}K + O\left(K(\log{K})^{-\frac{1}{2}}\right)$ numbers $k$ in the interval $5\leq k\leq K$ we shall call {\it generic}.

It follows from Sec.\,\ref{sec2} that every solution ${\bf x}$ associated to a generic $k$ is equivalent to one in the set $\mathfrak{S}^{+}(k)$ given in Lemma \ref{lem2.1}.  We now show that the elements in this set, when non-empty,  are inequivalent under $\Gamma$, so that $\mathfrak{S}^{+}(k)$ is a {\it fundamental set}. 

We will use the $\Delta$-function given in \eqref{1b} to form an ordering on the tree of solutions to the Markoff equation. Given any ${\bf x}=(x_1,x_2,x_3)$, the three Vieta maps are $\mathcal{V}_1: (x_1,x_2,x_3) \mapsto (x_2x_3-x_1,x_2,x_3)$, $\mathcal{V}_2: (x_1,x_2,x_3) \mapsto (x_1,x_1x_3-x_1,x_3)$ and $\mathcal{V}_3: (x_1,x_2,x_3) \mapsto (x_1,x_2,x_1x_2-x_3)$. Recall that the group $\Gamma$ is generated by permutations, double sign-changes and the Vieta maps. The $\Delta$-function is invariant under the first two motions and we denote $\Delta_{i} = \Delta\circ \mathcal{V}_i$.  Then, it is easy to check that when ${\bf x}$ is a solution of the Markoff equation, one has

\begin{equation}\label{gauge}
\begin{aligned}
 &\Delta_1({\bf x})-\Delta({\bf x}) = x_2x_3(x_2x_3 -2x_1)\left[2(k-5)+(x_2^2-4)(x_3^2-4)\right],\\
 &\Delta_2({\bf x})-\Delta({\bf x}) = x_1x_3(x_1x_3 -2x_2)\left[2(k-5)+(x_1^2-4)(x_3^2-4)\right],\\
 &\Delta_3({\bf x})-\Delta({\bf x}) = x_1x_2(x_1x_2 -2x_3)\left[2(k-5)+(x_1^2-4)(x_2^2-4)\right].
\end{aligned}
\end{equation}
The expressions in the square brackets in all three formulae above  are strictly positive when $k$ is generic and if ${\bf x}$ is any solution of the corresponding Markoff equation.

We set up the tree associated with solutions as follows: each solution ${\bf x} =(x_1,x_2,x_3)$ will be a vertex and neighboring vertices are edge connected if they are obtained from ${\bf x}$ by one of the three Vieta maps. As such, we identify coordinates if they are obtained by permutations or double sign changes (noting that $\Delta$ is unchanged under them). By this latter identification, the coordinates are one of two types, namely all positive or exactly one negative. It is then possible to rearrange them into the following canonical forms: $(x_1,x_2,x_3)$ or $(-x_1,x_2,x_3)$ with $3\leq x_1\leq x_2\leq x_3$. We call the former {\it positive nodes} and the latter {\it negative nodes}. By Lemma \ref{lem2.1}, for $k\geq 5$, every positive node is equivalent to a negative node (or otherwise, by descent is equivalent to the node $(3,3,3)$ which corresponds to $k=0$).

We look at the action of the Vieta maps on a positive node. It is clear that $x_2x_3 -2x_1$ and $x_1x_3 -2x_2$ are strictly positive so that $\Delta_1({\bf x}) >\Delta({\bf x})$ and $\Delta_2({\bf x}) >\Delta({\bf x})$. Moreover, the nodes $\mathcal{V}_1({\bf x})$ and $\mathcal{V}_{2}({\bf x})$ are both positive, Next, the argument showing descent in Sec.\,\ref{sec2} shows that $x_1x_2 - 2x_3 \geq 0$ is impossible so that $\Delta_3({\bf x}) <\Delta({\bf x})$. Here  $\mathcal{V}_{3}({\bf x})$ may be either positive or negative. We represent these observations by the images Fig.6a and Fig.6b , where   square nodes are positive nodes,  disc nodes  are negative nodes, dark nodes are the Vieta images while the original point is a light node (the vertical ordering of the nodes is determined by  the signs of the $\Delta$-differences from \eqref{gauge}).

Next, if we begin with a negative node (so that one replaces $x_1$ with $-x_1$ in the formulae above, it is obvious that $\Delta_i({\bf x}) >\Delta({\bf x})$ for all $i$ and (after a double sign change and reordering) that the $\mathcal{V}_{i}({\bf x})$ are all positive. This is represented by Fig. 6c .

It follows now that the tree decomposes into components and each component has a root that is a negative node. Moreover, the negative node occupies the lowest point on the tree, with all other nodes in that component being positive (in other words, $\Delta$ has a minimum on each component and that minimum is determined by a negative node). Thus the negative nodes form a fundamental set, giving us the first  case of Theorem \ref{Thm1}.

\begin{figure}[!ht]\label{nodes}
\centering
  \includegraphics[width=.35\linewidth]{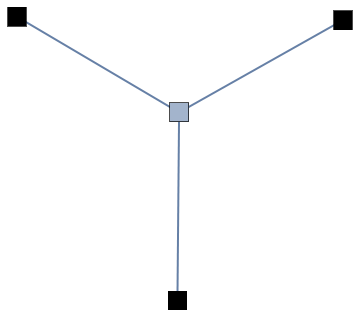}
  \hspace{60pt}
  \vspace{1cm}
  \includegraphics[width=.35\linewidth]{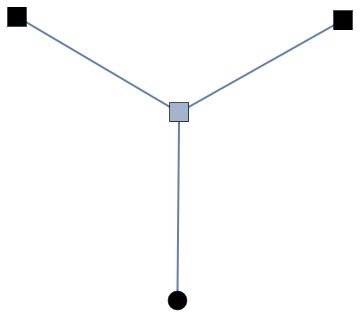}
  \includegraphics[width=.35\linewidth]{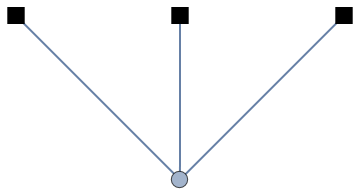}
\caption{Node blocks; top:\ (a), (b); bottom:\ (c).}
\end{figure}
\subsection{The case $k<0$}
\label{subsec4aa}\ 

From Sec.\,\ref{sec2} and Lemma \ref{lem2.2} every lattice point in $V_k$ is equivalent to one in $\mathfrak{S}^{-}(k)$. We show that the points in this set are inequivalent. First using $(x_{1}^2 -4)(x_{2}^2 -4) = (2x_3 -x_1x_2)^2 -4(k-4)$ in \eqref{gauge} and the similar formulae with the variables permuted, we see that the three terms in square brackets in \eqref{gauge} are all positive. Thus the signs of the differences of the $\Delta$-functions in \eqref{gauge} are  determined by the three terms $x_2x_3 -2x_1$, $x_1x_3 -2x_2$ and $x_1x_2 -2x_3$. The first two are obviously positive, and one sees that the last is non-negative if and only if $(x_1,x_2,x_3)\in \mathfrak{S}^{-}(k)$ Thus, in the tree determined by these points one sees that we have nodes of the type shown in Fig.6(c) with two or three black square vertices emanating from points in $\mathfrak{S}^{-}(k)$, while for points in the complementary set, we have nodes of the type in Fig.6(a). It follows that the points in $\mathfrak{S}^{-}(k)$ can serve as the roots of the components of the tree, from which the second case of Theorem \ref{Thm1} follows.
\subsection{The Cayley surface $k=4$}
\label{subsec4b}\ 

Most of the argument above for $k\geq 5$ can be applied to the case $k=4$, and we indicate the necessary modifications.
First, we consider solutions of the type $(-x_1,x_2,x_3)$ with $x_1,\,x_2,\, x_3 \geq 0$ satisfying $x_{1}^{2} +x_{2}^{2}+x_{3}^{2} +
x_1 x_2 x_3 = 4$. It is obvious that there are only two solutions up to equivalence, namely $(-2,0,0)\sim (0,0,2)$ and $(-1,1,1)\sim (1,1,2)$. Hence we need only consider solutions of the type $(x_1,x_2,x_3)$ with $0\leq x_1 \leq\ x_2 \leq  x_3 $. If $x_1 =0$, the only solution is $(0,0,2)$ while if $x_1 =1$, then the only choice is $(1,1,2)$. Then by the descent argument in Sec.\,\ref{sec2}, if $x_1 \geq 3$, the solution $(x_1,x_2,x_3)$ is equivalent to one with one of the coordinates equal to 2. It is trivial that the only solutions of this kind are one of the type $(2,a ,a)$, with $a \geq 0 $ integers. It suffices now to check the equivalence of these solutions. It is easily checked that the orbits of  $(2,0,0)$, $(2,1,1)$ and $(2,2,2)$ contain no other points of the type $(2,a ,a)$ except themselves, so that we assume $a \geq 3$.

Following the three formulas in \eqref{gauge}, if ${\bf x}=(2,a ,a)$, then two of the Vieta transformations  keep it inert while the third creates a node above it, this new node not being of the same type (we say ``above'' to mean $\Delta_i({\bf x})> \Delta({\bf x})$). Also following the argument used for $k\geq 5$, if ${\bf x}=(x_1,x_2,x_3)$, with $x_i \geq 3$, then two Vieta transformations create nodes above it while a third creates a node below it. It is then easily seen that a tree containing a node of the type  $(2,a ,a)$ cannot contain a different node of the same type. Hence we have
 \begin{proposition}\label{cayley}
 The Cayley surface $V_{4}(\mathbb{Z})$ has infinitely many inequivalent orbits, each determined by a solution of the type $(2,a,a)$, with $a \geq 0$.
 \end{proposition}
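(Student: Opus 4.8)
The plan is to mirror the structure of the $k\geq 5$ argument in Section~\ref{subsec4a}, but tracking the special solutions of the form $(2,a,a)$ in place of the negative nodes. First I would dispose of the solutions with a negative coordinate: writing such a point (after permutations and double sign changes) as $(-x_1,x_2,x_3)$ with $x_1,x_2,x_3\geq 0$ and $x_1^2+x_2^2+x_3^2+x_1x_2x_3=4$, one checks directly that the only possibilities are $(-2,0,0)\sim(0,0,2)$ and $(-1,1,1)\sim(1,1,2)$, so these already lie in the claimed family. Then, for solutions $(x_1,x_2,x_3)$ with $0\leq x_1\leq x_2\leq x_3$, the cases $x_1=0$ and $x_1=1$ force $(0,0,2)$ and $(1,1,2)$ respectively, while the descent of Lemma~\ref{lem2.1} (whose proof, examining the step where $k\leq 4$ was the obstruction, still applies with $x_1\geq 3$ when $k=4$, pushing coordinates down until one of them equals $2$) shows every remaining solution is $\Gamma$-equivalent to one with a coordinate equal to $2$; solving $4+a^2+b^2+2ab=4$ gives $(a+b)^2=0$, hence the reduced point has the form $(2,a,a)$.

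Next I would establish the inequivalence of the distinct $(2,a,a)$ by the same $\Delta$-monotonicity device used for $k\geq 5$. The point is that the square brackets in \eqref{gauge} remain strictly positive: using $(x_i^2-4)(x_j^2-4)=(2x_\ell-x_ix_j)^2-4(k-4)=(2x_\ell-x_ix_j)^2$ when $k=4$, the bracket $2(k-5)+(x_i^2-4)(x_j^2-4)=(2x_\ell-x_ix_j)^2-2$ is positive except in degenerate small cases, so for $a\geq 3$ (and, after separately checking the orbits of $(2,0,0),(2,1,1),(2,2,2)$ contain no other $(2,a',a')$, this suffices) the sign of each $\Delta_i({\bf x})-\Delta({\bf x})$ is governed by $x_jx_\ell-2x_i$. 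For ${\bf x}=(2,a,a)$ two of these are $a^2-2a=a(a-2)>0$ and one is $2a-2a=0$: hence $\mathcal{V}_1,\mathcal{V}_2$ raise $\Delta$ and produce points not of the form $(2,a',a')$, while $\mathcal{V}_3$ is (up to reordering) inert. For a point with all coordinates $\geq 3$ the same computation as in Section~\ref{subsec4a} gives two raising Vieta moves and one lowering one. Thus in the tree of solutions, a node of type $(2,a,a)$ is a local minimum of $\Delta$ whose only neighbours of comparable or lower $\Delta$ are its own reorderings, so each $\Gamma$-component contains exactly one such node; since every solution descends to one, these are precisely the orbit representatives, and distinct $a$ give distinct orbits.

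The main obstacle I anticipate is the borderline behaviour at $k=4$ rather than $k>4$: the descent inequalities in Section~\ref{sec2} were derived assuming $k>4$ (the case $x_1=2$ was exactly where $k\leq 4$ slipped through), so one must recheck that with the additional hypothesis $x_1\geq 3$ the descent still strictly decreases a coordinate, and separately handle the orbits of the small reduced points $(2,0,0),(2,1,1),(2,2,2)$ by hand to confirm they are not $\Gamma$-equivalent to any other $(2,a,a)$. A secondary subtlety is that the factor $2(k-5)+(\cdots)$ in \eqref{gauge} is now $(\cdots)-2$, which can in principle vanish or be negative for very small coordinates; this is why the reduction to $a\geq 3$ (and $x_i\geq 3$ in the generic node) before invoking the $\Delta$-ordering is essential, and it is the one place where the $k=4$ argument genuinely departs from the $k\geq 5$ one.
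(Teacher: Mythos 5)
Your proposal follows the paper's own proof essentially step for step: dispose of the negative‑coordinate and $x_1\in\{0,1\}$ solutions directly, use the descent of Section~\ref{sec2} (which is still valid for $k=4$ once $x_1\geq 3$) to reduce to points with a coordinate equal to $2$, observe that these are exactly the $(2,a,a)$, check the orbits of $(2,0,0),(2,1,1),(2,2,2)$ by hand, and for $a\geq 3$ use the $\Delta$-monotonicity coming from \eqref{gauge} to make each $(2,a,a)$ the unique $\Delta$-minimal node of its tree component. One small correction: for ${\bf x}=(2,a,a)$ the three relevant quantities are $x_2x_3-2x_1=a^2-4>0$ and $x_1x_3-2x_2=x_1x_2-2x_3=0$, so exactly one Vieta map raises $\Delta$ (sending $(2,a,a)$ to $(a^2-2,a,a)$) while \emph{two} fix the point — not two raising and one inert as you computed via $a^2-2a$; this slip is harmless, since the point still has no strictly descending neighbour and your local-minimum conclusion stands.
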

 One checks that the $(2,0,0)\sim (-2,0,0)$-component has only 1 element (upto permutation and double sign-change) and so the minimal $\Delta$-value is $\Delta(-2,0,0)= 9$. Next, the $(2,1,1)$-component has only 2 elements namely $(2,1,1)$ and $(-1,1,1)$. The minimal $\Delta$-value is $\Delta(-1,1,1)= -16$ while $\Delta(2,1,1)=5$. Finally, $\Delta(2,a,a)=9 -4a ^2 <0$ for $a \geq 2$. Then the same argument used in Sec.\,\ref{subsec3a} can be used to show that  any lattice point $\bf{x}$ not of these type satisfy $\Delta({\bf x})\geq 0$, so that the minimal $\Delta$-value is uniquely determined. Thus, even here each component has a unique minimal $\Delta$-value, whose point can be used as a generator.
 
 \begin{remark}
One can use the $\Delta$-function and the analysis above to deduce a descent procedure. One concludes that either every positive node descends to a negative node or if not, there is an infinite chain of positive nodes on which $\Delta({\bf x})$ is strictly decreasing. The latter is not possible since $\Delta({\bf x}) \geq 0$ on positive nodes. There are only finitely many negative nodes in  $\mathfrak{S}^{+}(k)$. So we conclude that there are finitely many orbits. Repeating the analysis in the paper also shows that all the negative points are $\Gamma$-inequivalent \underline{and} in each orbit $\Delta$ has a minimum value taken at the root of that orbit, so at the only (modulo double sign-changes) negative point on that orbit.

Using Lagrange multipliers on the region on $V_k$ with $x_j \geq 3$ and $k\to \infty$, one can show that :
 \begin{enumerate}
 \item $$\min_{{\bf x} \in \mathfrak{S}^{+}(k)} \Delta({\bf x}) \ \geq\  k^2 +18 k^{\frac{3}{2}} + 88 k  + \frac{621}{4} k^{\frac{1}{2}} + O(1);$$
 \item $$\max_{{\bf x} \in \mathfrak{S}^{-}(k)} \Delta({\bf x}) \ \leq\  k^2 -18 k^{\frac{3}{2}} + 88 k  - \frac{45}{4} k^{\frac{1}{2}} + O(1);$$
 \item $$\min_{{\bf x} \in \mathfrak{S}^{-}(k)} \Delta({\bf x}) \ \geq\  -3k^{\frac{4}{3}} + O(k).$$
 \end{enumerate}
  
Hence asymptotically, $\Delta$ behaves like a Minkowski gauge-function, with "successive minima" taken at the root of the orbits; that is if $\mathfrak{h}(k)$ is the number of orbits, the first $\mathfrak{h}(k)$ minimal values (counted with multiplicity) of $\Delta$ on the lattice points on $V_k$ occur at the negative points.

\end{remark}

\section{Parametric solutions on Markoff-type surfaces and Zariski density. }\label{param}
We show in this section that for generic $k$, the Markoff surface has no parametric integral points and that the solution set is Zariski dense. We also consider the  surfaces given by  $U_1$ and $U_2$ mentioned in the Introduction.
\subsection{Parametric solutions}
\begin{lemma} For any $k\in \mathbb{Z}$, let $M^*_k$ be the  surface given by $M^*({\bf x})=k$, where 
\begin{equation}\label{para1}
M^*({\bf x})=\sum_{j=1}^{3}\alpha_j x_j + \left(\beta_1x_2x_3 + \beta_2x_1x_3 + \beta_3x_1x_2\right) + x_1^2 + x_2^2 + \ve x_3^2 -x_1x_2x_3 ,
\end{equation}
where $\ve = \pm 1$ and $\alpha_j$, $\beta_j \in \mathbb{Z}$ for all $j$. Suppose there are polynomials $P_{j}(t)\in \mathbb{Z}[t]$ each with non-zero degree, such that $M^*(P_1, P_2, P_3) =  k$ identically in $t$. Then there are polynomials $Q_1, Q_2 \in \mathbb{Z}[t]$ of non-zero degree and a constant $q\in \mathbb{Z}$ such that $M^*(q, Q_1, Q_2) =  k$ identically in $t$.
\end{lemma}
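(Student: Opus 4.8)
The plan is to argue that the leading-degree behavior of the parametric solution $(P_1,P_2,P_3)$ forces at least one of the three polynomials to be constant; then the equation $M^*=k$ becomes an identity in two polynomials, which (after relabeling) gives the assertion directly. First I would write $d_j=\deg P_j$ and suppose for contradiction that all $d_j\geq 1$. The key observation is that the unique monomial of highest total weight in $M^*$ is $-x_1x_2x_3$: the quadratic terms $x_i^2$, the bilinear terms $x_ix_j$, and the linear terms $\alpha_jx_j$ all have lower degree than $x_1x_2x_3$ after substituting polynomials of positive degree, \emph{provided} the degree of $P_1P_2P_3$ genuinely exceeds the degree of every other term. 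So I would compare $d_1+d_2+d_3$ (the degree of the cubic term) against $\max$ over the degrees of the other monomials: $2d_i$ from $x_i^2$, $d_i+d_j$ from $x_ix_j$ and from $\beta_\ell x_ix_j$, and $d_j$ from $\alpha_j x_j$. Since $M^*(P_1,P_2,P_3)$ is the constant $k$, the degree of the left side is $0$, so the top-degree terms must cancel.

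The heart of the argument is this cancellation analysis. If $d_1+d_2+d_3 > \max_{i\neq j}(d_i+d_j) = d_1+d_2+d_3 - \min_\ell d_\ell$, which holds as soon as $\min_\ell d_\ell \geq 1$, then the monomial $x_1x_2x_3$ contributes a term of degree $d_1+d_2+d_3$ that is strictly larger than anything coming from the quadratic/bilinear/linear part. Its leading coefficient is $-a_1a_2a_3$ where $a_j$ is the leading coefficient of $P_j$, and this is nonzero, so it cannot be cancelled — contradiction. Hence $\min_\ell d_\ell = 0$, i.e. some $P_\ell$ is constant. This is the main step and also the main (mild) obstacle: one must be careful because $M^*$ is not symmetric — the coefficient of $x_3^2$ is $\ve=\pm1$ and the $\beta_j$, $\alpha_j$ are arbitrary — but none of this affects the degree count, since every non-cubic monomial has total degree at most $2$ in the $x_i$ while the cubic has total degree $3$, and substituting positive-degree polynomials preserves the strict inequality $3\cdot(\text{stuff}) > 2\cdot(\text{stuff})$ in the relevant sense; the precise inequality to verify is just $d_1+d_2+d_3 > d_i+d_j$ for all $i\neq j$ and $> 2d_i$ and $> d_j$, all of which follow from $d_1,d_2,d_3\geq 1$.

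Finally I would clean up: say $P_3$ is the constant, set $q=P_3\in\mathbb{Z}$, $Q_1=P_1$, $Q_2=P_2$. Then $M^*(q,Q_1,Q_2)=k$ identically, and $Q_1,Q_2$ still have nonzero degree by assumption. If instead it is $P_1$ or $P_2$ that is constant, one uses the (obvious) symmetry of $M^*$ in the variables $x_1,x_2$ — and in the case where $P_1$ is the constant one simply renames variables, noting that the conclusion of the lemma only asserts the existence of \emph{some} constant slot, so it does not matter which index degenerates. (In the degenerate sub-case where two of the $P_j$ are constant, the third must then also be constant for $M^*$ to be constant — contradicting $\deg P_j\neq 0$ for that index — so this does not actually occur, but it need not be addressed since we only need one constant coordinate.) This completes the proof.
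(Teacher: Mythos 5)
Your argument breaks at the degree comparison, and the conclusion you draw from it is in fact false. You assert that $d_1+d_2+d_3>2d_i$ for each $i$ ``follows from $d_1,d_2,d_3\geq 1$,'' but it does not: take $d_1=d_2=1$ and $d_3=2$, so that $2d_3=d_1+d_2+d_3$. In that case the top-degree term of $-P_1P_2P_3$ can be cancelled by the top-degree term of $\ve P_3^2$, and no contradiction arises. This is not a hypothetical loophole: for the Markoff form itself ($\alpha_j=\beta_j=0$, $\ve=1$) with $k=4+w^2$, the triple $P_1=t^2+wt-2$, $P_2=t$, $P_3=t+w$ satisfies $M(P_1,P_2,P_3)=k$ identically, and all three polynomials have positive degree. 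So the statement at the heart of your proof --- that some $P_\ell$ must already be constant --- is simply wrong, and the lemma cannot be proved by a single degree count.

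What the degree comparison actually yields is weaker but still useful: since the cubic term must be cancelled, one must have (after relabeling) $d_3=d_1+d_2$ with $d_1,d_2\geq 1$. The paper then completes the proof by descent. The Vieta involution $x_3\mapsto x_4=x_1x_2-\alpha_3-\beta_1x_2-\beta_2x_1-\ve x_3$ preserves the surface, and from
\[
P_3P_4=k-P_1^2-P_2^2-\beta_3P_1P_2-\alpha_1P_1-\alpha_2P_2
\]
one reads off $d_3+d_4\leq 2\max(d_1,d_2)$, hence $d_4\leq \max(d_1,d_2)-\min(d_1,d_2)<\max(d_1,d_2)$. Either $P_4$ is constant, in which case the triple $(P_1,P_2,P_4)$ gives the desired form, or one has a new parametric solution with all degrees positive and strictly smaller maximal degree, and one repeats with the appropriate Vieta map. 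This iteration must terminate in a triple with exactly one constant coordinate, which is the assertion of the lemma. Your final paragraph (relabeling the constant slot into the first coordinate) is fine once such a triple exists, but producing it requires the descent, not just the leading-term analysis.
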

\begin{proof}
Let $P_j$ have degree $d_j \neq 0$ for $j=1, 2, 3$ as above. By comparing degrees in \eqref{para1} we cannot have $d_1=d_2=d_3$, so that there is either a unique $d_j$ exceeding the other two or exactly two of the degrees are the same. The latter does not happen as it implies  that at least one of the polynomials is a constant. Hence (comparing degrees in \eqref{para1}) we have that $d'' = d' + d$ for some choice of the degrees. It will not matter which subscript represents the largest degree in what follows, so that we put $d_3 = d_1+d_2$, with $d_1$, $d_2 \geq 1$.

There is a Vieta affine transformation acting on the surface given by $x_3 \mapsto x_4 =x_1x_2 - \alpha_3 - \beta_1x_2 -\beta_2x_1 -\ve x_3$, so that if $P_4(t)$ is the polynomial determined by $x_4$, we have
\[ 
P_3P_4= k - P_1^2 - P_2^2 - \beta_3P_1P_2 -\alpha_1P_1 -\alpha_2P_2,
\]
identically in $t$. If $d_4$ is the degree of $P_4$, we have $d_3 + d_4 \leq 2\max(d_1,d_2)$, so that $d_4 \leq \max(d_1,d_2) - \min(d_1,d_2) < \max(d_1,d_2)$. Thus we have polynomials $P_1, P_2, P_4$ in place of $P_1, P_2, P_3$ representing integral points on the surface, with the maximal degree reduced by at least one and the new maximum degree is determined by $P_1$ or $P_2$. Either $P_4$ has degree zero, in which case we are done, or if not, all the new polynomials have non-zero degree. Repeating this descent argument (with a different Vieta transformation) shows that there must be parametric solutions with at least one polynomial constant, and the other two polynomials of non-zero degree.
\end{proof}

 It is not possible to have parametric solutions to \eqref{para1} with two of the polynomials constant. It follows from the lemma that if parametric solutions exist then there exists $q\in \mathbb{Z}$ and $Q_1, Q_2 \in \mathbb{Z}[t]$ of the same degree $d$ satisfying \eqref{para1} (it is possible to show that $d\leq 2$, if it exists). We now consider some special cases:
 
 \begin{enumerate}
 \item For the Markoff equation we have $Q_1^2 + Q_2^2 - qQ_1Q_2 = k-q^2 $. Comparing the highest degree term shows that there are integers $q_1, q_2$ such that $q_1^2 + q_2^2 - qq_1q_2 = 0$. It follows that $q= \pm 2$ and $k-4 = \square$. Moreover if $k=4+w^2$, then one has a parametric family of solutions $q=2$, $Q_1 = t$ and $Q_2=t+w$. In particular, this means that if $k$ is generic, there are no parametric solutions to the associated Markoff level set.
 \item Consider the Markoff-like surface $x_1^2 + x_2^2 -x_3^2 -x_1x_2x_3 =k$.  If we have parametric solutions as above of the type $(Q_1,Q_2,q)$, then the argument is identical to the Markoff case so that we conclude there are no such parametric solutions except when $k+4 = w^2$, in which case we have the parametric family $(t+w,t,2)$. Next, if either $x_1$ or $x_2$ is $q$, we have the equation $Q_1^2 - Q_2^2 - qQ_1Q_2 = k-q^2 $, so like the case above, we have $q_1^2 - q_2^2 - qq_1q_2 = 0 $. We conclude that $q=0$ so that when $k\neq 0$, $Q_1$ and $Q_2$ have degree zero, a contradiction. When $k=0$, we have the parametric family $(Q_1,0, \pm Q_1)$ for any polynomial $Q_1$.
 \begin{remark} This surface has the following features: (i) there are no local obstructions, (ii) for $k= 4^{\alpha}k'$ with $\alpha \geq 0$ and $k'$ odd, it has the integral points $(0,2^{\alpha}\frac{k' +1}{2}, 2^{\alpha}\frac{k' -1}{2})$, (iii) if $k' \neq 1$ or $\alpha \geq 3$, there are infinitely many integral points, and (iv) there are infinitely many Hasse failures (in particular, $k=94$ is a Hasse failure). This latter statement follows from an analysis similar to that in Prop. \ref{HF1}.
 \end{remark}
 
 \item Consider the linear deformation $U_1$ of the Markoff equation considered in \eqref{You1}, namely $x_1+ x_1^2 + x_2^2 + \ve x_3^2 -x_1x_2x_3=k$. For any integer $k$, and $\ve = \pm 1$, we have the parametric family of integral solutions  $\left(-t^2 +k-4\ve, -t^2 + t + k - 4\ve,2\right)$ .
 \item Consider the  quadratic deformation $U_2$ of \eqref{You2}: $x_2x_3 -x_1x_2 + x_1^2 + x_2^2 + x_3^2 -x_1x_2x_3=k$. For any $k$, we have the parametric solutions $(-t^2 + t + k-1, -t^2 +k-1,1)$.
 \end{enumerate}

\subsection{Zariski density}\ 

\subsubsection{}  We  prove \eqref{zariski} for the Markoff surface for $k$ not a square (this ensures that if $V_{k,M}(\mathbb{Z})\neq \emptyset$, then it has a lattice point with at most one coordinate zero).  First note that if $\hat{{\bf x}}= (\hat{x}_1,\hat{x}_2,\hat{x}_3) \in V_k(\mathbb{Z})$ and $\vert\hat{x}_j\vert\geq 2$ for some $j$, then $\vert V_k(\mathbb{Z})\vert=\infty$. To see this, say $\vert\hat{x}_1\vert\geq 2$; then the composition of the Vieta transformation $\mathcal{V}_3$ with the permutation of $x_2$ and $x_3$ yields the transformation $(x_1,x_2,x_3) \mapsto (x_1,x_1x_2-x_3,x_2)$ in $\Gamma$. This preserves the plane $x_1 = \hat{x}_1$ and $V_k$, and it induces the linear action $\left[\begin{array}{cc}
 \hat{x}_{1}& -1 \\ 1 & \ 0 \end{array}\right]$ on this plane. Since $\vert\hat{x}_1\vert\geq 2$, this element in $SL_2(\mathbb{Z})$ is of infinite order, so that its orbit is infinite (since it is not acting on the origin) and its Zariski closure contains the conic section $\{x_1=\hat{x}_1\}\cap V_k$. We now argue as in \cite{CZ}. If $\overline{V_k(\mathbb{Z})}$, the Zariski closure of $V_k(\mathbb{Z})$ is not $V_k$, then it is contained in a finite union of curves in $V_k$. Hence there can be at most finitely many $\hat{x}_1$'s with $(\hat{x}_1,\hat{x}_2,\hat{x}_3) \in V_k(\mathbb{Z})$ with  $\vert\hat{x}_1\vert\geq 2$ (since otherwise $\overline{V_k(\mathbb{Z})}$ contains infinitely many distinct conic sections as above). The same applies to $\hat{x}_2$ and $\hat{x}_3$, giving $\vert V_k(\mathbb{Z})\vert < \infty$. That is we have shown that $\vert V_k(\mathbb{Z})\vert=\infty$ implies that $\overline{V_k(\mathbb{Z})}= V_k$. To complete the proof of \eqref{zariski} note that if $\vert k\vert >20$ and $(\hat{x}_1,\hat{x}_2,\hat{x}_3) \in V_k(\mathbb{Z})$ then for at least one of the $j$'s, $\vert\hat{x}_j\vert\geq 2$ and so  $\vert V_k(\mathbb{Z})\vert \neq \emptyset$ implies  $\overline{V_k(\mathbb{Z})} = V_k$. For the $k$'s with $\vert k\vert \leq20$ we check directly that \eqref{zariski} holds. One can show that when $k =1,\ 9,\ 49$, for example, $V_{k,M}(\mathbb{Z}) \neq \emptyset$ but has only a finite orbit. On the other hand, when $k=k_1^2$ with $k_1$ having an odd prime factor congruent to one modulo 4, then $V_{k,M}(\mathbb{Z})$ has an infinite orbit, and by the argument above, is Zariski dense.
 
\subsubsection{} We next consider the surface $U_1$  discussed above and in \eqref{You1}. The argument is almost the same as for the Markoff surface except that now we have an affine transformation and a lack of full symmetry in the variables. 
 
As in the case for the Markoff equation, assume that $\overline{V_{k,U_1}(\mathbb{Z})}\neq V_{k,U_1}$ so that it is contained in a finite union of curves.  Consider the two Vieta transformations: $\mathcal{V}_1({\bf x}) = (x_2x_3 -1-x_1,x_2,x_3)$ and  $\mathcal{V}_3({\bf x}) = (x_1,x_2,x_1x_2 -x_3)$, keeping $x_2$ fixed. Put ${\bf w}=(x_1,x_3)^T$ so that $\mathcal{V}_1$ and $\mathcal{V}_3$ act on ${\bf w}$.  By abuse of notation, we have
 \[\mathcal{V}_1({\bf w})=\left[\begin{array}{cc}
 -1 & x_2 \\ 0 & \ 1 \end{array}\right]{\bf w} + \left[\begin{array}{c}  
 -1  \\ 0  \end{array}\right]\quad \text{and} \quad \mathcal{V}_3({\bf w})=\left[\begin{array}{cc}
 1 & 0 \\ -x_2 & -1 \end{array}\right]{\bf w},
 \]
 so that we write $\mathcal{V}_1\mathcal{V}_3({\bf w}) = A{\bf w} + {\bf b}$, with 
 \[
 A= \left[\begin{array}{cc}
 -1- x_2^2 & -x_2 \\ -x_2 & \ -1 \end{array}\right]\in SL(2,\mathbb{Z}),\quad \text{and} \quad {\bf b}=\left[\begin{array}{c}  
 -1  \\ 0  \end{array}\right].
 \]
Hence $(\mathcal{V}_1\mathcal{V}_3)^n{\bf w} = A^n{\bf w} + \sum_{j=0}^{n-1}A^j{\bf b}$ for $n \geq 1$.
If $\mathcal{V}_1\mathcal{V}_3$ has order $n$, it follows that $(A^n - I)[(A-I){\bf w} + {\bf b}]=0$. Now, if $x_2 \neq 0$, then $A$ has infinite order and $A^n - I$ is invertible, so that we have $(A-I){\bf w}=-{\bf b}$. This is impossible since $(A-I)^{-1}{\bf b}$ is not integral. Hence $\mathcal{V}_1\mathcal{V}_3$ has infinite order so that the orbit $\mathcal{V}_1\mathcal{V}_3(x_1,x_2,x_3)$ with $x_2\neq 0$ fixed is infinite. The assumption of Zariski density implies that there are only finitely many $x_2$'s. 

Since the surface given by $U_1$ is symmetric in $x_2$ and $x_3$, it follows that there are only finitely many $x_2$ and $x_3$'s, from which we conclude that there are at most finitely many lattice points (since $x_1$ is determined). Starting with the base point ${\bf p}=(k-4,k-4,2)$ which is on the surface, we see that this is impossible since the orbit $\mathcal{V}_1\mathcal{V}_3({\bf p})$ is infinite if $k\neq 4$. Hence $V_{k,U_1}(\mathbb{Z})$ is Zariski dense in $V_{k,U_1}$ for all $k\neq 4$. For $k=4$, we use instead ${\bf p}=(-1,2,0)$ so that ${\bf w}\neq {\bf 0}$, and the argument above gives an infinite orbit, and Zariski dense.

\subsubsection{} The argument for $U_2$ is almost identical: we use the Vieta transformations  $\mathcal{V}_1({\bf x})  = (x_2x_3 +x_2 -x_1,x_2,x_3)$ and  $\mathcal{V}_3({\bf x}) = (x_1,x_2,x_1x_2 - x_2 -x_3)$, and have the corresponding matrix equation for 
   $\mathcal{V}_1\mathcal{V}_3({\bf w}) = A{\bf w} + {\bf b}$, with 
  \[
 A= \left[\begin{array}{cc}
 -1+x_2^2 & -x_2 \\ x_2 & \ -1 \end{array}\right]\in SL(2,\mathbb{Z}),\quad \text{and} \quad {\bf b}=\left[\begin{array}{c}  
 -x_2^2 + x_2  \\ -x_2  \end{array}\right].
 \]
 The analysis is the same as  for $U_1$ except now we have that $A^n -I$ is invertible if $|x_2|\geq 3$.  As above, we derive a contadiction of the finite order assumption since $(A-I)^{-1}{\bf b}$ is not integral. In particular, taking the base point  ${\bf p}=(k-1,k-1,1)$, we conclude that $V_{k,U_2}(\mathbb{Z})$ is infinite if $|k|\geq 4$. The reasoning above using the Zariski density assumption shows that there are only finitely many $x_2$'s.
 
 Due to the lack of symmetry in the variables, we redo the analysis with $x_1$ fixed, using $\mathcal{V}_2\mathcal{V}_3({\bf w}) = A{\bf w} + {\bf b}$, with $\mathcal{V}_2({\bf x}) = (x_1,x_1x_3 +x_1 -x_3 -x_2,x_3)$, $\mathcal{V}_3$ as before, ${\bf w}=(x_2,x_3)^T$ and
 \[
 A= \left[\begin{array}{cc}
 (x_1 -1)^2 -1 & 1-x_1 \\ x_1 -1 & \ -1 \end{array}\right]\in SL(2,\mathbb{Z}),\quad \text{and} \quad {\bf b}=\left[\begin{array}{c}  
 x_1\\ 0  \end{array}\right].
 \]
 If $|x_1 -1|\geq 3$, we conclude $(A-I){\bf w}=-{\bf b}$, and derive a contradiction regarding the finite order assumption. Thus the Zariski density hypothesis implies that there are only finitely many $x_1$'s. Hence, again since $x_3$ is determined by $x_1$ and $x_2$, $V_{k,U_2}(\mathbb{Z})$ is finite, giving a contradiction. Thus $V_{k,U_2}(\mathbb{Z})$ is Zariski dense in $V_{k,U_2}$ for all $|k|\geq 4$. For $|k|\leq 4$, a direct computation gives many eligible candidates for lattice points that lead to Zariski dense. 
 
 A much stronger theorem concerning $\Gamma$ invariant holomorphic curves and structures for the surfaces corresponding to \eqref{cform} is proved in (\cite{CL}, Theorem D).

\section{Local solutions in $\mathbb{Z}_{p}$}
\label{sec5}
\begin{proposition}\label{local}
 Given $k\in \mathbb{Z}$, the congruence $x_{1}^{2} +x_{2}^{2} +x_{3}^{2} -x_1x_2x_3 \equiv  k \, ({\rm mod}\,  p^n)$ has solutions for all primes $p$ and $n\geq 1$ except for the following exceptions : $k\equiv 3 \, ({\rm mod}\, 4)$, $k\equiv \pm 3 \, ({\rm mod}\,  9)$.
\end{proposition}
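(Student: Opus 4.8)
The plan is to prove the proposition by a case analysis on the prime $p$, treating $p=2$, $p=3$, and $p\geq 5$ separately, and in each case showing that solutions mod $p$ lift to solutions mod $p^n$ via a Hensel-type argument, except for the stated obstructions mod $4$ and mod $9$.

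\textbf{Odd primes $p\geq 5$.} First I would show that for every $k$ there is a solution mod $p$. Fix $x_3=0$; then the equation becomes $x_1^2+x_2^2\equiv k\pmod p$, and it is classical that every residue class mod $p$ is a sum of two squares (count: the sets $\{x_1^2\}$ and $\{k-x_2^2\}$ each have size $(p+1)/2$, so they intersect). This gives a solution $(a,b,0)$. To lift to $\mathbb{Z}_p$ I would apply Hensel's lemma: the gradient of $F({\bf x})=x_1^2+x_2^2+x_3^2-x_1x_2x_3$ at $(a,b,0)$ is $(2a-0,\ 2b-0,\ 0-ab)=(2a,2b,-ab)$. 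If not all of $a,b$ are $\equiv 0\pmod p$ we are done since $p\neq 2$. The only remaining case is $a\equiv b\equiv 0$, i.e. $k\equiv 0\pmod p$, where I would instead pick a different starting point, e.g. solve $x_2^2+x_3^2-x_2x_3 x_1\equiv 0$ with $x_1$ a unit chosen so that the partial derivative $\partial F/\partial x_1=2x_1-x_2x_3$ is a unit; concretely take $x_2=x_3=0$ is again singular, so instead take $x_1=x_2=x_3=0$ fails — here one takes a point with $x_2,x_3$ not both zero on the cone, available since the conic $x_2^2+x_3^2-x_2x_3 t=0$ has nonsingular points mod $p$. In all cases one produces a point at which some partial derivative is a $p$-adic unit, and Hensel's lemma finishes the lift.

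\textbf{The prime $p=2$.} Here I would work mod $4$ (and then mod $8$, $16$, \dots) directly. Reducing $x_1^2+x_2^2+x_3^2-x_1x_2x_3$ mod $4$: squares are $0,1$ and the cross term $x_1x_2x_3$ is even unless all $x_i$ are odd, in which case it is odd and $x_1^2+x_2^2+x_3^2\equiv 3$. Enumerating the $8$ parity patterns shows the value mod $4$ lies in $\{0,1,2\}$ (the all-odd case gives $3-1=2$), so $k\equiv 3\pmod 4$ is genuinely obstructed. Conversely, for $k\equiv 0,1,2\pmod 4$ I would exhibit an explicit solution mod $2$ at which $F$ is Hensel-liftable; the safe choice is to arrange exactly one variable even and $2$-adically control the derivative. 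For instance for $k$ even take $(x_1,x_2,x_3)=(0,a,b)$ with $a^2+b^2\equiv k$ and $a$ odd so that $\partial F/\partial x_1=2x_1-x_2x_3= -ab$ — this is even, so instead one uses a point where a derivative is $\equiv 2\pmod 4$ is still not a unit; the cleaner route is to note the equation in $x_1$ alone, $x_1^2-(x_2x_3)x_1+(x_2^2+x_3^2-k)=0$, and choose $x_2,x_3$ so the discriminant $(x_2x_3)^2-4(x_2^2+x_3^2-k)$ is a $2$-adic square, which reduces the problem to representing residues by a binary form mod powers of $2$ — a finite check.

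\textbf{The prime $p=3$.} Similarly I would compute values mod $9$. Modulo $3$ squares are $0,1$, and one checks that $F$ represents every class mod $3$; the subtlety is the obstruction mod $9$. Expanding with $x_i\in\{0,\pm1,\pm3,\dots\}$ and reducing, one finds that the all-units-mod-$3$ case forces $x_1^2+x_2^2+x_3^2\equiv 3$ and $x_1x_2x_3\equiv \pm1\pmod 9$ in a way that makes $F\equiv \pm3\pmod 9$ unreachable; a direct tabulation of the (finitely many) residues $(x_1,x_2,x_3)\bmod 9$ confirms the image mod $9$ omits exactly $\pm3$. For the admissible residues mod $9$ one again produces a nonsingular point mod $3$ and invokes Hensel.

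\textbf{Main obstacle.} The conceptual content is easy; the real work — and the step most likely to hide an error — is the bookkeeping for $p=2$ and $p=3$, namely verifying that \emph{every} non-obstructed residue of $k$ mod $4$ (resp. mod $9$) actually has a solution that lifts, which requires either a clean choice of always-liftable point or, failing that, an honest finite enumeration mod $8$ or mod $27$ to check stabilization. I would organize this by reducing, via the quadratic-in-$x_1$ form of the equation, to a statement about which residues a binary quadratic form (with unit leading coefficient, after fixing $x_2,x_3$ suitably) represents $p$-adically, where the standard theory of $p$-adic representation by binary forms gives the lift cleanly; the only genuinely exceptional behavior then is precisely the mod $4$ and mod $9$ obstructions coming from the case where all three variables are $p$-adic units.
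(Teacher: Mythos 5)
Your overall strategy is the same as the paper's (identify the obstructions modulo $4$ and $9$, then for every other $k$ exhibit a point modulo a small power of $p$ to which Hensel's lemma applies), and your handling of $p\geq 5$ with $p\nmid k$ is correct and in fact cleaner than the paper's: setting $x_3=0$ and representing $k$ as a sum of two squares gives a point with gradient $(2a,2b,-ab)$ having a unit entry, whereas the paper gets existence mod $p$ from the Fricke trace identity or a character-sum count before running the same Hensel argument. The difficulty is that the proposal is incomplete, and in places wrong, exactly where the proposition has content: at the points where every solution mod $p$ is singular.

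Concretely. (1) For $p\geq5$ with $p\mid k$ you never actually produce a non-singular point; the equation you write down drops the $x_1^2$ term and the discussion does not converge. The paper's fix is simply $(3,3,3)$: it lies on $V_0$ and $\partial F/\partial x_1=2\cdot 3-9=-3$ is a unit for $p\geq5$. (2) For $p=3$ your plan ``produce a nonsingular point mod $3$ and invoke Hensel'' cannot work when $9\mid k$: the only solution of $F\equiv0\pmod 3$ is $(0,0,0)$, which is singular, so no such point exists and one is forced to start at level $9$ (e.g.\ substitute $x_i=3y_i$, which turns the congruence into $\sum y_i^2-3y_1y_2y_3\equiv k/9$, an equation that does have non-singular solutions mod $3$; the paper instead uses $(3,0,0)$ with the strong form of Hensel's lemma). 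Your explanation of the obstruction is also misattributed: if all $x_i$ are units mod $3$ then $F\equiv 3-x_1x_2x_3\equiv \mp1\not\equiv0\pmod 3$, so the all-units case is irrelevant to $k\equiv\pm3\pmod 9$; the obstruction comes from the fact that $F\equiv0\pmod 3$ forces $3\mid x_i$ for all $i$, whence $9\mid F$. (3) For $p=2$ with $k$ even your first attempt actually succeeds and you abandon it by a miscomputation: at $(0,1,1)$ one has $\partial F/\partial x_1=-x_2x_3=-1$, a $2$-adic unit (not even, as you assert), and $F(0,1,1)=2\equiv k\pmod 2$, so ordinary Hensel lifts. For $k\equiv1\pmod4$ the only solution mod $2$ is the singular $(0,0,1)$, so again one must work at level $4$ or $8$ with the strong Hensel lemma (the paper uses $(1,0,0)$, where $\partial F/\partial x_1=2$); your fallback of ``representing residues by a binary form mod powers of $2$ --- a finite check'' is a reasonable route but is not carried out, and that check is precisely the content of this case. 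Until (1)--(3) are supplied, the solubility of $V_k(\mathbb{Z}_p)$ for all non-obstructed $k$ is not established.
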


We break up the proof into several cases.

It is particularly easy to verify the Proposition for powers of primes $p\geq 5$ as follows:  recall the Fricke trace identity, namely for any real unimodular matrices $A$ and $B$, 
\begin{equation}\label{fricke1}
\mathfrak{S}(A)^2 + \mathfrak{S}(B)^2 + \mathfrak{S}(AB)^2 - \mathfrak{S}(A)\mathfrak{S}(B)\mathfrak{S}(AB) = \mathfrak{S}([A,B]) +2,
\end{equation}
 where $[A,B]=ABA^{-1}B^{-1}$ is the commutator, and $\mathfrak{S}( )$ denotes the trace of the matrix.
 
 Restricting the matrices to $SL(2,\mathbb{Z})$, one obtains integral solutions to  \eqref{1a}, with $k=t+2$, where we denote $\mathfrak{S}([A,B])$ by $t$. We have
 
 \begin{lemma} For any prime $p \geq 5$, $n \geq 1$ and any integer $t$, there exists matrices $A,\,B\,\in SL(2,\mathbb{Z}\slash p^{n}\mathbb{Z})$ such that 
 \[
 \mathfrak{S}(A)^2 + \mathfrak{S}(B)^2 + \mathfrak{S}(AB)^2 - \mathfrak{S}(A)\mathfrak{S}(B)\mathfrak{S}(AB) \equiv t +2 \, ({\rm mod}\,  p^n)\,.
 \] 
 \end{lemma}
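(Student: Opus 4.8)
The plan is to exhibit, for each integer $t$ and each prime power $p^n$ with $p\geq 5$, an explicit pair of matrices in $SL(2,\mathbb{Z}/p^n\mathbb{Z})$ realizing the prescribed value of the Fricke polynomial. Since the Fricke identity \eqref{fricke1} holds identically, it suffices to control the three traces $\mathfrak{S}(A)$, $\mathfrak{S}(B)$, $\mathfrak{S}(AB)$ and the commutator trace $\mathfrak{S}([A,B])$ simultaneously. First I would fix a convenient one-parameter family, for instance $A=\left(\begin{smallmatrix} a & 1 \\ -1 & 0 \end{smallmatrix}\right)$ so that $\mathfrak{S}(A)=a$, and $B=\left(\begin{smallmatrix} 1 & 0 \\ b & 1 \end{smallmatrix}\right)$ so that $\mathfrak{S}(B)=2$. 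Then $AB=\left(\begin{smallmatrix} a+b & 1 \\ -1 & 0 \end{smallmatrix}\right)$ has $\mathfrak{S}(AB)=a+b$, and one computes the commutator trace as an explicit polynomial in $a,b$; working it out, $\mathfrak{S}([A,B]) = 2 + a^2 b^2 + (\text{lower order})$, and in fact the left side of the Fricke identity evaluated on this pair is $a^2 + 4 + (a+b)^2 - 2a(a+b) = b^2 + 4$. So with this choice the identity reduces to the requirement $b^2 + 4 \equiv t+2 \pmod{p^n}$, i.e.\ $b^2 \equiv t-2 \pmod{p^n}$, which only covers $t$ with $t-2$ a quadratic residue — not all $t$.

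To reach every residue class I would instead use a genuinely two-parameter family. Take $A=\left(\begin{smallmatrix} a & 1 \\ -1 & 0 \end{smallmatrix}\right)$ and $B=\left(\begin{smallmatrix} 0 & -1 \\ 1 & c \end{smallmatrix}\right)$, so $\mathfrak{S}(A)=a$, $\mathfrak{S}(B)=c$, and $AB=\left(\begin{smallmatrix} 1 & -a+c \\ 0 & 1 \end{smallmatrix}\right)$ — no, this degenerates. A robust choice: keep $\mathfrak{S}(A)=x$, $\mathfrak{S}(B)=y$ as free parameters via companion-type matrices and pick $AB$ to have trace $z$ as a third free parameter; concretely, given any target $(x,y,z)$ one can build $A,B\in SL_2$ with these three traces as soon as $p\geq 5$ (this is the standard surjectivity of the trace map onto the character variety over a field, valid away from small primes), and then the left side of \eqref{fricke1} equals the fixed polynomial $x^2+y^2+z^2-xyz$. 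So the real claim to prove is: for $p\geq 5$, the map $SL_2(\mathbb{Z}/p^n)\times SL_2(\mathbb{Z}/p^n)\to (\mathbb{Z}/p^n)^3$ sending $(A,B)\mapsto(\mathfrak{S}(A),\mathfrak{S}(B),\mathfrak{S}(AB))$ is surjective, combined with the observation that $x^2+y^2+z^2-xyz$ takes every value mod $p^n$ (choose $z=0$, then $x^2+y^2$ represents everything mod $p^n$ for $p$ odd since $-1$ and all units are sums of two squares). The second half is elementary; the first half I would prove by first handling $n=1$ (a field computation: realize arbitrary traces by upper/lower triangular and companion matrices, e.g.\ $A=\left(\begin{smallmatrix} x & -1 \\ 1 & 0\end{smallmatrix}\right)$, $B=\left(\begin{smallmatrix} 0 & 1 \\ -1 & y\end{smallmatrix}\right)$ gives $\mathfrak{S}(AB)=\mathfrak{S}\left(\begin{smallmatrix} 1 & x-y \\ 0 & 1\end{smallmatrix}\right)$, so instead scale: use $A=\left(\begin{smallmatrix} x & 1 \\ 0 & 1\end{smallmatrix}\right)$ won't be in $SL_2$ with det $1$ unless the product is $x$ — take $A=\left(\begin{smallmatrix} x & -1 \\ 1 & 0\end{smallmatrix}\right)$, $B=\left(\begin{smallmatrix} 1 & 0 \\ t & 1\end{smallmatrix}\right)$, then $\mathfrak{S}(B)=2$, not free). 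The cleanest route for $n=1$ is to put $A$ in the family with trace $x$ and eigenbasis generic, $B$ with trace $y$, and show $\mathfrak{S}(AB)$ varies over all of $\mathbb{F}_p$ as $B$ ranges over the trace-$y$ conjugacy class; this is a one-variable count that never degenerates for $p\geq 5$.

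Finally I would lift from $n=1$ to general $n$ by Hensel's lemma: given $A_1,B_1\in SL_2(\mathbb{F}_p)$ with the three traces equal to the reductions of $x,y,z$, deform $A_1$ in its $SL_2$-conjugacy class / the ambient group to adjust the traces $p$-adically. More precisely, the trace functions on $SL_2$ are submersions at points where the relevant matrices are not central, so the system $\{\mathfrak{S}(A)=x,\ \mathfrak{S}(B)=y,\ \mathfrak{S}(AB)=z\}$ has a smooth solution set over $\mathbb{Z}_p$ whose reduction mod $p$ is nonempty, and smoothness gives a point mod $p^n$ for every $n$. The main obstacle I anticipate is precisely the verification that the Jacobian of $(\mathfrak{S}(A),\mathfrak{S}(B),\mathfrak{S}(AB))$ (as functions on a suitable $3$-dimensional slice of $SL_2\times SL_2$) is a unit mod $p$ at the chosen base point — equivalently, that one can choose $A_1,B_1$ \emph{irreducible enough} (no common eigenvector, neither $\pm I$) that all three trace gradients are linearly independent; handling the finitely many degenerate $(x,y,z)$ (such as $x=y=z=0$ or the singular locus of $x^2+y^2+z^2-xyz=4$) by ad hoc explicit matrices is where the case analysis, though elementary, becomes slightly delicate, and this is the step I expect to occupy most of the write-up. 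The restriction $p\geq 5$ enters exactly here, to keep the relevant nonzero polynomial coefficients (powers of $2$ and $3$) invertible.
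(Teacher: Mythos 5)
Your proposal has a genuine gap, and it also heads in a structurally awkward direction. The concrete error is in your reduction of the surjectivity of $x^2+y^2+z^2-xyz$ modulo $p^n$: setting $z=0$ and claiming that $x^2+y^2$ represents every residue modulo $p^n$ for odd $p$ is false when $p\equiv 3\ (\mathrm{mod}\ 4)$ and $n\geq 2$. In that case the form $x^2+y^2$ is anisotropic modulo $p$, so $p\mid(x^2+y^2)$ forces $p\mid x$ and $p\mid y$, hence $p^2\mid(x^2+y^2)$; consequently no residue of the form $p\cdot(\text{unit})$ is a sum of two squares modulo $p^2$ (e.g.\ $7$ is not a sum of two squares mod $49$). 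Your parenthetical justification (``$-1$ and all units are sums of two squares'') only covers units. Beyond this, the other half of your plan --- surjectivity of $(A,B)\mapsto(\mathfrak{S}(A),\mathfrak{S}(B),\mathfrak{S}(AB))$ onto $(\mathbb{Z}/p^n\mathbb{Z})^3$ together with the Hensel lift at irreducible pairs --- is true but is left entirely as a sketch, with the degenerate triples (exactly the locus $x^2+y^2+z^2-xyz=4$, where every preimage may be reducible and the Jacobian criterion fails) explicitly deferred. Finally, note that in the paper this lemma exists precisely in order to \emph{deduce} that the Markoff polynomial is surjective modulo $p^n$ (Corollary \ref{fifthpower}); a proof of the lemma that presupposes that surjectivity gets the logical order backwards even if the details were repaired.

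The fix is to work on the other side of the Fricke identity \eqref{fricke1}: instead of trying to hit a prescribed value of the cubic in the three traces, make the commutator trace itself equal to $t$. The paper takes $B=\mathrm{diag}(e,f)$ with $ef\equiv 1\ (\mathrm{mod}\ p^n)$ and $e-f$ a unit (possible since $p\geq 5$), and a general $A=\left(\begin{smallmatrix}a&b\\c&d\end{smallmatrix}\right)\in SL_2$; a direct computation gives $\mathfrak{S}([A,B])\equiv 2-bc(e-f)^2$. Choosing $c$ with $c(e-f)^2\equiv 1$, then $b=2-t$, $a=1$, $d=1+bc$ yields $\mathfrak{S}([A,B])\equiv t$, and the identity immediately gives the left-hand side $\equiv t+2$. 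This is a one-parameter explicit construction with no case analysis, no quadratic-residue obstruction, and no appeal to the surjectivity of the trace map; your first one-parameter attempt (which produced $b^2+4$) was close in spirit but aimed at the wrong side of the identity.
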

 \begin{proof} For
 \[
A= \begin{bmatrix}
a&b\\c&d
\end{bmatrix}, \quad B= \begin{bmatrix}
e&0\\0&f
\end{bmatrix},\quad  A, B \in SL(2,\mathbb{Z}\slash p^{n}\mathbb{Z}),
\]
we have $\mathfrak{S}([A,B]) = 2 a d e f - b c (e^2 + f^2) \equiv 2 - bc(e-f)^2 \, ({\rm mod}\,  p^n)$. 

Since $p\geq 5$, there exists $e$ and $f$ such that $(e-f,p)=1$ with $ef \equiv 1  \, ({\rm mod}\,  p^n)$. Then, we choose $c$ so  that $c(e-f)^2 \equiv 1 \, ({\rm mod}\, p^n)$.  Finally, we choose $a=1$, $b= 2-t$ and $d = 1 +bc$.\qed
 \end{proof}
 
 \begin{Corollary}\label{fifthpower} For $p \geq 5$ and $n \geq 1$, the Markoff congruence $x_{1}^{2} + x_{2}^{2} + x_{3}^{2} -x_1x_2x_3 \equiv k  \, ({\rm mod}\,  p^n)$ has the solution $x_1 \equiv 2 - (k-4)c$, $x_2 \equiv e+f$ and $x_3 \equiv e -f +fx_1$, with $e,\,f$ and $c$  as in the proof of the Lemma.
 \end{Corollary}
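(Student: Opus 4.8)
The plan is to unwind the construction carried out in the proof of the Lemma and simply read off the three traces, using the Fricke identity \eqref{fricke1} to certify that the displayed triple solves the Markoff congruence; no genuinely new ingredient is needed beyond what was used for the Lemma.

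First I recall that for
\[
A = \begin{bmatrix} a & b \\ c & d\end{bmatrix}, \qquad B = \begin{bmatrix} e & 0 \\ 0 & f\end{bmatrix}
\]
one has $\mathfrak{S}(A) = a + d$, $\mathfrak{S}(B) = e + f$ and $\mathfrak{S}(AB) = ae + df$, and that, by \eqref{fricke1} read over $\mathbb{Z}/p^n\mathbb{Z}$, the triple $(x_1,x_2,x_3) := (\mathfrak{S}(A),\mathfrak{S}(B),\mathfrak{S}(AB))$ satisfies
\[
x_1^2 + x_2^2 + x_3^2 - x_1 x_2 x_3 \equiv \mathfrak{S}([A,B]) + 2 \ ({\rm mod}\, p^n).
\]
It then suffices to substitute the choices made in the proof of the Lemma — namely (taking $t=k-2$, so that $t+2=k$) $a = 1$, $b = 2 - t = 4 - k$, $d = 1 + bc$, together with $c,e,f$ chosen so that $ef \equiv 1$, $c(e-f)^2 \equiv 1\ ({\rm mod}\, p^n)$ and $(e-f,p)=1$ — into these three traces, recalling that with these choices $\mathfrak{S}([A,B]) \equiv 2 - bc(e-f)^2 \equiv 2 - b \equiv k - 2\ ({\rm mod}\, p^n)$, so that the right-hand side above is $\equiv k$.

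There remains only the bookkeeping: $\mathfrak{S}(A) = 1 + (1 + bc) = 2 + bc = 2 - (k-4)c = x_1$; $\mathfrak{S}(B) = e + f = x_2$; and $\mathfrak{S}(AB) = e + (1+bc)f = e + f + bcf$, which, using $bc = x_1 - 2$, rewrites as $e + f + (x_1 - 2)f = e - f + f x_1 = x_3$. Since $\det A = (1 + bc) - bc = 1$ and $\det B = ef \equiv 1$, both matrices lie in $SL(2,\mathbb{Z}/p^n\mathbb{Z})$, so the Fricke identity indeed applies, and the displayed triple is the asserted solution. I do not expect any real obstacle here: the substantive content — producing, for a prescribed commutator trace, a pair of matrices in $SL(2,\mathbb{Z}/p^n\mathbb{Z})$ — was already carried out in the Lemma, and the Corollary is just the explicit translation into Markoff coordinates.
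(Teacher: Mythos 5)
Your proposal is correct and is exactly the computation the paper intends: the Corollary is just the Lemma's construction read off in trace coordinates via the Fricke identity (taken as a polynomial identity valid over $\mathbb{Z}/p^n\mathbb{Z}$), and your bookkeeping $\mathfrak{S}(A)=2+bc=2-(k-4)c$, $\mathfrak{S}(B)=e+f$, $\mathfrak{S}(AB)=e+f+bcf=e-f+fx_1$ matches the stated triple. Nothing further is needed.
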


The argument above gives the existence of solutions for powers of $p\geq 5$. It is useful to have a precise count for the number of solutions modulo $p$. For this, it is not any harder to consider  the more general problem in

\begin{lemma}\label{5.4}\ For $p\geq 3$, let $N_{p}$ denote the number of solutions to $x_{1}^{2} +x_{2}^{2}+x_{3}^{2} -\alpha x_1x_2x_3 \equiv \beta$ modulo $p$. Then 
\[
 N_{p} = \left\{
 \begin{array}{ll}
  p^2 + p\left(\frac{-\beta}{p}\right)_{\!\!L} & \text{if}\  p|\alpha, \\
  \\
 p^2 + 1 + \left(\frac{\alpha^2\beta -4}{p}\right)_{\!\!L}\left[3+\left(\frac{\beta}{p}\right)_{\!\!L}\right] p   & \quad \text{otherwise}.
 \end{array} \right.
 \]
\end{lemma}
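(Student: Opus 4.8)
The plan is to count points on the affine surface $x_1^2+x_2^2+x_3^2-\alpha x_1x_2x_3=\beta$ over $\mathbb{F}_p$ by slicing according to the value of $x_3$, so that for each fixed $x_3=c$ we are left with a binary quadratic form in $(x_1,x_2)$. Write $c=x_3$ and regard the equation as $x_1^2 - \alpha c\, x_1 x_2 + x_2^2 = \beta - c^2$. The quadratic form on the left has discriminant (in $x_1$, say) equal to $\alpha^2 c^2 - 4$; its number of solutions over $\mathbb{F}_p$ is governed in the usual way by the Legendre symbol $\left(\tfrac{\alpha^2 c^2-4}{p}\right)_{\!\!L}$. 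First I would recall the standard formula: for a nondegenerate binary form $Q$ of discriminant $D$ the number of $(x_1,x_2)\in\mathbb{F}_p^2$ with $Q(x_1,x_2)=m$ is $p - \left(\tfrac{D}{p}\right)_{\!\!L}$ if $m\neq 0$, and is $p + (p-1)\left(\tfrac{D}{p}\right)_{\!\!L}$ if $m=0$ (when $D\not\equiv 0$); and when $D\equiv 0\pmod p$ the form is a perfect square times a scalar and one counts directly. Summing these counts over $c\in\mathbb{F}_p$ gives $N_p$.

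The case $p\mid\alpha$ is the warm-up: then $\alpha^2c^2-4\equiv -4$, a fixed nonzero square (since $p\geq 3$), so $\left(\tfrac{\alpha^2c^2-4}{p}\right)_{\!\!L}=1$ for every $c$, and the slice count is $p-1$ unless $\beta-c^2=0$, in which case it is $p + (p-1)$. The number of $c$ with $c^2=\beta$ is $1+\left(\tfrac{\beta}{p}\right)_{\!\!L}$, so
\[
N_p = p(p-1) + p\Bigl(1+\Bigl(\tfrac{\beta}{p}\Bigr)_{\!\!L}\Bigr) = p^2 + p\Bigl(\tfrac{\beta}{p}\Bigr)_{\!\!L},
\]
and one checks $\left(\tfrac{\beta}{p}\right)_{\!\!L}=\left(\tfrac{-\beta}{p}\right)_{\!\!L}$ here because $-4$ being a square forces $-1$ to be a square. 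This matches the stated formula.

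For $p\nmid\alpha$ the main work is the character sum over $c$. After substituting the slice counts one obtains, up to the easily-handled degenerate slices,
\[
N_p = \sum_{c\in\mathbb{F}_p}\Bigl(p - \Bigl(\tfrac{\alpha^2c^2-4}{p}\Bigr)_{\!\!L}\Bigr) + p\cdot\#\{c: c^2=\beta\} + (\text{correction at }\alpha^2c^2-4\equiv 0),
\]
so everything reduces to evaluating $T:=\sum_{c}\left(\tfrac{\alpha^2c^2-4}{p}\right)_{\!\!L}$ and being careful about the two values $c$ with $\alpha^2c^2\equiv 4$, where the form degenerates and the generic slice formula must be replaced. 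The sum $T$ is a standard Jacobi-type sum: completing, $\sum_c\left(\tfrac{a c^2+b}{p}\right)_{\!\!L} = -\left(\tfrac{a}{p}\right)_{\!\!L}$ when $b\neq 0$ (here $a=\alpha^2$ is a square, so this is just $-1$), which pins down $T$. I expect the main obstacle to be purely bookkeeping: correctly merging the contribution of the degenerate slices $\alpha^2c^2=4$ (where $x_1^2-\alpha c x_1 x_2 + x_2^2$ becomes $(x_1 \mp x_2)^2$) with the generic count, and then collecting all the pieces into the compact shape $p^2 + 1 + \left(\tfrac{\alpha^2\beta-4}{p}\right)_{\!\!L}\bigl[3+\left(\tfrac{\beta}{p}\right)_{\!\!L}\bigr]p$ — in particular recognizing that the combination of "$c^2=\beta$" terms and the $\alpha^2c^2-4$ character values reorganizes into a single Legendre symbol $\left(\tfrac{\alpha^2\beta-4}{p}\right)_{\!\!L}$, which is most transparent if one instead slices on $x_1$ (or symmetrizes), since $\alpha^2\beta-4$ is exactly the discriminant attached to the whole cubic rather than to an individual slice. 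I would do the computation that way to avoid an artificial case split, verify the two formulas agree when $p\mid\alpha$ as a consistency check, and note $p\geq 3$ is used both to invert $2$ and to make $\left(\tfrac{-4}{p}\right)_{\!\!L}$ meaningful.
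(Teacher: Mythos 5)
Your route is genuinely different from the paper's: the paper substitutes $u=2x_3-x_1x_2$ and evaluates $N_p$ by orthogonality, reducing everything to complete Gauss and character sums ($S_p(a)$, $T_p(a)$), whereas you slice on $x_3=c$ and add up standard conic point counts. Your method is more elementary and does assemble correctly in the case $p\nmid\alpha$: with $T=\sum_c\left(\frac{\alpha^2c^2-4}{p}\right)_{\!\!L}=-1$, the correction to the generic slice count $p-\left(\frac{\alpha^2c^2-4}{p}\right)_{\!\!L}$ equals $p\left(\frac{\alpha^2\beta-4}{p}\right)_{\!\!L}$ both at each of the $1+\left(\frac{\beta}{p}\right)_{\!\!L}$ slices with $c^2=\beta$ and at each of the two degenerate slices with $\alpha^2c^2=4$, giving $p^2+1+\left(\frac{\alpha^2\beta-4}{p}\right)_{\!\!L}\left[3+\left(\frac{\beta}{p}\right)_{\!\!L}\right]p$. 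Note that in your schematic display the term $p\cdot\#\{c:c^2=\beta\}$ must carry the factor $\left(\frac{\alpha^2\beta-4}{p}\right)_{\!\!L}$; as written it is off, though you flag the bookkeeping as outstanding.

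There is one concrete error you must repair: in the case $p\mid\alpha$ you assert that $-4$ is ``a fixed nonzero square,'' hence that $\left(\frac{-4}{p}\right)_{\!\!L}=1$ for every slice. This is false for $p\equiv3\pmod 4$, since $\left(\frac{-4}{p}\right)_{\!\!L}=\left(\frac{-1}{p}\right)_{\!\!L}$. Your computation as written yields $p^2+p\left(\frac{\beta}{p}\right)_{\!\!L}$, which disagrees with the stated $p^2+p\left(\frac{-\beta}{p}\right)_{\!\!L}$ precisely when $-1$ is a nonresidue, and the reconciliation you offer (``$-4$ being a square forces $-1$ to be a square'') assumes the false premise. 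The fix is immediate: set $\epsilon=\left(\frac{-1}{p}\right)_{\!\!L}$; each slice contributes $p-\epsilon$ unless $c^2=\beta$, where it contributes $p+(p-1)\epsilon$, and summing gives $p^2-p\epsilon+\left(1+\left(\frac{\beta}{p}\right)_{\!\!L}\right)p\epsilon=p^2+p\left(\frac{-\beta}{p}\right)_{\!\!L}$, as required. With that correction, and the bookkeeping in the $p\nmid\alpha$ case carried out as above, your argument is a valid and arguably cleaner alternative to the paper's exponential-sum proof.
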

\proof
It is clear we need only consider the cases $\alpha=0$ and $\alpha=1$, the latter when $p\nmid \alpha$, upon which we multiply through with $\alpha^2$ and change variables.

Write $S_{p}(a)=\sum_{u} e_{p}(au^{2})$ (where $e_{p}(x)=e^{\frac{2\pi ix}{p}}$) so that when $p\nmid a$, one has $S_{p}(a) = \left(\frac{a}{p}\right)_{\!\!L}S_{p}(1)$. When $\alpha =1$, putting $u \equiv 2x_3 -  x_1x_2 \, ({\rm mod}\, p)$ shows that we have the same number of solutions as the congruence
\[
4(x_{1}^{2} + x_{2}^{2}) + u^2 - x_{1}^{2}x_{2}^{2} \equiv 4\beta \, ({\rm mod}\,  p)  ,
\]
so that
\begin{equation}\label{a2}
N_{p} - p^{2} = \frac{1}{p}\sum_{a\not\equiv 0} T_{p}(a)S_{p}(a)e_{p}(-4a\beta)\,;
\end{equation}
here we obtained $p^2$ solutions when $a \equiv 0 \, ({\rm mod}\,  p)$, and we put
\[
T_{p}(a)= \sum_{x_1,x_2} e_{p}\left(a(4x_{1}^{2}+4x_{2}^{2} - x_{1}^{2}x_{2}^{2})\right) = \sum_{x_1}e_{p}(4ax_{1}^{2})S_{p}\left(a(4- x_{1}^{2})\right)\,.
\]
Breaking the sum over $x_1$ in $T_{p}$ above depending on when $x_1 \equiv \pm 2$ or not gives us
\[
T_{p}(a) = 2p\,e_{p}(16a) + \left(\frac{a}{p}\right)_{\!\!L}S_{p}(1)\sum_{x_1}\left(\frac{4- x_{1}^{2}}{p}\right)_{\!\!L}e_{p}(4ax_{1}^{2}).
\]
Summing over $a$ in \eqref{a2} gives $N_{p} = p^2 + \mathcalorig{E}_{1} + \mathcalorig{E}_{2}$, where
\begin{equation}\label{a3}
\mathcalorig{E}_{1}= 2S_{p}(1)\sum_{a}\left(\frac{a}{p}\right)_{\!\!L}\, e_{p}\left(a(16 -4\beta)\right) =2S_{p}(1)^2\left(\frac{4-\beta}{p}\right)_{\!\!L}\, ,
\end{equation}
and
\begin{equation}\label{a4}
\mathcalorig{E}_{2} = \frac{1}{p}S_{p}(1)^2 \sum_{x_1}\left(\frac{4- x_{1}^{2}}{p}\right)_{\!\!L}\ \sum_{a\not\equiv 0}e_{p}\left(4a(x_{1}^{2} - \beta)\right).
\end{equation}
Summing over $a$ in \eqref{a4}, we write $\mathcalorig{E}_{2} = -\mathcalorig{E}_{2,1} + \mathcalorig{E}_{2,2}$ with 
\begin{align*}
\mathcalorig{E}_{2,1} &= \frac{S_{p}(1)^2}{p}\sum_{x_{1}^{2} \not{\equiv} \beta}\ \left(\frac{4-x_{1}^{2}}{p}\right)_{\!\!L}\ ,\\ &=  \frac{S_{p}(1)^2}{p}\left[\sum_{x_1}\ \left(\frac{4-x_{1}^{2}}{p}\right)_{\!\!L} - \left(\frac{4-\beta}{p}\right)_{\!\!L}\left[1 + \left(\frac{\beta}{p}\right)_{\!\!L}\right]\right],
\end{align*}
and
\[
\mathcalorig{E}_{2,2} = \frac{S_{p}(1)^2}{p}(p-1) \left(\frac{4-\beta}{p}\right)_{\!\!L}\left[1 + \left(\frac{\beta}{p}\right)_{\!\!L}\right]. 
\]
Since $\sum_{x} \left(\frac{4-x_{1}^{2}}{p}\right)_{\!\!L} = -\left(\frac{-1}{p}\right)_{\!\!L}$, it follows from \eqref{a3} and \eqref{a4} that 
\[
N_{p} = p^2 + 2S_{p}(1)^2\left(\frac{4-\beta}{p}\right)_{\!\!L}  + \frac{S_{p}(1)^2}{p}\left(\frac{-1}{p}\right)_{\!\!L} + S_{p}(1)^2\left(\frac{4-\beta}{p}\right)_{\!\!L} \left[1+ \left(\frac{\beta}{p}\right)_{\!\!L}\right].
\]
Using $S_{p}(1)^2 = p\left(\frac{-1}{p}\right)_{\!\!L}$ then gives us 
\[
N_{p} = p^2 + \left(\frac{\beta -4}{p}\right)_{\!\!L}\left[3+\left(\frac{\beta}{p}\right)_{\!\!L}\right] p +1. 
\]
It follows that $N_{p} \geq p^2 - 4p +1 = (p-2)^2 -3 >0$ if $p \geq 5$. This is also true of $p=3$  as can be checked with different values of $\beta$.

Next, if  $p|\alpha$,
\[
N_{p} - p^2 = \frac{1}{p}\sum_{a\not\equiv 0} e_{p}(-\beta a)S_{p}(a)^3 = \frac{S_{p}(1)^3}{p}\sum_{a}e_{p}(-\beta a)\left(\frac{a}{p}\right)_{\!\!L}.
\]
If $p|\beta$, then $N_{p}=p^2$. If $p\nmid \beta$, then the right hand side is $p\left(\frac{-\beta}{p}\right)_{\!\!L}$. 

\subsection{Prime powers : $p\geq 5$}
\label{subsec5a}\ 

We have already considered this case in Corollary \ref{fifthpower}, but for completeness we give here the argument using Hensel's lemma.  Let $f = x_{1}^{2} + x_{2}^{2} + x_{3}^{2} -x_1x_2x_3 -k$, considered as three  functions of each variable. Then $f'$ takes the following three  forms: $2x_1 -x_2x_3$, $2x_2 -x_1x_3$ or $2x_3 -x_1x_2$. To obtain solutions modulo $p^{n+1}$ from those modulo $p^n$, it suffices that at least one of these derivatives  not vanish modulo $p^n$. We call such triples non-singular. If $(x_1,x_2,x_3)$ is such a non-singular solution modulo $p^n$ with say $2x_1- x_2x_3 \not\equiv 0 \, ({\rm mod}\,  p^n)$, then Hensel's lemma gives a solution to $f \equiv 0 \, ({\rm mod}\,  p^{n+1})$ of the form $(y_{1},x_2,x_3)$ with $y_{1} \equiv x_1 \, ({\rm mod}\,  p^n)$. This new triple is non-singular modulo $p^{n+1}$ so that by induction a non-singular solution modulo $p$ lifts to one modulo $p^n$ for any $n\geq 1$, for any prime $p\geq 5$. Note that $(3,3,3)$ is a non-singular solution when $p\vert k$, giving solutions modulo $p^n$. 

Next suppose the triple $(x_1,x_2,x_3)$ is a singular solution of  the congruence $f \equiv 0 \, ({\rm mod}\,  p)$ for $p\nmid k$, so that we  have  $2x_1 \equiv x_2x_3$, $2x_2 \equiv x_1x_3$ and  $2x_3 \equiv x_1x_2 \, ({\rm mod}\,  p)$. If we assume $p\nmid x_1x_2x_3$, then necessarily $x_{1}^{2} \equiv x_{2}^{2} \equiv x_{3}^{2} $ and $x_1x_2x_3 \equiv 2x_{1}^{2} \, ({\rm mod}\,  p)$. Substituting into $f\equiv 0 \, ({\rm mod}\,  p)$ gives $x_{1}^{2} \equiv k \,({\rm mod}\, p)$ so that $k$ must be a non-zero quadratic residue modulo $p$, so say $k \equiv u^2 \, ({\rm mod}\,  p)$. But then $(u,0,0)$ is a non-singular solution to $f\equiv 0 \,({\rm mod}\,  p)$, and so by above, lifts to a non-singular solution modulo $p^n$ for all $n\geq 1$.

Finally, suppose $p|x_1x_2x_3$ with $(x_1,x_2,x_3)$ singular. Then $p$ divides $x_1$, $x_2$ and $x_3$, so that $p^{2}|k$. But then $(3,3,3)$ is a non-singular solution modulo $p^2$ for all $p > 3$.  We can now apply Hensel's lemma as above, starting modulo $p^2$ and lifting to solutions modulo $p^n$ for all $n\geq 2$ and $p>3$.

\subsection{Prime powers : $p=3$}
\label{subsec5b}\ 

The congruence $f \equiv 0 \, ({\rm mod}\,  3)$ has the following non-singular solutions : when $k\equiv  1$, take $(1,0,0)$; and when $k \equiv -1$, take $(0,1,1)$. These solutions lift to solutions modulo $3^n$ for $n\geq 1$. 

When $k\equiv 0 \, ({\rm mod}\,  3)$, the only solution is the singular $(0,0,0)$. We now consider this case modulo 9.
Since $3$ divides each of $x_1$, $x_2$ and $x_3$, then necessarily when $k \equiv 3$ or $6$ mod $9$, there are no solutions. So assume $9|k$, in which case $(3,0,0)$ is a non-singular solution modulo 9 and so lifts to solutions modulo $3^n$, $n\geq 2$.

\subsection{Prime powers : $p=2$}
\label{subsec5c}\ 

Modulo 2, $f' \equiv$ $x_1x_2$ or $x_1x_3$ or $x_2x_3$. Thus if $k$ is even, one may use the non-singular solution $(1,1,1)$ to obtain solutions modulo powers of 2. When $k$ is odd, the only solution is the singular $(0,0,1)$. Then necessarily $k\equiv 3 \, ({\rm mod}\,  4)$ has no solutions. So assume $k\equiv 1 \, ({\rm mod}\,  4)$ and we find the non-singular solution $(1,0,0)$ modulo 4 (note that here one uses $f' \equiv 2x_1 -x_2x_3 \not\equiv 0 \, ({\rm mod}\,  4)$). This then lifts to higher powers of 2.


\section{The average of $\mathfrak{h}_{M}^{\pm}(k)$: counting lattice points.}
\label{sec6}
We show here that the average of $\mathfrak{h}_{M}^{\pm}(k)$ is $C^{\pm}(\log{k})^{2}$, by counting lattice points in the domains given in Theorem \ref{Thm1} ((see the paragraph containing \eqref{avg} for definitions). We provide the details for $k>5$. 

Fix $u_{1}=a$ with  $3\leq a \ll K^{\frac{1}{3}}$ and write $u_2=m$ and $u_3=n$. We determine the asymptotics of $N_a(K)$,  the number of pairs $(m,n)$ satisfying the inequality $a^2 + m^2 +n^2 +amn \leq K$ with $a\leq m\leq n$. We have 
\[
m\leq n\leq \frac{1}{2}\left(-am + \sqrt{4(K-a^2) +(a^2 -4)m^2}\right) \, ,
\]
so that $m\leq K_a$, with $K_a = \sqrt{\frac{K-a^2}{a+2}}$. Hence 
\[
N_a(K) = \frac{1}{2}\sum_{a\leq m\leq K_a} \left\{\sqrt{4(K-a^2) +(a^2 -4)m^2} -(a+2)m\right\} + O\left(\sqrt{\frac{K}{a}}\right)\, .
\]
The function in the sum is decreasing in $m$ and the contribution from the endpoints are $O(\sqrt{K})$. Hence
\[
N_a(K) = \frac{1}{2}\int_{a}^{K_a} \left\{\sqrt{4(K-a^2) +(a^2 -4)x^2} -(a+2)x\right\} dx + O\left(\sqrt{K}\right)\, .
\]
Changing variables gives 
\[
N_a(K) = 2\frac{K-a^2}{\sqrt{a^2 -4}}\int_{\alpha}^{\beta} \left\{\sqrt{1+ x^2} - \frac{a+2}{\sqrt{a^2 -4}}x\right\} dx + O\left(\sqrt{K}\right)\, ,
\]
where $\beta = \frac{\sqrt{a-2}}{2}$ and $\alpha = O(aK^{-\frac{1}{2}})$. Replacing $\alpha$ with zero gives an error of $O(\sqrt{K})$ and the integral becomes
\[
\frac{1}{2}\left\{\beta\sqrt{1+\beta^2} + \log{\left(\beta + \sqrt{1+\beta^2}\right)} - \frac{a+2}{\sqrt{a^2 -4}}\beta^2\right\}\, .
\]
Simplifying gives us 

\begin{lemma}
\label{av0}
 For $3\leq a\ll K^{\frac{1}{3}}$, the number of pairs $(m,n)$ satisfying the inequality $a^2 + m^2 +n^2 +amn \leq K$ with $a\leq m\leq n$ is
\[
N_{a}(K) = \log{\left[\frac{\sqrt{a-2} + \sqrt{a+2}}{2}\right]}\frac{K-a^2}{\sqrt{a^2 -4}} + O(\sqrt{K}).
\]
\end{lemma}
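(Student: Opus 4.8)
The plan is to evaluate $N_a(K)$ by reducing the lattice-point count to an area computation, then to a single one-dimensional integral, and finally to a closed form. First I would fix $u_1 = a$ with $3 \le a \ll K^{1/3}$ and, writing $u_2 = m$, $u_3 = n$, solve the quadratic inequality $a^2 + m^2 + n^2 + amn \le K$ for $n$ in terms of $m$: completing the square in $n$ gives the upper bound $n \le \tfrac12\bigl(-am + \sqrt{4(K-a^2) + (a^2-4)m^2}\bigr)$, and the constraint $m \le n$ forces $m \le K_a := \sqrt{(K-a^2)/(a+2)}$. Thus for each admissible $m$ the number of valid $n$ is the length of an interval, namely $\tfrac12\{\sqrt{4(K-a^2)+(a^2-4)m^2} - (a+2)m\}$ up to an error $O(1)$ per value of $m$, and summing the $O(1)$ errors over the $O(\sqrt{K/a})$ values of $m$ contributes $O(\sqrt{K/a}) = O(\sqrt{K})$.

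Next I would replace the sum over $m$ by an integral. Since the summand $\sqrt{4(K-a^2)+(a^2-4)x^2} - (a+2)x$ is monotone decreasing on $[a, K_a]$ (its derivative $(a^2-4)x/\sqrt{\cdots} - (a+2)$ is negative once one checks signs, and in any case monotonicity on the relevant range suffices), the difference between the sum and the integral $\int_a^{K_a}$ is bounded by the total variation, which is controlled by the endpoint values, each of size $O(\sqrt{K})$. So $N_a(K) = \tfrac12 \int_a^{K_a} \{\sqrt{4(K-a^2)+(a^2-4)x^2} - (a+2)x\}\,dx + O(\sqrt{K})$. Then I would substitute $x = \tfrac{2\sqrt{K-a^2}}{\sqrt{a^2-4}}\, y$ to normalize the radical, turning the integral into $2\,\tfrac{K-a^2}{\sqrt{a^2-4}} \int_\alpha^\beta \{\sqrt{1+y^2} - \tfrac{a+2}{\sqrt{a^2-4}}\, y\}\,dy$, where the new upper limit is $\beta = \tfrac{\sqrt{a-2}}{2}$ (obtained by plugging $x = K_a$ and simplifying $\sqrt{(K-a^2)/(a+2)} \cdot \tfrac{\sqrt{a^2-4}}{2\sqrt{K-a^2}} = \tfrac{\sqrt{a-2}}{2}$) and the lower limit is $\alpha = a \cdot \tfrac{\sqrt{a^2-4}}{2\sqrt{K-a^2}} = O(aK^{-1/2})$.

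Finally I would discard the lower limit: replacing $\alpha$ by $0$ changes the integral by $O(\alpha)$, and since the prefactor is $O(K/a)$ this is an error $O(K/a \cdot aK^{-1/2}) = O(\sqrt{K})$. The remaining integral $\int_0^\beta \{\sqrt{1+y^2} - \tfrac{a+2}{\sqrt{a^2-4}}\, y\}\,dy$ is elementary: $\int_0^\beta \sqrt{1+y^2}\,dy = \tfrac12(\beta\sqrt{1+\beta^2} + \log(\beta + \sqrt{1+\beta^2}))$ and $\int_0^\beta y\,dy = \tfrac12\beta^2$, giving $\tfrac12\{\beta\sqrt{1+\beta^2} + \log(\beta+\sqrt{1+\beta^2}) - \tfrac{a+2}{\sqrt{a^2-4}}\beta^2\}$. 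Substituting $\beta^2 = (a-2)/4$, so $1+\beta^2 = (a+2)/4$ and $\sqrt{1+\beta^2} = \tfrac12\sqrt{a+2}$, one finds $\beta\sqrt{1+\beta^2} = \tfrac14\sqrt{a^2-4}$ and $\tfrac{a+2}{\sqrt{a^2-4}}\beta^2 = \tfrac{a+2}{\sqrt{a^2-4}} \cdot \tfrac{a-2}{4} = \tfrac14\sqrt{a^2-4}$, so these two terms cancel exactly, leaving only the logarithmic term $\tfrac12\log(\beta + \sqrt{1+\beta^2}) = \tfrac12 \log\bigl(\tfrac{\sqrt{a-2}+\sqrt{a+2}}{2}\bigr)$. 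Multiplying by the prefactor $2\,\tfrac{K-a^2}{\sqrt{a^2-4}}$ and the outer $\tfrac12$ from the area formula yields $N_a(K) = \log\bigl(\tfrac{\sqrt{a-2}+\sqrt{a+2}}{2}\bigr)\tfrac{K-a^2}{\sqrt{a^2-4}} + O(\sqrt{K})$, as claimed. The only mildly delicate point is bookkeeping the error terms uniformly in $a$ over the range $a \ll K^{1/3}$ — one must check that each $O(\sqrt{K})$ really is uniform and not secretly growing with $a$ — but since $K_a \le \sqrt{K}$ and all endpoint evaluations are $\le \sqrt{4K} = O(\sqrt{K})$, this is straightforward; the exact cancellation of the two non-logarithmic terms is the pleasant feature that makes the final form clean.
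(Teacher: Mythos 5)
Your argument reproduces the paper's proof of this lemma step for step: the same reduction of the count to $\tfrac12\sum_{a\le m\le K_a} g(m)$ with $g(m)=\sqrt{4(K-a^2)+(a^2-4)m^2}-(a+2)m$ and $K_a=\sqrt{(K-a^2)/(a+2)}$, the same sum-to-integral comparison via monotonicity of $g$, the same normalizing substitution with upper limit $\beta=\tfrac{\sqrt{a-2}}{2}$, and the same exact cancellation of the non-logarithmic terms at $y=\beta$. In terms of method there is nothing that distinguishes the two.

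There is, however, one step whose error accounting does not hold up, and it is present in identical form in the paper: the assertion $\alpha=a\cdot\tfrac{\sqrt{a^2-4}}{2\sqrt{K-a^2}}=O(aK^{-1/2})$. Since $\sqrt{a^2-4}\asymp a$, in fact $\alpha\asymp a^2K^{-1/2}$, so the error from replacing the lower limit $\alpha$ by $0$ is of size $(\text{prefactor})\times\alpha\asymp \tfrac{K}{a}\cdot\tfrac{a^2}{\sqrt K}=a\sqrt K$, not $O(\sqrt K)$. One sees this directly without the substitution: $\tfrac12\int_0^a g(x)\,dx\asymp a\sqrt K$ because $g\asymp\sqrt K$ throughout $[0,a]$ when $a\ll K^{1/4}$ (and for $a\gg K^{1/4}$ the discarded piece grows to be comparable to the main term itself). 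Carrying the lower endpoint exactly gives $\tfrac12\int_a^{K_a}g=\log\bigl[\tfrac{\sqrt{a-2}+\sqrt{a+2}}{2}\bigr]\tfrac{K-a^2}{\sqrt{a^2-4}}-\tfrac{a}{4}g(a)-\tfrac{K-a^2}{\sqrt{a^2-4}}\log\bigl(\alpha+\sqrt{1+\alpha^2}\bigr)$, and the two subtracted terms are each $\asymp a\sqrt K$. So the stated error $O(\sqrt K)$ is correct only for bounded $a$; uniformly in $3\le a\ll K^{1/3}$ the error should read $O(a\sqrt K)$ (or the endpoint terms should be retained). This is not a defect of your reasoning relative to the source, but if you intend the lemma to be used with its stated uniformity, this is the step you would need to repair.
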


\begin{lemma}
\label{average}
 Let  $R^{+}(K)$ be  the number of points $(x_1,x_2,x_3)$ satisfying $x_{1}^2 +x_{2}^2 +x_{3}^2 +x_1x_2x_3 \leq K$, with $3\leq x_1\leq x_2\leq x_3$\, .  Then 
\[
R^{+}(K) = \frac{1}{36}K(\log{K})^2   +O(K\log{K})\, .
\]
\end{lemma}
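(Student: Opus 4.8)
The plan is to sum the count $N_a(K)$ of Lemma~\ref{av0} over all admissible values of the smallest coordinate. Writing $a=x_1$, $m=x_2$, $n=x_3$, a point counted by $R^{+}(K)$ has $3\le a\le m\le n$ and $a^2+m^2+n^2+amn\le K$, and since $a\le m\le n$ this forces $a^3\le amn\le K$. Hence $R^{+}(K)=\sum_{3\le a\le K^{1/3}}N_a(K)$, and the problem is reduced to evaluating this sum.

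For $a$ up to a small constant times $K^{1/3}$, Lemma~\ref{av0} gives $N_a(K)=L(a)\,\dfrac{K-a^2}{\sqrt{a^2-4}}+O(\sqrt K)$ with $L(a)=\log\!\big[\tfrac12(\sqrt{a-2}+\sqrt{a+2})\big]$. The accumulated error from the $O(\sqrt K)$ terms is $O(K^{1/3}\cdot\sqrt K)=O(K^{5/6})$. For the main term I would insert the elementary expansions, as $a\to\infty$,
\[
L(a)=\tfrac12\log a+O(a^{-2}),\qquad \frac{K-a^2}{\sqrt{a^2-4}}=\frac{K}{a}+O\!\Big(\frac{K}{a^3}\Big)+O(a),
\]
the first coming from $\sqrt{a-2}+\sqrt{a+2}=2\sqrt a\,\big(1-\tfrac1{2a^2}+O(a^{-4})\big)$. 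Multiplying these out, the product $L(a)\frac{K-a^2}{\sqrt{a^2-4}}$ equals $\frac{K}{2}\cdot\frac{\log a}{a}$ plus terms whose sum over $3\le a\le K^{1/3}$ is $O(K)+O(K^{2/3}\log K)$ (coming from $\sum\log a\cdot K/a^3$, $\sum\log a\cdot a$, and $\sum a^{-2}\cdot K/a$). For $a$ within a fixed constant factor of $K^{1/3}$, where I may not be able to invoke Lemma~\ref{av0} directly, I would bound things crudely: there $m,n$ both lie in an interval of length $O(K^{1/3})$ since $n\le K/(am)\le K/a^2\ll K^{1/3}$, so $N_a(K)=O(K^{2/3})$ and this subrange contributes $O(K^{1/3}\cdot K^{2/3})=O(K)$. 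Collecting everything,
\[
R^{+}(K)=\frac{K}{2}\sum_{3\le a\le K^{1/3}}\frac{\log a}{a}+O(K\log K).
\]

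It then remains to evaluate the sum. By partial summation against $\int_1^X\frac{\log t}{t}\,dt=\tfrac12(\log X)^2$ (equivalently Euler--Maclaurin), $\sum_{a\le X}\frac{\log a}{a}=\tfrac12(\log X)^2+C_1+O(X^{-1}\log X)$ for an absolute constant $C_1$; taking $X=K^{1/3}$ gives $\sum_{3\le a\le K^{1/3}}\frac{\log a}{a}=\tfrac12\cdot\tfrac19(\log K)^2+O(1)=\tfrac1{18}(\log K)^2+O(1)$, whence $R^{+}(K)=\tfrac1{36}K(\log K)^2+O(K\log K)$. There is no serious obstacle here: the only points requiring attention are the bookkeeping of the secondary terms and the uniformity in $a$ of the error in Lemma~\ref{av0} near the top of the range $a\le K^{1/3}$, and in fact a little more care sharpens the error term to $O(K)$.
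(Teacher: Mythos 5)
Your proposal is correct and follows essentially the same route as the paper: sum $N_a(K)$ from Lemma \ref{av0} over $3\le a\le K^{1/3}$, expand the main term as $\frac{K}{2}\cdot\frac{\log a}{a}$ plus admissible errors, and evaluate $\sum_{a\le K^{1/3}}\frac{\log a}{a}\sim\frac{1}{18}(\log K)^2$. The extra care you take near the top of the range of $a$ and in bookkeeping the secondary terms is sound but not needed beyond what the paper already does.
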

\proof{} It follows from the previous lemma that
\[{}
R^{+}(K) = \sum_{{3\leq a\leq K^{\frac{1}{3}}}}\, \log{\left[\frac{\sqrt{a-2} + \sqrt{a+2}}{2}\right]}\frac{K}{\sqrt{a^2 -4}} + O\left(K^{\frac{5}{6}}\right).
\]
The main term is asymptotic to $\frac{K}{2}\sum_{a}\frac{\log{a}}{a} \sim \frac{K}{4}(\log{K^{\frac{1}{3}}})^{2}$\, .\qed

We also state, without details, the analogous count for the case of $k<0$ in Theorem \ref{Thm1}(ii).

\begin{lemma}\label{average2}
 Let  $R^{-}(K)$ be  the number of points $(x_1,x_2,x_3)$ satisfying $x_{1}^2 +x_{2}^2 +x_{3}^2 - x_1x_2x_3 = -k$, with $0<k\leq K$  and $3\leq x_1\leq x_2\leq x_3\leq \frac{1}{2}x_1x_2$ .  Then 
\[
R^{-}(K) = \frac{1}{48}K(\log{K})^2   +O(K\log{K})\, .
\]
\end{lemma}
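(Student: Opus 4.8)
The plan is to mirror the proof of Lemma \ref{average}, replacing the positive cross-term by a negative one and carrying the extra constraint $x_3 \leq \tfrac12 x_1 x_2$. As before, fix $x_1 = a$ with $3 \leq a$ and set $x_2 = m$, $x_3 = n$, and count the lattice points $(m,n)$ with $a \leq m \leq n \leq \tfrac12 am$ subject to $a^2 + m^2 + n^2 - amn \geq -K$, i.e. $a^2 + m^2 + n^2 - amn + K \geq 0$. First I would analyze, for each fixed $a$ and $m$, the range of $n$: the quadratic $n^2 - amn + (a^2+m^2+K)$ in $n$ is nonnegative outside the interval between its two roots $\tfrac12(am \pm \sqrt{(a^2-4)m^2 - 4(a^2 + K)})$ when the discriminant is positive, and the constraint $n \leq \tfrac12 am$ forces us onto the lower branch $n \leq \tfrac12(am - \sqrt{(a^2-4)m^2 - 4(a^2+K)})$, together with $n \geq m$. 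So the count of admissible $n$ for fixed $a,m$ is, up to $O(1)$,
\[
\tfrac12\left(am - \sqrt{(a^2-4)m^2 - 4(a^2+K)}\right) - m,
\]
valid once $(a^2-4)m^2 \geq 4(a^2+K)$, i.e. once $m \geq M_a := \sqrt{\tfrac{4(a^2+K)}{a^2-4}}$; for smaller $m$ the whole segment $m \leq n \leq \tfrac12 am$ is admissible, contributing $\tfrac12 am - m + O(1)$, but one must also observe that $n \leq \tfrac12 am$ and $n \geq m$ together with $n \ll \sqrt{Ka}$ (the earlier bound $x_3 \ll \sqrt{|k| x_1}$ from Lemma \ref{lem2.2}) bound the relevant region, which keeps $a \ll K^{1/3}$ as the effective range.

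Next I would sum over $m$. The small-$m$ regime $m \leq M_a$ contributes $\sum_{m \leq M_a}(\tfrac12 am - m) \sim \tfrac{a}{4} M_a^2 \asymp K$ per value of $a$, which after summing over $a \ll K^{1/3}$ gives only $O(K^{4/3})$... so I must be more careful: actually $M_a \asymp \sqrt{K}/a$ for $a$ not too large, so $\tfrac a4 M_a^2 \asymp \tfrac a4 \cdot \tfrac{K}{a^2} = \tfrac{K}{4a}$, and $\sum_{a \leq K^{1/3}} \tfrac{K}{4a} \asymp K \log K$, an error term. The main term comes from the large-$m$ regime $M_a \leq m \leq \tfrac12 a m$-consistent range, where I replace the sum by an integral (the summand is monotone in $m$, endpoints contribute $O(\sqrt{K})$ per $a$ as in Lemma \ref{av0}):
\[
N_a^-(K) = \tfrac12 \int_{M_a}^{m^*} \left( am - \sqrt{(a^2-4)m^2 - 4(a^2+K)} - 2m \right) dm + O(\sqrt K),
\]
where the upper limit $m^*$ is where the lower root equals $m$, i.e. where $\tfrac12(am - \sqrt{\cdots}) = m$, equivalently $(a-2)m = \sqrt{(a^2-4)m^2 - 4(a^2+K)}$, giving $m^* = \sqrt{\tfrac{a^2+K}{a-2}}$ after squaring. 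A change of variables $m = \tfrac{2\sqrt{a^2+K}}{\sqrt{a^2-4}}\cosh t$ (or the algebraic substitution analogous to the one in Lemma \ref{av0}) turns this into an elementary integral whose dominant contribution is $\asymp (K/a)\log a$, and then $\sum_{3 \leq a \ll K^{1/3}} \tfrac{K \log a}{\sqrt{a^2-4}} \sim \tfrac{K}{2}\sum_a \tfrac{\log a}{a} \sim \tfrac{K}{4}(\log K^{1/3})^2 = \tfrac{K}{36}(\log K)^2$. The discrepancy between $\tfrac{1}{36}$ here and $\tfrac{1}{48}$ in the statement must come from the geometry of the truncated region: the constraint $n \leq \tfrac12 am$ removes roughly a quarter of... more precisely, I expect the correct constant bookkeeping to yield exactly $\tfrac{1}{48}$, and pinning this down is where I would be most careful.

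The main obstacle, then, is not the method — it is a direct transcription of Sections \ref{sec6}'s argument — but the precise constant: one must track the upper cutoff $m^*$ and the contribution of the lower root correctly, since the region $\{3 \leq x_1 \leq x_2 \leq x_3 \leq \tfrac12 x_1 x_2\}$ is genuinely a proper subregion of the $k>0$ fundamental domain, and the factor-of-$\tfrac{1}{48}$ versus $\tfrac{1}{36}$ hinges on getting the inner integral's leading asymptotic $\int (\text{integrand})\,dm$ right up to its constant. I would compute the inner integral exactly (it evaluates to a combination of $m\sqrt{(a^2-4)m^2 - 4(a^2+K)}$, an $\operatorname{arccosh}$ term, and $m^2$ terms), evaluate at $m = M_a$ and $m = m^*$, extract the coefficient of $K \log a / \sqrt{a^2-4}$, and then sum over $a$ exactly as in Lemma \ref{average}. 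The error terms $O(K \log K)$ absorb: the $O(\sqrt K)$-per-$a$ rounding and endpoint errors summed over $a \ll K^{1/3}$ give $O(K^{5/6})$; the small-$m$ regime gives $O(K \log K)$; and replacing $M_a$ and tail truncations by clean values gives lower-order contributions. Since the paper states this "without details," I would present the statement of the inner integral and the final summation, flagging that the constant $\tfrac{1}{48}$ results from the additional $x_3 \leq \tfrac12 x_1 x_2$ constraint relative to Lemma \ref{average}.
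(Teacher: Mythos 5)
Your reduction is set up correctly (lower root $n_-(K)$, the thresholds $M_a$ and $m^*$), but the proof as outlined does not close: it provably yields $\tfrac{1}{36}$, not $\tfrac1{48}$, and the place you propose to look for the missing factor is not where it lives. The inner integral you plan to compute evaluates exactly to
\[
\tfrac12\int_{M_a}^{m^*}\Bigl((a-2)m-\sqrt{(a^2-4)m^2-4(a^2+K)}\Bigr)\,dm \;=\; \frac{a^2+K}{\sqrt{a^2-4}}\,\log\frac{\sqrt{a+2}+\sqrt{a-2}}{2}\;-\;\frac{a^2+K}{a+2},
\]
and the term $-\frac{a^2+K}{a+2}$ cancels exactly against your "small-$m$" contribution $\sum_{a\le m<M_a}\frac{(a-2)m}{2}=\frac{(a-2)M_a^2}{4}+O(a^3+\sqrt K)=\frac{a^2+K}{a+2}+O(a^3+\sqrt K)$. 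So per fixed $a$ you recover precisely the leading term of Lemma \ref{av0}, and summing over $3\le a\ll K^{1/3}$ gives $\tfrac{1}{36}K(\log K)^2$. In particular the constraint $x_3\le\tfrac12x_1x_2$ is not the source of the new constant: in ray coordinates $n=cm$ the sector $1\le c\le a/2$ for the indefinite form $amn-m^2-n^2$ carries the same area $\frac{K}{\sqrt{a^2-4}}\log\frac{\sqrt{a+2}+\sqrt{a-2}}{2}$ as the full sector $c\ge1$ does for $m^2+n^2+amn$, so no "careful bookkeeping" of that integral will turn $\tfrac1{36}$ into $\tfrac1{48}$.

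The missing idea is the interaction of the lower limit $m\ge a$ with $M_a$. One has $M_a=2\sqrt{(a^2+K)/(a^2-4)}\ge a$ only for $a\lesssim(4K)^{1/4}$; for $(4K)^{1/4}\lesssim a\ll K^{1/3}$ your small-$m$ regime is empty and the main integral must run over $[a,m^*]$, not $[M_a,m^*]$. Equivalently, only rays with $ac-1-c^2\le 1+K/a^2$ meet $\{m\ge a\}$, which truncates the $c$-integral at $c_{\max}\approx K/a^3<a/2$ and replaces $\log\sqrt a$ by $\tfrac12\log(K/a^3)$, i.e. $N_a^-(K)\approx\frac{K}{2a}\log\frac{K}{a^3}$ there. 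Summing, the range $a\le K^{1/4}$ gives $\frac K2\sum\frac{\log a}{a}\sim\frac{K(\log K)^2}{64}$ and the range $K^{1/4}<a<K^{1/3}$ gives $\frac{K(\log K)^2}{2}\int_{1/4}^{1/3}(1-3u)\,du=\frac{K(\log K)^2}{192}$, and $\frac1{64}+\frac1{192}=\frac1{48}$. (Note that the same truncation is needed for $a>K^{1/4}$ in the proof of Lemma \ref{av0}, where in fact $\alpha\asymp a^2K^{-1/2}$ rather than $O(aK^{-1/2})$ and discarding $\int_0^\alpha$ is no longer an $O(\sqrt K)$ error; so a verbatim transcription of that argument cannot be the route to $\tfrac1{48}$.)
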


\section{Failures of the Hasse Principle}
\label{sec7}

The fundamental sets allows us to determine Hasse failures for small $k$ very readily. For example, direct computations reveal that the smallest positive Hasse failure occurs with $k=46$. That $k=46$ is a Hasse failure can be verified  by applying  Theorem \ref{Thm1} as follows: either $k=46$ is exceptional or there exist $3\leq x_1\leq x_2\leq x_3$ such that $x_{1}^2 + x_{2}^2 +x_{3}^2 +x_1x_2x_3=46$. The latter cannot occur since the smallest value of the polynomial is $54$. To determine if $46$ is exceptional, since it is not a sum of two squares and since 42 is not a square, it remains to check if the equation $x_{2}^2 + x_{3}^2 - x_{2}x_{3} = 45$ has any solutions with $x_2,\ x_3\ \in \mathbb{Z}$. The equation implies that $3|x_2$ and $3|x_3$, so that we consider the solvability of $y_{1}^2 + y_{2}^2 -y_{1}y_{2}=5$. This is equivalent to the solvability of $u_{1}^2 + 3u_{2}^2 = 20$, which is impossible by congruence modulo 5 or otherwise.

Let $V_{k}(\mathbb{Z})$ denote the integral points on the surface $x_{1}^2 + x_{2}^2 + x_{3}^2 -x_1x_2x_3 = k$, for $k\in \mathbb{Z}$. For $k=4+d$, the surface $V_k$ is the singular Cayley surface when reduced modulo $d$. Its features, coupled with global quadratic reciprocity, yield failures of strong approximation (mod $4d$). For example, assume that $n \to \left(\frac{4d}{n}\right)$ is a primitive Dirichlet character (mod $4d$) and let $S_d \subset \mathbb{Z}/4d\mathbb{Z}$ be the multiplicative closed set $\left\{n\ :\ \left(\frac{4d}{n}\right) =0\ \text{or}\ 1\right\}$. Then, for any ${\bf x}=(x_1,x_2,x_3) \in V_{k}(\mathbb{Z})$ one has 
\begin{equation}\label{7a}
x_{j}^2 -4 \in S_d\ (\text{mod}\ 4d)\ ,\ \text{for}\ j=1, 2, 3\ .
\end{equation}
These congruences on $x_j$ imposed by \eqref{7a} are not consequences of local considerations and so strong approximation fails for $V_{k}(\mathbb{Z})$, at least (mod $4d$). 

To see \eqref{7a}, we rewrite \eqref{1a} as
\begin{equation}\label{7b}
w^2 -4d = \left(x_{1}^2 -4\right)\left(x_{2}^2 -4\right)\ ,
\end{equation}
with $w= 2x_3 - x_1x_2$. Now, if $x_{1}^2 -4 = p_1p_2\ldots p_l$ with $p_j$ primes (possibly with repetition), then $w^2 \equiv 4d \ (\text{mod}\ p_j)$ and hence $\left(\frac{4d}{p_j}\right) =0$ or $1$. Thus $p_j \in S_d$ for each $j$, and hence so does $x_{1}^2 -4$. The same applies to $x_{2}^2 -4$ and $x_{3}^2 -4$. Quadratic reciprocity then implies that the $x_j$'s must lie in certain congruence classes (mod $4d$).

As we now show, by specializing the $k$'s and enhancing the analysis above, we can eliminate all the candidate congruence classes and produce families of Hasse failures. We turn to these and the proof of Theorem \ref{Thm2}(i) in the Introduction, which follows from Prop. \ref{HF1} below.

\begin{proposition}\label{HF1} For the following choices of $k$, $V_{k}(\mathbb{Z})$ is empty but  $V_{k}(\mathbb{Z}_p)$ is non-empty   for all primes $p$ :
\begin{enumerate}[label=(\roman*).]
\item  For $k<0$, choose $k= 4 - 2\nu^2$, with odd $\nu$ having all its prime factors congruent to $1$ or $3$ modulo 8.
\item  For $k>4$, choose $k= 4 + 2\nu^2$ with $\nu$ having all of its prime factors in the congruence classes $\{\pm 1\}$ modulo 8, and in addition with  $\nu \in \{0,\ \pm3,\ \pm4\}$ modulo 9.
\item Suppose $\ \ell\geq 13$ is a prime number with $\ell \equiv \pm 4$  (mod 9). Then choose $k = 4 + 2\ell^2$. 
\end{enumerate}
The smallest positive $k$ here is 342.
\end{proposition}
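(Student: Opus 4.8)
The plan is to establish two things for the stated families: (a) local solubility at every prime $p$ (i.e. $k$ is admissible, hence $V_k(\mathbb{Z}_p)\neq\emptyset$ by Proposition \ref{local}), and (b) emptiness of $V_k(\mathbb{Z})$ via the factorization \eqref{7b} combined with quadratic reciprocity. For (a) one simply checks the congruence conditions defining admissibility: $k\not\equiv 3\ (\mathrm{mod}\ 4)$ and $k\not\equiv \pm3\ (\mathrm{mod}\ 9)$. With $k=4\pm2\nu^2$ and $\nu$ odd (which is forced, since all prime factors of $\nu$ are $\equiv\pm1\ (\mathrm{mod}\ 8)$, hence odd), we get $k\equiv 4\pm2\equiv 2$ or $6\ (\mathrm{mod}\ 8)$, so certainly $k\not\equiv 3\ (\mathrm{mod}\ 4)$; and from $\nu\in\{0,\pm3,\pm4\}\ (\mathrm{mod}\ 9)$ we have $\nu^2\in\{0,0,7\}=\{0,7\}\ (\mathrm{mod}\ 9)$, so $k=4\pm2\nu^2\in\{4,4\pm14\}=\{4,0,-1,1\}\ (\mathrm{mod}\ 9)$ — never $\pm3$. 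So $k$ is admissible and in fact generic (one should also note it is not exceptional, which follows because the emptiness argument rules out even the points with small coordinates; alternatively one checks directly that $k=4+d$ with $d=\pm2\nu^2$ makes conditions (i)–(iii) of exceptionality fail, using quadratic reciprocity on $d$). Case (ii) is the special case $\nu=\ell$ with a single prime, and the condition $\ell\equiv\pm4\ (\mathrm{mod}\ 9)$ is exactly $\nu\in\{\pm4\}\ (\mathrm{mod}\ 9)$; one must also check $\ell\equiv\pm1\ (\mathrm{mod}\ 8)$ is \emph{not} needed here and is replaced by a direct reciprocity computation — this is the point where case (ii) differs and needs its own argument.

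The heart of the matter is step (b). Suppose ${\bf x}=(x_1,x_2,x_3)\in V_k(\mathbb{Z})$ with $k=4+d$, $d=\pm2\nu^2$. By \eqref{7b}, $w^2-4d=(x_1^2-4)(x_2^2-4)$ with $w=2x_3-x_1x_2$, and cyclically for the other pairs. The idea, following the discussion after \eqref{7a}, is: for any prime $p$ dividing $x_j^2-4$, the relation $w_j^2\equiv 4d\ (\mathrm{mod}\ p)$ (with the appropriate $w_j$) forces $4d$ to be a square mod $p$, hence $\left(\frac{d}{p}\right)\in\{0,1\}$ (as $4$ is a square). Therefore every prime factor of each $x_j^2-4$ lies in the set $S_d$ of residues $n$ with $\left(\frac{d}{n}\right)\in\{0,1\}$ — but here I want the \emph{Jacobi} symbol $\left(\frac{d}{\cdot}\right)$, which is completely multiplicative in the denominator, so multiplicativity gives $\left(\frac{d}{x_j^2-4}\right)\in\{0,1\}$ for each $j$ (taking absolute values / handling signs appropriately, writing $x_j^2-4=\pm\prod p_i$ and noting $x_j^2-4\ge -4$, with the small cases $x_j^2-4\in\{-4,-3,0\}$ handled separately as these correspond exactly to exceptional points which the genericity excludes). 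Now apply quadratic reciprocity to flip $\left(\frac{d}{x_j^2-4}\right)$ into a condition on $x_j$ modulo $4|d|$: with $d=\pm2\nu^2$ and $\nu$'s prime factors all $\equiv\pm1\ (\mathrm{mod}\ 8)$, the symbol $\left(\frac{d}{m}\right)$ simplifies dramatically — the $\nu^2$ part contributes a perfect square (hence $1$ whenever coprime to $m$), and the $\pm2$ part contributes $\left(\frac{\pm2}{m}\right)$, which depends only on $m\ (\mathrm{mod}\ 8)$. So the constraint becomes: $x_j^2-4$, as a residue mod $8$, must lie in the kernel of $\left(\frac{\pm2}{\cdot}\right)$. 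Running through $x_j\ (\mathrm{mod}\ 8)$: squares mod $8$ are $\{0,1,4\}$, so $x_j^2-4\in\{-4,-3,0\}\equiv\{4,5,0\}\ (\mathrm{mod}\ 8)$. One then checks $\left(\frac{2}{\cdot}\right)$ and $\left(\frac{-2}{\cdot}\right)$ on these residues and on the relevant refinement mod $4|d|$ to derive a contradiction — the claim is that no choice of $(x_1,x_2,x_3)$ survives, so $V_k(\mathbb{Z})=\emptyset$. The reciprocity bookkeeping is analogous to Mordell's \cite{Mor53} and to the Brauer–Manin-type obstructions of \cite{CThW12}.

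The step I expect to be the main obstacle is getting the quadratic reciprocity bookkeeping to actually \emph{close} — i.e. showing the candidate congruence classes for the $x_j$ are all eliminated, not merely restricted. The subtlety is threefold: (1) handling signs and the prime $2$ carefully in the Jacobi symbol $\left(\frac{d}{x_j^2-4}\right)$ when $x_j^2-4$ is negative or even (here $x_j$ odd forces $x_j^2-4$ odd, and one should argue $x_j$ must be odd — if $x_j$ were even then from the Markoff equation mod small powers of $2$, using $k\equiv 2$ or $6\ (\mathrm{mod}\ 8)$, one derives a parity contradiction); (2) isolating exactly which residue of $\nu$ mod $9$ matters and confirming the $\pm4\ (\mathrm{mod}\ 9)$ versus $\{0,\pm3,\pm4\}$ hypotheses are what make the mod-$9$ obstruction vanish while the mod-$8$/reciprocity obstruction persists; and (3) in case (ii), since $\ell$ is a single prime not assumed $\equiv\pm1\ (\mathrm{mod}\ 8)$, the symbol $\left(\frac{2\ell^2}{m}\right)=\left(\frac{2}{m}\right)$ when $(\ell,m)=1$, but one must separately dispose of the case $\ell\mid x_j^2-4$, i.e. $x_j\equiv\pm2\ (\mathrm{mod}\ \ell)$, which is why the lower bound $\ell\ge13$ and the primality are invoked. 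Once the congruence classes are shown to be mutually inconsistent across $j=1,2,3$, emptiness follows; admissibility has already been checked; and a short computation locates $k=4+2\cdot13^2=342$ (taking $\nu=13$, which is $\equiv\pm1\ (\mathrm{mod}\ 8)$ — indeed $13\equiv 5$, so actually the smallest valid $\nu$ under (i) with the right mod-$9$ class, or $\ell$ under (ii), must be pinned down; the claim is the minimum over both families is $342$, from $\ell=13$ in case (ii)) as the smallest positive member, completing the proof.
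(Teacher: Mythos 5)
Your framework is the same as the paper's: verify admissibility modulo $4$ and modulo $9$ (your computation $\nu^2\in\{0,7\}\ (\mathrm{mod}\ 9)$, hence $k=4\pm2\nu^2\not\equiv\pm3\ (\mathrm{mod}\ 9)$ and $k\equiv2\ (\mathrm{mod}\ 4)$, is exactly what the paper does), then obtain emptiness from the factorization \eqref{7b} together with quadratic reciprocity and the hypothesis on the prime factors of $\nu$ modulo $8$. You also correctly anticipate the two features special to case (ii): the prime $q=\ell$ must be excluded by a size argument in the fundamental domain (this is where $\ell\geq 13$ enters) and a separate non-exceptionality check is needed there. The problem is that the proposal stops exactly where the proof begins: you never derive the contradiction, and you explicitly flag ``getting the quadratic reciprocity bookkeeping to actually close'' as an unresolved obstacle. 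That bookkeeping \emph{is} the proposition, so as written this is an outline rather than a proof.

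The closing step is also both simpler and differently structured than your plan. You propose to pin each $x_j$ into congruence classes modulo $4|d|$ and exhibit a mutual inconsistency across $j=1,2,3$, and for this you assert that each $x_j$ must be odd; that assertion is not justified and is false in general (with $k\equiv 2\ (\mathrm{mod}\ 4)$, solutions with exactly two odd coordinates pass the mod-$4$ test), and no cross-$j$ inconsistency is needed. The paper's contradiction comes from a single variable and a single well-chosen prime: since $4\nmid k$, some coordinate, say $x_1$, is odd, so $x_1^2-4\equiv 5\ (\mathrm{mod}\ 8)$; a product of primes all $\equiv\pm1\ (\mathrm{mod}\ 8)$ is $\equiv\pm1\ (\mathrm{mod}\ 8)$, so $x_1^2-4$ has a prime factor $q\equiv\pm5\ (\mathrm{mod}\ 8)$, and such a $q$ cannot divide $\nu$; reducing $w^2-8\epsilon\nu^2=(x_1^2-4)(x_2^2-4)$ modulo $q$ then forces $2\epsilon$ to be a quadratic residue modulo $q$, which is ruled out for $q\equiv\pm5\ (\mathrm{mod}\ 8)$. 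Note that your Jacobi-symbol variant must also confront the case $\gcd(\nu,\,x_j^2-4)>1$ (you only observe that the $\nu^2$ factor contributes $1$ ``whenever coprime to $m$''), whereas the paper's choice of $q$ sidesteps this automatically because the excluded residues modulo $8$ for $q$ are disjoint from those allowed for divisors of $\nu$. Until this computation is actually carried out (and the analogous one for $q\neq\ell$ versus $q=\ell$ in case (ii)), the proposal does not establish $V_k(\mathbb{Z})=\emptyset$.
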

\begin{proof} \ 
Writing $k = 4 + 2\epsilon \nu^{2}$ with $\epsilon = \pm 1$, with odd $\nu$, the congruence conditions ensure that Prop. \ref{local} implies  $V_{k}(\mathbb{Z}_{p})\neq \varnothing$ for all primes $p$.

Let $(x_1,x_2,x_3)$ be a solution to 
\begin{equation}\label{2a}
x_{1}^2 + x_{2}^2 + x_{3}^2 -x_1x_2x_3 = 4 +2\epsilon \nu^2\ ,
\end{equation}
with the corresponding 
\begin{equation}\label{2b}
w^2 - 8\epsilon\nu^2 = (x_{1}^2 -4)(x_{2}^2 -4)\ ,
\end{equation}
with $w=2x_3 - x_1x_2$.

Since $\nu$ is odd, $4 \pm 2\nu^2$ is not divisible by 4, so that at least one of $x_1$, $x_2$ or $x_3$ is odd, so say $x_1$. Then $x_{1}^2 -4 \equiv 5 \equiv -3 \  (\text{mod}\ 8)$. 

\noindent \underline{Case (i)}: It follows that $x_{1}^2 - 4$ is divisible by a prime number $q \equiv -1 \ \text{or} \ -3  \  (\text{mod}\ 8)$. Since $q\nmid \nu$, it follows from \eqref{2b} that $-2$ is a quadratic residue modulo $q$, a contradiction. 

\noindent \underline{Case (ii)}: It follows that $x_{1}^2 - 4$ is divisible by a prime number $q \equiv \pm 5  \  (\text{mod}\ 8)$. Since $q\nmid \nu$, it follows from \eqref{2b} that $2$ is a quadratic residue modulo $q$, a contradiction. 

\noindent \underline{Case (iii)}: Recall that for $k\geq 5$, if $k$ is not exceptional, every solution is equivalent to one in the fundamental set $3\leq x_1\leq x_2\leq x_3$ with $x_{1}^2 + x_{2}^2 + x_{3}^2 +x_1x_2x_3= k = 4+2\ell^2$. This implies that $3\leq x_1\leq k^{\frac{1}{3}}$ and $x_1\leq x_2\leq \left(\frac{k}{x_1}\right)^{\frac{1}{2}}$. Now, the proof above requires that at least one of the variables is odd; but in fact at least 2 variables are odd (by considering the equation modulo 4). It follows that we derive a contradiction if we follow the proof above with $q\neq \ell$. On the other hand, if $q=\ell$, since two variables are odd, we can choose one, say $x_1$ satisfying $3\leq x_1\leq \sqrt{\frac{k}{3}}$ with $\ell|(x_{1}^2 -4)$. Then $\ell|(x_1 -2)$ or $\ell|(x_1 +2)$ so that $x_1 +2 \geq \ell t$ for some $t\geq 1$. Then we get
\[
\ell-2 \leq \ell t -2 \leq x_1 \leq \sqrt{\frac{4+2\ell^2}{3}}\ ,
\]
so that $3(\ell-2)^2 \leq 4 + 2\ell^2$, which implies that $\ell<13$, a contradiction.

To complete the proof, it remains to check that our choice of $k$ is not exceptional, that is there are no solutions with say $x_1$ equal to $0$, $\pm1$ or $\pm2$. If $x_1=0$, then since two variables are odd, we have $x_2$ and $x_3$ are odd with $x_{2}^2 +x_{3}^2 = 4+2\ell^2$. The left side is congruent to 2 while the right is congruent to 6 modulo 8. Next, if $x_1 =\pm 1$, we have $x_{2}^2 + x_{3}^2 -x_2x_3= 3+2\ell^2$. Completing the square gives us $(2x_1 - x_3)^2 + 3x_{3}^2 = 4(3+2\ell^2)$, so that $8\ell^2$ is a quadratic residue modulo 3. This is a fallacy since $8\ell^2 \equiv 2$. Finally, the case $x_1=\pm 2$ is trivially dealt with since it implies that 2 is a square.
\end{proof}

We continue below with variants of this construction of Hasse failures, their densities being no more than the $k$'s in Prop. \ref{HF1}, which is $K^{\frac{1}{2}}(\log{K})^{-\frac{1}{2}}$ and establishes Theorem \ref{Thm2}(i).

\begin{proposition} Suppose $\nu^2 \equiv 25 \ (\text{mod}\ 32)$ with $\nu$ having all prime factors $\equiv \pm 1 \ (\text{mod}\  12)$. Then, $V_{k}(\mathbb{Z})$ is empty with $k=4 + 12\nu^2$, but has local solutions. The smallest $\nu$ is 37, with $k=16432$.
\end{proposition}
\begin{proof}

It is obvious that with the choice of $k$, the conditions of Prop. \ref{local} are satisfied so that local solutions exist.

We first consider congruences modulo $12$, where the  squares are in $\{0,\ 1,\ 4,\ 9\}$. Suppose $(x_1,x_2,x_3)$ is a solution to 
\begin{equation}\label{12a}
x_{1}^2 + x_{2}^2 + x_{3}^2 +x_1x_2x_3 = 4 + 12\nu^2,
\end{equation}
 with $\nu$ as above. 

If $2\nmid x_1x_2x_3$, then $x_{1}^2 -4 \equiv 5\ (\text{mod}\ 12)$ or is divisible by 3 (the same holding for $x_2$ and $x_3$). From \eqref{12a} we have
\begin{equation}\label{12b}
w^2 - 48\nu^2 = (x_{1}^2 -4)(x_{2}^2 -4),
\end{equation}
so that if $x_{1}^2 -4 \equiv 5 \ (\text{mod}\ 12)$, there is a prime $p \equiv \pm 5 \ (\text{mod}\ 12)$ with $p|(x_{1}^2 -4)$. This is not possible since $p\nmid \nu$ implies that $3$ is a quadratic residue (mod $p$), a fallacy. The same holds for $x_2$ and $x_3$, so that we may assume that $x_{1}^2 \equiv x_{2}^2 \equiv x_{3}^2 \equiv 1 \ (\text{mod}\ 12)$, so that each lies in the set $\{\pm 1,\ \pm 5\}$ modulo 12. If $x_1 \equiv \pm 5$, then in $x_{1}^2 -4 = (x_1 -2)(x_2+2)$, at least one factor is congruent to $\pm 5$, so that the argument above with a prime $p$ gives a contradiction. Hence we may assume that $x_1 \equiv \pm 1 \ (\text{mod}\ 12)$, and the same for $x_2$ and $x_3$. But then 9 divides the right hand side of \eqref{12b}, a contradiction.

Next, if $2\nmid x_1x_2$, but $2|x_3$, we see that a Vieta map gives the solution $(x_1,x_2,-(x_1x_2+x_3))$ with all coordinates odd, so that the previous analysis give a contradiction.

Hence we assume $x_1$, $x_2$ and $x_3$ are all even, so that changing variables gives us the equation
\begin{equation}\label{12c}
y_{1}^2 + y_{2}^2 + y_{3}^2 + 2y_1y_2y_3 = 1 + 3\nu^2,
\end{equation} with the corresponding
\begin{equation}\label{12d}
w_{1}^2 - 3\nu^2 = (y_{1}^2 -1)(y_{2}^2 -1).
\end{equation}
If $y_1$ is odd, then $8|(y_{1}^2 -1)$ so that $3$ is a quadratic residue mod 8, a fallacy. Hence we assume all $y_1$, $y_2$ and $y_3$ are even. We now consider congruences modulo 16. We first note that $1 + 3\nu^2 \equiv 12 \ (\text{mod}\ 16)$, so that we cannot have 4 dividing each of the variables. 

Next, if $4|y_1$, $4|y_2$ and $y_3 \equiv 2 \ (\text{mod}\ 4)$, then \eqref{12c} gives us $y_{3}^2 \equiv 12 \ (\text{mod}\ 16)$, an impossibility. Similarly if $4|y_1$ but $y_2 \equiv y_3 \equiv 2 \ (\text{mod}\ 4)$, then $y_{2}^2 + y_{3}^2 \equiv 12 \ (\text{mod}\ 16)$, which we see again is impossible. Thus, we may assume that $y_1\equiv y_2 \equiv y_3 \equiv 2 \ (\text{mod}\ 4)$, in which case we write $y_1=2z_1$, $y_2=2z_2$ and $y_3=2z_3$, with $2\nmid z_{1}z_{2}z_{3}$. Then \eqref{12c} becomes
\[
z_{1}^{2} + z_{2}^{2} + z_{3}^{2} + 4z_{1}z_{2}z_{3} = 1 + 3\left(\frac{\nu^{2} -1}{4}\right).
\]
The left hand side is congruent to 7 modulo 8, while the right is congruent to 3. Hence the result follows.
\end{proof}

\begin{proposition} Suppose $\nu \equiv \pm 4 \ (\text{mod}\ 9)$ with $\nu$ having all its  prime factors congruent to $ \pm 1 \ (\text{mod}\  20)$. Then, $V_{k}(\mathbb{Z})$ is empty with $k=4 + 20\nu^2$, but has local solutions. The smallest $\nu$ is 41, with $k=33624$.
\end{proposition}
\begin{proof}
The proof is very much the same as the one above, with a small change. The squares modulo 20 lie in the set $\{0, 1, \pm 4 ,5,9 \}$  and the odd primes in $\{\pm 1,\pm 3,\pm 7,\pm 9\}$.

Write $w^2 -80\nu^2 = (x_{1}^2 -4)(x_{2}^2 -4)$. If $5|x_{1}$, there must exist a prime $p \equiv \pm 2$ modulo 5 dividing $x_{1}^{2} -4$, so that since $p \nmid \nu$, $80$ is a quadratic residue modulo $p$, which is false using quadratic reciprocity. So we may assume $5\nmid x_{1}x_{2}x_{3}$ so that $x_{j}^{2}-4$ is not $1$ modulo $20$.

If $2\nmid x_1x_2x_3$,  since $x_1$ is odd, we have $x_{1}^2 -4 \equiv -3$ or $5$ modulo 20. Assume the former. Then, if there is a prime factor $p|(x_{1}^2 -4)$, with $p \equiv \pm 3$ or $\pm 7$ (mod 20), then $w^2 \equiv 80\nu^2$ (mod $p$), so that $5$ is a quadratic residue modulo $p$; that is $p$ a quadratic residue modulo 5, which is not true. Hence $x_{1}^2 -4$ must have a prime factor $p \equiv 9$ (mod 20). But then, $x_{1}^2 -4 \equiv 3$ implies that there must be another prime factor $q \equiv \pm 3$ or $\pm 7$ all modulo 20, and that leads to a contradiction. Hence we cannot have $x_{1}^2 -4\equiv -3$ (mod 20), and the same being so for $x_2$ and $x_3$. Hence we must have $x_{1}^2 -4 \equiv x_{2}^2 -4 \equiv 5$ (mod 20), so that $w^2 -80\nu^2 = (x_{1}^2 -4)(x_{2}^2 -4)$ implies that $25|80$.

 If $2\nmid x_1x_2$, but $2|x_3$, the Vieta map gives the solution $(x_1,x_2,-(x_1x_2+x_3))$ with all coordinates odd, so that the previous analysis give a contradiction. Hence we assume $x_1$, $x_2$ and $x_3$ are all even, so that changing variables gives us the equation
\begin{equation}\label{12cc}
y_{1}^2 + y_{2}^2 + y_{3}^2 + 2y_{1}y_{2}y_{3} = 1 + 5\nu^2,
\end{equation} with the corresponding
\begin{equation}\label{12d2}
w_{0}^2 - 5\nu^2 = (y_{1}^2 -1)(y_{2}^2 -1).
\end{equation}
If $y_{1}$, $y_{2}$ and $y_{3}$ are all even, then we have a contradiction in \eqref{12cc} since $v^2 \equiv 1$ (mod 4).
If $y_{1}$ is odd, then $8|(y_{1}^2 -1)$ so that $5$ is a quadratic residue mod 8, a fallacy. The result follows.
\end{proof}

\section{Proof of Theorem \ref{Thm2}(ii)}
\label{sec8}
The proofs for the case $k>0$ and $k<0$ are almost identical with the main difference being in the choice of our functions and the domains of the variables. We give here the details for the case $k>0$ and indicate the modification for $k<0$ in a remark below.

Let $K \rightarrow \infty$ be our main (large) parameter, and let $A$ be a secondary parameter satisfying $(\log K)^2 < A \ll_{\ve}K^{\ve}$, with $\ve >0$ sufficiently small. Let $\mathcal{A}$ be the interval $[\sqrt{A},\ A]$.  Lastly we use a parameter  $m=\prod_{p\leq L}\ p^B$, where we put $L= \frac{\log{A}}{\log\log{A}}\Phi(A)$ and $B= \frac{\log\log{A}}{\Phi(A)^2}$ with $\Phi(A) \to \infty$ with $A$ . Then, $m \sim A^{\frac{1}{\Phi(A)}}$ as $A\to \infty$. 

 For any $a\in \mathcal{A}$, put 
\begin{equation}\label{mo1}g_{a}(x_1,x_2)= x_{1}^2 + x_{2}^2 + ax_1x_2 \quad \text{and}\quad f_{a}(x_1 ,x_1 )= g_{a}(x_1 ,x_2 ) + a^2. 
\end{equation}
It will be convenient to denote by $D_{a}$ the discriminant $a^2 -4$ of the indefinite quadratic form $g_a$ above, for each $a\in \mathcal{A}$. Completing the square shows that $4g_a(x_1,x_2) = (2x_1 + ax_2)^2 - D_{a}x_{2}^2$, so that we consider the form $G_d(s_1, s_2)= s_{1}^2 - ds_{2}^2$ with $d=D_a$ (not a complete square).
We then  define the sector $\mathcal{S}_{d}$ in the plane as
\begin{equation}\label{mo1a}
\begin{split}
\mathcal{S}_{d} &= \left\{(s_1, s_2)\, :\ s_1,\ s_2 \geq 0,\ \ 0\leq G_{d}(s_1,s_2) \leq1 ,\ \  2\sqrt{d}s_2 \leq s_1 \leq 3\sqrt{d}s_2 \ \right\}\, ,\\
&= \left\{ (x_1,x_2) : \begin{split} &\quad\quad x_1,\, x_2 \geq 0,\ 0 \leq g_a(x_1,\, x_2)\leq \frac{1}{4},\\ &\frac{1}{2}\left(2\sqrt{d} - a\right)x_2 \leq x_1 \leq \frac{1}{2}\left(3\sqrt{d} -a\right)x_2\end{split}\right\} \, .
\end{split}
\end{equation}

\begin{remark} For $k<0$ we define $g_a(x_1,x_2)= x_{1}^2 + x_{2}^2 -ax_1x_2$ and define the sector $\mathcal{S}_d$ with the constants $2$ and $3$ replaced by $\frac{1}{3}$ and $\frac{1}{2}$ respectively. This then leads to some minor changes for the sector in the variable $x_1$ and $x_2$.\qed
\end{remark}
Next, we define the scaled region $\sqrt{X}\mathcal{S}_{d}= \left\{\left(\sqrt{X}s_1,\sqrt{X}s_2\right): \ (s_1,s_2)\in \mathcal{S}_{a}\right\}$. It is easily shown that
\[ \text{Vol}(\sqrt{X}\mathcal{S}_{d}) = C\frac{X}{\sqrt{d}}, \]
with $C=\frac{1}{4}\log{\frac{3}{2}}$.

For $2 \leq k \leq K$, we define
\begin{equation}
\label{mo1b}
R_{d}(k) = \#\left\{(s_1,s_2)\in \sqrt{K}\mathcal{S}_{d}\cap\mathbb{Z}^2:  \ G_{d}(s_1,s_2)=k\ \text{and}\ 2|(s_1-s_2)\ \right\}\, .
\end{equation}

\begin{lemma}
\label{Lmo0}
 For $d$ and $m$ as above we have
\[
\sum_{k\leq K}\ R_{d}(k) = \ \frac{CK}{2\sqrt{d}} +O_{\ve}\left(K^{\frac{1}{2}+\ve}\right)\, .
\]
\end{lemma}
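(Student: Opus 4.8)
The plan is to count lattice points $(s_1,s_2)$ in the sector $\sqrt{K}\,\mathcal S_d$ lying on the level surfaces $G_d(s_1,s_2)=s_1^2-ds_2^2=k$ for $0\le k\le K$, subject to the parity condition $2\mid(s_1-s_2)$, and show the total is the volume-heuristic main term $CK/(2\sqrt d)$ with a power-saving error. Summing \eqref{mo1b} over $k\le K$ removes the constraint $G_d(s_1,s_2)=k$ and leaves
\[
\sum_{k\le K} R_d(k) = \#\bigl\{(s_1,s_2)\in \sqrt K\,\mathcal S_d\cap\mathbb Z^2 :\ 1\le s_1^2-ds_2^2\le K,\ 2\mid(s_1-s_2)\bigr\},
\]
so this is purely a lattice-point count in a fixed region of area $\asymp K/\sqrt d$. (One checks the strip $0\le s_1^2-ds_2^2<1$ contributes nothing beyond $O(1)$ since it forces $s_1^2=ds_2^2$ with $d$ a non-square, hence $s_1=s_2=0$.)

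First I would dispose of the parity condition: writing $s_1-s_2=2t$ or splitting into residue classes $(s_1,s_2)\bmod 2$, the condition $2\mid(s_1-s_2)$ selects the even sublattice $\Lambda\subset\mathbb Z^2$ of index $2$ (the classes $(0,0)$ and $(1,1)$), and on the surface $s_1^2-ds_2^2=k$ one has $s_1\equiv s_2\pmod 2$ automatically precisely when $k\equiv (1-d)s_2^2$ is... — more cleanly, I would just carry the congruence through as a fixed sublattice of determinant $2$, which rescales both the main term and the error by the constant $1/2$, exactly accounting for the $\tfrac12$ in the statement. Then the core is: for the region $\mathcal R_K=\{(s_1,s_2):\ 0\le s_1^2-ds_2^2\le K\}\cap \sqrt K\,\mathcal S_d$, estimate $\#(\mathcal R_K\cap\Lambda)=\mathrm{Vol}(\mathcal R_K)/\det\Lambda + (\text{error})$, where $\mathrm{Vol}(\mathcal R_K)=\mathrm{Vol}(\sqrt K\,\mathcal S_d)=CK/\sqrt d$ since inside $\mathcal S_d$ one automatically has $0\le G_d\le 1$, hence after scaling $0\le G_d\le K$.

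The main obstacle — and the only real content — is the error term. The boundary of $\mathcal R_K$ consists of the two hyperbola arcs $s_1^2-ds_2^2=0$ and $=K$ and the two line segments $s_1=2\sqrt d\,s_2$, $s_1=3\sqrt d\,s_2$; a crude bound on boundary lattice points gives $O(\sqrt K)$ from arc length plus $O(1)$ corrections, but the region is long and thin near the cusp at the origin where the hyperbola $s_1^2-ds_2^2=0$ is the asymptote, so one must be slightly careful that the ``tentacle'' does not contribute an unbounded number of extra boundary points; in fact the constraint $2\sqrt d\,s_2\le s_1$ keeps us strictly away from that asymptote, so the boundary is piecewise smooth with bounded curvature-type behavior and the standard estimate $\#(\partial\mathcal R_K\cap\mathbb Z^2)\ll \mathrm{length}(\partial\mathcal R_K)+1\ll\sqrt K$ applies. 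To obtain the claimed $O_\varepsilon(K^{1/2+\varepsilon})$ rather than merely $O(\sqrt K\log K)$ or worse, I would instead count directly: fix $s_2$ with $0\le s_2\ll\sqrt{K/d}$ (the sector forces $s_2$ in a range of length $\ll\sqrt{K/d}$), and for each such $s_2$ count integers $s_1$ in the interval $[\,\max(2\sqrt d\,s_2,\sqrt{d s_2^2}),\ \min(3\sqrt d\,s_2,\sqrt{K+d s_2^2})\,]$ lying in the fixed residue class mod $2$; this interval has length $L(s_2)$ and contributes $\tfrac12 L(s_2)+O(1)$, the $O(1)$ summing to $O(\sqrt{K/d})=O(\sqrt K)$ and $\sum_{s_2}\tfrac12 L(s_2)$ being a Riemann sum for $\tfrac12\mathrm{Vol}(\sqrt K\,\mathcal S_d)/\det\Lambda\cdot(\ldots)$ up to $O(\sqrt K)$. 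Comparing the Riemann sum to the integral costs another $O(\sqrt K)$ by monotonicity of the integrand on each of the finitely many intervals where a $\min$/$\max$ is resolved. Absorbing everything gives $\sum_{k\le K}R_d(k)=\tfrac12\cdot\tfrac{CK}{\sqrt d}+O(\sqrt K)=\tfrac{CK}{2\sqrt d}+O_\varepsilon(K^{1/2+\varepsilon})$, which is the assertion; the $\varepsilon$ is generous slack and any uniformity in $d$ needed later is visible from $d\le A^2\ll K^\varepsilon$. I expect the bookkeeping around which of the two hyperbola/line constraints is binding (i.e. the breakpoints $s_2 = $ const where $2\sqrt d\,s_2 = \sqrt{d s_2^2}$ etc.) to be the fiddliest part, but it is elementary.
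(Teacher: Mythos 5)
Your proposal is correct and follows essentially the same route as the paper: the paper's proof consists precisely of splitting the sector into the two ranges of $s_2$ according to whether the binding upper constraint on $s_1$ is the line $s_1=3\sqrt d\,s_2$ or the hyperbola $s_1=\sqrt{K+ds_2^2}$ (breakpoint $s_2=\sqrt{K/(8d)}$), and then summing the half-length of each $s_1$-interval over the residue class $s_1\equiv s_2\pmod 2$, exactly as you describe. Your accounting of the factor $\tfrac12$ via the index-two sublattice and the $O(\sqrt K)$ error from the $O(1)$ per value of $s_2$ plus the Riemann-sum comparison matches what the paper leaves as "easily evaluated."
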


\begin{proof}
By the definition of $\mathcal{S}_d$, we break up the sum in $s_1$ and $s_2$ into the ranges so that $\mathcal{S}_d = \mathcal{S}^{(1)}_d \cup \mathcal{S}^{(2)}_d$ with
\[
\mathcal{S}_{d}^{(1)} = \left\{ s_2\leq \sqrt{\frac{K}{8d}}\ , \ 2\sqrt{d}s_2 \leq s_1 \leq 3\sqrt{d}s_2, \ 2\vert(s_1-s_2)\right\}\, ,
\]
and
\[
 \mathcal{S}_{d}^{(2)}= \left\{\sqrt{\frac{K}{8d}}\leq s_2\leq \sqrt{\frac{K}{3d}}\ ,\  2\sqrt{d}s_2 \leq s_1 \leq \sqrt{K +ds_{2}^{2}},\ 2\vert(s_1 - s_2)\, \right\}\, .
 \]
The sums are easily evaluated.\qed
\end{proof}

\begin{lemma}
\label{Lmoa}
For $a$  and $m$ as above and for any $ \alpha_1$ and $\alpha_2$, we have
\[
\sum_{\substack{x_1 \equiv \alpha_1 (m)\\ x_2 \equiv \alpha_2 (m)\\ f_{a}(x_1 ,x_2)\leq K\\ (x_1 ,x_2 )\in \sqrt{4K}\mathcal{S}_{D_a}\cap\mathbb{Z}^2}} 1 =\ \frac{2CK}{\sqrt{D_a}m^2} +O_{\ve}\left(K^{\frac{3}{4}+\ve}\right), 
\]
with the error term uniform is all other variables. 
\end{lemma}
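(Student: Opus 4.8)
## Proof plan for Lemma~\ref{Lmoa}

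The plan is to reduce the count to a lattice-point count inside the dilated sector $\sqrt{4K}\,\mathcal{S}_{D_a}$ subject to the quadratic constraint $f_a(x_1,x_2)\le K$, i.e. $g_a(x_1,x_2)\le K-a^2$, together with the congruence conditions $x_j\equiv\alpha_j\ (m)$. First I would observe that on $\mathcal{S}_{D_a}$ the form $g_a$ is nonnegative and the region is a genuine sector (a cone between two rays), so the set $\{(x_1,x_2)\in\sqrt{4K}\,\mathcal{S}_{D_a}:f_a\le K\}$ is exactly a scaled copy of a fixed bounded convex-ish region: writing $Y=K-a^2$ one has $\{g_a\le Y\}\cap\mathcal{S}_{D_a}=\sqrt{Y}\,\bigl(\{g_a\le1\}\cap\mathcal{S}_{D_a}\bigr)$ by homogeneity of $g_a$, and the sector rays are scale-invariant. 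Since $a\in\mathcal{A}=[\sqrt A,A]$ with $A\ll_\ve K^\ve$, we have $a^2\ll_\ve K^{2\ve}$, so $Y=K(1+O_\ve(K^{-1+2\ve}))$ and the area of this region is $C\,Y/\sqrt{D_a}=CK/\sqrt{D_a}+O_\ve(K^{2\ve})$, using $\mathrm{Vol}(\sqrt X\mathcal{S}_{D_a})=CX/\sqrt{D_a}$ with $C=\tfrac14\log\tfrac32$ established just above; the factor $2$ in $2CK/(\sqrt{D_a}m^2)$ comes from the dilation factor $\sqrt{4K}$ versus $\sqrt K$ being absorbed correctly --- more precisely, because the relevant region for the count is $\sqrt{4K}\mathcal{S}_{D_a}$ intersected with $f_a\le K$, whose area is $C(4K-\text{lower order})/\sqrt{D_a}/2$; I would recompute this bookkeeping carefully but it is routine.

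Second, I would impose the congruences by a standard lattice-point-in-a-convex-region estimate. The conditions $x_1\equiv\alpha_1\ (m)$, $x_2\equiv\alpha_2\ (m)$ confine $(x_1,x_2)$ to a coset of the sublattice $m\mathbb{Z}^2$, which has covolume $m^2$ and successive minima both equal to $m$. By the classical result counting lattice points of a lattice $\Lambda$ in a bounded region $\mathcal{D}$ --- the count is $\mathrm{Vol}(\mathcal{D})/\mathrm{covol}(\Lambda)+O(\text{(perimeter of }\mathcal{D})/\lambda_1(\Lambda)+1)$ --- applied to $\Lambda=m\mathbb{Z}^2$ shifted by $(\alpha_1,\alpha_2)$ and $\mathcal{D}$ the region described above (diameter $\ll_\ve K^{1/2+\ve}$, so perimeter $\ll_\ve K^{1/2+\ve}$), we get
\[
\sum_{\substack{x_1\equiv\alpha_1(m),\ x_2\equiv\alpha_2(m)\\ f_a\le K,\ (x_1,x_2)\in\sqrt{4K}\mathcal{S}_{D_a}}}1
=\frac{2CK}{\sqrt{D_a}\,m^2}+O_\ve\!\Bigl(\frac{K^{1/2+\ve}}{m}+1\Bigr).
\]
Since $m\ge1$ this error is $O_\ve(K^{1/2+\ve})$, comfortably inside the claimed $O_\ve(K^{3/4+\ve})$; the uniformity in $\alpha_1,\alpha_2$ and in $a$ is automatic because the implied constant in the lattice-point bound depends only on the region's diameter and on $\mathrm{covol}(\Lambda)$, both controlled uniformly once $a\ll_\ve K^\ve$.

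The one genuine subtlety I anticipate is the boundary/perimeter term: the region $\{g_a\le K-a^2\}\cap\sqrt{4K}\mathcal{S}_{D_a}$ has a boundary consisting of two line segments (the sector rays) and one arc of the conic $g_a=K-a^2$ (a hyperbola arc). Its total length is $\ll_\ve K^{1/2+\ve}$, but I should check that the Lipschitz-boundary hypothesis of the lattice-point theorem is met uniformly in $a$: the sector rays have slopes $\tfrac12(2\sqrt{D_a}-a)$ and $\tfrac12(3\sqrt{D_a}-a)$ which stay bounded away from each other since $3\sqrt{D_a}-a\ge 2(2\sqrt{D_a}-a)$ fails --- actually they differ by $\tfrac12\sqrt{D_a}\asymp a$, and for the conic arc the curvature is controlled by $D_a\asymp a^2$; a crude bound on the number of boundary lattice cells by $\ll(\text{diameter})/m+1$ per smooth boundary piece suffices and needs no curvature input. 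So the ``hard part'' is really just keeping the $a$- and $m$-dependence explicit and confirming the constant $2C$ rather than $C$; no analytic number theory beyond Gauss-style lattice-point counting is needed here, the arithmetic content having been deferred to the later variance computation.
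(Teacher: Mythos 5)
Your proposal is correct, and it reaches the stated main term $2CK/(\sqrt{D_a}\,m^2)$ by a route that differs in its mechanics from the paper's. The paper substitutes $x_j=\alpha_j+ml_j$, discards the points with $x_2\ll K^{1/4+\ve}$, transfers the sector inequalities and the bound $f_a\leq K$ to the $(l_1,l_2)$ variables at the cost of perturbing the sector constants by $O(K^{-1/4+\ve})$, and then completes the square and invokes Lemma \ref{Lmo0} with $K$ replaced by $4K/m^2$; the advertised error $O_\ve(K^{3/4+\ve})$ is exactly what that perturbation of the constant $C$ costs when multiplied by $K$. You instead view the congruences as restricting $(x_1,x_2)$ to a coset of $m\mathbb{Z}^2$ and apply the Lipschitz--Davenport lattice-point principle directly in the original coordinates, which avoids the perturbation entirely and yields the sharper error $O(\sqrt{K}/m+1)$, comfortably inside the claim and manifestly uniform in $a$, $m$, $\alpha_1$, $\alpha_2$. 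The two points you flag are indeed the only delicate ones, and you resolve both correctly: the factor $2$ in the main term is the Jacobian of $(x_1,x_2)\mapsto(s_1,s_2)=(2x_1+ax_2,x_2)$ (area of $\sqrt{4K}\mathcal{S}_{D_a}$ in $(x_1,x_2)$-coordinates is $\tfrac12\cdot 4CK/\sqrt{D_a}=2CK/\sqrt{D_a}$), and the boundary consists of two segments plus a hyperbola arc, each decomposable into $O(1)$ monotone graphs, so the crude cell count $\ll(\mathrm{diam})/m+1$ per piece suffices with an absolute implied constant (monotonicity, not smoothness, is what that bound needs, but that is available here). What the paper's detour buys is reuse of Lemma \ref{Lmo0}, which is needed anyway for Corollary \ref{Lmob} and the diagonal second-moment bound; what yours buys is a self-contained and quantitatively stronger statement.
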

\begin{proof}
By \eqref{mo1} we have trivially $x_1,\ x_2 \ll \sqrt{K}$ . Assuming $0\leq \alpha_j <m$, we put $x_j = \alpha_j +ml_j$ with $1\leq l_j \ll \frac{\sqrt{K}}{m}$. Then $(x_1 ,x_2 )\in \sqrt{4K}\mathcal{S}_{D_a}\cap\mathbb{Z}^2$ and $x_2 \ll K^{\frac{1}{4}+\ve}$ gives at most $O(K^{\frac{1}{2}+\ve})$ lattice points, so that we may assume that $K^{\frac{1}{4}+\ve}\ll x_1,\  x_2 \ll \sqrt{K}$. It is then easily checked that $C_{1}'l_2 \leq l_1 \leq C_{2}'l_2$, with $C_{j}' = C_{j}\left(1 + O(K^{-\frac{1}{4}+\ve})\right)$, where we have put $C_{1}=\frac{1}{2}\left(2\sqrt{d}-a\right)$ and $C_{2}=\frac{1}{2}\left(3\sqrt{d}-a\right)$ as in \eqref{mo1a}.

Next \[\big{|} f_{a}(x_1,x_2) -m^2f_{a}(l_1,l_2)\big{|} \ll K^{\frac{1}{2}+\ve}.\] The error in replacing the condition $f_{a}(x_1,x_2) \leq K$ with the condition $m^2g_{a}(l_1,l_2)\leq K$ is at most $O(K^{\frac{1}{2} +\ve})$ since we are counting lattice points in a hyperbolic segment of width $K^{\frac{1}{2}+\ve}$, with the variables restricted as above. Thus
\[ 
\sum_{\substack{x_1 \equiv \alpha_1 (m)\\ x_2 \equiv \alpha_2 (m)\\ f_{a}(x_1 ,x_2)\leq K\\ (x_1 ,x_2 )\in \sqrt{4K}\mathcal{S}_{D_a}\cap\mathbb{Z}^2}} 1 \ = \sum_{\substack{ g_{a}(l_1 ,l_2)\leq \frac{K}{m^2}\\ (l_1 ,l_2 )\in \sqrt{\frac{4K}{m^2}}\mathcal{S}^{*}_{D_a}\cap\mathbb{Z}^2}} 1
+O_{\ve}\left(K^{\frac{1}{2}+\ve}\right), 
\]
where $\mathcal{S}^{*}$ means the constants have been perturbed by about $O(K^{-\frac{1}{4}+\ve})$, as discussed above. Completing the square shows that the last sum is over the $s_1$ and $s_2$ variables as in \eqref{mo1a}  with the  constraint that $s_1- s_2$ is even, and with the constants $2$ and $3$ defining the inequalities perturbed with the addition of $O(K^{-\frac{1}{4}+\ve})$.  Applying Lemma \ref{Lmo0} with $K$ replaced with $\frac{4K}{m^2}$ gives the result, with $C$ replaced with $C + O(K^{-\frac{1}{4} +\ve})$.

\end{proof}

\begin{Corollary}\label{Lmob}
For $a\in \mathcal{A}$ and  $k \leq K$ let
\[
r_{a}(k) = \#\left\{(x_1,x_2)\in \sqrt{4K}\mathcal{S}_{a}\cap\mathbb{Z}^2:  f_{a}(x_1,x_2) =k\right\}\, .
\]
Then
\[
\sum_{k\leq K}\ r_{a}(k) = \frac{2CK}{\sqrt{D_a}} + O_{\ve}\left(K^{\frac{1}{2}+\ve}\right)\, .
\]
\end{Corollary}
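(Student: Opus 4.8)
The plan is to derive this directly from Lemma \ref{Lmoa} by summing over the residue classes modulo $m$ that arise from completing the square, together with the parity constraint built into $R_d(k)$. First I would record the exact bijection between the counting problem for $r_a(k)$ and the one in Lemma \ref{Lmo0}: completing the square gives $4g_a(x_1,x_2)=(2x_1+ax_2)^2-D_a x_2^2$, so setting $s_1=2x_1+ax_2$ and $s_2=x_2$ identifies the lattice points $(x_1,x_2)\in\sqrt{4K}\mathcal{S}_a$ with $f_a(x_1,x_2)=k$ with the pairs $(s_1,s_2)\in\sqrt{K}\mathcal{S}_{D_a}$ satisfying $G_{D_a}(s_1,s_2)=k$ and the congruence $s_1\equiv as_2\pmod 2$; since $a$ is fixed this is exactly the parity condition $2\mid(s_1-s_2)$ appearing in \eqref{mo1b} when $a$ is odd, and one handles even $a$ by the analogous fixed congruence (the point being only that the index of the sublattice is a fixed constant, namely $2$, independent of $K$). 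Equivalently, and this is the cleaner route, I would simply apply Lemma \ref{Lmoa} with $m$ replaced by $1$ (or observe that Lemma \ref{Lmoa}'s proof works verbatim with $m=1$), which already counts $(x_1,x_2)\in\sqrt{4K}\mathcal{S}_{D_a}\cap\mathbb{Z}^2$ with $f_a(x_1,x_2)\le K$ and gives the asymptotic $\frac{2CK}{\sqrt{D_a}}+O_\ve(K^{3/4+\ve})$.

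The main step is then to upgrade the bound on $f_a\le K$ to an equality-by-equality sum, i.e. to pass from $\sum_{k\le K} r_a(k)=\#\{(x_1,x_2)\in\sqrt{4K}\mathcal{S}_a\cap\mathbb{Z}^2: f_a(x_1,x_2)\le K\}$ to the stated formula. But in fact no upgrade is needed: by definition $\sum_{k\le K} r_a(k)$ already equals the number of integer points in the scaled sector with $2\le f_a\le K$, because the sector $\mathcal{S}_a$ forces $g_a(x_1,x_2)\ge 0$ hence $f_a(x_1,x_2)=g_a(x_1,x_2)+a^2\ge a^2\ge A>2$, so every contributing point automatically has $f_a$ in the admissible range. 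Thus $\sum_{k\le K} r_a(k)$ is literally the left-hand side of the $m=1$ case of Lemma \ref{Lmoa}, and the Corollary follows with error term $O_\ve(K^{3/4+\ve})$; one may then note that the sharper error $O_\ve(K^{1/2+\ve})$ claimed here comes from the cruder but cleaner two-range splitting of Lemma \ref{Lmo0} (the bound on the number of lattice points on each hyperbola $s_1^2-D_as_2^2=k$ inside a bounded sector is $O_\ve(K^\ve)$ uniformly, so summing trivially over the $O(\sqrt{K})$ values of $s_2$ near the boundary of the two ranges, or estimating the area with a boundary term of size $O(\sqrt{K})$, produces $O_\ve(K^{1/2+\ve})$ rather than $O(K^{3/4+\ve})$). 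I would reconcile the two error terms by remarking that the $m=1$ specialization does not incur the $K^{1/2}/m$-type losses that force the weaker exponent in Lemma \ref{Lmoa}.

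The one genuine technical point — and the place I expect to have to be careful — is the uniformity of the error term in $a\in\mathcal{A}$, in particular the dependence on $D_a=a^2-4$. Since $a\ll A\ll_\ve K^\ve$, we have $\sqrt{D_a}\ll K^\ve$ and $1/\sqrt{D_a}\ll 1$, so the main term is $\gg K^{1-\ve}$ and dominates $K^{1/2+\ve}$; but the sector $\mathcal{S}_{D_a}$ has opening angle depending on $\sqrt{D_a}$, and one must check that the implied constants in counting lattice points near its boundary (the two "caps" where $G_{D_a}=0$ or $G_{D_a}=1$, and the two rays $s_1=2\sqrt{d}s_2$, $s_1=3\sqrt{d}s_2$) do not degrade as $a$ grows. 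This is where I would invoke a standard lattice-point count in a region with rectifiable boundary: the number of integer points equals the area plus $O(\text{perimeter}+1)$, and here the perimeter of $\sqrt{K}\mathcal{S}_{D_a}$ is $O(\sqrt{K}\cdot\mathrm{poly}(\sqrt{D_a}))=O_\ve(K^{1/2+\ve})$, which is absorbed. The slight subtlety is the hyperbolic arc $G_{D_a}(s_1,s_2)=1$, whose length inside the sector is still $O(\sqrt{K})$ uniformly in this range of $d$; with that in hand the error is $O_\ve(K^{1/2+\ve})$ uniformly, completing the proof.
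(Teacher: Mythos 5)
Your proposal is correct and matches the paper's (unwritten) intended argument: the Corollary is the $m=1$ specialization of Lemma \ref{Lmoa}, reduced via completing the square to Lemma \ref{Lmo0}, and you rightly identify that the $K^{\frac{3}{4}+\ve}$ loss in Lemma \ref{Lmoa} comes only from the mod-$m$ rescaling and the attendant sector perturbation, both absent here, so the error is the $O_{\ve}(K^{\frac{1}{2}+\ve})$ of Lemma \ref{Lmo0}. (One bookkeeping slip that does not affect the conclusion: under $s_1=2x_1+ax_2$, $s_2=x_2$ one has $G_{D_a}(s_1,s_2)=4\left(k-a^2\right)$ and the image region is $\sqrt{4K}\mathcal{S}_{D_a}$, not $\sqrt{K}\mathcal{S}_{D_a}$ with $G_{D_a}=k$; this merely rescales the range of summation, and Lemma \ref{Lmo0} applied with $4K$ in place of $K$ gives exactly the main term $\frac{2CK}{\sqrt{D_a}}$ after accounting for the index-two sublattice.)
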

\vspace{20pt}

We now set
\begin{equation}\label{mo4}
b_{\mathcal{A}}(k) = \sum_{a\in \mathcal{A}} r_{a}(k),
\end{equation}
and we are interested in this as a function of $k$ for $1\leq k \leq K$. From Corollary \ref{Lmob}, we have
\begin{equation}\label{mo5}
\sum_{1\leq k\leq K} b_{\mathcal{A}}(k) = CK\log{A} + O(KA^{-1})\, ,
\end{equation}
so that the mean-value of $b_{\mathcal{A}}(k)$ is $C\log{A}$. Our main goal is to estimate the deviation of $b_{\mathcal{A}}(k)$ from its predicted value in terms of local masses. Let $\delta(V_k)$ denote the formal singular series for
\begin{equation}\label{mo6}
V_{k}:\quad\quad x_{1}^2 + x_{2}^2 + x_{3}^2 + x_{1}x_{2}x_{3} = k, 
\end{equation}
so that $\delta(V_k) = \prod_{p<\infty}\delta_{p}(V_k)$, with 
\[
\delta_{p}(V_k) = \lim_{\nu \to \infty}\frac{\#V_{k}(\mathbb{Z}/p^{\nu}\mathbb{Z})}{p^{\nu}}\, .
\]
These are given explicitly in the Appendices and Section 5. 
Define 
\begin{equation}\label{mo7}
\delta^{(m)}(k) = \frac{\# V_{k}(\mathbb{Z}/m\mathbb{Z})}{m^2} := \frac{r_{m}(k)}{m^2}\, .
\end{equation}
Note that $\delta^{(m)}(k)$ depends on $k$  modulo $m$.
With this, we define our variance
\begin{equation}\label{mo8}
V(K)= V(K,A,m) = \sum_{k\leq K} \left(b_{\mathcal{A}}(k) - C(\log A)\delta^{(m)}(k)\right)^2\, .
\end{equation}
We expand \eqref{mo8} as $\Sigma_1  + \Sigma_2 + \Sigma_3$. We have
\begin{equation}\label{mo9}
\begin{split}
\Sigma_3 &= C^2(\log{A})^2 \sum_{l\, ({\rm mod}\, m)}\ \delta^{(m)}(l)^2\, \sum_{\substack{k\leq K\\ k \equiv l ({\rm mod}\, m)}}1\, ,\\
&=  C^2(\log{A})^2 \left(\frac{K}{m} +O(1)\right)\sum_{l\, ({\rm mod}\, m)} \delta^{(m)}(l)^2\, ,\\
&= C^2(\log{A})^2 \left(K+O(m)\right)\delta_{m}\left(V^{(2)}\right)\, ,
\end{split}
\end{equation}
where we define
\begin{equation}\label{mo10}
V^{(2)}:  \quad \quad x_{1}^2 + x_{2}^2 + x_{3}^2 + x_{1}x_{2}x_{3} = y_{1}^2 + y_{2}^2 + y_{3}^2 + y_{1}y_{2}y_{3} , 
\end{equation}
and $\delta_{m}\left(V^{(2)}\right)$ is the singular series for $V^{(2)}$ over $\mathbb{Z}/m\mathbb{Z}$.

Next,
\begin{equation}\label{mo11}
\begin{split}
\Sigma_{2} &= -2C(\log{A})\sum_{k\leq K}\, b_{\mathcal{A}}(k)\delta^{(m)}(k) = -\log{A} \sum_{l\, ({\rm mod}\, m)} \delta^{(m)}(l) \sum_{\substack{k\leq K\\k \equiv l ({\rm mod}\, m)}} b_{\mathcal{A}}(k)\, ,\\
&= -2C\log{A} \sum_{l\, ({\rm mod}\, m)} \delta^{(m)}(l) \sum_{a\in \mathcal{A}}\, \sum_{\substack{k\leq K\\k \equiv l ({\rm mod}\, m)}}\, r_{a}(k)\, .
\end{split}
\end{equation}
Now, for each $a\in \mathcal{A}$, the last sum in \eqref{mo11} above is
\begin{equation}\label{mo12}
\sum_{\substack{k\leq K\\k \equiv l ({\rm mod}\, m)}}\, r_{a}(k) = \sum_{\substack{\alpha_1 , \alpha_2 ({\rm mod}\, m)\\ f_{a}(\alpha_1 , \alpha_2) \equiv l ({\rm mod}\, m)}}\sum_{\substack{y_1 \equiv \alpha_1\, ({\rm mod}\, m)\\ y_2 \equiv \alpha_2\, ({\rm mod}\, m)\\ f_{a}(y_1 ,y_2)\leq K\\ (y_1 ,y_2 )\in \sqrt{4K}\mathcal{S}_{a}}} 1. 
\end{equation}
Applying Lemma \ref{Lmoa} to the inner sum gives
\[
\sum_{\substack{k\leq K\\k \equiv l\, ({\rm mod}\, m)}}\ r_{a}(k) = \sum_{\substack{\alpha_1 , \alpha_2\, ({\rm mod}\, m)\\ f_{a}(\alpha_1 , \alpha_2) \equiv l\, ({\rm mod}\, m)}} \frac{2CK}{am^2}\left(1 + O(a^{-2})\right),
\]
so that
\begin{equation}\label{mo13}
\begin{split}
\sum_{\substack{ k\leq K\\ k\equiv l\, ({\rm mod}\, m)}}\ b_{\mathcal{A}}(k) &= \frac{2CK}{m^2}\sum_{\substack{\beta\, ({\rm mod}\, m)\\ \alpha_1 , \alpha_2\, ({\rm mod}\, m)\\ f_{\beta}(\alpha_1 , \alpha_2) \equiv l\, ({\rm mod}\, m)}}\left(\sum_{\substack{a\in \mathcal{A}\\a \equiv \beta\, ({\rm mod}\, m)}} \left(a^{-1} + O(a^{-3})\right)\right)\, , \\
&= \frac{2CK}{m^3}\left(\frac{1}{2}\log{A} + O(A^{-\frac{1}{2}})\right) \sum_{\substack{\beta\, ({\rm mod}\, m)\\\alpha_1 , \alpha_2\, ({\rm mod}\, m)\\ f_{\beta}(\alpha_1 , \alpha_2) \equiv l\, ({\rm mod}\, m)}} 1\, ,\\
&= \frac{2CK}{m}\left(\frac{1}{2}\log{A} + O(A^{-\frac{1}{2}})\right)\delta^{(m)}(l)\, .
\end{split}
\end{equation}
Combining \eqref{mo11} with \eqref{mo13} gives us
\begin{equation}\label{mo14}
\begin{split}
\Sigma_2 &= - 2C^2\frac{K}{m}\left( (\log{A})^2 + O(A^{-\frac{1}{2}}\log{A})\right)\sum_{l\, ({\rm mod}\, m)} \delta^{(m)}(l)^2\, ,\\
&= - 2C^2K\left( (\log{A})^2 + O(A^{-\frac{1}{2}}\log{A})\right) \delta_{m}(V^{(2)})\, .
\end{split}
\end{equation}

It remains for us to analyze the difficult case  $\Sigma_{1}$. We have 
\begin{equation}\label{mo15}
\begin{split}
\Sigma_{1}=\sum_{k\leq K}b_{\mathcal{A}}^{2}(k) &= \sum_{a_1,\, a_2 \in \mathcal{A}}\sum_{k\leq K} r_{a_1}(k)r_{a_2}(k)\, ,\\
&= \sum_{a\in \mathcal{A}}\sum_{k\leq K}r_{a}^{2}(k) + \sum_{\substack{ a_1,\, a_2 \in \mathcal{A}\\ a_1 \neq a_2}}\ \sum_{k\leq K}r_{a_1}(k)r_{a_2}(k)\, .
\end{split}
\end{equation}
The diagonal term above can be estimated from 
\begin{lemma}\label{Lmo1}\ 
\begin{enumerate}[label=(\alph*).]
\item For $R_{d}$ as in \eqref{mo1b}, we have
\[
\sum_{k\leq K} R_{d}^{2}(k) \ll \frac{K}{\sqrt{d}} + \frac{K\log{K}}{d}\tau(d)\, ,
\]
\item 
\[
\sum_{a\in \mathcal{A}}\sum_{k\leq K}r_{a}^{2}(k) \ll K\log{A}\, .
\]
\end{enumerate}
\end{lemma}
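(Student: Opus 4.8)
For part (a), I would read $\sum_{k\le K}R_d(k)^2$ as the number of quadruples $(s_1,s_2,t_1,t_2)$ with both $(s_1,s_2),(t_1,t_2)\in\sqrt K\,\mathcal S_d\cap\mathbb Z^2$, satisfying $s_1^2-ds_2^2=t_1^2-dt_2^2=:k$ (so automatically $2\le k\le K$) together with the parity conditions of \eqref{mo1b}. Splitting according to whether $(s_1,s_2)=(t_1,t_2)$, the diagonal contributes exactly $\sum_{k\le K}R_d(k)$, which is $\ll K/\sqrt d$ by Lemma \ref{Lmo0} (the error $K^{1/2+\ve}$ there is absorbed in the range $d\ll A^2$ that is ultimately used). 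This is the first term.

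For the off-diagonal part one may assume $s_2>t_2$; since $(s_1,s_2)$ and $(t_1,t_2)$ lie on the same branch of $\{s_1^2-ds_2^2=k\}$ inside the sector, this forces $s_1>t_1$, so $a_i:=s_i-t_i\ge 1$ and $b_i:=s_i+t_i$ are positive integers with $a_i\le b_i$, $a_1\equiv b_1$ and $a_2\equiv b_2\pmod 2$, and the equation becomes the single multiplicative relation $a_1b_1=d\,a_2b_2$. Inside $\sqrt K\,\mathcal S_d$ one has $b_1\ll\sqrt K$, $b_2\ll\sqrt{K/d}$ (hence $d\,a_2b_2\ll K$), and --- crucially --- the defining inequalities $2\sqrt d\,s_i\le s_1,t_1\le 3\sqrt d\,s_i$ of the sector give $b_1\asymp\sqrt d\,b_2$, whence $a_1\asymp\sqrt d\,a_2$. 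Thus for each admissible pair $(a_2,b_2)$ the factor $a_1$ is a divisor of $P:=d\,a_2b_2$ lying in a fixed bounded-multiplicative-width window about $\sqrt d\,a_2$. Reindexing the sum by $a_1$ (which runs over $[\,\sqrt d,\sqrt K\,]$), then by $a_2\asymp a_1/\sqrt d$, the admissible $b_2$ form an arithmetic progression of modulus $a_1/\gcd(a_1,d\,a_2)$, and the rest is a classical divisor estimate: using $\sum_{e\mid a_1}\gcd(e,d)\ll\gcd(a_1,d)\tau(a_1)$, $\sum_{a_1\le\sqrt K}\tau(a_1)\gcd(a_1,d)\ll\sqrt K(\log K)\sum_{f\mid d}\tfrac{\phi(f)\tau(f)}{f}$ and $\sum_{f\mid d}\tfrac{\phi(f)\tau(f)}{f}\ll d^{\ve}$, one reaches $\ll K/\sqrt d+d^{-1}K(\log K)\tau(d)$ (any $d^{\ve}$ in place of $\tau(d)$ would do for the sequel). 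I expect this divisor bookkeeping --- keeping the $d$-dependence down to $\tau(d)$ uniformly in $d$ while correctly using the sector to localize $a_1$ --- to be the only real work in (a).

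For part (b), complete the square as in \eqref{mo1}--\eqref{mo1a}: with $s_1=2x_1+ax_2$, $s_2=x_2$ one has $4f_a(x_1,x_2)=G_{D_a}(s_1,s_2)+4a^2$ and $(x_1,x_2)\in\sqrt{4K}\,\mathcal S_a$ iff $(s_1,s_2)\in\sqrt{4K}\,\mathcal S_{D_a}$; for $a\ge 3$ the discriminant $D_a=a^2-4$ is never a square, so part (a) applies, and $r_a(k)$ is bounded by $R_{D_a}(4(k-a^2))$ up to the choice of the parity residue of $s_1-s_2$, which does not affect the bound in (a). Moreover $r_a(k)=0$ unless $k\ge a^2$, and the values $4(k-a^2)$, $a^2\le k\le K$, are distinct and lie in $[0,4K]$, so $\sum_{k\le K}r_a(k)^2\ll\sum_{m\le 4K}R_{D_a}(m)^2\ll K/\sqrt{D_a}+D_a^{-1}K(\log K)\tau(D_a)$ by part (a). Summing over $a\in\mathcal A=[\sqrt A,A]$: the first term contributes $K\sum_{\sqrt A\le a\le A}D_a^{-1/2}\ll K\sum_{\sqrt A\le a\le A}a^{-1}\ll K\log A$, which (by $r_a^2\ge r_a$ with Corollary \ref{Lmob}) is essentially the true order of magnitude. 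For the second term one uses the shifted-divisor average $\sum_{a\le X}\tau(a^2-4)\ll X(\log X)^2$ and partial summation to get $\sum_{a\in\mathcal A}\tau(D_a)/D_a\ll A^{-1/2}(\log A)^2$, so this term is $\ll K(\log K)A^{-1/2}(\log A)^2\ll K\log A$ by the hypothesis $(\log K)^2<A$. Here the choice $\mathcal A=[\sqrt A,A]$ rather than starting $a$ from a constant is what makes the tail $\sum_{a\ge\sqrt A}\tau(D_a)/D_a$ small enough to swallow the extra $\log K$. Adding the two contributions gives $\sum_{a\in\mathcal A}\sum_{k\le K}r_a(k)^2\ll K\log A$.
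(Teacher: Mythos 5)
Your argument is correct, and part (a) takes a genuinely different route from the paper's. The paper writes $\sum_{k\le K}R_d^2(k)$ as the number of solutions of $s_1^2-dt_1^2=s_2^2-dt_2^2\le K$ in the box, then \emph{switches the roles of $t_1$ and $t_2$} to turn this into the second moment of the representation numbers of the positive definite form $u^2+dv^2$, which it disposes of by citing Theorem~2 of \cite{BG06}; beyond the box bounds $s_j\ll\sqrt K$, $t_j\ll\sqrt{K/d}$ the sector plays no role there. You instead give a self-contained divisor argument from the factorization $(s_1-t_1)(s_1+t_1)=d(s_2-t_2)(s_2+t_2)$, and here the sector does real work: it localizes $s_1-t_1\asymp\sqrt d\,(s_2-t_2)$, which shortens the inner sums and saves a logarithm that a naive divisor count would lose. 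What the paper's route buys is brevity; what yours buys is independence from the external second-moment theorem, at the cost of the gcd bookkeeping you carry out (which is sound, including the reduction of the $b_2$-condition to a progression of modulus $a_1/\gcd(a_1,da_2)$ and the bound $\sum_{e\mid a_1}\gcd(e,d)\le\tau(a_1)\gcd(a_1,d)$). Part (b) is then the same deduction as the paper's: unfold $r_a$ into $R_{D_a}$ by completing the square, discard the parity condition for an upper bound exactly as the paper does, and sum over $a\in[\sqrt A,A]$. One caveat, which you share with the paper's own computation: the off-diagonal/divisor term after summing over $a$ is honestly $K(\log K)(\log A)^2A^{-1/2}$ (the additive-divisor sum $\sum_{a\le X}\tau(a^2-4)$ carries $(\log X)^2$), and the final step $\ll K\log A$ is borderline when $A$ sits right at $(\log K)^2$; since every subsequent use only needs some $A\to\infty$, this is immaterial, but strictly one should take $A$ slightly larger than the stated lower endpoint.
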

where $\tau(\ )$ is the divisor function, and all implied constants are absolute.
\begin{proof}
Since we are obtaining upper-bounds, we will discard the condition that $s_1 - s_2$ is even in the definition of $R_{d}(k)$. By abuse of notation, we denote this modified counting function by $R_{d}(k)$  in the proof. Part\,(b) follows from Part\,(a) in the same manner that Lemma \ref{Lmoa} follows from Lemma \ref{Lmo0}, and summing over $a\in \mathcal{A}$, giving
\[
\sum_{a\in \mathcal{A}}\sum_{k\leq K}r_{a}^{2}(k) \ll K\log{A}  + \frac{K\log{K}\log{A}}{\sqrt{A}} \ll K\log{A}\, ,
\]
since $ (\log{K})^2 \ll A \ll K^{\ve}$.

 For the proof of Part\,(a), we write ${\bf s}_{j} = (s_j,t_j)$ for $j=1\,, 2$ to get
\[
R_{d}^{2}(k) = \#\left\{ ({\bf s}_1,{\bf s}_2)\, :\ s_{1}^2 -dt_{1}^2 = s_{2}^2 -dt_{2}^2 = k,\  {\bf s}_{j}\in \sqrt{K}\mathcal{S}_{d},\ j=1,\, 2\, \right\}\, ,
\]
so that we have
\begin{equation}\label{mo15a}
\sum_{k\leq K} R_{d}^{2}(k) = \#\left\{ ({\bf s}_{1},{\bf s}_{2}): \ s_{1}^2 -dt_{1}^2 = s_{2}^2 -dt_{2}^2 \leq K,\ {\bf s}_{j}\in \sqrt{K}\mathcal{S}_{d},\ j=1,\, 2\, \right\}\, .
\end{equation}
Now ${\bf s}_{j}\in \sqrt{K}\mathcal{S}_d$ and $s_{j}^2 - dt_{j}^2 \leq K$ imply that
\[
s_1\, , s_2 \ll \sqrt{K}\quad \text{and} \quad t_1\, , t_2 \ll \sqrt{\frac{K}{d}}
\]
Switching the roles of $t_1$ and $t_2$ in \eqref{mo15a} shows that
\[
\sum_{k\leq K} R_{d}^{2}(k)  \leq \#\left\{ ({\bf s}_{1},{\bf s}_{2}) :\, s_{1}^2 +dt_{2}^2 = s_{2}^2 +dt_{1}^2 ,\,  s_j \ll K^{\frac{1}{2}}, t_j \ll \left(\frac{K}{d}\right)^{\frac{1}{2}},\ j=1,\, 2\, \right\} .
\]
Since the forms are now positive definite, we apply Theorem 2 of \cite{BG06}, which gives the estimate in the Lemma.\qed
\end{proof}

The inner sum in the off-diagonal term in \eqref{mo15} can be analyzed by using Kloostermann's method (see \cite{HB96} and \cite{Ni} for a modern treatment and uniformity with our parameters) to give, for $a_1 \neq a_2$
\begin{equation}\label{mo17}
\sum_{k\leq K}r_{a_1}(k)r_{a_2}(k) = \delta^{(K)}_{\infty}(a_1, a_2)\,  \delta_{\textrm{fin}}(a_1,a_2)\  + O(K^{1-\ve_{0}})\, ,
\end{equation}
for some $\ve_{0}>0$. 

Here, $\delta^{(K)}_{\infty}(a_1,a_2)$ is the singular integral and $\delta_{\textrm{fin}}(a_1,a_2)=\prod_{p<\infty}\delta_{p}(a_1, a_2)$, where $ \delta_{p}(a_1,a_2)$ is the singular series, both associated to the equation
\begin{equation}\label{mo18}
V_{a_1,a_2}:  \quad f_{a_1}(x_1,x_2) = f_{a_2}(y_1,y_2)\, ,
\end{equation}
with
\begin{equation}\label{mo20}
\delta_{p}(a_1,a_2)= \lim_{\nu \to \infty} \frac{\# V_{a_1,a_2}\left(\mathbb{Z}/p^{\nu}\mathbb{Z}\right)}{p^{3\nu}}\, .
\end{equation}
For the singular integral, let  $\sqrt{4K}\mathcal{S}_{a_1,a_2} = \sqrt{4K}\mathcal{S}_{a_1} \times \sqrt{4K}\mathcal{S}_{a_2}$ and let $\chi_{a_1,a_2}$ be its characteristic function (here we abuse notation by writing $\mathcal{S}_{a_j}$ instead of $\mathcal{S}_{D_{a_j}}$). Then,
\begin{equation}\label{mo19}
\begin{split}
\delta^{(K)}_{\infty}(a_1,a_2) &= \int_{-\infty}^{\infty}\left[\int_{\mathbb{R}^4} \chi_{a_1,a_2}({\bf x},{\bf y})\, e\left(t(g_{a_1}({\bf x})-g_{a_2}({\bf y})+a_{1}^2 -a_{2}^2)\right)\ \text{d}{\bf x}\text{d}{\bf y}\right]\ \text{d}t\, .\\
&= \lim_{\epsilon \to 0}\frac{1}{\epsilon}.\text{Vol}\left(({\bf x},{\bf y}) \in \sqrt{4K}\mathcal{S}_{a_1,a_2}:  \vert g_{a_1}({\bf x})-g_{a_2}({\bf y})+a_{1}^2 -a_{2}^2 \vert < \epsilon \right)\, ,\\
&= \frac{4C^2K}{a_1a_2}\left[1 + O(A^{-1})\right]\, .
\end{split}
\end{equation}
Hence, from \eqref{mo15} and \eqref{mo17} we have 
\begin{equation}\label{mo19a}
\Sigma_{1} = 4C^2K\left(1 + O(A^{-1})\right) \sum_{\substack{ a_1,\, a_2 \in \mathcal{A}\\ a_1 \neq a_2}}\  \frac{1}{a_1a_2}\delta_{\textrm{fin}}(a_1,a_2) + O\left( K\log{A} + K^{1 - \ve_{0}}A^2\right)\, .
\end{equation}
To analyse the main term in \eqref{mo19a}, we replace $\delta_{\textrm{fin}}(a_1,a_2)$ with $\delta^{(m)}(a_1,a_2)$, where 
\begin{equation}\label{mo22}
\delta^{(m)}(a_1,a_2) := \frac{\# V_{a_1,a_2}\left(\mathbb{Z}/m\mathbb{Z}\right)}{m^{3}}. 
\end{equation}
The error term in doing so in \eqref{mo19a} has size
\begin{equation}\label{mo23}
\ll  K \sum_{s\geq 1}\sum_{\substack{ a_1,a_2 \in \mathcal{A}\\ a_1 \neq a_2\\ \text{gcd}\left(D_{a_1},D_{a_2}\right)=s}}\  \frac{\big{\vert}\delta_{\textrm{fin}}(a_1,a_2) - \delta^{(m)}(a_1,a_2)\big{\vert}}{a_1a_2} .
\end{equation}
By Appendix B, $ \delta^{(m)}(a_1,a_2)$ is suitably close to $\delta_{\textrm{fin}}(a_1,a_2)$ unless $s= \text{gcd}\left(D_{a_1},D_{a_2}\right)$  is in the set
\[
S_{\mathcal{A},m} := \left\{s: s= p_{1}^{e_1}\ldots p_{t}^{e_t}\ \text{with either}\ e_j \geq B \ \text{for some}\ j,\ \text{or}\ p_j >L\ \text{for\ some}\  j \right\}.
\]
Moreover, for such $a_1$ and $a_2$, the difference $\big{\vert}\delta_{\textrm{fin}}(a_1,a_2) - \delta^{(m)}(a_1,a_2)\big{\vert}$ is $O\left(\tau(s)\right)$, with $\tau(.)$ the divisor function. Hence the contribution  to \eqref{mo23}  from these  is at most
\begin{equation}\label{mo24}
\sum_{s\in S_{\mathcal{A},m}}\tau(s)  \sum_{\substack{ a_1, a_2 \in \mathcal{A}\\ \text{gcd}(D_{a_1},D_{a_2})=s}}\ \frac{1}{a_1 a_2}. 
\end{equation}
Since $D_{a}\equiv 0$ (mod $k$) occurs only if $a \equiv \pm 2$ (mod $\frac{s}{2^{\alpha}}$) with $0\leq \alpha \leq 3$, the sums in \eqref{mo24} above are bounded by 
\begin{equation}\label{mo25}
(\log{A})^2\sum _{s\in S_{\mathcal{A},m}}\ \frac{\tau(s)}{s^2} \ll (\log{A})^2\ \min\left(L,2^B\right)^{-\frac{1}{2}}\, ,
\end{equation}
because $s\in S_{\mathcal{A},m}$ implies $s\geq \min(L,2^B)$ and $a_{1},\ a_{2} \neq \pm 2$.

Next, for $s \notin S_{\mathcal{A},m}$,  we write
\[
\delta_{\textrm{fin}}(a_1,a_2) = \prod_{p\leq L}\delta_{p}(a_1,a_2)\ \prod_{p>L}\delta_{p}(a_1,a_2)\, .
\]
Recall from Prop. \ref{LA4} that $\delta_{p}(a_1,a_2) = 1 + O(p^{-2})$ if $p\nmid D_{a_1}D_{a_2}\left(D_{a_1}-D_{a_2}\right)$ when $p \geq 3$.  We denote these primes by $\mathcal{P}^{(1)}$ and include $p=2$ in this set, and denote the remaining finite set of  primes by $\mathcal{P}^{(2)}$. We decompose $\mathcal{P}^{(2)}$ further into  $\mathcal{P}^{(3)} = \left\{p\geq 3:  p\vert \text{gcd}\left(D_{a_1},D_{a_2}\right)\  \right\}$ and its complement. Then we write
\[
\begin{split}
\prod_{p>L}\delta_{p}(a_1,a_2) &= \prod_{\substack{p\in \mathcal{P}^{(1)}\\p>L}}\ \delta_{p}(a_1,a_2) \prod_{\substack{ p\in \mathcal{P}^{(2)}\\p>L}}\delta_{p}(a_1,a_2)\, , \\
&= \prod_{\substack{ p\in \mathcal{P}^{(2)}\\p>L}}\delta_{p}(a_1,a_2)\, \left(1 + O\left(\frac{1}{L}\right)\right)\, .
\end{split}
\]
Since $s \notin S_{\mathcal{A},m}$, if $p>L$ we have (using Prop. \ref{LA4} again) 
\[
\log{\prod_{\substack{ p\in \mathcal{P}^{(2)}\\p>L}}\delta_{p}(a_1,a_2)} =  \sum_{\substack{p\in \mathcal{P}^{(2)}\\p>L}}  \frac{c_{p}}{p} +\ O\left( \frac{1}{L}\right)\, ,
\]
with coefficients $c_p$ satisfying $\vert c_p\vert \leq 1$. Since $a_1 \neq \pm a_2$ and $a_j \neq \pm 2$, the set $\mathcal{P}^{(2)}$ has the bound  $\text{card}(\mathcal{P}^{(2)}) \ll \frac{\log{A}}{\log\log{A}}$, as it contains those primes dividing $D_{a_1}$ or $D_{a_2}$ or $\left(D_{a_1}-D_{a_2}\right)$. Hence the sum over $\mathcal{P}^{(2)}$ above is bounded by $\frac{\log{A}}{L\log\log{A}} \ll \Phi(A)^{-1}$.  Thus, for $s \notin S_{\mathcal{A},m}$ we have
\[
\delta_{\textrm{fin}}(a_1,a_2) = \prod_{p\leq L}\delta_{p}(a_1,a_2)\times\left(1 + O\left(\frac{1}{\Phi(A)}\right)\right)\, .
\]

To analyse this further, we write $\delta^{(m}(a_1,a_2)=\prod_{p\leq L}\ \delta^{(m)}_{p}(a_1,a_2) $. Then one has 
 \begin{equation}\label{LB1}
 \delta_{p}(a_1,a_2) = 1 + \sum_{l=1}^{\infty}N_{l}(a_1,a_2) \quad \text{and} \quad  \delta^{(m)}_{p}(a_1,a_2) = 1 + \sum_{l=1}^{B}N_{l}(a_1,a_2),
 \end{equation}
  where
\begin{equation}\label{LB1a}
 N_{l}(a_1,a_2) = p^{-4l}\sideset{}{^*}\sum_{b\, ({\rm mod}\, p^l)}\, \sum_{{\bf x},\,{\bf y}\,({\rm mod}\,p^l)}\ e\left(\frac{g_{a_1}({\bf x})- g_{a_2}({\bf y}) +D_{a_1} - D_{a_2}}{p^l}b\right)\, .
  \end{equation}
 Then for $s \notin S_{\mathcal{A},m}$, one has by Prop \ref{LA40} that $ \delta_{p}(a_1,a_2) = \delta^{(m)}_{p}(a_1,a_2) + O(p^{-B})$. It  follows that the contribution to  \eqref{mo23} is 
 \[
 \begin{split}
 K \sum_{\substack{s\geq 1 \\  s \notin S_{\mathcal{A},m}}}\sum_{\substack{ a_1,\ a_2 \in \mathcal{A}\\ a_1 \neq a_2\\ \text{gcd}\left(D_{a_1},D_{a_2}\right)=s}} & \frac{\big{\vert}\delta_{\textrm{fin}}(a_1,a_2) - \delta^{(m)}(a_1,a_2)\big{\vert}}{a_1a_2}\\ & \ll K\min\left(\Phi(A),2^B\right)^{-1}\sum_{\substack{ a_1,\ a_2 \in \mathcal{A}\\ a_1 \neq a_2}}\  \frac{\big{\vert}\delta^{(m)}(a_1,a_2)\big{\vert}}{a_1a_2},\\ &\ll K\min\left(\Phi(A),2^B\right)^{-1}(\log{A})^2\, ,
\end{split}\]
using $\delta^{(m)}(a_1,a_2) \ll \tau(\text{gcd}(a_1,a_2))$. 

We choose $\Phi(A) = \frac{1}{2}\sqrt{\frac{\log\log A}{\log\log\log A}}$ so that $ \Phi(A)^2 \ll 2^B = o(L)$. Substituting into \eqref{mo19a} gives us 

\begin{equation}\label{no1}
\Sigma_1 = 4C^2K\sum_{\substack{ a_1,\ a_2 \in \mathcal{A}\\ a_1 \neq a_2}}\  \frac{\delta^{(m)}(a_1,a_2)}{a_1a_2} + O\left(K\Phi(A)^{-1} (\log{A})^2\right)\, .
\end{equation}
Since $\delta^{(m)}(a_1,a_2)$ is periodic modulo $m$, the sum in \eqref{no1} can be analyzed as  in \eqref{mo9}, giving
\begin{equation}\label{no2}
\begin{split}
\Sigma_1 &= 4C^2K\sum_{\substack{ \alpha_1 \,({\rm mod}\, m)\\ \alpha_2\, ({\rm mod}\, m) }}\,  \delta^{(m)}(\alpha_1,\alpha_2)\frac{\left(\frac{1}{2}\log{A}\right)^2}{m^2} + O\left(K\Phi(A)^{-1} \left(\log{A}\right)^2\right),\\
&= C^2K\left(\log{A}\right)^2\delta_{m}\left(V^{(2)}\right) + O\left(K\Phi(A)^{-1} \left(\log{A}\right)^2\right)\, .
\end{split}
\end{equation}
Combining \eqref{mo9}, \eqref{mo14} and \eqref{no1} into \eqref{mo8} gives us the key cancellation and hence the estimate on the variance $V(K)$.
\begin{proposition}\label{Pmo1} Let $K \rightarrow \infty$, let $\mathcal{A}$ be the interval $[\sqrt{A},\ A]$ with $A$  satisfying $(\log K)^2 < A \ll_{\ve}K^{\ve}$, with $\ve >0$ sufficiently small. Then with $\Phi(A) = \frac{1}{2}\sqrt{\frac{\log\log A}{\log\log\log A}}$, we have 
\[
\sum_{k\leq K} \left[b_{\mathcal{A}}(k) - C(\log A)\delta^{(m)}(k)\right]^2 \ll K\Phi(A)^{-1} \left(\log{A}\right)^2\  .
\]
\end{proposition}
\begin{Note}
\label{Rmo1} One can remove the auxiliary parameter $B$  in the Proposition above with
\[
\delta^{(m)}(k) = \prod_{p\leq L}\delta_{p}(k) +O(2^{-B}),
\] 
as follows. From \eqref{A8} and \eqref{mo6}, we have $\delta(V_k)=\prod_{p<\infty}\delta_{p}(k)$ with $\delta_{p}(k)=1+\sum_{l\geq 1}N_{l}(k)$, where $N_{l}(k)$ is given in \eqref{A7} and  are all evaluated in the Appendix . Similarly, one can show that $\delta^{(m)}(k)=\prod_{p\leq L}\delta^{(m)}_{p}(k)$ with $\delta^{(m)}_{p}(k)=1+\sum_{1\leq l \leq B}N_{l}(k)$ . Then, it follows from Prop. \ref{LA5} that $\delta^{(m)}_{p}(k) = \delta_{p}(k) + O(p^{-B})$. Applying Prop \ref{LA3} then gives the result. \qed
\end{Note}

\subsection{Lower bound for $\delta^{(m)}(k)$ for most admissible $k$'s}
\label{lowerbound}\ 

To complete the proof  of Theorem \ref{Thm2}(ii), we need to estimate, for $\ve>0$
\begin{equation}\label{no3}
\big{|}\big\{0\leq k\leq K\, :\, k \ \text{admissible},\ \delta^{(m)}(k)<\ve\big\}\big{|}\, .
\end{equation}
By Props. \ref{LA4} and \ref{Lb6} in the Appendix, and Remark \ref{Rmo1} , we can write
\begin{equation}\label{no4}
\delta^{(m)}(k) = \prod_{p\leq L}\left(1 + N_{1,p}(k) + \mathcalorig{C}_{p}(k)\right) + o(1), 
\end{equation}
where we indicate the dependence of $p$ in the definition of $N_{l}$, and  with $\mathcalorig{C}_{p}(k)$ coming from the $N_{l,p}$'s with $l\geq 2$. Since we are assuming that $k$ is admissible, we can ignore the primes $p=2$ and $p=3$ since then these local factors are bounded from below. For $p\geq 5$, the problematic case of $\mathcalorig{C}_{p}(k)$ in \eqref{no4} is, by Prop. \ref{LA5}, of the form $4^{\beta}p^{-1} + O\left(p^{-2}\right)$. So, up to $O\left(p^{-2}\right)$, which can be ignored for our purposes of bounding $\delta^{(m)}(k)$ from below, we have that
\begin{align}\label{no5}
\begin{split}
\delta^{(m)}(k) &\gg \prod_{p\leq L}\left(1 + N_{1,p}(k) + O\left(\frac{1}{p^2}\right)\right) + o(1)\, ,\\
&\gg \prod_{p\leq L}\left(1 + \frac{\chi(k-4)\left(3+\chi(k)\right)}{p} \right)\, ,
\end{split}
\end{align}
where $\chi$ is the Legendre symbol modulo $p$. Hence
\begin{align}\label{no6}
\begin{split}
\left[\delta^{(m)}(k)\right]^{-1} &\ll \prod_{p\leq L}\left(1 - \frac{\chi(k-4)\left(3+\chi(k)\right)}{p} \right), \\
&= \sum_{n\leq M}\ \frac{\mu(n)A(k,n)}{n} \, ,
\end{split}
\end{align}
where $A(k,n)= A(k,p_1)\ldots A(k,p_l)$ if $n=p_1\ldots p_l$\ , $M=\left(\prod_{p\leq L}p\right) \leq m \ll K^{\ve}$, and 
\begin{equation}\label{no7}
A(k,p) = \left\{ \begin{array}{ll}
  \chi(k-4)\left(3+\chi(k)\right)& \text{if}\ \ p\geq 5\,  , \\
 \quad\quad0 & \text{otherwise}\  . 
\end{array} \right. 
\end{equation}
Since $A(k,n)$ as a function of $k$ is periodic of period $n$, we have
\begin{equation}\label{no8}
\sum_{k\leq K}A(k,n) = \frac{K}{n}\sum_{k\, ({\rm mod}\, n)} A(k,n) + O(n)\, .
\end{equation}
By multiplicativity, the completed sum 
\[
\sum_{k\, ({\rm mod}\, n)} A(k,n) = \prod_{p|n}\left(\sum_{k\, ({\rm mod}\, p)} A(k,p)\right) = \mu(n) , 
\]
since $\sum_{k\, ({\rm mod}\, p)} \chi(k-4) =0$ and  $\sum_{k\, ({\rm mod}\, p)} \chi(k-4)\chi(k) =-1$. Hence
\[
\sum_{k\leq K}A(k,n) = \frac{\mu(n)}{n}K + O(n),
\]
so that by \eqref{no6} we have 
\[
\sum_{\substack{k\leq K\\ k\, \text{admissible}}} \left[\delta^{(m)}(k)\right]^{-1} \ll K\left(\sum_{n}\frac{1}{n^2}\right) + M \ll K. 
\]
Hence, it follows that for any $\ve >0$
\begin{equation}\label{no9}
 \big{|}\big\{0\leq k\leq K: \ k \ \text{admissible}\, ,\ \delta^{(m)}(k)<\ve\big\}\big{|} \ll \ve K. 
 \end{equation}
 Finally, combining \eqref{no9} with the variance estimate in Prop. \ref{Pmo1} gives Theorem \ref{Thm2}(ii).\qed


\section{Computations}
\label{sec9}
We computed all the Hasse failures (HF) for positive $k \leq K$  with $K$ about  564 million and present some data associated with this. 

By definition there are no HF's in the congruence classes $3$\,(mod\,4) and $\pm3$\,(mod\,9). We tabulate in Table \ref{table1}  the percentages in some other congruence classes: more precisely this is the quantity  $\lambda =100\times \#\{\text{ HF's in a congruence class}\}/\#\{\text{ HF's }\}$.

\begin{center}
\begin{tabular}{p{4cm}p{4cm}}
\begin{tabular}{|c|c|}
\hline
Class (mod 4)   & $\lambda \%$ \\
\hline
0&  30.6733\\
\hline
1&  28.1317\\
\hline
2&  41.195\\
\hline
\end{tabular}

&
\begin{tabular}{|c|c|}
\hline
Class (mod 3)   & $\lambda \%$ \\
\hline
0& 42.2078\\
\hline
1& 43.2872\\
\hline
2&  14.5051\\
\hline
\end{tabular}
\end{tabular}
\end{center}

\begin{center}
\begin{tabular}{p{4cm}p{4cm}}
\begin{tabular}{|c|c|}
\hline
Class (mod 9) &   $\lambda \%$ \\
\hline
0& 42.2078\\
\hline
1&  19.023\\
\hline
4&  4.462\\
\hline
7&  19.802\\
\hline
\end{tabular}
&
\begin{tabular}{|c|c|}
\hline
Class (mod 9) &   $\lambda \%$ \\
\hline
2&  4.831\\
\hline
5&  4.833\\
\hline
8&  4.841\\
\hline
\end{tabular}
\vspace{15pt}
\end{tabular}

\captionof{table}{Percentages of Hasse failures in congruence classes.}\label{table1}
\end{center}

The number of admissible $k$'s (see Sec.\,\ref{sec3}) in the interval $[1, K]$ we denote by $\mathcalorig{A}(K)$, and is  asymptotically $\frac{7}{12}K$. The admissibles consist of the exceptional $k$'s, of which there are $O(\frac{K}{\sqrt{\log{K}}})$ members, the generic $k$'s consisting of the HF's and the generic  $k$'s with $\mathfrak{h}(k)>0$. For $K\geq 5$, let $\mathcalorig{A}_{\scriptscriptstyle HF}(K)$ denote the number of HF's in the interval $[5, K]$. By the arguments in Sec.\,\ref{sec6}, we know that $\mathcalorig{A}_{\scriptscriptstyle HF}(K) \rightarrow \infty$ with $K$, and more precisely $\mathcalorig{A}_{\scriptscriptstyle HF}(K) \gg_{\epsilon} K^{\frac{1}{2} -\epsilon}$  for any $\epsilon >0$. While we do not know the exact order of $\mathcalorig{A}_{\scriptscriptstyle HF}(K)$, Theorem \ref{Thm2}(ii) shows that it is $o(K)$, and we consider this question here computationally, for which we  compare $\mathcalorig{A}_{\scriptscriptstyle HF}(K)$ with $\mathcalorig{A}(K)$. There are two possible models to consider, namely (1) $\mathcalorig{A}_{\scriptscriptstyle HF}(K) \sim \mathcalorig{A}(K)^\theta$ for some $0 < \theta <1$ or (2) $\mathcalorig{A}_{\scriptscriptstyle HF}(K) \sim \mathcalorig{A}(K)/\log{\mathcalorig{A}(K)}^\theta$.
To check these cases, we compute $\log{\mathcalorig{A}_{\scriptscriptstyle HF}(K)} / \log{\mathcalorig{A}(K)}$ with $\mathcalorig{A}(K)= \frac{7}{12}K$.

For $K \sim  564\times10^6$ put $L=\log K$, and let 
\begin{equation}\label{9.1}
 f(K)= 0.887516\, - 8.06653\ L^{-2} - 21.8923\ L^{-3}+ 2.38097\ L^{-4} + 3.35656\ L^{-5}.
\end{equation}
Using  $f(K)$ as a predictor for $\frac{\log{\mathcalorig{A}_{\scriptscriptstyle HF}(K)}}{ \log{\mathcalorig{A}(K)}}$ we compare $\mathcalorig{A}_{\scriptscriptstyle HF}(K)$ with $\mathcalorig{P}(K)$,  the integer part of $\mathcalorig{A}(K)^{f(K)}$ and tabulate also the relative percentage error $\mathcalorig{E}(K) = 100\times \frac{\mathcalorig{A}_{\scriptscriptstyle HF}(K) - \mathcalorig{P}(K)}{\mathcalorig{P}(K)}$, in Table \ref{table4},  page \pageref{table4}.  The $K$'s  given  are each a multiple of 100,800 but otherwise  is a  sample. The data was gathered by subdividing the  interval $[1,K]$ into subintervals of length 100,800 and computing $\mathcalorig{A}_{\scriptscriptstyle HF}(K)$ at the endpoints. 

We first show, in Fig. \ref{img2}, the graph of the ``percentages of the Hasse failures'', namely the quantity $100\times \frac{\mathcalorig{A}_{\scriptscriptstyle HF}(K)}{\mathcalorig{A}(K)}$, together with the prediction $\mathcalorig{P}(K)$. There were a total of $23,298,277$ HF's (with $K= 564,062,446 $) but we plot $5,595$ sample points. Next, Fig. \ref{img3} is a plot of  $\frac{\log{\mathcalorig{A}_{\scriptscriptstyle HF}(K)}}{ \log{\mathcalorig{A}(K)}}$ together with the approximation $f(K)$ in \eqref{9.1}. In both graphs, the approximation  curves  and data points are too close to show up as separate curves, and so we plot only the data points. We also indicate in Table \ref{table2}, the quality of the approximations in   Fig. \ref{img4} and Fig. \ref{img5}.

\begin{figure}[!ht]
\centering
  \includegraphics[width=.75\linewidth]{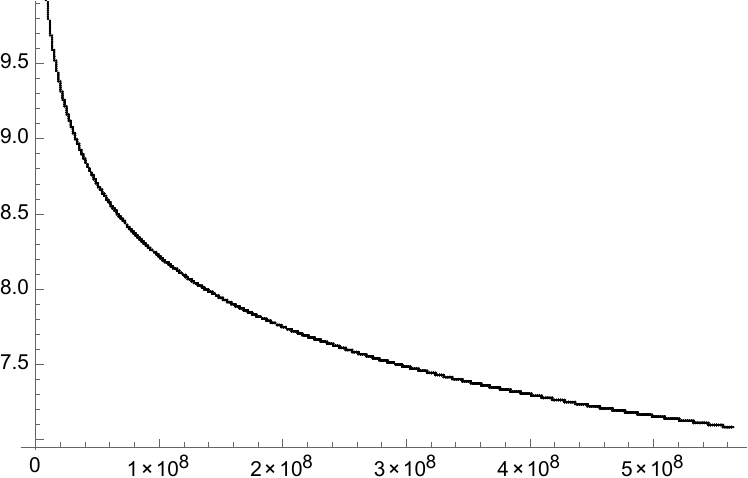}
\caption{Percentages of Hasse failures: $100\times \frac{\mathcalorig{A}_{\scriptscriptstyle HF}(K)}{\mathcalorig{A}(K)}$ .}
\label{img2}
\vspace{2cm}
\end{figure}

 \begin{figure}[!ht]
\centering
  \includegraphics[width=.75\linewidth]{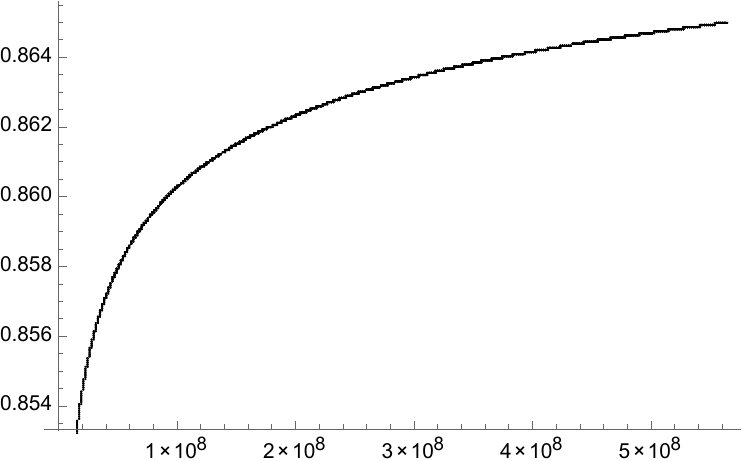}
\caption{Plot of $\frac{\log{\mathcalorig{A}_{\scriptscriptstyle HF}(K)}}{ \log{\mathcalorig{A}(K)}}$.}
 \label{img3}
 \vspace{.5cm}
\end{figure}

\begin{center}
 \begin{tabular}{p{7cm}p{6cm}}
\begin{tabular}{|r l|}
\hline
{}&{}\\
$R^2$ Goodness of Fit: &	0.99999359 \\
Correlation Coefficient: &	0.99999689 \\
Maximum Error: &	0.00015802515 \\ 
Mean Squared Error: &	$6.09792 \times 10^{-11}$ \\
Mean Absolute Error: &	$3.4565103 \times 10^{-6}$ \\
{}&{}\\
\hline
\end{tabular}
\captionsetup{width=\linewidth}
\captionof{table}{Data on approximation:\\ $\frac{\log{\mathcalorig{A}_{\scriptscriptstyle HF}(K)}}{ \log{\mathcalorig{A}(K)}} \sim f(K)$, \ $2\times 10^6 < k < 564 \times 10^6$}\label{table2}
\end{tabular}
\end{center}

\begin{figure}[!ht]
\centering
  \includegraphics[width=.75\linewidth]{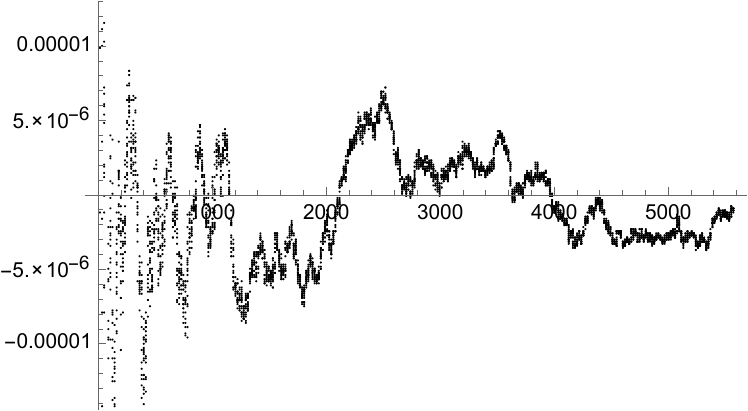}
\caption{Normalised residuals between $\frac{\log{\mathcalorig{A}_{\scriptscriptstyle HF}(K)}}{ \log{\mathcalorig{A}(K)}}$  and prediction $f(K)$: x-axis is \ $K/100800$.}
\label{img4}
\vspace{.5cm}
\end{figure}

\begin{figure}[!ht]
\centering
  \includegraphics[width=.65\linewidth]{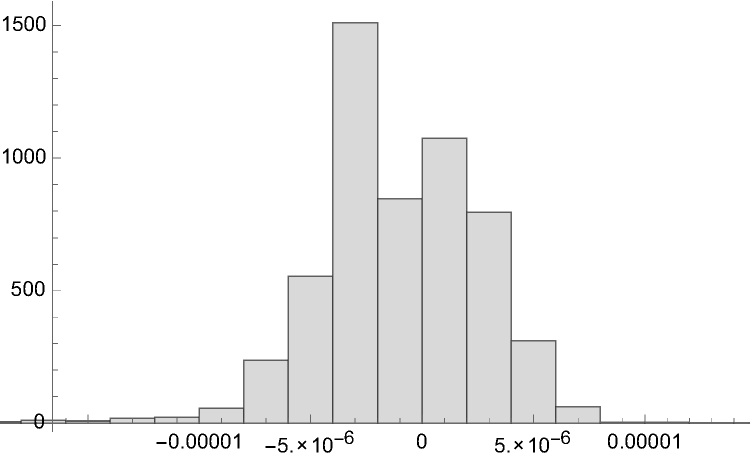}
\caption{Histogram of residuals above.}
  \label{img5}
  \vspace{.5cm}
\end{figure}

\begin{table}[!ht]
\caption{Percentages of Hasse failures.}
\vspace{10pt}
\label{table3}
\centering\small
\begin{tabular}{|c| c| }
\hline
$k \leq K$ &   ${ \%}$  \small{Hasse failures}\\
\hline
{}&{}\\
\ 100,800&12.97620\\
 10,080,000 & 9.84888 \\
 20,160,000 & 9.34943 \\
 30,240,000 & 9.05874 \\
 40,320,000 & 8.85229 \\
 50,400,000 & 8.69513 \\
 60,480,000 & 8.56721 \\
 70,560,000 & 8.45991 \\
 80,640,000 & 8.36626 \\
 90,720,000 & 8.28619 \\
 {}&{}\\
 \hline
 \end{tabular}
 \quad
\begin{tabular}{|c| c|}
\hline
 $k\leq K$ & $ \%$  Hasse failures\\
 \hline
 {}&{}\\
100,800,000 & 8.21313 \\
 110,880,000 & 8.14845 \\
 120,960,000 & 8.08844 \\
 131,040,000 & 8.03345 \\
 141,120,000 & 7.98373 \\
 151,200,000 & 7.93721 \\
 161,280,000 & 7.89398 \\
 171,360,000 & 7.85349 \\
 181,440,000 & 7.81481 \\
 191,520,000 & 7.77902 \\
 {}&{}\\
 \hline
 \end{tabular}
\label{table3a}
\begin{tabular}{|c| c|}
\hline
 $k\leq K$ & $ \%$  Hasse failures\\
 \hline
 {}&{}\\
201,600,000 & 7.74513 \\
 211,680,000 & 7.71296 \\
 221,760,000 & 7.68273 \\
 231,840,000 & 7.65369 \\
 241,920,000 & 7.62577 \\
 252,000,000 & 7.59906 \\
 262,080,000 & 7.57319 \\
 272,160,000 & 7.54791 \\
 282,240,000 & 7.52428 \\
 292,320,000 & 7.50153 \\
 {}&{}\\
 \hline
 \end{tabular}
 \quad
\begin{tabular}{|c |c|}
\hline
 $k\leq K$ & $ \%$  Hasse failures\\
 \hline
 {}&{}\\
302,400,000 & 7.47924 \\
 312,480,000 & 7.45807 \\
 322,560,000 & 7.43758 \\
 332,640,000 & 7.41768 \\
 342,720,000 & 7.39828 \\
 352,800,000 & 7.37978 \\
 362,880,000 & 7.36165 \\
 372,960,000 & 7.34364 \\
 383,040,000 & 7.32663 \\
 393,120,000 & 7.30999  \\
 {}&{}\\
 \hline
 \end{tabular}
\begin{tabular}{|c| c| }
\hline
$k \leq K$ &   $ \%$  Hasse failures\\
\hline
{}&{}\\
 403,200,000 & 7.29355 \\
 413,280,000 & 7.27757 \\
 423,360,000 & 7.26198 \\
 433,440,000 & 7.24716 \\
 443,520,000 & 7.23274 \\
 453,600,000 & 7.21817 \\
 463,680,000 & 7.20418 \\
 473,760,000 & 7.19059 \\
 {}&{}\\
 \hline
 \end{tabular}
 \quad
\begin{tabular}{|c| c|}
\hline
 $k\leq K$ & $ \%$  Hasse failures\\
 \hline
 {}&{}\\
 483,840,000 & 7.17718 \\
 493,920,000 & 7.16409 \\
 504,000,000 & 7.15137 \\
 514,080,000 & 7.13901 \\
 524,160,000 & 7.12665 \\
 534,240,000 & 7.11460 \\
 544,320,000 & 7.10295 \\
 554,400,000 & 7.09159 \\
 {}&{}\\
 \hline
\end{tabular}
 
\end{table}

In Table \ref{table3}, we provide a sample of the percentages of the Hasse failures. The data in Table \ref{table3}  suggests that $$\mathcalorig{A}_{ \scriptscriptstyle HF}(K) \sim C\  K^{0.8875...  +\ {\text o}(1)}$$ for some constant $C$, at least for $K$ in this range. The error is smaller than $0.1 \%$ for $K \geq 10^7$ and gets better for larger values of $K$. This is illustrated in Table \ref{table4}.

We could use $f(K) = 1 - \frac{C}{\log{K}} + \ldots $ to test for the ``positive proportion'' case (which we know is false), and use $f(K) = 1 - \frac{C \log\log{K}}{\log{K}} + \ldots $ for case (2). For our range of $K$, $\log\log K$ is essentially constant and we will not be able to distinguish between these cases satisfactorily. Testing gave a candidate function whose error in approximating is closer to $1 \%$, a result much weaker than the one for case (1) above. Moreover, the graph of the normalised residuals (not shown) was far from random.

We do not come to any firm conclusions from these computations as our range of $k$'s may well be too small (since $\log K$ does not grow fast enough), and because the exponent $0.8875...$ is quite close to 1. However, it is still a bit of a surprise how well columns 2 and 3 match in Table \ref{table4}.

We next consider a slightly different calculation. Recall that we  subdivided our large interval into subintervals of length $h= 100,800$ (this value was chosen to be a nice integer slightly larger than $\sqrt{K} =23750$). It is interesting to look at the distribution of HF's within these subintervals. We plot $\frac{1}{h}\left(\mathcalorig{A}_{\scriptscriptstyle HF}((l+1)h) -\mathcalorig{A}_{\scriptscriptstyle HF}(lh)\right)  $, the average number of HF's in the $l$-th subinterval, with $lh\leq K$. The  curve in the graph is an approximation, given by $g(x)=0.2353 x^{-0.0908047} - 46.7396 x^{-0.661084}$ (there was no effort made to find ``optimal'' coefficients in the  approximation). The residuals appear random, with the histogram fitting the normal distribution $N(-0.0016, 0.0018)$ quite closely.

\begin{table}[!ht]
\caption{Comparision between Hasse failure count $\mathcalorig{A}_{\scriptscriptstyle HF}(K)$ and the model $\mathcalorig{P}(K)$.}
\vspace{10pt}
\label{table4}
\resizebox{6cm}{!}{
\begin{minipage}[t]{0.70\textwidth}
\centering
 \begin{tabular}{|cccl|}
\hline\hline
$k$ & $\mathcalorig{A}_{\scriptscriptstyle HF}(K)$& $\mathcalorig{P}(K)$ & error: $\mathcalorig{E}(K)$\%\\
\hline
 6,552,000 & 388,485 & 388,474 & { }0.00279494 \\
 13,104,000 & 738,402 & 738,476 & -0.0100959 \\
 19,656,000 & 1,074,038 & 1,074,075 & -0.00351784 \\
 26,208,000 & 1,400,385 & 1,400,458 & -0.00526837 \\
 32,760,000 & 1,720,203 & 1,720,067 & { }0.0078529 \\
 39,312,000 & 2,034,145 & 2,034,330 & -0.00913502 \\
 45,864,000 & 2,343,944 & 2,344,184 & -0.0102605 \\
 52,416,000 & 2,650,338 & 2,650,290 & { }0.0017743 \\
 58,968,000 & 2,952,994 & 2,953,142 & -0.00502773 \\
 65,520,000 & 3,253,233 & 3,253,119 & { }0.00349665 \\
 72,072,000 & 3,550,279 & 3,550,523 & -0.00689091 \\
 78,624,000 & 3,845,160 & 3,845,601 & -0.0114887 \\
 85,176,000 & 4,138,458 & 4,138,557 & -0.00241157 \\
 91,728,000 & 4,429,888 & 4,429,563 & { }0.00732315 \\
 98,280,000 & 4,718,612 & 4,718,766 & -0.00326508 \\
 104,832,000 & 5,006,091 & 5,006,291 & -0.00401399 \\
 111,384,000 & 5,292,241 & 5,292,251 & -0.000204305 \\
 117,936,000 & 5,576,772 & 5,576,742 & { }0.00052202 \\
 124,488,000 & 5,859,223 & 5,859,851 & -0.0107229 \\
 131,040,000 & 6,140,768 & 6,141,654 & -0.0144272 \\
 137,592,000 & 6,421,657 & 6,422,220 & -0.00876724 \\
 144,144,000 & 6,701,189 & 6,701,611 & -0.00630533 \\
 150,696,000 & 6,979,137 & 6,979,885 & -0.0107173 \\
 157,248,000 & 7,256,456 & 7,257,091 & -0.00876333 \\
 163,800,000 & 7,532,631 & 7,533,279 & -0.00860614 \\
 170,352,000 & 7,807,978 & 7,808,490 & -0.00656096 \\
 176,904,000 & 8,082,302 & 8,082,764 & -0.00572446 \\
 183,456,000 & 8,355,009 & 8,356,139 & -0.0135256 \\
 190,008,000 & 8,627,950 & 8,628,647 & -0.0080886 \\
 196,560,000 & 8,899,431 & 8,900,322 & -0.0100164 \\
 203,112,000 & 9,170,775 & 9,171,192 & -0.00455085 \\
 209,664,000 & 9,440,833 & 9,441,285 & -0.00478836 \\
 216,216,000 & 9,710,721 & 9,710,626 & { }0.000974181 \\
 222,768,000 & 9,979,756 & 9,979,240 & { }0.00516595 \\
 229,320,000 & 10,247,890 & 10,247,150 & { }0.00722142 \\
 235,872,000 & 10,515,262 & 10,514,376 & { }0.00842309 \\
 242,424,000 & 10,781,980 & 10,780,939 & { }0.00964957 \\
 248,976,000 & 11,047,893 & 11,046,859 & { }0.0093601 \\
 255,528,000 & 11,313,674 & 11,312,152 & { }0.0134518 \\
 262,080,000 & 11,577,887 & 11,576,836 & { }0.00907272 \\
 268,632,000 & 11,841,388 & 11,840,928 & { }0.00388283 \\
 275,184,000 & 12104,565 & 12,104,442 & { }0.00101294 \\
 \hline\hline
 \end{tabular}
 \end{minipage}}
 \hspace{.4cm}
\resizebox{6cm}{!}{
\begin{minipage}[t]{0.70\linewidth}
\begin{tabular}{|cccl|}
\hline\hline
$K$ & $\mathcalorig{A}_{\scriptscriptstyle HF}(K)$& $\mathcalorig{P}(K)$ & error: $\mathcalorig{E}(K)$\%\\
\hline
281,736,000 & 12,367,646 & 12,367,393 &  { }0.00203978 \\
 288,288,000 & 12,630,282 & 12,629,796 &  { }0.00384661 \\
 294,840,000 & 12,892,179 & 12,891,662 &  { }0.00400262  \\
 301,392,000 & 13,153,376 & 13,153,006 & { }0.00280671 \\
 307,944,000 & 13,414,178 & 13,413,839 & { }0.00252140\\
 314,496,000 & 13,674,773 & 13,674,173 & { }0.00438484 \\
 321,048,000 & 13,934,649 & 13,934,018 & { }0.00452291 \\
 327,600,000 & 14,194,163 & 14,193,386 & { }0.00547096 \\
 334,152,000 & 14,452,782 & 14,452,286 & { }0.00342712 \\
 340,704,000 & 14,711,231 & 14,710,729 & { }0.00341123 \\
 347,256,000 & 14,969,227 & 14,968,723 & { }0.00336506 \\
 353,808,000 & 15,227,250 & 15,226,278 & { }0.00638342 \\
 360,360,000 & 15,484,481 & 15,483,402 & { }0.00696817 \\
 366,912,000 & 15,740,411 & 15,740,103 & { }0.00195186 \\
 373,464,000 & 15,996,468 & 15,996,391 & { }0.00048054 \\
 380,016,000 & 16,252,525 & 16,252,271 & { }0.00155736 \\
 386,568,000 & 16,508,096 & 16,507,753 & { }0.00207456 \\
 393,120,000 & 16,763,273 & 16,762,843 & { }0.00256364 \\
 399,672,000 & 17,017,822 & 17,017,548 & { }0.00160994 \\
 406,224,000 & 17,271,602 & 17,271,874 & -0.00157807 \\
 412,776,000 & 17,525,323 & 17,525,829 & -0.00288924 \\
 419,328,000 & 17,778,565 & 17,779,418 & -0.00480173 \\
 425,880,000 & 18,031,595 & 18,032,648 & -0.00584339 \\
 432,432,000 & 18,284,841 & 18,285,525 & -0.00374203 \\
 438,984,000 & 18,537,717 & 18,538,054 & -0.00181802 \\
 445,536,000 & 18,790,012 & 18,790,240 & -0.00121647 \\
 452,088,000 & 19,041,406 & 19,042,090 & -0.00359348 \\
 458,640,000 & 19,292,457 & 19,293,608 & -0.00596736 \\
 465,192,000 & 19,543,623 & 19,544,799 & -0.00602086 \\
 471,744,000 & 19,794,451 & 19,795,669 & -0.00615548 \\
 478,296,000 & 20,045,181 & 20,046,222 & -0.00519473 \\
 484,848,000 & 20,295,253 & 20,296,462 & -0.00596103 \\
 491,400,000 & 20,545,280 & 20,546,395 & -0.00542974 \\
 497,952,000 & 20,794,925 & 20,796,024 & -0.00528918 \\
 504,504,000 & 21,044,290 & 21,045,355 & -0.00506101 \\
 511,056,000 & 21,293,257 & 21,294,390 & -0.00532190 \\
 517,608,000 & 21,541,991 & 21,543,134 & -0.00530773 \\
 524,160,000 & 21,790,444 & 21,791,591 & -0.00526622 \\
 530,712,000 & 22,038,418 & 22,039,765 & -0.00611405 \\
 537,264,000 & 22,286,350 & 22,287,659 & -0.00587756 \\
 543,816,000 & 22,534,130 & 22,535,278 & -0.00509679 \\
 550,368,000 & 22,782,046 & 22,782,624 & -0.00254092 \\
 \hline\hline
\end{tabular}%
\end{minipage}}
\end{table}

Finally we include data on the distribution of the number of orbits $\mathfrak{h}(k)$ with generic $k \leq  K$ where $K=10^7$. A sample of fundamental sets  together with the corresponding class numbers $\mathfrak{h}(k)$ obtained using  Theorem \ref{Thm1}(i) is given in Table \ref{funsettable}. The data on the distribution of these class numbers is given in Table \ref{table5} and Fig. \ref{img10}. Here, $\mathfrak{n}(h)=\mathfrak{n}_{K}(h)$ is the number of occurrences of $h=\mathfrak{h}(k)$ with $k $ running through generic integers in $[1,K]$. Our count also includes the number of Hasse failures, denoted by $\mathfrak{n}(0)$.  Since $\mathfrak{n}(0)$ grows with $K$, we normalize our counts and consider the distribution of $\frac{\mathfrak{n}(h)}{\mathfrak{n}(0)}$. We find that this quantity appears to behave like the graph of $e^{-\sqrt{h+1}}$. If so, this suggests that $\frac{\mathfrak{n}(h+1)}{\mathfrak{n}(h)} \sim 1 - \frac{1}{2}h^{-\frac{1}{2}}$ as $h \rightarrow \infty$. This is roughly consistent with the data in the second column of Table \ref{table5}, for which with $h=21$ we have $\frac{\mathfrak{n}(h+1)}{\mathfrak{n}(h)} = 0.88921$ while $1 - \frac{1}{2}h^{-\frac{1}{2}} = 0.89089$. By Lemma \ref{average}, the  average value of $\mathfrak{h}(k)$  with $0\leq k \leq K$ has size about $(\log K)^2$. Since $\mathfrak{n}(0)$ has size a power of $K$, the data (at least in this short range for $K$) suggests that the maximal value of  $\mathfrak{h}(k)$ is probably a power of  $\log K$ , or at worst $\mathfrak{h}(k) \ll_{\epsilon} k^\epsilon$, for $\epsilon >0$ (the maximum value for $\mathfrak{h}(k)$ in our data was 131).  As mentioned in the introduction, the best we know is $\mathfrak{h}(k) \ll_{\ve} k^{\frac{1}{3} + \ve}$. 

\begin{figure}[!ht]
\vspace{1cm}
\centering
  \includegraphics[width=.7\linewidth]{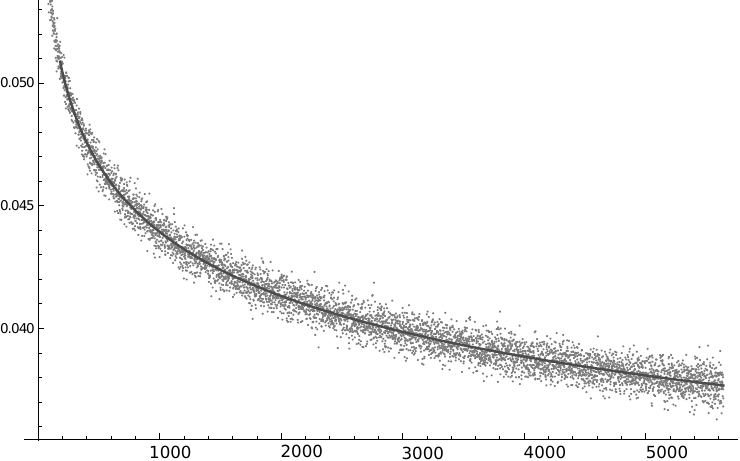}
\caption{Hasse failures in subintervals.\ }
\label{img6}
\vspace{1cm}
\end{figure}

\begin{figure}[!ht]
\centering
  \includegraphics[width=.65\linewidth]{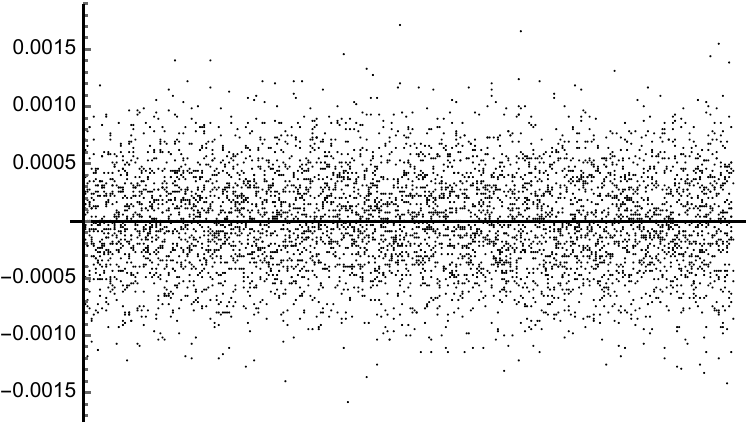}
\caption{Hasse failures in subintervals residuals.}
  \label{img7}
\vspace{2cm}
\end{figure}

\begin{figure}[!ht]
  \includegraphics[width=.40\linewidth]{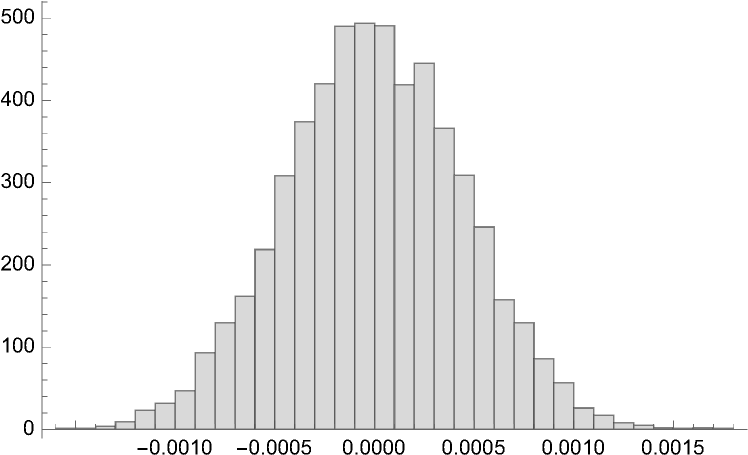}
  \label{img8}
\vspace{20pt}
  \includegraphics[width=.40\linewidth]{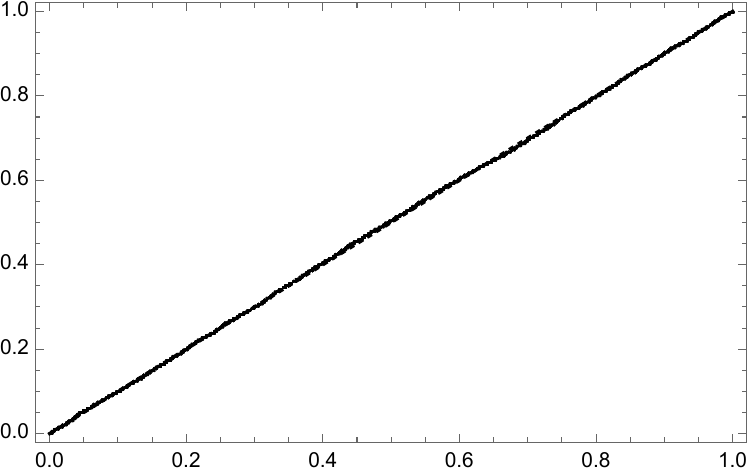}
\caption{Hasse failures in subintervals: residuals histogram and probability plot.}
  \label{img9}
\vspace{30pt}
\end{figure}
\pagebreak

\begin{center}
\begin{longtable}[!ht]{|l|l|l|}
\caption[Funset]{\small{Sample fundamental sets, $(-u_1,u_2,u_3)$}}\label{funsettable} \\

\hline \multicolumn{1}{|c|}{$k$\quad\quad} & \multicolumn{1}{c|}{$\mathfrak{h}(k)$} & \multicolumn{1}{l|}{\textbf{\  $(u_1,u_2,u_3)$}} \\ \hline 
\endfirsthead

\multicolumn{3}{c}%
{{ \tablename\ \thetable{} \small{-- continued from previous page}}} \\
\hline \multicolumn{1}{|c|}{$k$} &
\multicolumn{1}{c|}{$\mathfrak{h}(k)$} &
\multicolumn{1}{l|}{\textbf{$\ (u_1,u_2,u_3)$}} \\ \hline
\endhead

\hline \multicolumn{3}{r}{{}} \\
\endfoot

\hline \hline
\endlastfoot

54 & 1 & (3, 3, 3) \\
70 & 1 & (3, 3, 4) \\
88 & 1 & (3, 3, 5) \\
108 & 1 & (3, 3, 6) \\
133 & 1 & (3, 4, 6) \\
154 & 1 & (3, 3, 8) \\
166 & 1 & (4, 5, 5) \\
188 & 1 &  (3, 5, 7) \\
189 & 1 & (3, 6, 6) \\
214 & 1 & (3, 4, 9) \\
236 & 1 & (5, 5, 6) \\
254 & 1 & (3, 7, 7) \\
270 & 1 & (3, 3, 12) \\
304 & 1 & (3, 3, 13) \\
329 & 2 & (3, 8, 8), (4, 4, 11) \\
341 & 1 & (4, 5, 10) \\
358 & 1 & (3, 5, 12) \\
378 & 1 & (3, 3, 15) \\
412 & 1 & (5, 6, 9) \\
414 & 1 & (3, 9, 9) \\
430 & 1 & (3, 4, 15) \\
446 & 1 & (5, 5, 11) \\
448 & 1 & (3, 6, 13) \\
460 & 2 & (3, 3, 17), (3, 9, 10) \\
473 & 2 & (3, 4, 16), (5, 8, 8) \\
494 & 2 & (4, 7, 11), (5, 5, 12) \\
502 & 1 & (4, 9, 9) \\
504 & 1 & (3, 3, 18) \\
518 & 1 & (3, 4, 17) \\
532 & 1 & (6, 6, 10) \\
540 & 1 & (3, 6, 15) \\
553 & 1 & (4, 8, 11) \\
558 & 1 & (3, 9, 12) \\
566 & 1 & (4, 5, 15) \\
616 & 1 & (4, 10, 10) \\
664 & 1 & (3, 9, 14) \\
665 & 2 & (3, 4, 20), (4, 8, 13) \\
668 & 2 & (3, 10, 13), (6, 7, 11) \\
684 & 1 & (6, 9, 9) \\
693 & 1 & (3, 6, 18) \\
700 & 1 & (3, 3, 22) \\
713 & 2 & (3, 8, 16), (5, 8, 12) \\
718 & 1 & (3, 4, 21) \\
{\ldots}&{\ldots}&{\ldots}\\
{\ldots}&{\ldots}&{\ldots}\\
9230 & 3 & (3, 28, 59), (7, 17, 52), (11, 25, 28) \\
9234 & 2 & (3, 15, 75), (9, 9, 63) \\
9253 & 3 & (3, 42, 44), (8, 9, 66), (12, 18, 35) \\
9260 & 9 & (3, 7, 86), (3, 19, 70), (3, 29, 58), (5, 19, 58), (5, 31, 42)\\ 
{}&{}& (6, 23, 47), (7, 31, 33),(9, 13, 53), (9, 22, 37) \\
9261 & 1 & (6, 15, 60) \\
9268 & 1 & (6, 32, 36) \\
9288 & 2 & (3, 30, 57), (6, 12, 66) \\
9289 & 1 & (3, 24, 64) \\
9296 & 1 & (10, 11, 55) \\
9302 & 3 & (4, 21, 61), (5, 9, 76), (11, 19, 36) \\
9304 & 5 & (3, 13, 78), (9, 14, 51), (9, 27, 31), (13, 18, 33), (14, 21, 27) \\
9308 & 3 & (5, 27, 47), (9, 11, 58), (10, 23, 33) \\
9310 & 3 & (3, 3, 92), (3, 20, 69), (4, 13, 73) \\
9313 & 1 & (4, 24, 57) \\
9317 & 2 & (4, 6, 85), (4, 34, 45) \\
9322 & 2 & (5, 24, 51), (9, 15, 49) \\
9329 & 2 & (7, 8, 72), (8, 28, 33) \\
9353 & 3 & (4, 36, 43), (8, 12, 59), (8, 29, 32) \\
9358 & 4 & (3, 21, 68), (9, 23, 36), (12, 13, 45), (12, 21, 31) \\
9368 & 3 & (3, 14, 77), (7, 21, 46), (13, 21, 29) \\
9373 & 4 & (3, 6, 88), (4, 38, 41), (11, 22, 32), (18, 18, 25) \\
9380 & 7 & (3, 34, 53), (4, 22, 60), (6, 20, 52), (8, 10, 64), (8, 24, 38) \\
{}&{}& (10, 24, 32), (15, 17, 31)\\
9388 & 3 & (6, 9, 73), (6, 17, 57), (9, 19, 42) \\
9405 & 1 & (3, 18, 72) \\
9414 & 3 & (3, 9, 84), (3, 36, 51), (9, 21, 39) \\
9416 & 2 & (4, 30, 50), (5, 29, 45) \\
9430 & 2 & (3, 15, 76), (12, 15, 41) \\
9436 & 2 & (6, 25, 45), (10, 25, 31) \\
9446 & 1 & (11, 20, 35) \\
9449 & 2 & (4, 16, 69), (8, 16, 51) \\
9450 & 1 & (3, 39, 48) \\
9454 & 11 & (3, 7, 87), (4, 11, 77), (4, 23, 59), (4, 31, 49), (7, 12, 63), \\
{}&{}& (7, 17, 53), (7, 28, 37), (11, 23, 31), (13, 13, 43), (15, 20, 27) \\
{}&{}&(17, 17, 28)\\
9468 & 1 & (3, 42, 45) \\
9470 & 4 & (3, 43, 44), (5, 7, 81), (5, 12, 71), (17, 21, 23) \\
9484 & 2 & (3, 5, 90), (9, 13, 54) \\
9493 & 5 & (3, 30, 58), (4, 27, 54), (6, 12, 67), (6, 14, 63), (6, 23, 48) \\
9494 & 1 & (7, 29, 36) \\
9500 & 8 & (3, 13, 79), (5, 9, 77), (5, 10, 75), (5, 31, 43), (6, 13, 65) \\
{}&{}&(10, 13, 51), (10, 27, 29), (13, 23, 27)\\
9504 & 3 & (3, 3, 93), (6, 21, 51), (12, 12, 48) \\
9520 & 2 & (3, 31, 57), (13, 15, 39) \\
9532 & 1 & (15, 21, 26) \\
9538 & 1 & (5, 27, 48) \\
\end{longtable}
\end{center}

\begin{table}[!ht]
\caption{Distribution of $\mathfrak{h}(k)$, generic $k\leq 10^7$.}
\vspace{10pt}
\centering\small

\begin{tabular}{|c| c| }
\hline
$\mathfrak{h}(k)$ &   $\mathfrak{n}(\mathfrak{h}(k))$ occurrences  \\
\hline
 0 & 574,778 \\
 1 & 423,094 \\
 2 & 346,019 \\
 3 & 259,787 \\
 4 & 202,111 \\
 5 & 157,726 \\
 6 & 124,744 \\
 7 & 100,431 \\
 8 & 81,243 \\
 9 & 66,794 \\
 10 & 54,942 \\
 11 & 45,898 \\
 12 & 38,719 \\
 13 & 32,886 \\
 14 & 28,001 \\
 15 & 23,954 \\
 16 & 20,930 \\
 17 & 17,932 \\
 18 & 15,970 \\
 19 & 13,748 \\
 20 & 12,105 \\
 21 & 10,434 \\
 \hline
 \end{tabular}
 \quad\quad
\begin{tabular}{|c| l| }
\hline
$s$ &   $ \mathfrak{n}(s+1)/\mathfrak{n}(s)$  \\
\hline
 0 & 0.7361 \\
 1 & 0.81783 \\
 2 & 0.750788 \\
 3 & 0.777987 \\
 4 & 0.780393 \\
 5 & 0.790891 \\
 6 & 0.805097 \\
 7 & 0.808943 \\
 8 & 0.822151 \\
 9 & 0.822559 \\
 10 & 0.83539 \\
 11 & 0.843588 \\
 12 & 0.84935 \\
 13 & 0.851457 \\
 14 & 0.855469 \\
 15 & 0.873758 \\
 16 & 0.856761 \\
 17 & 0.890587 \\
 18 & 0.860864 \\
 19 & 0.880492 \\
 20 & 0.861958 \\
 21 & 0.888921 \\
 \hline
 \end{tabular}
 \label{table5}
\end{table}

We end this Section with some basic Conjectures concerning the class 
numbers $\mathfrak{h}(k)$. These are suggested by our theoretical results as well as our more refined numerical findings.

\begin{conjecture}\label{conj1} For any $\ve >0$
\[ \mathfrak{h}(k) \ll_{\ve}\ |k|^{\ve}. \]
\end{conjecture}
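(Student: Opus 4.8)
The plan is to reduce the conjecture, via the explicit fundamental domains of Theorem \ref{Thm1}, to a sparsity statement about representations by a family of binary quadratic forms of variable discriminant, and then to attack that statement by equidistribution. For generic $k\ge 5$ (the case $k<0$ being entirely parallel, using $\mathfrak{F}_k^-$), Theorem \ref{Thm1} gives
\[
\mathfrak{h}(k)=\big|\mathfrak{F}_k^{+}(\mathbb{Z})\big|=\sum_{3\le a\ll k^{1/3}} r_a(k),\qquad r_a(k)=\#\big\{(u_2,u_3)\in\mathbb{Z}^2:\ a\le u_2\le u_3,\ u_2^2+u_3^2+a\,u_2u_3=k-a^2\big\}.
\]
For fixed $a$ the form $Q_a(x,y)=x^2+axy+y^2$ is the principal form of the indefinite discriminant $D_a=a^2-4$, and its automorph group is generated up to sign by $(u_2,u_3)\mapsto(u_3,a u_3-u_2)$ --- precisely a Vieta involution. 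The ordering constraints defining $\mathfrak{F}_k^{+}$ select $O(1)$ representatives from each automorph orbit, so $r_a(k)$ is at most the number of $\mathrm{Aut}(Q_a)$-orbits of representations of $k-a^2$ by $Q_a$; by the classical dictionary between such orbits and invertible ideals of norm $k-a^2$ in the order of discriminant $D_a$ this is $\le\sum_{m^2\mid k-a^2}\tau\big((k-a^2)/m^2\big)\ll_{\ve}k^{\ve}$ for each individual $a$. Summed over $a$ this only recovers the known bound $\mathfrak{h}(k)\ll_{\ve}k^{1/3+\ve}$, so the real content of the conjecture is that the $a$'s essentially never contribute simultaneously:
\[
\#\big\{3\le a\ll k^{1/3}:\ r_a(k)\ge 1\big\}\ll_{\ve}k^{\ve}.
\]

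For fixed $a$, the event $r_a(k)\ge1$ says that $N:=k-a^2$ is a norm from the order $\mathbb{Z}\big[(a+\sqrt{D_a})/2\big]$: every prime dividing $N$ must split or ramify in $\mathbb{Q}(\sqrt{D_a})$ with appropriate multiplicity, and the product of the chosen primes above them must lie in the principal ideal class. The proposal is to treat this as a splitting-plus-principality condition on $k$ modulo a quantity depending on $a$, together with a class-group condition, and to sum its indicator over $a$ in dyadic ranges using a Chebotarev/Linnik-type equidistribution statement that is \emph{uniform over the varying quadratic field} $\mathbb{Q}(\sqrt{a^2-4})$. If the relevant character sums exhibit square-root cancellation uniformly in the conductor, one should be able to show the displayed count is $k^{o(1)}$, which is consistent with --- and indeed suggested by --- the average $\sum_{|k|\le K}\mathfrak{h}(k)\sim C K(\log K)^2$ of Lemma \ref{average} (the per-$a$ density $\sim(\log a)/\sqrt{a^2-4}$ sums over $a\ll K^{1/3}$ to $\asymp(\log K)^2$).

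The main obstacle is exactly this uniform equidistribution step: we are asking, for a \emph{fixed} target $k$, for a power-saving bound on the number of $a$ in a dyadic range for which $k-a^2$ lies in the image of the norm map from the principal genus of discriminant $a^2-4$. The conductor $\sqrt{a^2-4}$ grows with the modulus, the class numbers $h(a^2-4)$ oscillate, and there is no unconditional control of Siegel zeros for this family, so one cannot at present exclude anomalously many good $a$'s for a bad $k$; the Hasse failures of Proposition \ref{HF1} (coming from quadratic reciprocity applied to $x_j^2-4$) show that the $a$-pattern is genuinely arithmetic rather than random, which makes a purely analytic treatment delicate.

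Two weaker but presumably reachable targets are worth isolating along the way. First, Chebyshev's inequality applied to Lemma \ref{average} already gives $\mathfrak{h}(k)\ll_{\ve}k^{\ve}$ for all admissible $k$ outside a set of size $o(K)$ in $[1,K]$. Second, bounding the second moment $\sum_{k\le K}\mathfrak{h}(k)^2=\sum_{a_1,a_2}\sum_{k\le K}r_{a_1}(k)r_{a_2}(k)$ --- the diagonal via Lemma \ref{Lmo1} and the off-diagonal via the count of points on $M(\mathbf{x})=M(\mathbf{y})$ as in Section \ref{sec8} --- should give $\sum_{k\le K}\mathfrak{h}(k)^2\ll K(\log K)^{O(1)}$ and hence $\mathfrak{h}(k)\ll_{\ve}k^{\ve}$ outside a set of \emph{power-saving} size $K^{1-\ve+o(1)}$. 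Passing from such almost-all statements to the uniform bound for every $k$ is where genuinely new input --- plausibly an algebraic structural constraint on the Markoff tree forcing the fundamental set $\mathfrak{F}_k^{\pm}(\mathbb{Z})$ to be small, rather than a further analytic estimate --- appears to be required.
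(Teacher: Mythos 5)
The statement you are addressing is Conjecture \ref{conj1}, and the paper does not prove it: the authors state explicitly (remark (c) following Theorem \ref{Thm1} and again at the end of Section \ref{sec9}) that the best bound they can establish is $\mathfrak{h}(k)\ll_{\ve}|k|^{1/3+\ve}$. So there is no proof in the paper to compare yours against, and your text is, as you yourself make clear, a strategy rather than a proof.

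Your reduction is sound and consistent with the paper's own framework: for generic $k$, Theorem \ref{Thm1} identifies $\mathfrak{h}(k)$ with $|\mathfrak{F}_k^{\pm}(\mathbb{Z})|$; slicing by $u_1=a$ turns this into representation numbers of $k-a^2$ by the binary forms $x^2\pm axy+y^2$ of discriminant $a^2-4$; the fundamental-domain inequalities kill the unit contribution; and the divisor bound per $a$, summed over $a\ll k^{1/3}$, recovers exactly the $k^{1/3+\ve}$ bound the authors quote. (One small slip: the automorph of $x^2+axy+y^2$ is $(x,y)\mapsto(y,-ay-x)$, not $(x,y)\mapsto(y,ay-x)$; the latter belongs to $x^2-axy+y^2$, i.e.\ to the $k<0$ case.) The genuine gap is the one you name: showing that for a \emph{fixed} $k$ only $k^{o(1)}$ values of $a$ admit a representation of $k-a^2$ by the order of discriminant $a^2-4$ requires equidistribution uniform over a family of real quadratic fields of growing conductor, with no control of exceptional characters, and the reciprocity-based Hasse failures of Section \ref{sec7} confirm that the set of admissible $a$ is governed by arithmetic constraints rather than randomness; no currently available input closes this. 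Your fallback targets are reachable but strictly weaker --- indeed the first moment of Lemma \ref{average} alone already gives $\mathfrak{h}(k)\le k^{\ve}$ outside a power-saving exceptional set, so the second-moment computation buys less than you suggest --- and neither yields the pointwise bound. In short: a reasonable research plan, but not a proof, and the statement remains open both for you and for the authors.
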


\begin{conjecture}\label{conj2}
The number of Hasse failures for $0\leq k \leq K$ satisfies
\[
\big{|}\left\{ 0\leq k \leq K:  \mathfrak{h}(k)=0 \ \text{and}\ k \ \text{admissible}\ \right\}\big{|} \sim C_0 K^{\theta}, 
\]
for some $C_0 >0$ and some $\frac{1}{2}< \theta <1$.

More generally, for $t\geq 1$
\[
\big{|}\left\{ 0\leq k \leq K:  \mathfrak{h}(k)=t \ \text\ \right\}\big{|} \sim C_t K^{\theta}, 
\]
with $C_t >0$.
\end{conjecture}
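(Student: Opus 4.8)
\medskip
\noindent\emph{Proof proposal.} The statement is two--sided, and the upper and lower halves call for quite different inputs, so the plan is to treat them separately and let the exponent $\theta$ be pinned down by matching the two. For the \textbf{upper bound}, the estimate $\#\{|k|\le K:\mathfrak h(k)=t\}=o(K)$ obtained above rests on the variance estimate of Proposition~\ref{Pmo1} together with $\#\{k\le K:\delta^{(m)}(k)<\ve\}\ll\ve K$; to replace $o(K)$ by $O(K^{\theta+\eta})$ I would first make both inputs power--saving. For the variance this means proving, with a uniform power saving in the error, an asymptotic for the off--diagonal sums $\sum_{k\le K}r_{a_1}(k)r_{a_2}(k)$ counting points on $V_{a_1,a_2}$ in \eqref{mo18}, uniformly for $a_1,a_2\in\mathcal A$ --- Kloosterman's method (as in \cite{HB96}) should give $O(K^{1-\ve_0})$, but making $\ve_0$ uniform and then iterating to the higher moments $\sum_{k}(b_{\mathcal A}(k)-\lambda(k))^{2j}$, with $\lambda(k):=C(\log A)\delta^{(m)}(k)$, which amounts to counting points on $F({\bf x}_1)=\cdots=F({\bf x}_{2j})$ inside $\mathcal R^{2j}$, is genuinely hard: for $j\ge2$ this lies beyond the circle method in this critical dimension. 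Granting such moment bounds, Chebyshev gives $\#\{b_{\mathcal A}(k)=0,\ \lambda(k)>Y\}\ll KY^{-2j}+K^{1-\delta}$, and the conjectural $\theta$ should then be the exponent for which $\#\{k\le K:\delta^{(m)}(k)\text{ small}\}\asymp K^{\theta}$; establishing that requires understanding precisely the distribution of the local density $\delta(V_k)=\prod_p\bigl(1+\chi_p(k-4)(3+\chi_p(k))p^{-1}+O(p^{-2})\bigr)$ as $k$ varies, a distribution--of--multiplicative--functions problem in the spirit of \eqref{Intro3}.

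\noindent For the \textbf{lower bound} --- the real difficulty --- the only mechanism presently proven to force $V_k(\mathbb Z)=\varnothing$ for generic $k$ is the Cayley--cubic factorization $w^2-4d=(x_1^2-4)(x_2^2-4)$ of \eqref{7b} combined with quadratic reciprocity, giving the families $k=4+\beta\nu^2$ of Proposition~\ref{HF1} and its variants, whose counting function is $\ll K^{1/2}(\log K)^{-1/4}$, far below $K^{\theta}$ with $\theta\approx0.89$. To bridge the gap I would hunt for $k$ at which $b_{\mathcal A}(k)$ \emph{and} the contribution from the rest of $\mathfrak F_k^{\pm}$ are simultaneously forced to vanish by the joint congruence behaviour of $k$ modulo many primes rather than by one reciprocity relation --- i.e.\ realise the heuristic $\mathbb P(\mathfrak h(k)=0)\approx e^{-\lambda(k)}$ at many $k$ by building $k$ in a union of arithmetic progressions on which $\delta^{(m)}(k)$ is abnormally small, then certifying emptiness. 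The obstruction is that beyond Theorem~\ref{Thm1} (a finite check, useless for infinite families) and the Cayley obstruction we have no way to certify $V_k(\mathbb Z)=\varnothing$, and the data of Section~\ref{sec9}, where that obstruction accounts for a vanishing proportion of the observed failures, strongly suggests no clean description exists. The case $t\ge1$ would further demand controlling exactly how many of the surviving points in $\mathfrak F_{k}^{\pm}$ remain --- e.g.\ by perturbing a Hasse--failure $k$ to a nearby $k'$ carrying precisely $t$ inequivalent points --- which is open for the same reasons.

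\noindent The step I expect to be the \textbf{main obstacle} is therefore the lower bound: absent any certificate for the emptiness of $V_k(\mathbb Z)$ beyond the Cayley--reciprocity obstruction there is at present no route to $\gg K^{\theta}$ failures, and $\theta$ itself is not pinned down by any rigorous heuristic --- the numerics of Section~\ref{sec9} cannot reliably separate it from $1-c\log\log K/\log K$. A realistic intermediate target is to upgrade Theorem~\ref{Thm2}(ii) to a power saving $\#\{|k|\le K:\mathfrak h(k)=t\}\ll K^{1-\delta}$ for some explicit $\delta>0$ via the sharp second--moment analysis above, leaving the matching lower bound --- and hence the full conjecture --- for the future.
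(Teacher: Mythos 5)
The statement you were asked about is a \emph{conjecture} (Conjecture~\ref{conj2}), not a theorem: the paper offers no proof of it, only the numerical evidence of Section~\ref{sec9} (the fit $\mathcalorig{A}_{\scriptscriptstyle HF}(K)\sim \mathcalorig{A}(K)^{f(K)}$ with $f(K)\to 0.8875\ldots$, and the distribution data for $\mathfrak{n}(t)$ in Table~\ref{table5} and Fig.~\ref{img10}), together with the partial results of Theorem~\ref{Thm2}: a lower bound $\gg K^{1/2}(\log K)^{-1/4}$ for the number of Hasse failures via the Cayley--reciprocity obstruction of Proposition~\ref{HF1}, and the upper bound $o(K)$ via the variance argument of Section~\ref{sec8}. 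Your proposal correctly recognizes this and, rather than manufacturing a spurious argument, gives an honest survey of what a proof would require; that is the right call, and your assessment of the state of the art matches the paper's own. In particular you correctly locate the two genuine obstacles: (a) upgrading the $o(K)$ upper bound to a power saving would require power-saving and higher-moment versions of the variance estimate in Proposition~\ref{Pmo1}, which for moments beyond the second lies outside the reach of the circle method in this critical dimension; and (b) the lower bound is entirely open beyond $K^{1/2}$, since the only known certificates of emptiness are the reciprocity obstructions of Section~\ref{sec7}, and the paper itself remarks that these "do not account for the many more Hasse failures that we found numerically, indicating that any simple description of these is perhaps not possible." Your further caveat that the numerics cannot rigorously distinguish $K^{\theta}$ from $K/(\log K)^{c\log\log K}$-type behaviour is also consistent with the authors' own caution at the end of the discussion of Table~\ref{table4}. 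In short: there is nothing to compare against, since the paper proves no such statement; your proposal is an accurate description of why the conjecture is open and of the most plausible intermediate targets.
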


The values of $C_t$ above are illustrated in Fig. \ref{img10}, suggesting an exponential decay in $t$.

\begin{figure}[!ht]
\centering
  \includegraphics[width=\linewidth]{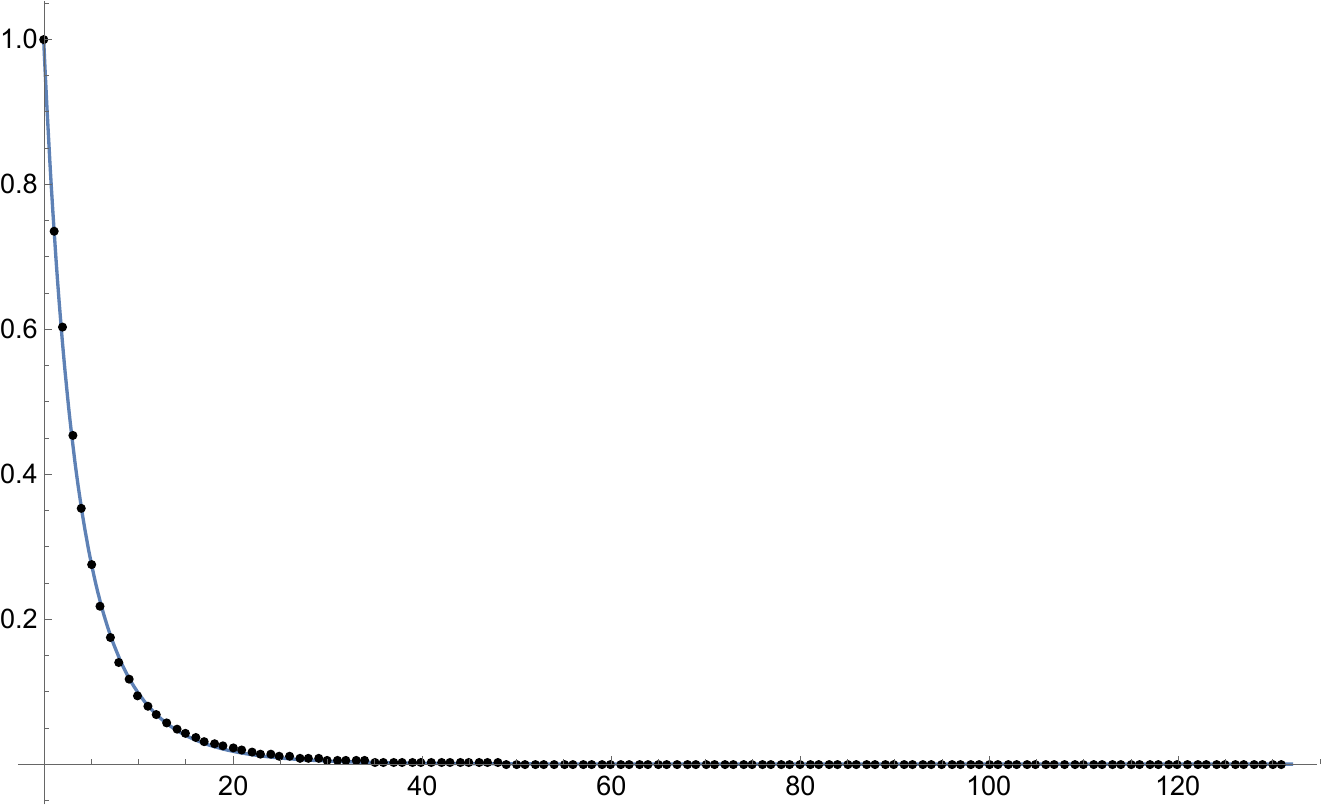}
  \begin{minipage}[b]{\textwidth}
\caption{Occurences of relative number of orbits: $\mathfrak{n}(\mathfrak{h}(k))/\mathfrak{n}(0)$,\\ generic $k\leq 10^7$. Approximation curve $\mathfrak{n}(h)=\mathfrak{n}(0) (6.86293 + 4.62621 h$ $ + 0.0576149 h^2)e^{-1.92905\sqrt{h+1}}$. }
\label{img10}
\end{minipage}
\vspace{2cm}
\end{figure}

\vspace{10pt}
\begin{figure}[!ht]
\centering
  \includegraphics[width=.8\linewidth]{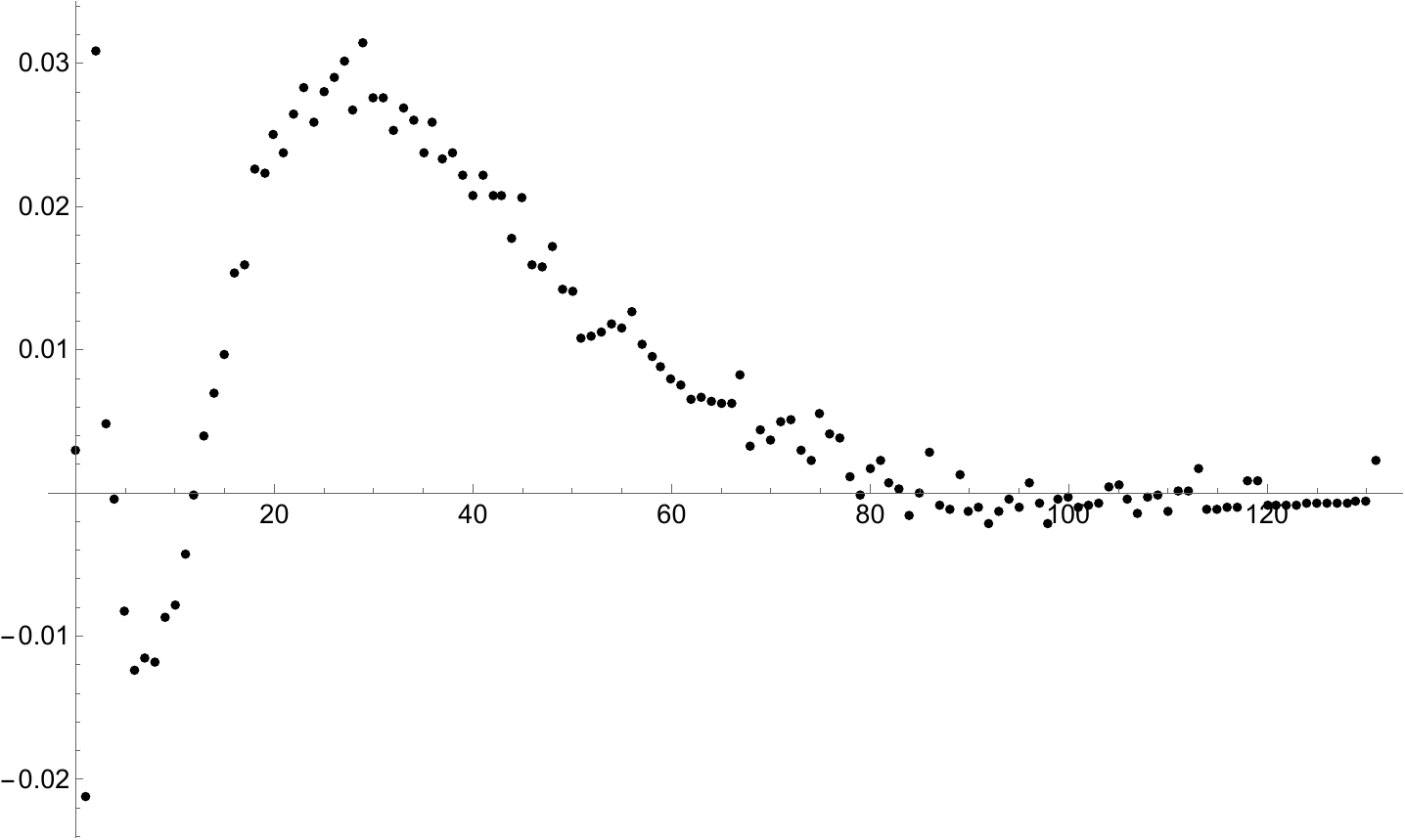}
\caption{Residuals to Fig. \ref{img10}. }
\label{img11}
\end{figure}

\begin{appendices}
\section*{Appendix}\ 
The appendix consists of  (A) a discussion of invariants of  affine cubic forms  referred to in the Introduction, and (B) computation of local masses $\delta_p$, for primes $p \geq 2$ (with some details omitted); their structure is used in the proofs in Section \ref{sec8}. 

\section{Arithmetic invariants of  affine cubic forms.}
\label{invariants}
A number of invariants of $f$ as an element of the unique factorization domain $R=\mathbb{Q}[x_1,x_2,\ldots,x_n]$, enter into the study of the values assumed by such an affine cubic form $f$. The first is the  $\mathcal{h}$-invariant from \cite{D-L}: $\mathcal{h}(f)$ is the minimal integer $h$ for which 
\begin{equation}\label{inv-1}
f_0 = L_1Q_1 + L_2Q_2 + \ldots + L_hQ_h ,
\end{equation}
where the $L_j$'s are homogeneous linear and the $Q_j$'s are homogeneous quadratic members of $R$; equivalently  $n - \mathcal{h}(f)$ is the dimension of the largest $\mathbb{Q}$-linear subspace contained in $W_0 = \left\{{\bf x}: f_0({\bf x})=0 \right\}$, the linear space given by $L_1 =L_2 = \ldots = L_h =0$. Note that $\mathcal{h}(f)=1$ iff $f_0$ is reducible in $R$, and in this case $W_0$ contains a rational hypersurface.

Closely related are the $\mathbb{Q}$-invariants $\mathcal{l}(f)$ and $\mathcal{q}(f)$ defined as the dimensions of the largest $\mathbb{Q}$-affine linear subspaces $U_{\mathcal{l}}$ and $U_{\mathcal{q}}$ of $\mathbb{A}^n$ on which the restriction of $f$ to $U_{\mathcal{l}}$ is linear (non-constant) and to $U_{\mathcal{q}}$ is quadratic. So, $\mathcal{l}(f)$ and $\mathcal{q}(f)$ lie in $[0,n-1]$. Of particular interest to us is that
\begin{equation}\label{inv-2}
\mathcal{h}(f)=1 \quad \text{iff}\quad \mathcal{q}(f)=n-1.
\end{equation}

The group $\text{Aff}_n(\mathbb{Z})$ consisting of integral affine linear maps ${\bf x}\to A{\bf x} + {\bf b}$ with $A\in \text{GL}_n(\mathbb{Z})$ and ${\bf b}\in \mathbb{Z}^n$, acts on the integral cubic polynomials by a change of variable. The arithmetic invariants as well as the diophantine questions concerning $V_{k,f}(\mathbb{Z})$ are all preserved by this action. On the leading homogeneous cubic term $f_0$, the action is that of $\text{GL}_n(\mathbb{Z})$, which has been well studied in terms of its invariants. With these fixed, there are finitely many $\text{GL}_n(\mathbb{Z})$ orbits, see \cite{B-S} for a recent discussion of the case $n=3$, which is our interest. In this case the vector space of $f_0$'s is 10-dimensional and it's quotient by $SL_3$ is 2-dimensional, given by the Aronhold invariants $I$ and $J$. The vector space of $f$'s is 20-dimensional and its quotient by $\text{Aff}_3$ is 9-dimensional. The invariants for this action up to the additive constant term and at a generic point are $I(f_0)$, $J(f_0)$ together with the 6-dimensional vector space associated with the homogeneous quadratic part of $f$


We end with some examples of affine cubic forms and their invariants.
\begin{enumerate}[wide,labelindent=0pt,label=(\arabic*).]
\item $S({\bf x}) = x_1^3 + x_2^3 + x_3^3$,\quad $\mathcal{h}(S)=3$, $\mathcal{l}(S)=\mathcal{q}(S)=0$;
\item $M({\bf x}) = x_1^2 + x_2^2 + x_3^2 - x_1x_2x_3$,\quad $\mathcal{h}(M)=1$, $\mathcal{l}(M)=0$, $\mathcal{q}(M)=2$;
\item $T({\bf x}) =   x_1x_2x_3 +x_1 + x_2 $ (perhaps the mildest perturbation  of the fully split form $x_1x_2x_3$),\quad $\mathcal{h}(M)=1$, $\mathcal{l}(M)=\mathcal{q}(M)=2$ (the restriction of $T$ to $x_3=0$ is linear). From the last it follows that $\mathfrak{v}_T(k)=\vert V_{k,T}(\mathbb{Z})\vert=\infty$; however $T$ is not perfect or even almost perfect since $V_{k,T}(\mathbb{Z})$ is not Zariski dense in $V_{k,T}$ for $k\neq 0$.
\item $P({\bf x})= x_1x_2x_3 + (x_1 -1)Q_1({\bf x}) + (x_2 -1)Q_2({\bf x})$, with $Q_1$, $Q_2$ generic quadratics. Then, $\mathcal{l}(P)=\mathcal{q}(P)=1$ (with $x_1=x_2=1$ giving the line $U_{\mathcal{l}}$). In particular, $V_{k,P}(\mathbb{Z})\neq \emptyset$ for every $k$. We expect that $P$ is full.
\end{enumerate}

\section{Analysis of the local masses.}
\subsection{Computation of $\delta_{p}(k)$ for odd primes.}
\label{Apx-A}\

For any integer $k$ and prime $p\geq 3$, we determine
\[
\delta_{p}(k) = \lim_{l \to \infty}\ |V_{k}(\mathbb{Z}\slash\; p^{l}\mathbb{Z})|\;p^{-2l}\ .
\]
Define

\begin{equation}\label{A7} N_{l}(k) = p^{-3l}\sideset{}{^*}\sum_{b\ (\text{mod}\ p^l)}\ \sum_{{\bf x}\ (\text{mod}\ p^l)}\ e\left(\frac{f({\bf x})-k}{p^l}b\right)\ ,
\end{equation}
where ${\bf x}=(x_{1},x_{2},x_{3})$, $f({\bf x})= x_{1}^2 + x_{2}^2 + x_{3}^2 -x_1x_2x_3$ and the asterisk denotes a sum over those $b$'s not divisible by $p$. Then one has

\begin{equation}\label{A8}
\delta_{p}(k) = 1 + \sum_{l=1}^{\infty}N_{l}(k)\ .
\end{equation}

In what follows, we analyze the case $l\geq 2$ (the case $l=1$ is determined by Lemma \ref{5.4}). For $p \geq 3$ one has

\begin{equation}\label{A9}
N_{l}(k) = p^{-3l}\sideset{}{^*}\sum_{b\ (\text{mod}\ p^l)}\ e\left(\frac{4(4-k)}{p^l}b\right) \sum_{{\bf x}}\ e\left(\frac{(2x_3 -x_1x_2)^2 -(x_{1}^2 -4)(x_{2}^2 -4)}{p^l}b\right)\ .
\end{equation}

Making a change of variable shows that the inner sum over ${\bf x}$ is

\begin{equation}\label{A10}
\sum_{u}\sum_{x_1,x_2}\ e\left(\frac{bu^2}{p^l}\right)\ e\left(\frac{-b(x_{1}^2 -4)(x_{2}^2 -4)}{p^l}\right) = S(b;p^{l})\sum_{x}\ e\left(\frac{4b(x^2 -4)}{p^l}\right)\overline{S\left(b(x^2 -4);p^{l}\right)}\ , 
\end{equation}
{}
where  for $q\geq 1$ we put
\begin{equation} S(b;q)=\sum_{r\ ({\rm mod}\ q)} e\left(\frac{br^2}{q}\right).
\end{equation}

Using properties of the Gauss sum, we get

  \begin{proposition}\label{LA5} For $p \geq 3$ we have
 \begin{enumerate}[label=(\alph*).]
 \item \[ N_{1}(k) = \chi(k-4)\left[3+\chi(k)\right]\frac{1}{p} + \frac{1}{p^2}\ ; \]
 \item  if $l\geq 3$ is odd, 
 \[
 N_{l}(k) = \left\{ \begin{array}{ll}
 4p^{-\frac{1}{2}(l +1)}\chi\left(\frac{k-4}{p^{l-1}}\right) & \text{if}\ \   p^{l-1}\vert (k-4) , \\
 p^{-\frac{1}{2}(l +1)}\chi\left(\frac{k}{p^{l-1}}\right) & \text{if}\ \   p^{l-1}\vert k , \\
\quad  0& \text{otherwise}\ ;
 \end{array} \right.
 \] 
 \item if $l\geq 2$ is even, then
 \[
 N_{l}(k) = \left\{ \begin{array}{ll}
  -p^{-\frac{l+2}{2}}\left\{4\eta_{l-1}(k-4) + \eta_{l-1}(k)\right\} & \text{if}\ \   p^{l-1}\vert\vert k(k-4) , \\
 p^{-\frac{l}{2}}\left(1 -\frac{1}{p}\right)\left\{4\eta_{l}(k-4) + \eta_{l}(k)\right\} & \text{if}\ \   p^{l}\vert k(k-4) , \\
 \quad 0& \text{otherwise},
 \end{array} \right.
 \]
 where we define $\eta_{l}(m) = 1$ if $p^l |m$ and is zero otherwise;
 \end{enumerate}
 where $\chi(b)$ denotes the Legendre symbol $\left(\frac{b}{p}\right)_{L}$.
 \end{proposition}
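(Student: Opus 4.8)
The plan is to obtain (a) from the point count modulo $p$ and to obtain (b) and (c) by substituting the Gauss-sum evaluations of Lemma \ref{LA1} into the closed formula \eqref{A15}, which already handles every $l\ge 2$ (the case $l=1$ is precisely Lemma \ref{5.4}). For (a): since $1+N_1(k)=p^{-2}\#V_k(\mathbb{Z}/p\mathbb{Z})$, applying Lemma \ref{5.4} with $\alpha=1$, $\beta=k$ gives $\#V_k(\mathbb{Z}/p\mathbb{Z})=p^2+1+\chi(k-4)\bigl[3+\chi(k)\bigr]p$; dividing by $p^2$ and subtracting $1$ yields the stated $\chi(k-4)\bigl[3+\chi(k)\bigr]p^{-1}+p^{-2}$.

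For $l\ge 2$ the only task is to evaluate $Y^{(0)}_l$ and $Y^{(1)}_l$ at the two arguments $4(4-k)$ and $-4k$ occurring in \eqref{A15}. Two elementary remarks organize everything: since $p$ is odd, $p^{j}\mid 4(4-k)\iff p^{j}\mid (k-4)$ and $p^{j}\mid -4k\iff p^{j}\mid k$; and $\gcd(k,k-4)$ divides $4$, so $p$ can never divide both $k$ and $k-4$. Hence for each fixed $l$ at most one of the two Gauss-sum terms in \eqref{A15} is nonzero, and the ``$0$ otherwise'' rows are forced.

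In the even case, Lemma \ref{LA1}(2) gives $Y^{(0)}_l(\alpha)=p^{l-1}(p-1)$ if $p^{l}\mid\alpha$, $=-p^{l-1}$ if $p^{l-1}\parallel\alpha$, and $=0$ otherwise; substituting into the first line of \eqref{A15} and multiplying by $p^{-3l/2}$ produces exactly the five outcomes $4p^{-l/2}(1-1/p)$, $p^{-l/2}(1-1/p)$, $-4p^{-(l+2)/2}$, $-p^{-(l+2)/2}$, $0$, according to whether $p^l\mid(k-4)$, $p^l\mid k$, $p^{l-1}\parallel(k-4)$, $p^{l-1}\parallel k$, or none of these --- which is (c) repackaged with $\eta_l,\eta_{l-1}$. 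In the odd case, Lemma \ref{LA1}(4) gives $Y^{(1)}_l(\alpha)=p^{l-1}\chi(\alpha/p^{l-1})S_p(1)$ when $p^{l-1}\mid\alpha$ (the formula returning $0$ automatically when $p^l\mid\alpha$, since $\chi$ kills multiples of $p$); feeding this into the second line of \eqref{A15}, using $S_p(1)^2=\chi(-1)p$ from Section \ref{sec5} to collapse the prefactor $p^{-3l/2}\tfrac{S_p(1)}{\sqrt p}p^{l-1}S_p(1)$, and simplifying $\chi\bigl(4(4-k)/p^{l-1}\bigr)$ and $\chi\bigl(-4k/p^{l-1}\bigr)$ by extracting $\chi(4)=1$ and the $\chi(-1)$ factors, leaves $4p^{-(l+1)/2}\chi\bigl((k-4)/p^{l-1}\bigr)$ when $p^{l-1}\mid(k-4)$, $p^{-(l+1)/2}\chi\bigl(k/p^{l-1}\bigr)$ when $p^{l-1}\mid k$, and $0$ otherwise, i.e.\ (b).

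There is no conceptual obstacle; the proof is a bounded case analysis driven by \eqref{A15} and Lemma \ref{LA1}. The one thing requiring real care is the bookkeeping of the quadratic character in the odd case --- keeping track of the several factors of $\chi(-1)$ arising from $S_p(1)^2$, from $\chi(-4)$, and from the explicit $\chi(-1)$ weighting the second term of \eqref{A15}, and checking that they combine to give precisely the stated Legendre symbols --- together with confirming that the listed divisibility alternatives are mutually exclusive and exhaustive, which is exactly the $2$-adic coprimality of $k$ and $k-4$.
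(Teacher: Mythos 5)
Your route is exactly the paper's: part (a) is Lemma \ref{5.4} with $\alpha=1$, $\beta=k$, and parts (b), (c) are obtained by feeding Lemma \ref{LA1} into \eqref{A15} and using that an odd $p$ cannot divide both $k$ and $k-4$. Part (c) goes through as you describe, since $Y^{(0)}_l$ carries no character. But the one step you single out as delicate and then assert without computation --- that the factors of $\chi(-1)$ ``combine to give precisely the stated Legendre symbols'' --- in fact fails for the second branch of (b). Count them: in the $4(4-k)$ term of \eqref{A15} there are two factors of $\chi(-1)$, one from $\chi\bigl(4(4-k)/p^{l-1}\bigr)=\chi(-1)\chi\bigl((k-4)/p^{l-1}\bigr)$ and one from $S_p(1)^2=\chi(-1)p$, and they cancel, giving the stated $4p^{-\frac{1}{2}(l+1)}\chi\bigl((k-4)/p^{l-1}\bigr)$. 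In the $-4k$ term there are three: the explicit coefficient $\chi(-1)$ in \eqref{A15}, the factor from $\chi\bigl(-4k/p^{l-1}\bigr)$, and the one from $S_p(1)^2$. One survives, so \eqref{A15} actually yields $p^{-\frac{1}{2}(l+1)}\chi\bigl(-k/p^{l-1}\bigr)$, not $p^{-\frac{1}{2}(l+1)}\chi\bigl(k/p^{l-1}\bigr)$.

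The discrepancy is real and the corrected sign is the right one: take $p=3$, $k=9$, $l=3$. A direct count gives $\#V_9(\mathbb{Z}/27\mathbb{Z})=162$ (mod $3$ the only solution is $(0,0,0)$; writing $x_i=3y_i$ reduces the congruence mod $27$ to $y_1^2+y_2^2+y_3^2\equiv 1\ ({\rm mod}\ 3)$, which has $6$ solutions mod $3$, hence $6\cdot 27$ solutions in $y$ mod $9$), so $\sum_{l\le 3}N_l(9)=162\cdot 3^{-6}=\tfrac{2}{9}$; since $1+N_1(9)+N_2(9)=\tfrac19+\tfrac29=\tfrac13$, one must have $N_3(9)=-\tfrac19=3^{-2}\chi(-1)$, whereas the statement as written gives $+\tfrac19$. (The first branch checks out numerically: $k=13$ gives $\#V_{13}(\mathbb{Z}/27\mathbb{Z})=1782=(2+\tfrac49)\cdot 3^6$, in agreement with $N_3(13)=\tfrac49$.) This is really an erratum to the proposition rather than a defect of your method --- the paper's own one-line proof makes the identical unverified claim --- and the sign propagates into \eqref{A19} and Proposition \ref{LA3}(d),(e), though harmlessly for the paper's applications, which only use $\mathfrak{S}_{p}(k)=4^{\beta}p^{-1}+O(p^{-2})$. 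But as a proof of the statement as printed, your computation does not close: carried out honestly it establishes a slightly different (and correct) formula.
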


 To compute $\delta_{p}(k)$ for $p\geq 3$ in \eqref{A8}, we write $\delta_{p}(k) = 1 + N_{1}(k) + \mathfrak{S}_{p}(k)$. Define $\mu \geq 0$ by $p^{\mu}\vert\vert k(k-4)$. By Prop. \ref{LA5}, $\mu = 0$ implies $N_{l}(k)=0$ for $l\geq 2$, so that we have $\mathfrak{S}_{p}(k)=0$ for this case. For $\mu \geq 1$, we have $p^{\mu}\vert\vert (k- 4\beta)$, with $\beta = 0$ or $1$. Then we combine Prop. \ref{LA5} in \eqref{A8}, to  get

\begin{equation}\label{A19}
\mathfrak{S}_{p}(k)  = 4^{\beta}\times\left\{ \begin{array}{ll}
p^{-1} - p^{-\frac{1}{2}(\mu +1)} - p^{-\frac{1}{2}(\mu +3) } & \text{if}\ \ 2\nmid \mu\ ,\\
p^{-1} - p^{-\frac{\mu}{2} -1} \left(1 - \chi\left(\frac{k-4\beta}{p^\mu}\right)\right)  & \text{if}\ \ 2\vert \mu\ .
\end{array} \right.
\end{equation}

In particular, we see that if $\mu=1$, then $\mathfrak{S}_{p}(k) = - 4^{\beta}p^{-2}$ while if $\mu \geq 2$ then $\mathfrak{S}_{p}(k) = 4^{\beta}p^{-1} + O(p^{-2})$.

Combining \eqref{A19} with Prop. \ref{LA5}(a) in \eqref{A8} gives

 \begin{proposition}\label{LA3} For $p \geq 3$, suppose $p^{\mu}\vert\vert k(k-4)$ with $\mu \geq 0$. We have
 \begin{enumerate}[label=(\alph*).]
 \item if $\mu = 0$, then $\delta_{p}(k) = 1 + \chi(k-4)\left[3+\chi(k)\right]\frac{1}{p} + \frac{1}{p^2}$\ ;
 \item if $p\vert\vert k$, then $\delta_{p}(k) = 1 + 3\chi(-1)\frac{1}{p}$\ ;
 \item if $p\vert\vert (k-4)$, then $\delta_{p}(k) = 1 - \frac{3}{p^2}$\ ;
 \item if $\mu \geq 2$ and $p\vert k$, then $$\delta_{p}(k) =  1 + \left\{ \begin{array}{ll}
\left(1 + 3\chi(-1)\right)p^{-1} +p^{-2} - p^{-\frac{1}{2}(\mu +1)} - p^{-\frac{1}{2}(\mu +3) } & \text{if}\ \ 2\nmid \mu\ ,
\\
\left(1 + 3\chi(-1)\right)p^{-1} + p^{-2} - p^{-\frac{\mu}{2} -1} \left(1 - \chi\left(\frac{k}{p^\mu}\right)\right)  & \text{if}\ \ 2\vert \mu\ ;
\end{array} \right. $$
\item if $\mu \geq 2$ and $p\vert (k-4)$, then $$\delta_{p}(k) =  1 + \left\{ \begin{array}{ll}
4p^{-1} +p^{-2} - 4p^{-\frac{1}{2}(\mu +1)} - 4p^{-\frac{1}{2}(\mu +3) } & \text{if}\ \ 2\nmid \mu\ ,\\
4p^{-1} +p^{-2} - 4p^{-\frac{\mu}{2} -1} \left(1 - \chi\left(\frac{k}{p^\mu}\right)\right)  & \text{if}\ \ 2\vert \mu\ .
\end{array} \right. $$
 \end{enumerate}
 \end{proposition}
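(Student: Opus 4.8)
\emph{Proof proposal.} The plan is to read off $\delta_p(k)$ from the expansion $\delta_p(k)=1+\sum_{l\geq 1}N_l(k)$ in \eqref{A8}, treating the $l=1$ term by Proposition \ref{LA5}(a) and the tail $\mathfrak{S}_p(k):=\sum_{l\geq 2}N_l(k)$ by Proposition \ref{LA5}(b),(c). The only structural fact needed is that for an odd prime $p$ one has $\gcd(k,k-4)\mid 4$, so $p$ divides at most one of $k$ and $k-4$; accordingly write $p^{\mu}\,\|\,(k-4\beta)$ with $\beta\in\{0,1\}$, where $\mu=0$ means $p\nmid k(k-4)$. When $\mu=0$, every divisibility hypothesis appearing in Proposition \ref{LA5}(b),(c) fails for $l\geq 2$, so $\mathfrak{S}_p(k)=0$ and $\delta_p(k)=1+N_1(k)$, which is part (a).

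For $\mu\geq 1$ the first step is to establish the intermediate formula \eqref{A19} for $\mathfrak{S}_p(k)$ by identifying which indices $l\geq 2$ actually contribute. From Proposition \ref{LA5}(c), the even indices $l$ with $2\leq l\leq\mu$ give a geometric sum $\sum 4^{\beta}p^{-l/2}(1-p^{-1})$ that telescopes to $4^{\beta}\bigl(p^{-1}-p^{-(\mu+2)/2}\bigr)$ when $\mu$ is even and to $4^{\beta}\bigl(p^{-1}-p^{-(\mu+1)/2}\bigr)$ when $\mu$ is odd; when $\mu$ is odd there is in addition the single even boundary index $l=\mu+1$, contributing $-4^{\beta}p^{-(\mu+3)/2}$. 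From Proposition \ref{LA5}(b), an odd index $l\geq 3$ contributes only at $l=\mu+1$ (hence only when $\mu$ is even), because for $l-1<\mu$ the argument of the Legendre symbol $\chi\bigl((k-4\beta)/p^{\,l-1}\bigr)$ is still divisible by $p$ and so the symbol vanishes; this term is $4^{\beta}\chi\bigl((k-4\beta)/p^{\mu}\bigr)\,p^{-(\mu+2)/2}$. Adding these pieces produces exactly the two cases (by the parity of $\mu$) of \eqref{A19}.

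The last step is pure substitution: put $N_1(k)=\chi(k-4)\bigl(3+\chi(k)\bigr)p^{-1}+p^{-2}$ together with \eqref{A19} into $\delta_p(k)=1+N_1(k)+\mathfrak{S}_p(k)$. When $p\mid k$ (that is, $\beta=0$) one has $\chi(k)=0$ and $\chi(k-4)=\chi(-4)=\chi(-1)$, so $N_1(k)=3\chi(-1)p^{-1}+p^{-2}$; when $p\mid(k-4)$ (that is, $\beta=1$) one has $\chi(k-4)=0$, so $N_1(k)=p^{-2}$. Taking $\mu=1$ then gives (b) and (c) after the $p^{-2}$ terms cancel or combine, and taking $\mu\geq 2$ gives (d) and (e), carrying along the parity-dependent remainder from \eqref{A19}.

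Everything here is a routine computation; the part that needs care is the bookkeeping of the boundary index ($l=\mu$ versus $l=\mu+1$), which flips with the parity of $\mu$, together with matching the Gauss-sum evaluations of Lemma \ref{LA1} underlying Proposition \ref{LA5} to those indices. I expect this parity and index tracking, rather than any genuine difficulty, to be the only subtle point.
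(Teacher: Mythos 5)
Your proposal is correct and is essentially identical to the paper's own proof: the paper likewise writes $\delta_p(k)=1+N_1(k)+\mathfrak{S}_p(k)$, derives \eqref{A19} for the tail $\mathfrak{S}_p(k)$ from Proposition \ref{LA5}(b),(c) by exactly the parity/boundary-index bookkeeping you describe, and then substitutes $N_1(k)$ with $\chi(k)=0$ or $\chi(k-4)=0$ according to $\beta$. (Your computation also shows that the symbol in part (e) should read $\chi\bigl((k-4)/p^{\mu}\bigr)$, consistent with \eqref{A19}.)
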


\begin{remark} The case (b) shows that $\delta_{3}(k) =0$ if $k \equiv 3$ or $6$ (mod $9$), while case (a) and (d) shows that $\delta_{3}(k)>0$ otherwise.
\end{remark}

  
  \subsection{Local factors associated with $V_{a_1,a_2}$, odd primes.}
  \label{Apx-B}\ 
  
 We next state (without details) the analogous results for the density function $\delta_{p}(a_1,a_2)$ in  \eqref{mo20} for the surface $V_{a_1,a_2}$ in \eqref{mo18}. Recalling the properties in \eqref{LB1} and \eqref{LB1a}, since $p$ is odd, completing the square gives us
  
  \begin{equation}\label{LB2}
  N_{l}(a_1,a_2) = p^{-3l}\sideset{}{^*}\sum_{b\ (\text{mod}\ p^l)}e\left(4b\frac{D_{a_1}-D_{a_2}}{p^l}\right)\overline{S(bD_{a_1};p^{l})}S(bD_{a_2};p^{l})\ .
  \end{equation}
 Again, using properties of the Gauss sums 
 gives us
  \begin{proposition}\label{LA40} Let $a_1 \neq a_2$ be fixed, and let $p\geq 3$.
  \begin{enumerate}[label=(\alph*).]
  \item Suppose $p \nmid D_{a_1}D_{a_2}\left(D_{a_1} -D_{a_2}\right)$. Then
  \[
  N_{l}(a_1,a_2)  =  \left\{ \begin{array}{ll}
- \frac{\chi \left(D_{a_1}D_{a_2}\right)}{p^2}  & \text{if}\ \ l=1 ,\\
0  & \text{otherwise}\ .
\end{array}\right .
  \]
  \item Suppose $p \nmid D_{a_1}D_{a_2}$ and $p^{\mu}\vert\vert\left(D_{a_1}-D_{a_2}\right)$ with $\mu \geq 1$. Then
  \[
  N_{l}(a_1,a_2)  =  \left\{ \begin{array}{ll}
\frac{1}{p^l}\left(1-\frac{1}{p}\right)  & \text{if}\ l\leq \mu ,\\
-p^{-\mu -2} & \text{if}\  l=\mu +1 , \\
0 & \text{otherwise}\ .
\end{array}\right .
  \]
  \item Suppose $p^{\alpha}\vert\vert D_{a_1}$ but $\ p\nmid D_{a_2}$ with $\alpha \geq 1$. Then
   \[
  N_{l}(a_1,a_2)  =  \left\{ \begin{array}{ll}
p^{-1} & \text{if}\ \ l=1 ,\\
0  & \text{otherwise}\ .
\end{array}\right .
  \]
  \item Suppose $p^{\eta_1}\vert\vert D_{a_1}$, $p^{\eta_2}\vert\vert D_{a_2}$ and $p^{\mu}\vert\vert\left(D_{a_1}-D_{a_2}\right)$ with $\eta_1$,$\eta_2$ and $\mu \geq 1$\ . Putting $\eta = \min{(\eta_1,\eta_2)}$ gives us
  \[
   N_{l}(a_1,a_2)  =  \left\{ \begin{array}{ll}
\left(1-\frac{1}{p}\right)  & \text{if}\ \ 1\leq l\leq \eta\ ,\\
p^{-1}\ & \text{if}\ \  l=\eta +1,\ \eta_1\neq \eta_2  \ ,\\
- p^{-\eta -2}\chi \left(\frac{D_{a_1}}{p^\eta}\right)\chi \left(\frac{D_{a_2}}{p^\eta}\right)& \text{if}\ \ l= \eta +1,\ \eta_1 = \eta_2 \leq \mu \ ,\\
0  & \text{otherwise}\ .
\end{array}\right .
\]
  \end{enumerate}
  \end{proposition}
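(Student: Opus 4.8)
\emph{Proof proposal.} The plan is to evaluate $N_l(a_1,a_2)$ directly from the expression \eqref{LB2}, following the same template as the computation of $\delta_p(k)$ in Appendix \ref{Apx-A}; the present case is in fact slightly simpler, since the quadratic parts of $f_{a_1}$ and $f_{a_2}$ have already been diagonalised by completing the square, so the inner sums over $\mathbf{x},\mathbf{y}$ collapse directly to products of the Gauss sums $S_l$. First I would fix $l\ge 1$, set $\alpha_i=v_p(D_{a_i})$ (so $\alpha_i=\eta_i$ in the notation of case (d), with $\eta=\min(\alpha_1,\alpha_2)$) and $\mu=v_p(D_{a_1}-D_{a_2})$, and use Lemma \ref{LA2} to write each of $\overline{S_l(bD_{a_1})}$ and $S_l(bD_{a_2})$ as a power of $p$ times, in the ``odd'' sub-cases only, a factor $\chi(bD_{a_i}/p^{\alpha_i})\,S_p(1)=\chi(b)\,\chi(D_{a_i}/p^{\alpha_i})\,S_p(1)$. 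Multiplying the two, the powers of $p$ combine into a single explicit prefactor $c=c(l,\alpha_1,\alpha_2)$; the copies of $S_p(1)$, when both occur, give $|S_p(1)|^2=p$, when exactly one occurs a lone $\chi(b)$ is left over (the remaining $S_p(1)^2=\chi(-1)p$ being absorbed later); and the $\chi(D_{a_i}/p^{\alpha_i})$ come out front. What remains is $p^{-3l}\,c\sideset{}{^*}\sum_{b\,({\rm mod}\,p^l)}\chi(b)^{j}\,e\!\left(4(D_{a_1}-D_{a_2})b/p^l\right)$, i.e.\ $p^{-3l}\,c\,Y_l^{(j)}\!\bigl(4(D_{a_1}-D_{a_2})\bigr)$, with $j\in\{0,1\}$ read off from the parities of $l-\alpha_1$ and $l-\alpha_2$.

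Next I would invoke Lemma \ref{LA1} to evaluate $Y_l^{(j)}\!\bigl(4(D_{a_1}-D_{a_2})\bigr)$. This is where $\mu$ enters decisively: for $l\ge 2$ the sum vanishes unless $p^{l}\mid 4(D_{a_1}-D_{a_2})$ (value $p^{l-1}(p-1)$, for $j=0$) or $p^{l-1}\,\|\,4(D_{a_1}-D_{a_2})$ (value $-p^{l-1}$ for $j=0$, and $\pm p^{l-1}S_p(1)$ for $j=1$), while for $l=1$ it equals $-1$ ($j=0$) or $\chi\!\bigl(4(D_{a_1}-D_{a_2})\bigr)S_p(1)$ ($j=1$). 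Substituting back and collecting the $\chi(-1)$ twists produced by the odd-length Gauss sums yields the four tabulated formulas, organised by which of $D_{a_1},D_{a_2}$ are divisible by $p$: (a) $\alpha_1=\alpha_2=\mu=0$, only $l=1$ surviving; (b) $\alpha_1=\alpha_2=0<\mu$, surviving contributions at $l\le\mu$ and $l=\mu+1$; (c) $\alpha_1\ge 1$, $\alpha_2=0$, which forces $\mu=0$ and leaves only $l=1$ with the characters cancelling to give $p^{-1}$; and (d) $\alpha_1,\alpha_2,\mu\ge 1$, where one analyses the three ranges $l\le\eta$, $l=\eta+1$, $l\ge\eta+2$, the dichotomy $\eta_1\neq\eta_2$ versus $\eta_1=\eta_2$ being exactly what governs whether a nonzero product $\chi(D_{a_1}/p^{\eta})\chi(D_{a_2}/p^{\eta})$ survives at $l=\eta+1$.

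The main obstacle is the bookkeeping in case (d): there $S_l(bD_{a_i})$ changes shape at each of the thresholds above, and one must keep the parity conventions of Lemma \ref{LA2}(4), the sign twists $\chi(-1)$ from odd-length Gauss sums, and the inequality $\eta\le\mu$ all mutually consistent; this is ``technical but otherwise straightforward'', which is presumably why the details are suppressed. As a safeguard against sign and normalisation errors I would verify two consistency checks against Section \ref{sec8}: that $\delta_p(a_1,a_2)=1+\sum_{l\ge 1}N_l(a_1,a_2)$ equals $1+O(p^{-2})$ whenever $p\nmid D_{a_1}D_{a_2}(D_{a_1}-D_{a_2})$ (as used there, e.g.\ in the treatment following \eqref{mo23}), and that the completed product $\prod_p\delta_p(a_1,a_2)$ is compatible with the mean-value identities underlying $\Sigma_2$ and $\Sigma_3$, which are phrased through $\delta_m\!\left(V^{(2)}\right)$.
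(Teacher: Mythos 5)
Your proposal follows exactly the route the paper indicates (and leaves undetailed): complete the square to reach \eqref{LB2}, collapse the $\mathbf{x},\mathbf{y}$-sums into the Gauss sums $S_l$ of Lemma \ref{LA2}, and evaluate the remaining $b$-sum as $Y_l^{(j)}\bigl(4(D_{a_1}-D_{a_2})\bigr)$ via Lemma \ref{LA1}, with the case split governed by the valuations of $D_{a_1}$, $D_{a_2}$ and their difference. Spot-checking cases (a)--(d) confirms the bookkeeping (including the cancellation of the $\chi(D_{a_i}/p^{\eta_i})$ and $S_p(1)$ factors) comes out as tabulated, so the plan is correct and essentially identical to the paper's.
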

  
 It then follows that
  
  \begin{proposition}\label{LA4} Let $a_1 \neq a_2$ be fixed, and let $p\geq 3$.
  \begin{enumerate}[label=(\alph*).]
  \item Suppose $p \nmid D_{a_1}D_{a_2}\left(D_{a_1} -D_{a_2}\right)$. Then
  \[
  \delta_{p}(a_1,a_2) = 1 - \frac{\chi \left(D_{a_1}D_{a_2}\right)}{p^2}\ .
  \]
  \item Suppose $p \nmid D_{a_1}D_{a_2}$ and $p^{\mu}\vert\vert\left(D_{a_1}-D_{a_2}\right)$ with $\mu \geq 1$. Then
  \[
  \delta_{p}(a_1,a_2) = \left(1 + \frac{1}{p}\right)\left(1 - \frac{1}{p^{\mu +1}}\right)\ .
  \]
  \item Suppose $p\vert D_{a_1}D_{a_2}$ but $p \nmid \left(D_{a_1}-D_{a_2}\right)$. Then
   \[
  \delta_{p}(a_1,a_2) = \left(1 + \frac{1}{p}\right)\ .
  \]
  \item Suppose $p^{\eta_1}\vert\vert D_{a_1}$, $p^{\eta_2}\vert\vert D_{a_2}$ and $p^{\mu}\vert\vert\left(D_{a_1}-D_{a_2}\right)$ with $\eta_1$,$\eta_2$ and $\mu \geq 1$\ . Putting $\eta = \min{(\eta_1,\eta_2)}$ gives us
  \[
  \delta_{p}(a_1,a_2) =  \left\{ \begin{array}{ll}
(1+\eta) - \frac{\eta -1}{p}  & \text{if}\ \ \eta_1 \neq \eta_2\ ,\\
(1+\eta) - \frac{\eta }{p}- \frac{1}{p^{2}}\chi\left(\frac{D_{a_1}}{p^\eta}\right) \chi\left(\frac{D_{a_2}}{p^\eta}\right)& \text{if}\ \ \eta_1 = \eta_2 =\mu \ ,\\
(1+\eta) - \frac{\eta -1 }{p}- \frac{1}{p^{\mu - \eta +1}}\left(1 + \frac{1}{p}\right) & \text{if}\ \ \eta_1 = \eta_2 <\mu \ .
\end{array}\right .
\]
  \end{enumerate}
  \end{proposition}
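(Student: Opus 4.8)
\section*{Proof plan for Proposition \ref{LA4}}

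The plan is to read off $\delta_p(a_1,a_2)$ from the series
\[
\delta_p(a_1,a_2) = 1 + \sum_{l\ge 1} N_l(a_1,a_2),
\]
where $N_l(a_1,a_2)$ is the exponential sum in \eqref{LB1a}; since $p$ is odd, completing the square puts it in the shape \eqref{LB2}, namely
\[
N_l(a_1,a_2) = p^{-3l}\sideset{}{^*}\sum_{b\,({\rm mod}\,p^l)} e\!\left(4b\,\tfrac{D_{a_1}-D_{a_2}}{p^l}\right)\overline{S_l(bD_{a_1})}\,S_l(bD_{a_2}),
\]
and Proposition \ref{LA40} already evaluates each $N_l(a_1,a_2)$ (via Lemmas \ref{LA1} and \ref{LA2}). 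So the entire proof is the elementary, bookkeeping-heavy task of summing a finite collection of geometric progressions, the strata being governed by $\eta_1 := v_p(D_{a_1})$, $\eta_2 := v_p(D_{a_2})$ and $\mu := v_p(D_{a_1}-D_{a_2})$.

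For (a) Proposition \ref{LA40}(a) gives $N_1=-\chi(D_{a_1}D_{a_2})p^{-2}$ and $N_l=0$ for $l\ge 2$, so $\delta_p(a_1,a_2)=1-\chi(D_{a_1}D_{a_2})p^{-2}$ at once. For (b), Proposition \ref{LA40}(b) gives $N_l=(1-p^{-1})p^{-l}$ for $1\le l\le\mu$ and $N_{\mu+1}=-p^{-\mu-2}$ (and $0$ otherwise), so using $(1-p^{-1})\sum_{l=1}^{\mu}p^{-l}=p^{-1}(1-p^{-\mu})$ one gets
\[
\delta_p(a_1,a_2) = 1 + p^{-1}(1-p^{-\mu}) - p^{-\mu-2} = \left(1+p^{-1}\right)\left(1-p^{-\mu-1}\right).
\]
Case (c) is immediate from Proposition \ref{LA40}(c): only $N_1=p^{-1}$ survives, so $\delta_p(a_1,a_2)=1+p^{-1}$.

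The substance is (d), where both $D_{a_1}$ and $D_{a_2}$ are divisible by $p$; this is the step I expect to require the most care, and the only real obstacle is keeping the valuations $\eta_1,\eta_2,\mu$ and the parity conditions of Lemma \ref{LA2} straight across the three sub-cases $\eta_1\ne\eta_2$ (which forces $\mu=\eta:=\min(\eta_1,\eta_2)$), $\eta_1=\eta_2=\mu$, and $\eta_1=\eta_2<\mu$. In all sub-cases $N_l=1-p^{-1}$ for $1\le l\le\eta$, contributing $\eta-\eta p^{-1}$. In the first two sub-cases only one further term survives, $N_{\eta+1}=p^{-1}$ respectively $N_{\eta+1}=-p^{-2}\chi(D_{a_1}/p^{\eta})\chi(D_{a_2}/p^{\eta})$, yielding the two displayed formulas directly. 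For the sub-case $\eta_1=\eta_2<\mu$ I would return to \eqref{LB2} and evaluate $N_l$ for $\eta+1\le l\le\mu+1$ by hand: Lemma \ref{LA2}(4) gives $\overline{S_l(bD_{a_1})}S_l(bD_{a_2})=p^{l+\eta}$ (using $\chi(d_1d_2)=1$, where $d_j=D_{a_j}/p^{\eta}$, since $\mu>\eta$ forces $d_1\equiv d_2\pmod p$), and then Lemma \ref{LA1}(1)--(2) for the $b$-sum gives $N_l=p^{\eta-l}-p^{\eta-l-1}$ for $\eta+1\le l\le\mu$ and $N_{\mu+1}=-p^{\eta-\mu-2}$, with $N_l=0$ for $l>\mu+1$. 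The sum $\sum_{l=\eta+1}^{\mu}\bigl(p^{\eta-l}-p^{\eta-l-1}\bigr)$ telescopes to $p^{-1}-p^{-(\mu-\eta+1)}$, and adding $N_{\mu+1}=-p^{-(\mu-\eta+2)}$ produces exactly $(1+\eta)-(\eta-1)p^{-1}-p^{-(\mu-\eta+1)}(1+p^{-1})$, as claimed. There is no analytic input anywhere; the work is purely the stratification and the geometric/telescoping summations.
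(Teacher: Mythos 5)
Your proposal is correct and follows the paper's own route: $\delta_p(a_1,a_2)$ is obtained by summing the local factors $N_l(a_1,a_2)$ of \eqref{LB2}, which are evaluated via Lemmas \ref{LA1} and \ref{LA2} (this is exactly the content of Proposition \ref{LA40}), and the remaining work is the geometric/telescoping bookkeeping you describe. Your direct evaluation of $N_l$ for $\eta+1\le l\le \mu+1$ in the sub-case $\eta_1=\eta_2<\mu$ is the right move — the terms you find there ($N_l=p^{\eta-l}-p^{\eta-l-1}$ and $N_{\mu+1}=-p^{\eta-\mu-2}$) are needed to reach the stated formula and are obtained by precisely the same computation the paper uses to derive Proposition \ref{LA40}.
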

  
   \begin{remark} If $a_1 = a_2 =a$  and  $p\geq 3$, one can deduce the result for $\delta_{p}(a,a)$ from parts (c) and (d) above, with $\mu \to \infty$, giving
  \begin{enumerate}[label=(\alph*).]
  \item if $p\nmid D_a$, then $\delta_{p}(a,a) = 1+ p^{-1}$\ , and
   \item if $p^\eta \vert\vert D_a$ with $\eta \geq 1$, then $\delta_{p}(a,a) = (1+\eta) - \frac{\eta -1}{p}$\ .
  \end{enumerate}
  \end{remark}

 \subsection{The even local factor $\delta_{2}(k)$.}
 \label{app-C}\ 
 
 Since the analysis here is a bit more delicate, we provide some additional details.
 Let $l\geq 0$ and define $F_{l}(c)=\sum_{x\ \text{mod}\ 2^l}\ e\left(\frac{cx^2}{2^l}\right)\ $. Recall the three primitive real characters modulo powers of two:  $\chi_{4} $ modulo 4, $\chi_{8}$ and $\chi_{4}\chi_{8}$ modulo 8, where 
 \[
 \chi_{4}(x) = \left(\frac{-4}{x}\right)_{\text{J}} = \left\{ \begin{array}{ll}
\ 1 & \text{if}\ \ x \equiv 1\ \text{mod}\ 4\ ,\\
 -1 & \text{if}\ \ x \equiv 3\ \text{mod}\ 4\ ,\\
\ 0 &  \ \text{otherwise}\ ,
 \end{array} \right. 
 \]
 and
 \[
 \chi_{8}(x) = \left(\frac{8}{x}\right)_{\text{J}}= \left\{ \begin{array}{ll}
\ 1 & \text{if}\ \ x \equiv \pm 1\ \text{mod}\ 8\ ,\\
 -1 & \text{if}\ \ x \equiv \pm 3\ \text{mod}\ 8\ ,\\
\ 0 &  \ \text{otherwise}\ .

 \end{array} \right.\label{ap} 
 \]
 
 For $l\geq 1$ we define $\omega_{l}(k)$ to be 1 if $2^l \vert k$ and 0 otherwise; if $l\leq 0$, we define  $\omega_{l}(k)$ to be 1 always. Given a term $\omega_{l}(k)$, we define $\hat{k}=\frac{k}{2^l}$. While $\omega_{l}(k)= \omega_{l}(-k)$, the corresponding ``hat'' function is not the same, and the appropriate choice is determined by the $\omega$-function.

 We have
 
 \begin{lemma}\label{Lb1}\ 
 Define $\theta\geq 0$ so that $2^{\theta}\vert\vert c$. We have
 \begin{enumerate}[label=(\alph*).]
 \item if $\theta \geq l$, $F_{l}(c)  = 2^l$\ ;
 \item if $\theta=l-1$, $F_{l}(c)=0$\ ;
 \item if  $l\geq 2$ and $2 \nmid c$,  then 
$ F_{l}(c) = 2^{\frac{l}{2}}\chi_{8}(c)^{l}\left[1 + \chi_{4}(c)i\right]$\ ;
 \item if $l\geq 3$ and $1 \leq \theta \leq l-2$, we have
 \[
 F_{l}(c) = 2^{\frac{l+\theta}{2}}\chi_{8}\left(\frac{c}{2^\theta}\right)^{l+\theta}\left[1 + \chi_{4}\left(\frac{c}{2^\theta}\right)i\right]\ ;
 \]
 \item for $l\geq 1$ and $q\in \mathbb{Z}$,
 \[
 \sideset{}{^*}\sum_{b\ (\text{mod}\ 2^l)}\ e\left(\frac{qb}{2^l}\right)F_{l}(b) = \omega_{l-3}(q)\ 2^{\frac{3(l-1)}{2}}\cos\left(\frac{\hat{q}+1}{4}\pi\right)[1 + (-1)^{l+\hat{q}}]\ ,
 \]
 where $\hat{q}=\frac{q}{2^{l-3}}$, and the sum over $b$ runs through odd numbers.
 \end{enumerate}
 \end{lemma}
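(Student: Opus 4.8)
The plan is to treat parts (a)--(d) as (lightly repackaged) instances of the classical evaluation of quadratic Gauss sums to a power-of-two modulus, and then to deduce (e) from (c) by harmonic analysis on the residues modulo $8$. Part (a) is immediate: $2^l\mid c$ forces $e(cx^2/2^l)=1$ for all $x$, so $F_l(c)=2^l$. For part (b) write $c=2^{l-1}c'$ with $c'$ odd; then $e(cx^2/2^l)=(-1)^{c'x^2}=(-1)^{x}$, and $\sum_{x\,(\mathrm{mod}\,2^l)}(-1)^x=0$ for $l\geq 1$.

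\textbf{Parts (c) and (d).} For (c) I would argue by induction on $l\geq 2$. The base cases $l=2$ and $l=3$ are a short direct computation (the squares modulo $4$, resp.\ modulo $8$, are only $0,1$, resp.\ $0,1,4$, each with the appropriate multiplicity). For $l\geq 4$ I would establish the recursion $F_l(c)=2F_{l-2}(c)$ for odd $c$: writing $x=y+2^{l-1}z$ with $z\in\{0,1\}$ and using $2^{2l-2}\equiv 0\ (\mathrm{mod}\ 2^l)$ gives $F_l(c)=2\sum_{y\,(\mathrm{mod}\,2^{l-1})}e(cy^2/2^l)$; then writing $y=u+2^{l-2}v$, summing $1+(-1)^{u}$ over $v$ to remove the odd residues $u$, and substituting $u=2w$ collapses what remains to $F_{l-2}(c)$ (the last step reusing the identity $F_{l-2}(c)=2\sum_{w\,(\mathrm{mod}\,2^{l-3})}e(cw^2/2^{l-2})$). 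Iterating from the base cases yields $2^{l/2}\chi_8(c)^{l}\bigl[1+\chi_4(c)i\bigr]$, where one uses that $\chi_8(c)\in\{\pm1\}$ so only the parity of the exponent matters. For (d), if $2^\theta\parallel c$ with $\theta\leq l-2$, write $c=2^\theta c'$ and split $x=y+2^{l-\theta}z$; a computation entirely parallel to the first step above gives $F_l(c)=2^\theta F_{l-\theta}(c')$ with $c'$ odd and $l-\theta\geq 2$, and (c) then gives the stated formula (note $\chi_8(c')^{l-\theta}=\chi_8(c')^{l+\theta}$).

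\textbf{Part (e).} The case $l=1$ is trivial, since $F_1(b)=1+(-1)^b=0$ for $b$ odd, matching the right-hand side; the case $l=2$, where $\omega_{l-3}\equiv1$, is a direct two-term evaluation. So assume $l\geq 3$. By (c), for odd $b$ we have $F_l(b)=2^{l/2}w(b)$, where $w(b):=\chi_8(b)^{l}\bigl[1+\chi_4(b)i\bigr]$ depends only on $b\,(\mathrm{mod}\,8)$; its values on $b\equiv 1,3,5,7\,(\mathrm{mod}\,8)$ are $1+i,\ (-1)^{l}(1-i),\ (-1)^{l}(1+i),\ 1-i$. Grouping the odd residues $b\,(\mathrm{mod}\,2^l)$ by their class $r\in\{1,3,5,7\}$ modulo $8$ and writing $b=r+8j$, the inner sum $\sum_{j\,(\mathrm{mod}\,2^{l-3})}e(qj/2^{l-3})$ equals $2^{l-3}$ when $2^{l-3}\mid q$ and $0$ otherwise --- this produces the factor $\omega_{l-3}(q)$. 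When $2^{l-3}\mid q$, put $\hat q=q/2^{l-3}$ and $\zeta=e^{i\pi/4}$; the remaining sum is $\sum_{r\in\{1,3,5,7\}}w(r)\zeta^{\hat q r}$, and using $\zeta^{5}=-\zeta$, $\zeta^{3}=-\zeta^{-1}$, $\zeta^{7}=\zeta^{-1}$ together with $1\pm i=\sqrt2\,\zeta^{\pm1}$ this collapses to $\bigl[1+(-1)^{l+\hat q}\bigr]\bigl((1{+}i)\zeta^{\hat q}+(1{-}i)\zeta^{-\hat q}\bigr)=\bigl[1+(-1)^{l+\hat q}\bigr]\cdot 2\sqrt2\,\cos\!\bigl(\tfrac{(\hat q+1)\pi}{4}\bigr)$. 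Collecting the powers of two, $2^{l/2}\cdot 2^{l-3}\cdot 2\sqrt2=2^{3(l-1)/2}$, gives exactly the claimed identity.

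\textbf{Main obstacle.} I expect the only genuine work to be in (e): keeping the factors $\chi_8(b)^{l}$ and $1+\chi_4(b)i=1+i^{b}$ straight through the reduction modulo $8$, and verifying that the sign $(-1)^{l+\hat q}$ and the cosine emerge precisely as stated --- together with a careful treatment of the small-$l$ edge cases ($l=1,2$), where $\omega_{l-3}$ is identically $1$ and the ``hat'' normalization must be read off directly. Parts (a)--(d) are routine once the recursion $F_l(c)=2F_{l-2}(c)$ is in hand.
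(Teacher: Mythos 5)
Your proposal is correct and follows essentially the same route as the paper: parts (a)--(d) via elementary manipulations of the Gauss sum (the paper merely notes these are elementary and that (d) reduces to (c), which your identity $F_l(c)=2^\theta F_{l-\theta}(c')$ and the recursion $F_l(c)=2F_{l-2}(c)$ make precise), and part (e) by observing that $F_l(b)$ depends only on $b\ (\mathrm{mod}\ 8)$ for $l\geq 3$, reducing the $b$-sum modulo $8$ to produce $\omega_{l-3}(q)$, and checking $l=1,2$ directly. The mod-$8$ character sum you evaluate is exactly the simplification the paper leaves to the reader, and your computation of it is accurate.
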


 \begin{lemma}\label{Lb2}
 For any $b$  and $l\geq 0$, put 
 \[
 Q_{l}(b;a)=\sum_{x_1,x_2\ (\text{mod}\ 2^l)}\ e\left(b\frac{x_{1}^2 + x_{2}^2 -ax_{1}x_{2}}{2^l}\right)\ .
 \] 
 \begin{enumerate}[label=(\alph*).]
\item If $2\vert a$, then $Q_{l}(b;a)=F_{l}(b)\overline{F_{l}\left(b(\frac{a^2}{4}-1)\right)}$,
\item If $2 \nmid ab $, then $Q_{l}(b;a)= (-2)^l$.
\end{enumerate}
 \end{lemma}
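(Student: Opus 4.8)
The plan is to handle the two cases separately: part (a) by completing the square, and part (b), which is the substantive one, by a $2$-adic descent.

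For part (a), since $a$ is even, $c:=a/2\in\mathbb{Z}$ and $x_1^2+x_2^2-ax_1x_2=(x_1-cx_2)^2+(1-c^2)x_2^2$. For each fixed $x_2$ the map $x_1\mapsto x_1+cx_2$ permutes $\mathbb{Z}/2^l\mathbb{Z}$, so the double sum factors as $Q_l(b;a)=F_l(b)\,F_l\bigl(b(1-c^2)\bigr)$. Then I will note that $F_l(-t)=\overline{F_l(t)}$ (conjugate the exponential) and that $1-c^2=-(\tfrac{a^2}{4}-1)$, which rewrites the product as $F_l(b)\,\overline{F_l\bigl(b(\tfrac{a^2}{4}-1)\bigr)}$, as asserted.

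The content is part (b), where $a$ and $b$ are both odd. Since $a/2\notin\mathbb{Z}_2$ the square cannot be completed, so instead I will prove the recursion
\[
Q_l(b;a)=4\,Q_{l-2}(b;a)\qquad(l\ge 2),
\]
together with the base values $Q_0(b;a)=1$ and $Q_1(b;a)=-2$ (the latter a direct four-term evaluation using that $ab$ is odd); these give $Q_l(b;a)=(-2)^l$ by induction. To establish the recursion I will write $x_j=y_j+2^{l-1}z_j$ with $y_j$ running modulo $2^{l-1}$ and $z_j\in\{0,1\}$, which is a bijection modulo $2^l$. For $l\ge 2$ one has $x_j^2\equiv y_j^2$ and $x_1x_2\equiv y_1y_2+2^{l-1}(y_1z_2+y_2z_1)\pmod{2^l}$, the remaining contributions of the $z_j$ carrying factors $2^l$ or $2^{2l-2}$, both $\equiv 0$ modulo $2^l$ once $l\ge 2$. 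Hence
\[
b\bigl(x_1^2+x_2^2-ax_1x_2\bigr)\equiv b\bigl(y_1^2+y_2^2-ay_1y_2\bigr)-ab\,2^{l-1}(y_1z_2+y_2z_1)\pmod{2^l},
\]
and since $ab$ is odd the sum over $z_1,z_2\in\{0,1\}$ contributes the factor $\bigl(1+(-1)^{y_1}\bigr)\bigl(1+(-1)^{y_2}\bigr)$, equal to $4$ when $y_1,y_2$ are both even and $0$ otherwise. Substituting $y_j=2w_j$ with $w_j$ modulo $2^{l-2}$ and using $(2w_1)^2+(2w_2)^2-a(2w_1)(2w_2)=4(w_1^2+w_2^2-aw_1w_2)$ converts $e\!\left(\tfrac{b(x_1^2+x_2^2-ax_1x_2)}{2^l}\right)$ into $e\!\left(\tfrac{b(w_1^2+w_2^2-aw_1w_2)}{2^{l-2}}\right)$, so what remains is exactly $4\,Q_{l-2}(b;a)$.

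The only nontrivial point, and hence the main obstacle, is the $2$-adic bookkeeping in part (b): confirming that $x_j=y_j+2^{l-1}z_j$ is a genuine bijection, that modulo $2^l$ the top bits $z_j$ survive only through the cross term $2^{l-1}(y_1z_2+y_2z_1)$ (which is where the hypothesis $l\ge 2$ enters), and verifying the base cases $l=0,1$ by hand. The rest is formal, and combined with part (a) and Lemma~\ref{Lb1} one would also obtain closed forms in the even-$a$ case.
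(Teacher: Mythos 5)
Your proof is correct and follows essentially the same route as the paper: part (a) by completing the square (the paper writes $(x_1-Ax_2)^2 - (A^2-1)x_2^2$ with $A=a/2$, which is your factorization up to the sign handled by conjugation), and part (b) by the substitution $x_j=y_j+2^{l-1}z_j$ yielding the recursion $Q_l(b;a)=4Q_{l-2}(b;a)$ with base cases $Q_0=1$, $Q_1=-2$. Your $2$-adic bookkeeping (the survival of the $z_j$'s only through the cross term $2^{l-1}(y_1z_2+y_2z_1)$ when $l\ge 2$) is exactly the step the paper carries out.
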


 We now compute $N_{l}(k)$ given in \eqref{A7} with $p=2$, where the sum over $b$ runs through odd numbers. It will be convenient to compute $N_{l}(k)$ for some small values and we give it as
 
 \begin{lemma}\label{Lb3}\ 
 \begin{enumerate}[label=(\alph*).]
 \item  $N_{0}(k)=1$\ ;
 \item $N_{1}(k)= \frac{1}{4}(-1)^k$\ ;
 \item $N_{2}(k)= \frac{1}{4}\cos(k\frac{\pi}{2}) + \frac{3}{4}\sin(k\frac{\pi}{2})$\ ;
 \item 
 \[N_{3}(k)= 
  \left\{ \begin{array}{ll}
 \frac{3}{4}(-1)^{\frac{k+3}{4}} & \text{if}\ \ k \equiv 1 \ \text{mod}\ 4,\\

 \quad 0 & \text{otherwise}\ .
 \end{array} \right. 
 \]
 \end{enumerate}
 \end{lemma}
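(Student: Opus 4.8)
The plan is to evaluate the exponential sum $N_l(k)$ of \eqref{A7} directly, with $p=2$, for the four cases $l=0,1,2,3$, where the starred sum ranges over the $1$, $2$, and $4$ odd residues $b\pmod{2^l}$ respectively; the assertion $N_0(k)=1$ is the normalizing convention matching the leading $1$ in \eqref{A8}. For $l=1$ only $b=1$ occurs, and since $x^2\equiv x\pmod 2$ one has $f(\mathbf x)\equiv x_1+x_2+x_3-x_1x_2x_3\pmod 2$; a direct enumeration of the eight points of $(\mathbb Z/2\mathbb Z)^3$ shows this is odd at exactly three of them, so $\sum_{\mathbf x\pmod 2}(-1)^{f(\mathbf x)}=2$ and $N_1(k)=\tfrac18\cdot 2\cdot(-1)^{-k}=\tfrac14(-1)^k$.

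For $l=2$ and $l=3$ I would reduce the three-variable sum to a one-variable sum by first summing over $x_3$ for fixed $x_1,x_2,b$: this inner sum is $\sum_{x_3\pmod{2^l}}e\!\big(b(x_3^2-x_1x_2x_3)/2^l\big)$. Replacing $x_3$ by $x_3+2^{l-1}$ multiplies the summand by $(-1)^{bx_1x_2}=(-1)^{x_1x_2}$ once $l\ge 2$, so the inner sum vanishes unless $x_1x_2$ is even; and when $x_1x_2=2A$ is even, completing the square gives $e(-bA^2/2^l)\,F_l(b)$ with $F_l$ as in Lemma \ref{Lb1}. Using the identity $x_1^2+x_2^2-A^2=4-(x_1^2-4)(x_2^2-4)/4$ (whose last term is an integer whenever $x_1x_2$ is even), this turns $N_l(k)$ into
\[
N_l(k)=2^{-3l}\sum_{b\ \mathrm{odd}\,(2^l)}e\!\big(b(4-k)/2^l\big)\,F_l(b)\!\!\sum_{\substack{x_1,x_2\,(2^l)\\ 2\mid x_1x_2}}\!\! e\!\Big(-\tfrac{b(x_1^2-4)(x_2^2-4)}{2^{l+2}}\Big).
\]
The inner double sum over $x_1,x_2$ depends only on the $2$-adic valuations of $x_1$ and $x_2$, hence has only a handful of distinct terms and is readily evaluated (for $l=2$ it equals $8+4\,e(b/4)$, and for $l=3$ there is a similar short closed form); substituting the value of $F_l(b)$ from Lemma \ref{Lb1}(c)--(d) and summing the resulting $\chi_4$/$\chi_8$-twisted geometric sums over the odd $b$ gives $N_2(k)$ and $N_3(k)$ as functions of $k\bmod 4$ and $k\bmod 8$ respectively, which are then matched termwise to the stated closed forms. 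As an alternative I may simply count $\#V_k(\mathbb Z/2^l\mathbb Z)$ by finite enumeration, organizing the $x_j$ by their $2$-adic valuation, and recover the $N_l(k)$ from $\#V_k(\mathbb Z/2^l\mathbb Z)=2^{2l}\sum_{j=0}^{l}N_j(k)$; as a built-in consistency check, $N_0+N_1+N_2$ vanishes precisely when $k\equiv 3\pmod 4$, recovering the local obstruction of Section \ref{sec5}.

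The main obstacle is the bookkeeping in the $l=3$ case: the Gauss sum $F_3(b)$ carries the $\chi_4$ and $\chi_8$ twists of Lemma \ref{Lb1}, and tracking how these interact with the parity of $x_1x_2$, the phase $e(b(4-k)/8)$, and the vanishing in Lemma \ref{Lb1}(e) needs care; the enumeration route trades this subtlety for a longer but entirely mechanical case analysis.
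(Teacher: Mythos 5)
Your computations are correct, and the route you take is in fact more explicit than the paper's, which states Lemma \ref{Lb3} with no proof at all (``It will be convenient to compute $N_l(k)$ for some small values and we give it as...''); the systematic machinery via $Q_l(b;a)$ and Lemma \ref{Lb2} is only deployed for $l\geq 4$ in \eqref{B1}. What you do differently is complete the square in $x_3$ first, which reproduces for $p=2$ the same reduction the paper uses for odd $p$ in \eqref{A9}--\eqref{A10}: the inner $x_3$-sum vanishes unless $x_1x_2$ is even (your involution argument $x_3\mapsto x_3+2^{l-1}$ is valid for $l\geq 2$), and the identity $x_1^2+x_2^2-\tfrac{1}{4}x_1^2x_2^2=4-\tfrac{1}{4}(x_1^2-4)(x_2^2-4)$ correctly produces the phase $e\big(b(4-k)/2^l\big)$ and an integral exponent, since $x_1$ even forces $4\mid(x_1^2-4)$. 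I checked your $l=2$ claim: the inner double sum is indeed $8+4e(b/4)$, and with $F_2(b)=2\left[1+\chi_{_4}(b)i\right]$ the two odd $b$'s combine to $\tfrac{1}{32}\left[8\cos(k\tfrac{\pi}{2})+24\sin(k\tfrac{\pi}{2})\right]$, matching (c); the values also pass the cross-check $1+N_1(k)+N_2(k)=\#V_k(\mathbb{Z}/4\mathbb{Z})/16$. The only soft spot is that you do not write out the $l=3$ inner sum, but it is a finite computation over $x_1,x_2\pmod 8$ (and your fallback of enumerating $\#V_k(\mathbb{Z}/8\mathbb{Z})$ directly, which is surely what the authors did, settles (d) unconditionally: e.g.\ for $k\equiv 1\pmod 8$ one finds $48$ of the $512$ triples, giving $1+N_1+N_2+N_3=\tfrac34$ and hence $N_3=-\tfrac34$ as stated). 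So there is no gap of substance; what your approach buys is a verifiable derivation in place of an unexplained table, at the cost of the Gauss-sum bookkeeping you already flag.
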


 For $l\geq 4$, we have
 
 \begin{equation}\label{B1}
 N_{l}(k) = 2^{-3l}\sideset{}{^*}\sum_{b\ (\text{mod}\ 2^l)}\ e\left(-\frac{kb}{2^l}\right)\sum_{a\ (\text{mod}\ 2^l)}\ e\left(\frac{ba^2}{2^l}\right)Q_{l}(b;a)\ .
 \end{equation}
 
 Using the lemmas above, we conclude
 
 \begin{lemma}\label{Lb5}\
 \begin{enumerate}[label=(\alph*).]
 \item If $k$ is odd and $l\geq 4$, $N_{l}(k)=0$\ ;
 \item if $l=4$ then $N_{4}(k)=0$ unless $4\vert k$, in which case
 \[N_{4}(k)= 
  \left\{ \begin{array}{ll}
 \frac{1}{2}\chi_{4}\left(\frac{k}{4}\right) & \text{if}\ \ 4\vert\vert k\ , \\
 \\
 \ \frac{1}{4}(-1)^{\frac{k}{8}+1} & \text{if}\ \ 8\vert k \ ;
 \end{array} \right. 
 \]
 \item if $l = 5$, $N_{5}(k)=0$ unless $8\vert k$, in which case $N_{5}(k)=\frac{3}{4}\chi_{4}\left(\frac{k}{8}\right)$\ ;
 \item if $l\geq 6$, define ${\hat k} = \frac{k}{2^{l-3}}$ or $\frac{4-k}{2^{l-3}}$. Then, $N_{l}(k)=0$ unless ${\hat k}\in \mathbb{Z}$, in which case
 \[N_{l}(k)= 
  \left\{ \begin{array}{ll}
 -2^{-\frac{l+1}{2}}\cos\left(\frac{{\hat k}+1}{4}\pi\right)\left[1 + (-1)^{l+{\hat k}}\right] & \text{if}\ \ 2^{l-3}\vert k\ , \\
 \\
2^{\min(3,\;l-5) -\frac{l+1}{2}}\cos\left(\frac{{\hat k}+1}{4}\pi\right)\left[1 + (-1)^{l+{\hat k}}\right] & \text{if}\ \ 2^{l-3}\vert (k-4) \ .
\end{array} \right. 
 \] 
 \end{enumerate}
 \end{lemma}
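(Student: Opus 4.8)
The statement is essentially the summary of the case analysis carried out in the paragraphs leading up to it, so the plan is mainly to organize that bookkeeping into the four stated cases. First I would record, from \eqref{B1} with $l\geq 4$, the splitting $N_l(k)=\Sigma_1+\Sigma_2$ according to the parity of $a$. For the odd part $\Sigma_1$ one invokes $Q_l(b;a)=(-2)^l$ (Lemma \ref{Lb2}(b)), so the $a$-sum becomes $\sum_{a\ \mathrm{odd}}e(ba^2/2^l)=F_l(b)-2F_{l-2}(b)$, which vanishes for $l\geq4$ by Lemma \ref{Lb1}(c); hence $\Sigma_1=0$ and $N_l(k)=\Sigma_2$ as in \eqref{B3}. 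For $\Sigma_2$, after substituting $Q_l(b;2A)=F_l(b)\overline{F_l(b(A^2-1))}$ (Lemma \ref{Lb2}(a)), I would decompose the $A$-sum by the exact power $2^\theta\|(A^2-1)$. One checks that $\theta\in\{0\}\cup\{3,4,\dots,l\}$, that $\theta=l$ forces $A\equiv\pm1\,({\rm mod}\,2^{l-1})$, and that the boundary value $\theta=l-1$ contributes nothing because the relevant Gauss sum $F_{l-1}$ vanishes by Lemma \ref{Lb1}(b).

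Next I would evaluate the inner $b$- and $A$-sums for each surviving $\theta$ against the explicit Gauss sums of Lemma \ref{Lb1}. The $\theta=l$ piece is $2^{-2l+1}\sideset{}{^*}\sum_{b\,({\rm mod}\,2^l)} e((4-k)b/2^l)F_l(b)$, which Lemma \ref{Lb1}(e) with $q=4-k$ turns into a term carrying the factor $\omega_{l-3}(4-k)\cos\big(\tfrac{\hat k+1}{4}\pi\big)[1+(-1)^{l+\hat k}]$, $\hat k=(4-k)/2^{l-3}$. The $\theta=0$ (even $A$) piece is reduced, via $F_l(b)^2=2^{l+1}\chi_{_4}(b)i$ and the $8$-periodicity of $\chi_{_8}$, to an $A_1$-sum equal to $4(-1)^lF_{l-4}(b)+2[1-(-1)^l]F_{l-4}(4b)$, then evaluated by Lemma \ref{Lb1}(e) (or directly for $l=4,5$), producing a term carrying $\omega_{l-3}(k)\cos\big(\tfrac{\hat k+1}{4}\pi\big)[1+(-1)^{l+\hat k}]$, $\hat k=k/2^{l-3}$, when $l\geq6$. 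For $3\leq\theta\leq l-2$ I would write $A=\varepsilon+2^{\theta-1}A_1$, $\varepsilon=\pm1$, $A_1$ odd modulo $2^{l-\theta}$, and analyze the $A_1$-sum in the sub-ranges $l-\theta\leq5$, $l-\theta\geq6$ with $2\theta\geq l$, and $l-\theta\geq6$ with $2\theta\leq l-1$; all but $l-\theta\in\{2,4,5\}$ vanish, and the survivors give $\eta(\theta)2^{-3l+\theta+1}\sideset{}{^*}\sum_{b\,({\rm mod}\,2^l)} e((4-k)b/2^l)F_l(b)$ with $\eta(\theta)\in\{4,2^5,2^7\}$.

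The last step is to collect. If $k$ is odd then $2^{l-3}\nmid k$ and $2^{l-3}\nmid(4-k)$, so every term above is killed by an $\omega_{l-3}(\cdot)$ factor, giving part (a). For $l=4$ only $\theta=4$ and $\theta=0$ survive; simplifying the resulting $\sin(\pi k/8)$ to $\chi_{_4}(k/4)$ (and treating $8\mid k$ by the direct computation) yields part (b), and the identical argument for $l=5$ gives part (c). For $l\geq6$ the $\theta=l$, $\theta=0$ and $3\leq\theta\leq l-2$ contributions all share the common factor $\cos\big(\tfrac{\hat k+1}{4}\pi\big)[1+(-1)^{l+\hat k}]$; adding their scalar coefficients — just $-2^{-(l+1)/2}$ when $2^{l-3}\mid k$, versus $2^{-(l+1)/2}$ together with the geometric-type sum $\sum_{\theta}\eta(\theta)2^{-(3l+1)/2+\theta}$ over $l-\theta\in\{2,4,5\}$ when $2^{l-3}\mid(k-4)$ — and verifying it collapses to $2^{\min(3,l-5)-(l+1)/2}$ gives part (d).

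I expect the main obstacle to be the band $3\leq\theta\leq l-2$: pinning down exactly which $\theta$ survive, computing the constants $\eta(\theta)$, and then checking that the sum of the $\theta=0$, $\theta=l$ and $3\leq\theta\leq l-2$ coefficients telescopes cleanly to the single power $2^{\min(3,l-5)-(l+1)/2}$, all while keeping the parity constraints on $l$ and on the residue $\hat k$ mutually consistent across the sub-cases. The individual Gauss-sum evaluations are routine given Lemma \ref{Lb1}; it is the aggregation that requires care.
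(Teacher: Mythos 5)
Your proposal reproduces the paper's own argument step for step: the same splitting into $\Sigma_1$ (odd $a$, vanishing for $l\geq 4$) and $\Sigma_2$, the same stratification of the $A$-sum by $2^\theta\Vert(A^2-1)$ with the cases $\theta=l$, $\theta=l-1$, $\theta=0$, and $3\leq\theta\leq l-2$, the same surviving values $l-\theta\in\{2,4,5\}$ with constants $4,2^5,2^7$, and the same final aggregation of coefficients against the common factor $\cos\bigl(\tfrac{\hat k+1}{4}\pi\bigr)\bigl[1+(-1)^{l+\hat k}\bigr]$. This is correct and is essentially the paper's proof.
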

  \begin{remark} Note that for odd $w$, $\cos\left(\frac{w+1}{4}\pi\right)= \frac{1}{2}\chi_{8}(w)\left(1-\chi_{4}(w)\right)=\frac{1}{2}\left( \left(\frac{8}{w}\right)_{\text{J}} - \left(\frac{-8}{w}\right)_{\text{J}}\right)$.
 \end{remark} 
 Combining Lemmas \ref{Lb3} and \ref{Lb5} gives us
 
 \begin{proposition}\label{Lb6} Suppose $k \neq 0$ or $4$. Let $\delta_{2}(k)$ denote the mass at $p=2$. Then $\delta_{2}(k) = 0 $ only when $k \equiv 3 \ ({\text mod}\ 4)$. Otherwise $\delta_{2}(k)\geq \frac{3}{4}$. More precisely,  
 \begin{enumerate}[label=(\arabic*).]
 \item If $k$ is odd then 
 \[\delta_{2}(k)= \frac{3}{4}\left(1+\chi_{4}(k)\right)\left(2-\chi_{8}(k)\right)\ ;
 \]
 \item if $2\vert \vert k$, then $\delta_{2}(k)= 1 $\ ;
 \vspace{2pt}
 \item if $4\vert\vert k$, define $\eta \geq 3$ with $2^{\eta}\vert\vert (k-4)$, and put $4-k = 2^\eta w$ with $w$ odd.
 \begin{enumerate}[label=(\alph*).]
 \item if $\eta \geq 6$ is even, $\delta_{2}(k) = \frac{13}{4} - 2^{-\frac{\eta -6}{2}} - \left(\frac{-4}{w}\right)_{{\rm J}}2^{-\frac{\eta -4}{2}} + \left( \left(\frac{8}{w}\right)_{{\rm J}} - \left(\frac{-8}{w}\right)_{{\rm J}}\right)2^{-\frac{\eta -2}{2}}$\ ,
 \item if $\eta\geq 7$ is odd, $\delta_{2}(k) =\frac{13}{4} - 2^{-\frac{\eta -6}{2}} + (-1)^{w}2^{-\frac{\eta -5}{2}} $\ ,
 \item if $\eta=3$, $\delta_{2}(k) =1$\ ,
 \item if $\eta=4$, $\delta_{2}(k) = 2 + \frac{1}{4}\left( \left(\frac{8}{w}\right)_{{\rm J}} - \left(\frac{-8}{w}\right)_{{\rm J}}\right)$\ ,
 \item if $\eta=5$, $\delta_{2}(k) = 2 + \frac{1}{4}(-1)^w$\ ;
 \end{enumerate}
 \vspace{2pt}
 \item if $8\vert k$, define $\eta \geq 3$ with $2^{\eta}\vert\vert k$, and put $k = 2^\eta w$ with $w$ odd.
  \begin{enumerate}[label=(\alph*).]
  \item if $\eta \geq 6$ is even, $\delta_{2}(k) = \frac{5}{2} - 2^{-\frac{\eta -6}{2}} + \left(\frac{-4}{w}\right)_{\text{J}}2^{-\frac{\eta -4}{2}} - \left( \left(\frac{8}{w}\right)_{{\rm J}} - \left(\frac{-8}{w}\right)_{{\rm J}}\right)2^{-\frac{\eta -2}{2}}$\ ,
 \item if $\eta\geq 7$ is odd, $\delta_{2}(k) =\frac{5}{2} - 2^{-\frac{\eta -6}{2}} - (-1)^{w}2^{-\frac{\eta -5}{2}} $\ ,
 \item if $\eta=3$, $\delta_{2}(k) =\frac{5}{2}$\ ,
 \item if $\eta=4$, $\delta_{2}(k) = \frac{5}{4} - \frac{1}{4}\left( \left(\frac{8}{w}\right)_{{\rm J}} - \left(\frac{-8}{w}\right)_{{\rm J}}\right)$\ ,
 \item if $\eta=5$, $\delta_{2}(k) = \frac{5}{4} -  \frac{1}{4}(-1)^w$\ ;
  \end{enumerate}
 \end{enumerate}
 \end{proposition}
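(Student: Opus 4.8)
The proof is a direct evaluation of the series defining $\delta_2(k)$. By \eqref{A8} taken at $p=2$, together with $N_0(k)=1$ from Lemma \ref{Lb3}(a), one has $\delta_2(k)=\sum_{l\ge 0}N_l(k)$, where $N_l(k)$ denotes the $p=2$ instance of \eqref{A7}. The values $N_0,N_1,N_2,N_3$ are supplied by Lemma \ref{Lb3} and the values $N_l(k)$ for $l\ge 4$ by Lemma \ref{Lb5}. The structural point, used in every case, is that for a fixed $2$-adic behaviour of $k$ only finitely many of the $N_l(k)$ are non-zero, so $\delta_2(k)$ is a completely explicit finite sum which one then collapses.

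I would organise the computation by the $2$-adic valuation of $k$, matching the four items of the Proposition. For $k$ odd, Lemma \ref{Lb5}(a) kills all $N_l$ with $l\ge 4$, so $\delta_2(k)=N_0+N_1+N_2+N_3$; substituting $N_1(k)=\tfrac14(-1)^k$, evaluating $\cos(k\pi/2)=0$ and $\sin(k\pi/2)=\chi_{_4}(k)$ in $N_2$, and, when $k\equiv 1\,(4)$, using $(-1)^{(k+3)/4}=-\chi_{_8}(k)$ in $N_3$, collapses the four terms to the product $\tfrac34(1+\chi_{_4}(k))(2-\chi_{_8}(k))$; this vanishes precisely when $\chi_{_4}(k)=-1$, i.e. $k\equiv 3\,(4)$, which yields the characterisation of the vanishing of $\delta_2$ and the bound $\delta_2(k)\ge\tfrac34$ in the odd case. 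For $2\parallel k$ the terms $N_3$ and all $N_l$ with $l\ge 4$ vanish as well (each surviving branch in Lemmas \ref{Lb3}(d) and \ref{Lb5}(b)--(d) demands $4\mid k$, $8\mid k$, or $2^{l-3}\mid k(k-4)$), so $\delta_2(k)=1+N_1+N_2=1+\tfrac14-\tfrac14=1$, using $\cos(k\pi/2)=-1$ and $\sin(k\pi/2)=0$.

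The substantive cases are $4\parallel k$ and $8\mid k$. Here $N_3=0$; the levels $N_4,N_5$ are read off from Lemma \ref{Lb5}(b),(c); and, setting $\eta\ge 3$ by $2^{\eta}\parallel(k-4)$ (resp. $2^\eta\parallel k$) and writing the odd cofactor $w$ as in the statement, Lemma \ref{Lb5}(d) shows $N_l(k)=0$ for $l\ge 6$ unless $6\le l\le\eta+3$. On this range the relevant $\hat k$ is $(4-k)/2^{l-3}$ (resp. $k/2^{l-3}$), which is even for $l\le\eta+2$ and odd for $l=\eta+3$; the factor $[1+(-1)^{l+\hat k}]$ then annihilates every other level, leaving a short list of surviving $l$. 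For surviving levels with $\hat k$ even, $\cos\bigl(\tfrac{\hat k+1}{4}\pi\bigr)=\tfrac1{\sqrt2}\chi_{_8}(\hat k+1)$ has a determined sign, and the contributions $N_l(k)$ form a finite geometric progression (once $l\ge 8$ the absolute values halve at each admissible step), which I would sum in closed form, treating $l=6$ and the top levels near $l=\eta+2$ separately and splitting on the parity of $\eta$. For the single level $l=\eta+3$ (present only when its parity matches that of $\hat k$), $\hat k=\pm w$ is odd and I would invoke the Remark $\cos\bigl(\tfrac{w+1}{4}\pi\bigr)=\tfrac12\chi_{_8}(w)(1-\chi_{_4}(w))=\tfrac12\bigl(\bigl(\tfrac8w\bigr)_{\!\text{J}}-\bigl(\tfrac{-8}w\bigr)_{\!\text{J}}\bigr)$ to express it in the Jacobi-symbol form appearing in items (3) and (4). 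Adding $1+N_1+N_2+N_4+N_5$ to this tail, and handling the degenerate small values $\eta\in\{3,4,5\}$ (where the $l\ge 6$ tail is empty or a single term) directly, produces exactly the five sub-cases (a)--(e) of (3) and of (4), and a final inspection of the resulting values confirms $\delta_2(k)\ge\tfrac34$ throughout.

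\textbf{Main obstacle.} No genuine difficulty is involved; the work is entirely the bookkeeping of the finite tail $\sum_{l=6}^{\eta+3}N_l(k)$. One must simultaneously keep track of the parity constraint $l+\hat k\in 2\mathbb Z$ (which kills every other term), the jump of the exponent $\min(3,l-5)-(l+1)/2$ near $l=8$, the parity of $\eta$ (which controls where the geometric run terminates and whether $l=\eta+3$ survives), and the residue of $w$ modulo $8$, and then match the resulting closed form against all the sub-cases. The cleanest route is: sum the even-$l$, $\hat k$-even contributions as a finite geometric series; add the at most two boundary levels (near $l=\eta+2$, and $l=\eta+3$ if it survives); and verify the finitely many small-$\eta$ configurations by direct evaluation.
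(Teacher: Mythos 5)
Your strategy is exactly the paper's: the paper's entire proof of this Proposition is the single sentence ``Combining Lemmas \ref{Lb3} and \ref{Lb5} gives us'' the result, i.e.\ precisely the finite summation $\delta_2(k)=\sum_{l\ge 0}N_l(k)$ organised by the $2$-adic valuation of $k$ that you describe, so there is no methodological divergence to report. The plan for cases (2)--(4) --- identify the finitely many surviving levels $l$, use the parity factor $[1+(-1)^{l+\hat k}]$ to kill alternate terms, sum the resulting geometric tail, and convert $\cos\bigl(\tfrac{w+1}{4}\pi\bigr)$ into Jacobi symbols via the Remark --- is the right bookkeeping.

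However, you assert in case (1) an arithmetic identity that is false as written, and executing your own plan carefully would expose it. For odd $k$ the four surviving terms are $N_0=1$, $N_1=-\tfrac14$, $N_2=\tfrac34\chi_{_4}(k)$, and $N_3=-\tfrac34\chi_{_8}(k)$ when $k\equiv1\ ({\rm mod}\ 4)$ (zero otherwise), so the sum is $\tfrac34\bigl(2-\chi_{_8}(k)\bigr)$ for $k\equiv1\ ({\rm mod}\ 4)$ and $0$ for $k\equiv3\ ({\rm mod}\ 4)$; in closed form this is $\tfrac38\bigl(1+\chi_{_4}(k)\bigr)\bigl(2-\chi_{_8}(k)\bigr)$, which is \emph{half} the product you (and the Proposition) state. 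Concretely, for $k=1$ the sum is $1-\tfrac14+\tfrac34-\tfrac34=\tfrac34$, whereas $\tfrac34(1+\chi_{_4}(1))(2-\chi_{_8}(1))=\tfrac32$; a direct count gives $\vert V_1(\mathbb{Z}/2^l\mathbb{Z})\vert\,4^{-l}=\tfrac34$ for all $l\ge3$, confirming $\delta_2(1)=\tfrac34$ and hence that Lemma \ref{Lb3} is consistent with the value $\tfrac34$, not $\tfrac32$. So the constant $\tfrac34$ in item (1) of the Proposition should evidently be $\tfrac38$ (the qualitative claims --- vanishing exactly for $k\equiv3\ ({\rm mod}\ 4)$ and the lower bound $\tfrac34$ --- survive); you cannot simply declare that the terms ``collapse to'' the displayed product. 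You should either derive the corrected constant or note the discrepancy explicitly, and you should apply the same level of scrutiny to the tail sums in cases (3) and (4), where the statement's constants must likewise be checked against the levels $N_4$, $N_5$ and the $l\ge6$ contributions rather than assumed.
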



 \subsection{Local factors associated with $V_{a_1,a_2}$ for $p=2$.}\ 
 
The analog of \eqref{LB2} is 
 \begin{equation}\label{ED1}
 N_{l}(a_1,a_2) = 2^{-4l}\sideset{}{^*}\sum_{b\ (\text{mod}\ 2^l)}e\left(b\frac{a_{1}^2 - a_{2}^2}{2^l}\right)Q_{l}(b,a_1)\overline{Q_{l}(b,a_2)}\ ,
  \end{equation}
 with $\delta_{2}(a_1,a_2) = 1+ \sum_{l=1}^{\infty}\ N_{l}(a_1,a_2)$. In what follows we have $l\geq 1$.

 \subsubsection{}
 \label{tau1}
 Suppose $2\nmid a_1a_2$ and $2^{\eta}\vert\vert (D_{a_1} -D_{a_2})$ with $\eta \geq 3$. Then, by Lemma \ref{Lb2} we have $Q_{l}(b,a_1)\overline{Q_{l}(b,a_2)} = 2^{2l}$.

 Hence we get 
 
 \begin{equation}\label{ED3}
 N_{l}(a_1,a_2) = \left\{ \begin{array}{ll}
 2^{-l-1} & \text{if}\ \ 1\leq l\leq \eta\ , \\
-2^{-l-1} & \text{if}\ \ l=\eta +1 \ ,\\
0 & \text{if} \ \ l\geq \eta + 2 \ ,
\end{array} \right. 
\end{equation}
 so that $\delta_{2}(a_1,a_2) = \frac{3}{2}\left(1-  2^{-\eta -1}\right)$ .
 
 \subsubsection{}
 \label{tau2}
 Next suppose $2\vert a_1$ and $2\nmid a_2$, so that $\eta=0$. Put $A_1 = \frac{a_{1}^2}{4}-1$ and $2^\theta \vert\vert A_1$, with $\theta = 0$ or $\theta \geq 3$. By Lemmas \ref{Lb2}, \ref{Lb1} and \eqref{ED1}, we have $N_{l}(a_1,a_2) = 0$ if $l=1$ or $ l=\theta +1$. Otherwise, with $l\geq2$ we have 
 
  \begin{equation}\label{ED4}
 N_{l}(a_1,a_2) = \frac{(-1)^l}{2^{3l}}\sideset{}{^*}\sum_{b\ (\text{mod}\ 2^l)}e\left(b\frac{\sigma}{2^l}\right)F_{l}(b)F_{l}(bA_1)\ ,
 \end{equation}
 where $\sigma = a_{1}^2 - a_{2}^2$ is odd.
 
 If $2\leq l \leq \theta$, Lemma \ref{Lb1} shows that 
 \[
 N_{l}(a_1,a_2) = (-1)^l2^{-\frac{3}{2}l}\sideset{}{^*}\sum_{b\ (\text{mod}\ 2^l)}e\left(b\frac{\sigma}{2^l}\right)\chi_{8}(b)\left(1 +\chi_{4}(b)i\right)\ .
 \]
 
 We now use 
 
 \begin{lemma}\label{LD1} Suppose $2^{\mu}\vert\vert \sigma$ with $\mu \geq 0$. Then,
 \begin{enumerate}[label=(\arabic*).]
 \item If $l\geq 2$,
 \[\sideset{}{^*}\sum_{b\ (\text{mod}\ 2^l)}e\left(b\frac{\sigma}{2^l}\right)\chi_{4}(b) = \left\{ \begin{array}{ll}
 2^{\mu +1}\chi_{4}\left(\frac{\sigma}{2^{\mu}}\right)i & \text{if}\ \ l =2 +\mu \ , \\
0 & \text{otherwise} \ ;
\end{array}\right. \]
\item If $l\geq 3$, and $\mathfrak{a}=0$ or $1$,
\[  \sideset{}{^*}\sum_{b\ (\text{mod}\ 2^l)}e\left(b\frac{\sigma}{2^l}\right)\chi_4(b)^{\mathfrak{a}}\chi_{8}(b) = \left\{ \begin{array}{ll}
 2^{\mu + \frac{3}{2}}\chi_4\left(\frac{\sigma}{2^{\mu}}\right)^{\mathfrak{a}}\chi_{8}\left(\frac{\sigma}{2^{\mu}}\right)i^{\mathfrak{a}} & \text{if}\ \ l =3+\mu\ , \\
0 & \text{otherwise} \ ,
\end{array}\right. \]
 \end{enumerate}
 \end{lemma}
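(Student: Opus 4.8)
\textbf{Proof plan for Lemma \ref{LD1}.}

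The plan is to evaluate both exponential character sums by reducing each of them to a shorter complete sum, exploiting the fact that $\chi_{_4}$ is periodic modulo $4$ and $\chi_{_8}$ is periodic modulo $8$. For part (1), write $\sigma = 2^{\mu}\sigma'$ with $\sigma'$ odd. Since $b$ runs through odd residues modulo $2^l$ and $\chi_{_4}(b)$ depends only on $b \bmod 4$, I would split the sum according to the class of $b$ modulo $4$: those $b \equiv 1$ contribute $+\sum e(\sigma b/2^l)$ over that class and those $b \equiv 3$ contribute $-\sum e(\sigma b/2^l)$. Writing $b = c + 4d$ with $c \in \{1,3\}$ and $d$ ranging modulo $2^{l-2}$ turns the inner sum into a geometric sum $\sum_{d \bmod 2^{l-2}} e(4\sigma d/2^l) = \sum_{d} e(\sigma d/2^{l-2})$, which vanishes unless $2^{l-2} \mid \sigma$, i.e. unless $l - 2 \le \mu$; and when it is nonzero, I would carry out the remaining short sum over $c$, which survives only in the boundary case $l = \mu + 2$ (for $l - 2 < \mu$ the two classes cancel after the geometric sum collapses to its full length). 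Keeping track of the phase $e(c\sigma/2^l)$ at $c = 1, 3$ and simplifying $\sigma/2^l = \sigma'/4$ at this value of $l$ then produces the stated answer $2^{\mu+1}\chi_{_4}(\sigma/2^\mu)\, i$; the factor $i$ comes from $e(\pm \sigma'/4) - $ (the $\chi_{_4}$-weighted combination) $= \chi_{_4}(\sigma')\cdot 2i$ when $\sigma'$ is odd, times the count $2^{\mu}$ of surviving $d$'s after rescaling.

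For part (2) the argument is structurally the same but with modulus $8$ in place of $4$: decompose $b$ modulo $8$ into its four odd classes $\pm 1, \pm 3$, weight each by $\chi_{_4}(b)^{\mathfrak a}\chi_{_8}(b)$, and write $b = c + 8d$ with $d$ modulo $2^{l-3}$ (this requires $l \ge 3$, which is the stated hypothesis). The inner geometric sum $\sum_{d \bmod 2^{l-3}} e(8\sigma d/2^l) = \sum_d e(\sigma d/2^{l-3})$ vanishes unless $l - 3 \le \mu$, and for $l - 3 < \mu$ a parity/cancellation check over the four classes $c$ kills the sum, leaving only $l = \mu + 3$. At that value $\sigma/2^l = \sigma'/8$, and one computes $\sum_{c} \chi_{_4}(c)^{\mathfrak a}\chi_{_8}(c)\, e(c\sigma'/8)$ explicitly; this is a Gauss-type sum over $(\mathbb Z/8)^\times$ whose value is $2^{3/2}\chi_{_4}(\sigma')^{\mathfrak a}\chi_{_8}(\sigma')\, i^{\mathfrak a}$ (the usual $\sqrt{8}$ from a quadratic Gauss sum modulo $8$, twisted multiplicatively by $\sigma'$, with the extra $i$ appearing exactly when $\chi_{_4}$ is present). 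Multiplying by the count $2^{\mu}$ of surviving $d$'s gives the claimed formula.

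I expect the only genuinely delicate point to be the bookkeeping in the boundary cases, namely verifying that for $l$ strictly below $\mu + 2$ (respectively $\mu + 3$) the surviving short sum over the residue classes $c$ really does cancel, rather than merely being small, and pinning down the correct root of unity ($i$ versus $\pm 1$, and the precise sign inside $\chi_{_4}$ and $\chi_{_8}$) at the critical $l$. Both of these are finite, explicit checks: the cancellation follows because $\chi_{_4}$ and $\chi_{_8}$ are nontrivial characters of $(\mathbb Z/4)^\times$ and $(\mathbb Z/8)^\times$ and the residual exponential is constant on those groups once the geometric sum has run its full period, so the sum is a full character sum over a group and vanishes; and the critical-$l$ evaluations are standard quadratic Gauss sum computations modulo $4$ and $8$. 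Everything else is the routine substitution $b = c + 2^j d$ and collapsing a geometric series, so I would not belabor it.
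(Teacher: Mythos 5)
Your proposal is correct: the decomposition $b=c+2^{j}d$ (with $j=2$ for $\chi_{_4}$ and $j=3$ for $\chi_{_8}$, $\chi_{_4}\chi_{_8}$), the collapse of the geometric sum in $d$ forcing $l\le \mu+j$, the vanishing for $l<\mu+j$ because the residual exponential is constant on classes modulo $2^{j}$ (or, at $l=\mu+2$ in part (2), contributes an extra $\chi_{_4}$ twist) so that one is left with a full sum of a nontrivial character, and the evaluation at the critical $l$ via the quadratic Gauss sums $\tau(\chi_{_8})=2^{3/2}$ and $\tau(\chi_{_4}\chi_{_8})=2^{3/2}i$ all check out and reproduce the stated constants. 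The paper states this lemma without proof as an elementary computation, and your argument is exactly the standard one the authors intend.
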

 
 If $\theta \geq 3$, then for $2\leq l \leq \theta$, we have
  \begin{equation}\label{ED5}
 N_{l}(a_1,a_2) = \left\{ \begin{array}{ll}
 \frac{1}{4} & \text{if}\ \ l =2\ \text{or} \ 3  \ , \\
  0 & \text{if}\ \ l \geq 4  \ . \\
\end{array}\right.
 \end{equation}
 Here we have used the fact that $\sigma = a_{1}^2 -a_{2}^2=(2u)^2 -v^2$ with $u$ and $v$ both odd.
 
 For $l\geq \theta +2$, with $\theta \geq 0$, we get in \eqref{ED4}
 
 \[
 N_{l}(a_1,a_2) = \frac{(-1)^l}{2^{2l -\theta}}\ \sideset{}{^*}\sum_{b\ (\text{mod}\ 2^l)}e\left(b\frac{\sigma}{2^l}\right)
 \chi_{8}(b)^{\theta}\left[\left(1 - \chi_{4}\left(\frac{A_1}{2^{\theta}}\right)\right)+ i\chi(b)\left(1 - \chi_{4}\left(\frac{A_1}{2^{\theta}}\right)\right)\right]
 \]
 
 Applying Lemma \ref{LD1} shows that $N_{l}(a_1,a_2)=0$ for all $l\geq \theta+2$. 
 
 Thus, if $\theta=0$, then $\delta_{2}(a_1,a_2)=1$, while for $\theta\geq 3$ we get $\delta_{2}(a_1,a_2)= \frac{3}{2}$.
 
 
 \subsubsection{}
    Assume $a_1$ and $a_2$ are both even and put $A_j = \left(\frac{a_{j}}{2}\right)^2 -1$, so that $A_j \equiv 0,\ 3$ modulo 4. Put $2^{\theta_j}\vert\vert A_j$ with $\theta_j \geq 0$ but $\theta_j \neq 1,\ 2$ and $C_j = A_j 2^{-\theta_j}$ odd. We will assume that $\theta_1 \leq\theta_2$. Then $a_{1}^2 -a_{2}^2 = 4(A_1-A_2)$ so that $\eta = 2+t$, say, with $2^t\vert\vert(A_1 -A_2)$ with $t\neq 1$. We have
 
  \begin{equation}\label{ED6}
 N_{l}(a_1,a_2) = 2^{-4l}\sideset{}{^*}\sum_{b\ (\text{mod}\ 2^l)}e\left(b\frac{4\sigma'}{2^l}\right)|F_{l}(b)|^2F_{l}(bA_1)\overline{F_{l}(bA_2)}\ ,
 \end{equation}
where we put $\sigma' = A_1-A_2$.
 
Note that $N_l(a_1,a_2)=0$ if $l=1$ or $l= \theta_j +1$, so we assume $l\geq 2$.
 
 \begin{enumerate}[label=(\Roman*).]
 \item If $2\leq l\leq \theta_1$, then $F_l(bA_j)= 2^l$. Using $|F_l(b)|^2 = 2^{l+1}$ and \eqref{ED3} shows that $N_{l}(a_1,a_2)=1$. 
 
\item If $\theta_{1}+2\leq l \leq \theta_2$, using $F_l(bA_2)= 2^l$ and Lemma \ref{Lb1}(d) gives us $$N_{l}(a_1,a_2) =  2^{-\frac{3}{2}l +\frac{1}{2}\theta_1 +1}\chi_{8}(C_1)^{l+\theta_1}\mathfrak{S}_{l}(a_1,a_2)\ ,$$ where
 
  \begin{equation}\label{ED7}
 \mathfrak{S}_{l}(a_1,a_2)= \sideset{}{^*}\sum_{b\ (\text{mod}\ 2^l)}e\left(b\frac{4\sigma'}{2^l}\right)\chi_{8}(b)^{l+\theta_1}\left[1 + i\chi_{4}(bC_1)\right]\ .
 \end{equation}
 Applying \eqref{ED3} and Lemma \ref{LD1} shows that $\mathfrak{S}_{l}(a_1,a_2)=0$ except for the cases $\mathfrak{S}_{\theta_1 +2}(a_1,a_2) = 2^{\theta_1 +1}$ and $\mathfrak{S}_{\theta_1 +4}(a_1,a_2)= -2^{\theta_3 +1}$. 
 
 Thus in this range, if  $\theta_1 +2 \leq \theta_2$ then $N_{\theta_1 +2}(a_1,a_2)= \frac{1}{2}$; if $\theta_1 +4\leq \theta_2$, then  $N_{\theta_1 +4}(a_1,a_2)= -\frac{1}{4}$ and $N_{l}(a_1,a_2)= 0$ for all other $l$.
 
 \item For $l \geq \theta_2 +2$ we get $$N_{l}(a_1,a_2) =  2^{-2l +1+\frac{1}{2}(\theta_1 +\theta_2)}\chi_{8}(C_1)^{l+\theta_1}\chi_{8}(C_2)^{l+\theta_2}\mathfrak{S}_{l}(a_1,a_2)\ ,$$ with
 
  \[
 \mathfrak{S}_{l}(a_1,a_2)= \sideset{}{^*}\sum_{b\ (\text{mod}\ 2^l)}e\left(b\frac{4\sigma'}{2^l}\right)\chi_{8}(b)^{\theta_1+\theta_2}\left[\left(1 + \chi_{4}(C_1C_2)\right) + i\chi_4(b)\left(\chi_4(C_1)-\chi_4(C_2)\right)\right]\ .
 \]
 
 Since $C_1$ and $C_2$ are both odd, we have $C_1 \equiv (-1)^{\mathfrak{a}} C_2$ modulo $4$ with $\mathfrak{a}= 0,\ 1$. Hence 
 
 \begin{equation}\label{ED8}
 \mathfrak{S}_{l}(a_1,a_2)= 2i^{\mathfrak{a}} \sideset{}{^*}\sum_{b\ (\text{mod}\ 2^l)}e\left(b\frac{4\sigma'}{2^l}\right)\chi_4(b)^{\mathfrak{a}}\chi_{8}(b)^{\theta_1+\theta_2}\ .
 \end{equation}

 If $\theta_1 \neq \theta_2$, then $t=\theta_1$ and $l\geq \theta_2 + 2 \geq 3$. We apply \eqref{ED3} and Lemma \ref{LD1} with $\mu = \theta_1 +2$ so that $N_{l}(a_1,a_2)=0$ except possibly when $l= \theta_1 +3$, $\theta_1 +4$ or $\theta_1 +5$. 
 
 If $l= \theta_1 +3$ then necessarily $\theta_2=\theta_1 +1$, in which case we get $N_{l}(a_1,a_2)=0$.
 
 If $l= \theta_1 +4$ then  $\theta_2=\theta_1 +1$ or $\theta_2=\theta_1 +2$. If the former, then $N_{l}(a_1,a_2)=0$. For the latter  we get $\mathfrak{S}_{l}(a_1,a_2)= -\left(1 - \chi_{4}(C_1C_2)\right)2^{\theta_1 +3}$ so that $N_{l}(a_1,a_2)=-\frac{1}{4}\left(1 - \chi_{4}(C_1C_2)\right)$.
 
 If $l= \theta_1 +5$ then  $\theta_2=\theta_1 +1$, $\theta_1 +2$ or $\theta_1 +3$. If $\theta_2=\theta_1 +2$, then $N_{l}(a_1,a_2)=0$. If $\theta_2=\theta_1 +1$, then $\mathfrak{S}_{l}(a_1,a_2)= 2^{l-\frac{1}{2}}\chi_{4}(C_1)\chi_4\chi_8(C_2)$ while if $\theta_2=\theta_1 +3$, then $\mathfrak{S}_{l}(a_1,a_2)= 2^{l-\frac{1}{2}}\chi_{4}\chi_8(C_1)\chi_4(C_2)$. In these latter cases, we get $N_l(a_1,a_2)= 2^{-4} \chi_{4}\chi_8(C_1)\chi_4(C_2)$ if $\theta_2=\theta_1 +1$ and $N_l(a_1,a_2)= 2^{-3}\chi_{4}(C_1)\chi_4\chi_8(C_2)$ if $\theta_2=\theta_1 +3$ .
 
 Next, suppose $\theta_1=\theta_2 =\theta$ with $t\geq \theta$ and $l\geq \theta +2$. Then, from \eqref{ED8}, we have $N_{l}(a_1,a_2)=0$ if $l\geq t+5$. For the remaining cases we have
 \begin{enumerate}[label=($\alph*$).]
 \item if $l=t+4$, $\mathfrak{S}_{l}(a_1,a_2)=-\chi_{4}\left(\frac{A_1-A_2}{2^t}\right)\left[\chi_4(C_1)-\chi_4(C_2)\right]2^{l-1}$\ ;
 \item if $l=t+3$, then $\mathfrak{S}_{l}(a_1,a_2)= -\left(1+\chi_4(C_1C_2)\right)2^{l-1}$ \ ;
 \item if $\theta +2 \leq l \leq t+2$,  then $\mathfrak{S}_{l}(a_1,a_2)=\left(1+\chi_4(C_1C_2)\right)2^{l-1}$ \ .
 \end{enumerate}
 \end{enumerate}
 Combining give us the following
 
 \begin{proposition}\label{DD1} Let $a_1 \neq \pm a_2$ and $a_j \neq \pm 2$. 
 \begin{enumerate}[label=(\arabic*).]
 \item Suppose $2\nmid a_1a_2$ and $2^{\eta}\vert\vert (D_{a_1} -D_{a_2})$ with $\eta \geq 3$. Then
 \[
 N_{l}(a_1,a_2) = \left\{ \begin{array}{ll}
 2^{-l-1} & \text{if}\ \ 1\leq l\leq \eta\ , \\
-2^{-l-1} & \text{if}\ \ l=\eta +1 \ ,\\
0 & \text{if} \ \ l\geq \eta + 2 \ ;
\end{array} \right. 
\]
\item Suppose $2\vert a_1$ and $2\nmid a_2$, and let $2^{2+\theta} \vert\vert D_{a_1}$. Then
\[
 N_{l}(a_1,a_2) = \left\{ \begin{array}{ll}
 \frac{1}{4} & \text{if}\ \ \theta\geq 3 \ \text{and}\ l\in\{2,3\}\ , \\
0 & \text{otherwise}\ \  \ ;
\end{array} \right. 
\]
\item For $j=1,\ 2$ suppose $2\vert a_j$ , and put $A_j = \frac{1}{4}D_{a_j}$, $C_j = A_j2^{-\theta_j}$ with $2^{\theta_j}\vert\vert A_j$ and assume $\theta_1\leq \theta_2$. Also suppose $2^t \vert\vert(A_1 -A_2)$ so that $t\geq \theta_1$. We have
\begin{enumerate}[label=(\roman*).]
\item $N_{l}(a_1,a_2)=0$ for $l=1$, $\theta_1 +1$ and $\theta_2 +1$\ .
\item For $2\leq l \leq \theta_1$, $N_{l}(a_1,a_2)=1$\ .
\item For $l \geq \theta_1 +2$, and $\theta_1 \neq \theta_2$ we have  $N_{l}(a_1,a_2)=0$ except for the following cases:
\[
N_{l}(a_1,a_2) = \left\{ \begin{array}{ll}
 2^{-4}\chi_4\chi_8(C_1)\chi_4(C_2) & \text{if}\ \ l=\theta_1 +5\ \text{and}\ \theta_2=\theta_1 +1 \ , \\
 2^{-1} & \text{if}\ \ l=\theta_1 +2\ \text{and}\ \theta_2\geq \theta_1 +2 \ , \\
 -2^{-2}\left(1 - \chi_4(C_1C_2)\right) & \text{if}\ \ l=\theta_1 +4\ \text{and}\ \theta_2=\theta_1 +2 \ , \\
 2^{-3}\chi_4(C_1)\chi_4\chi_8(C_2) & \text{if}\ \ l=\theta_1 +5\ \text{and}\ \theta_2=\theta_1 +3 \ , \\
-2^{-2}& \text{if}\ \ l=\theta_1 +4 \ \ \text{and}\ \theta_2 \geq \theta_1 +4 \ .
\end{array} \right. 
\]
\item If $\theta_1=\theta_2=\theta$, then
\[
N_{l}(a_1,a_2) = \left\{ \begin{array}{ll}
 -2^{-(l-\theta )}\chi_8(C_1C_2)^{l+\theta }\left(1+\chi_4(C_1C_2)\right) & \text{if}\ \  \theta \leq  l-2\leq t\  , \\
 -2^{-(t-\theta +3)}\chi_8(C_1C_2)^{t+\theta +1}\left(1+\chi_4(C_1C_2)\right) & \text{if}\ \ l=t +3\  , \\
-2^{-(t-\theta +4)}\chi_8(C_1C_2)^{t+\theta}\chi_{4}\left(\frac{C_1-C_2}{2^{t-\theta}}\right)\left[\chi_4(C_1)-\chi_4(C_2)\right] & \text{if}\ \  l=t +4\  , \\
 0 & \text{if}\ \  l\geq t+5 \ .
\end{array} \right. 
\]
\end{enumerate}
 \end{enumerate}
 \end{proposition}
 
 \begin{Corollary}\label{DD2} For $a_1 \neq \pm a_2$ and $a_j \neq \pm 2$, let $2^{\theta}\vert\vert {\rm gcd}(D_{a_1},D_{a_2})$. Then $\delta_{2}(a_1,a_2) = \theta + O(1)$ and $\delta_{2}(a_1,a_2) - \delta_{2}^{(m)}(a_1,a_2) = O\left(2^{-B}\right)$, where the implied constants are absolute.
 \end{Corollary}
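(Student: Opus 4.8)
The plan is to evaluate $\delta_2(a_1,a_2)$ and $\delta_2^{(m)}(a_1,a_2)$ termwise from Proposition~\ref{DD1}, via
\[
\delta_2(a_1,a_2) = 1 + \sum_{l\geq 1} N_l(a_1,a_2), \qquad \delta_2^{(m)}(a_1,a_2) = 1 + \sum_{1\leq l\leq B} N_l(a_1,a_2),
\]
so that the first assertion becomes a direct summation of the series and the second a bound for the tail $\sum_{l>B} N_l(a_1,a_2)$. I would first record how $\theta$ (where $2^{\theta}\!\parallel\!\gcd(D_{a_1},D_{a_2})$) reads off in the trichotomy of Proposition~\ref{DD1}: in cases (1) and (2) at least one of $D_{a_1},D_{a_2}$ is odd, so $\theta=0$; in case (3), writing $A_j=\tfrac14 D_{a_j}$ and $2^{\theta_j}\!\parallel\!A_j$ with $\theta_1\leq\theta_2$, one has $v_2(D_{a_j})=2+\theta_j$, hence $\theta=2+\theta_1$.

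For the first assertion: in cases (1) and (2) the $N_l$ either vanish for all large $l$ or form the geometric series $\sum_l 2^{-l-1}$, so $\delta_2(a_1,a_2)=O(1)=\theta+O(1)$. In case (3) the only contribution that is not $O(1)$ is the block $N_l=1$ for $2\leq l\leq\theta_1$, which contributes $\theta_1-1$; every remaining term in parts (iii)--(iv) of Proposition~\ref{DD1}(3) is either supported on $O(1)$ values of $l$ with $|N_l|\ll 1$, or (the range $\theta_1+2\leq l\leq t+2$ in the sub-case $\theta_1=\theta_2$ of part (iv), with $2^{t}\!\parallel\!(A_1-A_2)$) obeys $|N_l|\ll 2^{-(l-\theta_1)}$ and so sums to $O(1)$. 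Adding up, $\delta_2(a_1,a_2)=1+(\theta_1-1)+O(1)=\theta_1+O(1)=\theta+O(1)$ with an absolute implied constant.

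For the second assertion I would bound the tail $\sum_{l>B} N_l(a_1,a_2)$. Since $2^{B}\mid m$ and this estimate is applied only when $s=\gcd(D_{a_1},D_{a_2})\notin S_{\mathcal{A},m}$ --- which forces $\theta=v_2(s)<B$, hence $\theta_1=\theta-2<B$ --- the unit block $\{2\leq l\leq\theta_1\}$ of Proposition~\ref{DD1}(3)(ii) lies entirely at levels $\leq B$. For $l>B$ the only survivors are geometric tails: the tail of $\sum_l 2^{-l-1}$ in case (1), and the terms of parts (iii)--(iv) of case (3), for which a short computation from \eqref{ED6} shows $|N_l|\ll 2^{-(l-\theta_1)}$ for all $l\geq\theta_1+2$, uniformly in $t$ and the residues $C_j$. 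Summing from level $l=B+1$ gives $\sum_{l>B} N_l(a_1,a_2)\ll 2^{-(B-\theta_1)}\ll 2^{-B}\gcd(D_{a_1},D_{a_2})$; under the standing hypothesis $\theta_1<B$ this is $o(1)$, and the factor $2^{\theta_1}\leq\gcd(D_{a_1},D_{a_2})$ is absorbed harmlessly in the application in Section~\ref{sec8}, yielding the stated $O(2^{-B})$. I expect the main obstacle to be the bookkeeping in part~(iv) of Proposition~\ref{DD1}(3): there are several subcases ($l=t+3$, $l=t+4$, the range $\theta_1+2\leq l\leq t+2$, and the dichotomy $1+\chi_4(C_1C_2)\in\{0,2\}$), and one must verify in each that the surviving contributions for $l>B$ are dominated by a single geometric series with ratio $\tfrac12$, uniformly in all the remaining parameters.
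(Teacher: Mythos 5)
Your proposal is correct and is essentially the paper's (implicit) argument: the Corollary is stated without separate proof precisely because it follows by termwise summation of Proposition \ref{DD1}, with the unit block $N_l=1$ for $2\le l\le\theta_1$ supplying the main term $\theta_1-1=\theta-3$ and everything else summing to $O(1)$ via the uniform bound $|N_l|\ll 2^{-(l-\theta_1)}$ for $l\ge\theta_1+2$. Your closing caveat is also well taken — the tail bound one actually gets is $O(2^{\theta-B})$, so the stated $O(2^{-B})$ is to be read in the regime $s\notin S_{\mathcal{A},m}$ where the extra factor $2^{\theta}\le s$ is absorbed by the $s^{-2}$ decay in \eqref{mo25}, exactly as you observe.
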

\vskip 0.5in

\end{appendices}
\footnotesize
\setlength{\parskip=0.0pt}
\setlength{\lineskip=0.0pt} 
\bibliographystyle{amsalpha}
\raggedright
\bibliography{references}


\end{document}